\documentclass[a4paper,reqno]{amsart}

\usepackage{chemarrow}
\usepackage{comment}
\usepackage{subfiles}
\usepackage{upgreek}
\usepackage{amsfonts}
\usepackage{amsmath,amsthm,amssymb,colonequals}
\usepackage{indentfirst}
\usepackage{amscd}
\usepackage{setspace}
\usepackage{mathrsfs}
\usepackage{float}
\usepackage{graphicx, subcaption}
\usepackage{wrapfig}
\usepackage{enumitem}
\usepackage{euscript}
\usepackage{stackrel}

\usepackage[T1]{fontenc}
\usepackage[utf8]{inputenc}
\usepackage{pifont}

%\usepackage{refcheck}
%\usepackage{biblatex}
%\addbibresource{phdCitations.bib}

% list settings
\setlist[enumerate]{format=\normalfont}

%%Nice tables
\usepackage{array,multirow,booktabs,longtable}
\setlength{\heavyrulewidth}{1.2pt}
\setlength{\abovetopsep}{4pt}

\usepackage{centernot}
\usepackage{mathtools}
\usepackage{stmaryrd}

\makeatletter
\newcommand{\xMapsto}[2][]{\ext@arrow 0599{\Mapstofill@}{#1}{#2}}
\def\Mapstofill@{\arrowfill@{\Mapstochar\Relbar}\Relbar\Rightarrow}
\makeatother
%
%%Set margins.
%\usepackage[left=.95in, right=.95in, top=1in, bottom=1in]{geometry}
%\setlength{\parindent}{1.5cm} % Default is 15pt.

\addtolength{\hoffset}{-0.5cm} \addtolength{\textwidth}{1cm}
\addtolength{\voffset}{-1.5cm} \addtolength{\textheight}{2cm}

%%%%%%%%%%% TiKz stuff
\usepackage{tikz,mathrsfs}
\usetikzlibrary{arrows,decorations.pathmorphing,decorations.pathreplacing,positioning,shapes.geometric,shapes.misc,decorations.markings,decorations.fractals,calc,patterns}

\tikzset{>=stealth',
        cvertex/.style={circle,draw=black,inner sep=1pt,outer sep=3pt},
        vertex/.style={circle,fill=black,inner sep=1pt,outer sep=3pt},
        star/.style={circle,fill=yellow,inner sep=0.75pt,outer sep=0.75pt},
        tvertex/.style={inner sep=1pt,font=\scriptsize},
        gap/.style={inner sep=0.5pt,fill=white}}

\newtheorem{theorem}{Theorem}[section]
\newtheorem{prop}[theorem]{Proposition}
\newtheorem{lemma}[theorem]{Lemma}
\newtheorem{definition}[theorem]{Definition}
\newtheorem{cor}[theorem]{Corollary}

\theoremstyle{definition}

\newtheorem{example}[theorem]{Example}

\newtheorem{remark}[theorem]{Remark}

\newtheorem{notation}[theorem]{Notation}

\numberwithin{equation}{section}

\usepackage[colorlinks]{hyperref}

\newcommand{\Jac}{\scrJ\mathrm{ac}}

\newcommand\N{\mathbb{N}}
\newcommand\Z{\mathbb{Z}}
\newcommand\Q{\mathbb{Q}}

\newcommand\C{\mathbb{C}}
 %shorten \mathfrak{p}
 %shorten \mathfrak{m}
\newcommand{\GV}{\textnormal{GV}}

%%%MACROS

\DeclareMathOperator{\Supp}{\mathrm{Supp}}
\renewcommand{\mod}{\mathop{\mathrm{mod}}}
\newcommand{\Spec}{\mathop{\mathrm{Spec}}}
\DeclareMathOperator{\Ext}{\mathrm{Ext}}

\DeclareMathOperator{\Hom}{\mathrm{Hom}}
\DeclareMathOperator{\End}{\mathrm{End}}
\DeclareMathOperator{\Sing}{\mathrm{Sing}}
\DeclareMathOperator{\add}{\mathrm{add}}

\renewcommand{\dim}{\mathop{\mathrm{dim}}}

\renewcommand{\deg}{\mathrm{deg}}
\renewcommand{\le}{\mathrm{len}}
\renewcommand{\l}{\textnormal{left}}
\renewcommand{\r}{\textnormal{right}}

\newcommand{\Curve}{\mathrm{C}}

%-------------Nice typsetting
\newcommand{\scrA}{\EuScript{A}}
\newcommand{\scrB}{\EuScript{B}}

\newcommand{\scrJ}{\EuScript{J}}

\newcommand{\scrL}{\EuScript{L}}

\newcommand{\scrO}{\EuScript{O}}

\newcommand{\scrR}{\EuScript{R}}
\newcommand{\scrS}{\EuScript{S}}
\newcommand{\scrT}{\EuScript{T}}
\newcommand{\scrU}{\EuScript{U}}

\newcommand{\scrX}{\EuScript{X}}
\newcommand{\scrY}{\EuScript{Y}}

\newcommand{\lcl}{(\kern -2.5pt(}
\newcommand{\rcl}{)\kern -2.5pt)}
\newcommand{\lal}{[\kern -1pt[}
\newcommand{\ral}{ ] \kern -1pt ]}
\newcommand{\lbl}{\langle \kern -2.5pt \langle}
\newcommand{\rbl}{\rangle \kern -2.5pt \rangle}
\renewcommand*\partial{\textnormal{\reflectbox{6}}}

\newcommand{\x}{\mathsf{x}}
\newcommand{\y}{\mathsf{y}}

\newcommand{\CM}{\mathrm{CM}}
\newcommand{\red}{\textnormal{red}}
\newcommand{\m}{\mathfrak{m}}
%\singlespacing

\newcommand{\llcurve}{\{\kern -3pt\{}
\newcommand{\rrcurve}{\}\kern -3pt\}}

\begin{document}

\title{Local forms for the double $A_n$ quiver}
\author{Hao Zhang}
\address{Hao Zhang, The Mathematics and Statistics Building, University of Glasgow, University Place, Glasgow, G12 8QQ, UK.}
\email{h.zhang.4@research.gla.ac.uk}
\begin{abstract}
This paper studies the noncommutative singularity theory of the double $A_n$ quiver $Q_n$ (with a single loop at each vertex), with applications to algebraic geometry and representation theory. We give various intrinsic definitions of a Type $A$ potential on $Q_n$, then via coordinate changes we (1) prove a monomialization result that expresses these potentials in a particularly nice form, (2) prove that Type $A$ potentials precisely correspond to crepant resolutions of $cA_n$ singularities, (3) solve the Realisation Conjecture of Brown--Wemyss in this setting.

For $n \leq 3$, we furthermore give a full classification of Type $A$ potentials (without loops) up to isomorphism, and those with finite-dimensional Jacobi algebras up to derived equivalence. There are various algebraic corollaries, including to certain tame algebras of quaternion type due to Erdmann, where we describe all basic algebras in the derived equivalence class.
\end{abstract}

%\subjclass[2010]{Primary 14F05; Secondary 14D15, 14E30, 16E45, 16S38, 18E30}
\maketitle

\maketitle
\setcounter{tocdepth}{1}
\tableofcontents

\parindent 0pt
\parskip 5pt
\section{Introduction}
\subsection{Motivation}
This paper is motivated by classification problems arising in the Minimal Model Program (MMP). Given a reasonable variety, the MMP seeks a nice representatives within its birational class. Such representatives are in general not unique; rather, they are related by codimension--two surgeries known as \emph{flops} \cite{BCHM}.

Let $\uppi \colon \scrX \to \Spec \scrR$ be a crepant resolution, where $\scrR$ has compound Du Val (cDV) singularities.
Associated to $\uppi$ is a noncommutative algebra $\Lambda_{\mathrm{con}}(\uppi)$, called the \emph{contraction algebra}, which represents the noncommutative deformation theory of the exceptional curves \cite{DW2}.
The contraction algebra $\Lambda_{\mathrm{con}}(\uppi)$ is isomorphic to the Jacobi algebra of a quiver with some potential \cite{VM}, and it classifies $\Spec \scrR$ complete locally if $\scrR$ is furthermore isolated \cite{JKM}.

This motivates classifying Jacobi algebras (equivalently, their potentials) on various quivers, as this immediately then classifies certain crepant resolutions. In this paper, we give various intrinsic algebraic definitions of a Type $A$ potential on the double $A_n$ quiver $Q_n$ (with a single loop at each vertex). Then via coordinate changes, we prove a monomialization result that expresses these potentials in a particularly nice form, and prove that these potentials precisely correspond to $cA_n$ crepant resolutions, which solves the Realisation Conjecture of Brown--Wemyss in Type A cases \cite{BW2}. There are further applications to representation theory. 

Taken together, our results may be viewed as a noncommutative analogue of the classical classification of simple singularities by commutative polynomials \cite{A1}, as well as a generalisation of the fact that the germ of a complex analytic hypersurface with an isolated singularity is determined by its Tjurina algebra \cite{MY}.

\subsection{Main Results}\label{mainresult}
We now summarise the main results of the paper.
For any fixed $n \geq 1$, consider the following quiver $Q_{n}$, which is the double of the usual $A_n$ quiver, with a single loop at each vertex. Label the arrows of $Q_n$ left to right, as illustrated below.
 \[
\begin{array}{c}
\begin{tikzpicture}[bend angle=15, looseness=1.2]
\node (a) at (-1.5,0) [vertex] {};
\node (b) at (0,0) [vertex] {};
\node (c) at (1.5,0) [vertex] {};
\node (c2) at (2,0) {$\hdots$};
\node (d) at (2.5,0) [vertex] {};
\node (e) at (4,0) [vertex] {};
\node (a1) at (-1.5,-0.2) {$\scriptstyle 1$};
\node (a2) at (0,-0.2) {$\scriptstyle 2$};
\node (a3) at (1.5,-0.2) {$\scriptstyle 3$};
\node (a4) at (2.5,-0.25) {$\scriptstyle n-1$};
\node (a5) at (4,-0.25) {$\scriptstyle n$};
\draw[->,bend left] (a) to node[above] {$\scriptstyle a_{2}$} (b);
\draw[<-,bend right] (a) to node[below] {$\scriptstyle b_{2}$} (b);
\draw[->,bend left] (b) to node[above] {$\scriptstyle a_{4}$} (c);
\draw[<-,bend right] (b) to node[below] {$\scriptstyle b_{4}$} (c);
\draw[->,bend left] (d) to node[above] {$\scriptstyle a_{2n-2}$} (e);
\draw[<-,bend right] (d) to node[below] {$\scriptstyle b_{2n-2}$} (e);
\draw[<-]  (a) edge [in=120,out=55,loop,looseness=10] node[above] {$\scriptstyle a_{1}$} (a);
\draw[<-]  (b) edge [in=120,out=55,loop,looseness=11] node[above] {$\scriptstyle a_{3}$} (b);
\draw[<-]  (c) edge [in=120,out=55,loop,looseness=11] node[above] {$\scriptstyle a_{5}$} (c);
\draw[<-]  (d) edge [in=120,out=55,loop,looseness=11] node[above] {$\scriptstyle a_{2n-3}$} (d);
\draw[<-]  (e) edge [in=120,out=55,loop,looseness=11] node[above] {$\scriptstyle a_{2n-1}$} (e);
\node (z) at (1.5,-1) {Quiver $Q_{n}$ which has loop $a_{2i-1}$ at each vertex $i$.};
\end{tikzpicture}
\end{array}
\]

From this, define elements $\x_i$ and $\x'_i$ as follows: first, set $b_{2i-1}$ to be the trivial path at vertex $i$, for any $1 \leq i \leq n$. Then for any $1 \leq i \leq 2n-1$, set $\x_i \colonequals a_ib_i$ and $\x_i' \colonequals b_ia_i$.

For example, in the case $n=3$, 
 \[
 \begin{array}{cl}
\begin{array}{c}
\begin{tikzpicture}[bend angle=15, looseness=1.2]
\node (a) at (-1,0) [vertex] {};
\node (b) at (0,0) [vertex] {};
\node (c) at (1,0) [vertex] {};

\node (a1) at (-1,-0.2) {$\scriptstyle 1$};
\node (a2) at (0,-0.2) {$\scriptstyle 2$};
\node (a3) at (1,-0.2) {$\scriptstyle 3$};

\draw[->,bend left] (a) to node[above] {$\scriptstyle a_{2}$} (b);
\draw[<-,bend right] (a) to node[below] {$\scriptstyle b_{2}$} (b);
\draw[->,bend left] (b) to node[above] {$\scriptstyle a_{4}$} (c);
\draw[<-,bend right] (b) to node[below] {$\scriptstyle b_{4}$} (c);
\draw[<-]  (a) edge [in=120,out=55,loop,looseness=11] node[above] {$\scriptstyle a_{1}$} (a);
\draw[<-]  (b) edge [in=120,out=55,loop,looseness=11] node[above] {$\scriptstyle a_{3}$} (b);
\draw[<-]  (c) edge [in=120,out=55,loop,looseness=11] node[above] {$\scriptstyle a_{5}$} (c);
\end{tikzpicture}
\end{array}
&
\begin{array}{l}
\x_1=\x_1'=a_1\\
\x_3=\x_3'=a_3\\
\x_5=\x_5'=a_5
\end{array}
\end{array}
\]
whereas $\x_2=a_2b_2$, $\x_2'=b_2a_2$, and $\x_4=a_4b_4$, $\x_4'=b_4a_4$. 

Given the above $\x_i$ and $\x_i'$, we first define a \emph{reduced Type $A$ potential} on $Q_{n}$ to be any reduced potential $f$ that contains the terms $\x_i^{\prime}\x_{i+1}$ for all $1\leq i \leq 2n-2$. A \emph{Type $A$ potential} on $Q_{n}$ is then defined in \ref{TypeA}. But for the purposes of this introduction, it suffices to work with the notion of a \emph{monomialized Type $A$ potential} on $Q_n$, defined to be any potential of the form
\begin{equation*}
    \sum_{i=1}^{2n-2}\x_i^{\prime}\x_{i+1} + \sum_{i=1}^{2n-1}\sum_{j=2}^{\infty} \upkappa_{ij} \x_i^{j} 
\end{equation*}
for some $\upkappa_{ij} \in \C$. We prove in \ref{047} and \ref{410} that any Type $A$ potential is isomorphic to a monomialized Type $A$ potential, and so the above monomialized version suffices. 
%We follow the usual definition of right-equivalence in \cite{DWZ1} (see \ref{211}). the Jacobi algebra of two right-equivalent potentials is isomorphic to each other, 

The first main result is that the complete Jacobi algebra (denoted $\Jac$) of any Type $A$ potential on $Q_n$ can be realized as the contraction algebra of a crepant resolution of some $cA_n$ singularity.  %This proves the Realisation Conjecture of Brown--Wemyss \cite{BW2} in the setting of noncommutative Type $A$ potentials. 

\begin{theorem}[\ref{511}]\label{11}
For any Type $A$ potential $f$ on $Q_n$ where $n \geq 1$, there exists a crepant resolution $\uppi \colon \scrX \rightarrow \Spec\scrR$ where $\scrR$ is $cA_n$, such that $\Jac(f) \cong \Lambda_{\mathrm{con}}(\uppi)$.
\end{theorem}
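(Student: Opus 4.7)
The overall plan is to combine the monomialization results already established with an explicit geometric construction: turn the coefficient data of a standard-form potential into a $cA_n$ hypersurface together with a crepant resolution, then compute the resulting contraction algebra.

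First, by \ref{047} and \ref{410}, any Type $A$ potential $f$ on $Q_n$ is right-equivalent to a monomialized Type $A$ potential
\[
f \;=\; \sum_{i=1}^{2n-2}\x_i'\x_{i+1} + \sum_{i=1}^{2n-1}\sum_{j\geq 2}\upkappa_{ij}\x_i^j.
\]
Since right-equivalence preserves Jacobi algebras up to isomorphism, the task reduces to realizing each such monomialized $f$ as a contraction algebra $\Lambda_{\mathrm{con}}(\uppi)$.

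Given monomialized coefficients, I would read the loop data (the terms $\upkappa_{ij}\x_i^j$ with $i$ odd, so $\x_i^j=a_i^j$) to produce $n+1$ power series $g_1(x),\ldots,g_{n+1}(x) \in \C[[x]]$, and read the double-edge data (terms with $i$ even) to produce a correction series involving $y$. Assembling these into a single function $G(x,y)$, define
\[
\scrR \;\colonequals\; \C[[u,v,x,y]]\,\big/\,(uv - G(x,y)),
\]
chosen so that this is a $cA_n$ singularity. The crepant resolution $\uppi: \scrX \to \Spec\scrR$ then arises from an iterated blowup of the ideals $(u,g_1\cdots g_i)$ for $i=1,\ldots,n$, producing a chain of $n$ exceptional rational curves. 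Identifying $\Lambda_{\mathrm{con}}(\uppi)$ with $\Jac(f)$ then proceeds via the NCCR of $\scrR$: the underlying quiver of the tilting-bundle endomorphism ring is precisely $Q_n$, and by Van den Bergh's superpotential theorem the defining relations come from a potential built directly from $G$. Passing to the contraction quotient and invoking monomialization a second time identifies the resulting potential with the $f$ we started from.

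The principal obstacle is the \emph{surjectivity} of the construction: one must verify that every tuple $(\upkappa_{ij})$ appearing in a monomialized Type $A$ potential is realised by some admissible choice of $(g_1,\ldots,g_{n+1})$ and correction data. This amounts to solving a triangular system of power-series equations, which should be possible recursively in the $x$-degree, but the bookkeeping is delicate because the coefficient of a given $\x_i^j$ in the potential of $\Lambda_{\mathrm{con}}(\uppi)$ depends on nontrivial combinations of the Taylor coefficients of the $g_k$'s and their partial products. Secondary technicalities include ensuring $G$ genuinely defines a $cA_n$ (rather than a degenerate) singularity and that the iterated blowup is smooth and crepant; both conditions are open and should follow from generic non-vanishing of the relevant resultants.
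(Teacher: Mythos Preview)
Your reduction to the monomialized form via \ref{047} and \ref{410} is correct and is exactly what the paper does. The gap is in the geometric step.

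First, placing $g_1,\dots,g_{n+1}$ in $\C[[x]]$ and handling the even-$i$ data by a separate ``correction in $y$'' is not the right parameterisation. For a $cA_n$ singularity with a crepant resolution one needs $uv=h_0h_1\cdots h_n$ with each $h_i\in\C[[x,y]]$ having a linear term, and the pairwise ideals $(h_{i-1},h_i)$ control whether the quiver of $\Lambda(\uppi)$ has a loop at vertex $i$ (Proposition~\ref{35}). A one-variable ansatz does not give enough freedom to hit every loop/non-loop pattern, and the coupling you describe between loop data and edge data is not triangular in any obvious sense.

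Second, the surjectivity you flag as the principal obstacle is dissolved in the paper by a different bookkeeping device. Instead of $n{+}1$ unknowns, introduce $2n{+}1$ auxiliary series $g_0,\dots,g_{2n}\in\C[[x,y]]$ subject to the first-order recursion
\[
g_{i-1}+\sum_{j\ge 2} j\,\upkappa_{ij}\,g_i^{\,j-1}+g_{i+1}=0\qquad(1\le i\le 2n-1),
\]
seeded by $g_t=y$, $g_{t+1}=x$ for any fixed $t$. This is solvable in both directions for all remaining $g_s$, so every coefficient tuple $(\upkappa_{ij})$ is realised with no case analysis (Lemma~\ref{054}). One then sets $\scrR=\C[[u,v,x,y]]/(uv-g_0g_2\cdots g_{2n})$ using only the even-indexed $g$'s; the odd-indexed ones are the values of the loops. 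The recursion also gives $(g_s,g_{s+1})=(x,y)$ for all $s$, which certifies that $\scrR$ is genuinely $cA_n$ and that the standard module gives a crepant resolution, so your ``secondary technicalities'' evaporate.

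Third, your plan to compute the NCCR potential abstractly via Van den Bergh and then ``monomialize a second time'' to compare with $f$ is both unnecessary and unsafe: monomialized forms are not unique, so matching them requires an independent argument. The paper instead computes $\End_{\scrR}(M)$ explicitly by base-changing from a universal flop through a tower $\scrS_0\to\scrS_1\to\cdots\to\scrS_{2n}\cong\scrR$, using an explicit quiver presentation of the universal NCCR (Appendix~\ref{thm31}) and \cite[2.11]{V3} at each step. After quotienting by $e_0$ and eliminating redundant loops, the surviving relations are \emph{literally} the cyclic derivatives of $f$ (Theorem~\ref{gr} and Lemma~\ref{057}), so no second comparison is needed.
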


The Brown--Wemyss Realisation Conjecture \cite{BW2} predicts that if $f$ is a potential whose Jacobi algebra $\Jac(f)$ is either finite-dimensional, or infinite-dimensional with at most linear growth in the successive quotients by powers of its Jacobi ideal, then $\Jac(f)$ arises as the contraction algebra of some crepant resolution $\scrX \to \Spec \scrR$, where $\scrR$ is cDV.
Theorem~\ref{11} verifies this conjecture for all Type $A$ potentials on $Q_n$, for any $n \geq 1$.

We then obtain the converse to \ref{11} (see \ref{514}), which shows that our definition of Type A potential is intrinsic. The definition of the quiver $Q_{n,I}$
and Type $A_{n,I}$ crepant resolutions are given in  \S\ref{mono} and \ref{def: Type An}.

\begin{cor}[\ref{cor: TypeA_intrin}]
Let $f$ be a reduced potential on $Q_{n, I}$. The following are equivalent.
\begin{enumerate}
   \item $f$ is Type A.
   \item There exists a Type $A_{n, I}$ crepant resolution $\uppi$ such that $\Jac(f) \cong \Lambda_{\mathrm{con}}(\uppi)$.
   \item $e_i \Jac(f) e_i$ is commutative for $1 \leq i \leq n$.
\end{enumerate}
\end{cor}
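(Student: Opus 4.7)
I would establish the cycle $(1) \Rightarrow (2) \Rightarrow (3) \Rightarrow (1)$.

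The implication $(1) \Rightarrow (2)$ is essentially \ref{11}, which realises any Type $A$ potential as the contraction algebra of a crepant resolution of some $cA_n$ singularity. What remains is to check that this crepant resolution is specifically of Type $A_{n, I}$ in the sense of \ref{def: Type An}, i.e.\ that the loop pattern recorded by $I$ matches the shape of the exceptional configuration. I would do this by tracing the construction behind \ref{11}, reading off the generic hyperplane section together with the individual curve classes, and matching them to $I$.

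For $(2) \Rightarrow (3)$, the idea is geometric: for a crepant resolution $\uppi$ of a $cA_n$ singularity, each exceptional curve $C_i$ deforms in a one-parameter family obtained by varying a hyperplane section, so the noncommutative deformation algebra of $C_i$ is a commutative quotient of $\C\lcl x \rcl$. Since $e_i \Lambda_{\mathrm{con}}(\uppi) e_i$ computes exactly this one-pointed deformation algebra, commutativity follows. I would invoke the existing description of contraction algebras of $cA_n$ flops (via hyperplane sections and Knörrer-periodicity style arguments) to make this precise.

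The substantive implication is $(3) \Rightarrow (1)$. Starting from a reduced $f$ with each $e_i \Jac(f) e_i$ commutative, the plan is to analyse the cyclic derivatives $\partial_{a_j} f$ and the resulting relations in $\Jac(f)$, and to show that commutativity at every vertex forces $f$ to contain each bridging term $\x_i' \x_{i+1}$ for $1 \leq i \leq 2n-2$ (up to a change of variables). Concretely, without such a bridging term the loop class $\x_{2i-1}$ and the composite class $\x_{2i}$ produce free-like commutator contributions inside $e_i \Jac(f) e_i$ that cannot cancel against anything else. Once the bridging terms are present, reducedness controls the remaining quadratic contributions, and the monomialization results \ref{047} and \ref{410} put $f$ into monomialized form, at which point $f$ satisfies the intrinsic definition of Type $A$ from \ref{TypeA}.

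The main obstacle I anticipate is $(3) \Rightarrow (1)$. Commutativity of each $e_i \Jac(f) e_i$ is a priori a mild constraint, and extracting the specific bridging monomials $\x_i' \x_{i+1}$ from it requires careful combinatorial bookkeeping of cyclic paths through vertices $i$ and $i+1$, combined with the reducedness hypothesis and an induction on $i$. I would expect the argument to proceed vertex by vertex, showing that each commutator $[\x_{2i-1}, \x_{2i}]$ and $[\x_{2i-1}, \x'_{2i-2}]$ can only vanish in $\Jac(f)$ when $f$ already contains the prescribed bridging term, and otherwise exhibiting an explicit obstruction of free-algebra type in the appropriate local algebra.
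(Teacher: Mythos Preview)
Your overall cycle $(1)\Rightarrow(2)\Rightarrow(3)\Rightarrow(1)$ matches the paper, and your $(1)\Rightarrow(2)$ is exactly the paper's: Theorem~\ref{511} already records, in its ``furthermore'' clause, that the crepant resolution produced from a \emph{reduced} $f$ on $Q_{n,I}$ is of Type~$A_{n,I}$, so nothing further needs to be traced.

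For $(2)\Rightarrow(3)$ the paper's argument is much simpler and cleaner than your deformation-theoretic sketch. In the Type~$A_n$ situation one has $\Lambda_{\mathrm{con}}(\uppi)\cong\underline{\End}_{\scrR}(M)$ with $M=\scrR\oplus M_1\oplus\cdots\oplus M_n$ and each $M_i$ a rank-one reflexive $\scrR$-module. Hence $\End_{\scrR}(M_i)\cong\scrR$ is commutative, and $e_i\Lambda_{\mathrm{con}}(\uppi)e_i\cong\underline{\End}_{\scrR}(M_i)$ is a quotient of it, so commutative. No hyperplane sections or Kn\"orrer-type input is needed; your sketch that the deformation algebra ``is a commutative quotient of $\C\lcl x\rcl$'' is not quite right in general (when the normal bundle is $\scrO(-2)\oplus\scrO$ the local algebra need not be a quotient of a one-variable power series ring), and the rank-one argument bypasses all of this.

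For $(3)\Rightarrow(1)$ you have the right contrapositive idea, and the paper implements it as Lemma~\ref{581}: if some bridging term $\x_j'\x_{j+1}$ is missing, one passes to a small explicit quotient supported on two or three vertices (killing $\mathfrak m^4$ or $\mathfrak m^5$ and the far idempotents) and checks by hand that $\x_j'\x_{j+1}\neq\x_{j+1}\x_j'$ there. Two points where you overshoot: (i) no induction on $i$ is needed, the argument is purely local at the offending $j$; (ii) once every bridging term is present, $f$ is reduced Type~$A$ \emph{by definition}~\ref{TypeA}, so invoking the monomialization results \ref{047} and \ref{410} is unnecessary. Also note the index range: on $Q_{n,I}$ the bridging terms are $\x_i'\x_{i+1}$ for $1\leq i\leq m-1$ with $m=2n-1-|I|$, not $1\leq i\leq 2n-2$.
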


Moreover, there is a correspondence between crepant resolutions of $cA_n$ singularities and our intrinsic noncommutative monomialized Type $A$ potentials, as follows.

\begin{cor}[\ref{516}]\label{12}
For any $n$, the set of isomorphism classes of contraction algebras associated to crepant resolutions of $cA_n$ singularities is equal to the set of isomorphism classes of Jacobi algebras of monomialized Type $A$ potentials on $Q_n$.
\end{cor}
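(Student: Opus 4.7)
The corollary asserts an equality of two sets of isomorphism classes of algebras, so the plan is to establish both containments using the realisation theorem \ref{11}, its intrinsic converse \ref{cor: TypeA_intrin}, and the monomialization results \ref{047} and \ref{410}. The direction $(\supseteq)$ is essentially immediate: any monomialized Type $A$ potential $f$ on $Q_n$ is by definition a Type $A$ potential, so \ref{11} produces a crepant resolution $\uppi$ of a $cA_n$ singularity with $\Jac(f) \cong \Lambda_{\mathrm{con}}(\uppi)$, placing $[\Jac(f)]$ in the set of contraction algebras of $cA_n$ crepant resolutions.

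For the direction $(\subseteq)$, I would start with a crepant resolution $\uppi\colon\scrX\to\Spec\scrR$ of a $cA_n$ singularity and realise $\Lambda_{\mathrm{con}}(\uppi)$ as $\Jac(g)$ for some potential $g$ on a quiver of the form $Q_{n,I}$, where $I$ records which exceptional curves carry loops; this is the standard output of noncommutative crepant resolution theory. The implication $(2)\Rightarrow(1)$ of \ref{cor: TypeA_intrin} then forces $g$ to be a Type $A$ potential, after which \ref{047} and \ref{410} replace $g$ by an isomorphic monomialized Type $A$ potential. A final cosmetic step promotes this potential from $Q_{n,I}$ to $Q_n$: for each vertex $k\notin I$, the absent loop $a_{2k-1}$ is formally adjoined with quadratic coefficient $\upkappa_{2k-1,2}$ chosen nonzero, so that the Jacobi relation at the new loop expresses $a_{2k-1}$ as a power series in $\x_{2k-2}'$ and $\x_{2k}$, leaving the Jacobi algebra unchanged.

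The substantive inputs---realisation, its converse, and monomialization---are already in hand, so the main technical difficulty lies in the final passage from $Q_{n,I}$ to $Q_n$. I expect this to reduce to a local verification at each idempotent $e_i\Jac(f)e_i$ that adjoining an ``inactive'' loop, with a suitably chosen quadratic coefficient, does not alter the algebra up to isomorphism.
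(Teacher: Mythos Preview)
Your approach is essentially the paper's own: one containment is \ref{11}, the other is \ref{514}/\ref{cor: TypeA_intrin} followed by monomialization \ref{047} and the passage to $Q_n$ via \ref{410}. Two small corrections: in this paper $I\subseteq\{1,\dots,n\}$ is the set of vertices \emph{without} a loop, so the ``absent loops'' you want to adjoin sit at vertices $k\in I$, not $k\notin I$; and your ``final cosmetic step'' is precisely the content of \ref{410} (proved via \ref{048}--\ref{049}), so there is nothing left to do after invoking it.
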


Then, after restricting to those $cA_n$ singularities which are isolated, we obtain the following consequence.
\begin{theorem}[\ref{cor}]
For any $n$, there exists a one-to-one correspondence  
\[
\begin{tikzpicture}[bend angle=0, looseness=0]
\node[align=left] (a) at (0,-1) {derived equivalence classes of monomialized Type $A$ potentials on $Q_n$\\ with finite-dimensional Jacobi algebra.};
\node[align=left] (b) at (0,0.5) {isomorphism classes of isolated $cA_n$ singularities\\ which admit a crepant resolution};
\draw[->,bend left] (b) to node[below] {} (a);
\draw[->,bend left] (a) to node[above] {} (b);
\end{tikzpicture}
\]
\end{theorem}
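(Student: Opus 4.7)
The plan is to assemble the bijection by quotienting the isomorphism-level correspondence of \ref{12} by the natural equivalences on each side. Beyond the results already established in the paper, I would invoke three external ingredients: (a) any two crepant resolutions of the same cDV singularity are connected by a finite chain of flops, and each flop induces a derived equivalence of contraction algebras (via \cite{DW2} and the theory of noncommutative deformations); (b) for isolated cDV $\scrR$, the derived equivalence class of $\Lambda_{\mathrm{con}}(\uppi)$ determines $\Spec\scrR$ up to isomorphism, which is the \cite{JKM} reconstruction theorem already cited in the introduction; and (c) the standard fact that $\Lambda_{\mathrm{con}}(\uppi)$ is finite dimensional iff $\scrR$ is isolated. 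Item (c) translates the finite-dimensional hypothesis on $\Jac(f)$ into the isolatedness condition on $\scrR$.

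Given these, I would first define the forward map. Send an isolated $cA_n$ singularity $\Spec \scrR$ that admits a crepant resolution to the derived equivalence class $[f]$, where $f$ is any monomialized Type $A$ potential on $Q_n$ satisfying $\Jac(f) \cong \Lambda_{\mathrm{con}}(\uppi)$ for some crepant resolution $\uppi$ of $\Spec\scrR$. The existence of such $f$ is \ref{12}; independence from the choice of $\uppi$ is exactly (a); finite-dimensionality of $\Jac(f)$ is (c). For the reverse map, start with a derived equivalence class represented by a monomialized Type $A$ potential $f$ with $\Jac(f)$ finite dimensional. Apply \ref{11} to produce a crepant resolution $\uppi \colon \scrX \to \Spec \scrR$ of a $cA_n$ singularity with $\Lambda_{\mathrm{con}}(\uppi) \cong \Jac(f)$. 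By (c) the resulting $\scrR$ is isolated, and by (b) the isomorphism class of $\Spec\scrR$ depends only on the derived equivalence class of $\Jac(f)$, not on the chosen representative $f$ or the chosen witness resolution $\uppi$.

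That these maps are mutually inverse is immediate from their construction, since each is defined by picking a witness crepant resolution and invoking \ref{11}/\ref{12}, and the two inputs (a) and (b) ensure that the ambiguity in the choice of witness is precisely the equivalence relation imposed on the other side. The main obstacle is organizational rather than computational: pinning down the precise statements of (a) and (b) in the generality needed and confirming they apply to every isolated $cA_n$ crepant resolution obtained from a monomialized Type $A$ potential via \ref{11}. Given the form of $\uppi$ produced there (which lands in the $cA_n$ regime where flops and the \cite{JKM} theorem are well understood), this verification should be routine, and the theorem then reduces to assembling the two halves of the bijection from results already in the paper.
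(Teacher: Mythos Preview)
Your proposal is correct and follows essentially the same approach as the paper. The paper organizes the argument as defining a single map $\Phi$ and checking it is well-defined (via \ref{thm:august}, which packages your ingredient (a)), surjective (via \ref{gr}/\ref{516}), and injective (via \ref{037}, your ingredient (b)), with \ref{352} supplying your ingredient (c); your two-maps-mutually-inverse presentation is just a reorganization of the same content.
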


\subsection{Special cases: $A_3$}
In the case of the double $A_3$ quiver without loops, it is possible to describe the full isomorphism classes of Type $A$ potentials, and the derived equivalence classes of those with finite-dimensional Jacobi algebras. This generalises \cite{DWZ1, E1, H}. 

To ease notation, now consider the following labelling.
\[
 \begin{array}{cl}
\begin{array}{c}
\begin{tikzpicture}[bend angle=30, looseness=1]
\node (a) at (-2,0) [vertex] {};
\node (b) at (0,0) [vertex] {};
\node (c) at (2,0) [vertex] {};
\node (a1) at (-2,-0.2) {$\scriptstyle 1$};
\node (a1) at (0,-0.2) {$\scriptstyle 2$};
\node (a1) at (2,-0.2) {$\scriptstyle 3$};
\draw[<-,bend right] (a) to node[below] {$\scriptstyle b_1$} (b);
\draw[->,bend left] (a) to node[above] {$\scriptstyle a_1$} (b);
\draw[<-,bend right] (b) to node[below] {$\scriptstyle b_2 $} (c);
\draw[->,bend left] (b) to node[above] {$\scriptstyle a_2$} (c);
\node (z) at (0,-1.2) {Double $A_3$ quiver without loops $Q$};
\end{tikzpicture}
\end{array}
&
\begin{array}{l}
\x=b_1a_1, \ \y=a_2b_2
\end{array}
\end{array}
\]

Given two potentials $f$ and $g$ on $Q$, we say that $f$ is \emph{isomorphic} to $g$, written $f \cong g$, if the corresponding Jacobi algebras are isomorphic (see \ref{211}).  Similarly, we say that $f$ is \emph{derived equivalent} to $g$, written $f \simeq g$, if the corresponding Jacobi algebras are derived equivalent (see \ref{notation:derived}).

\begin{theorem}[\ref{610}]\label{117}
Any Type A potential on $Q$ must be isomorphic to an algebra in one of the following isomorphism classes of potentials:
\begin{enumerate}
    \item $\x^2+\x\y+\uplambda\y^2$ for any $\uplambda \in \mathbb{C} \setminus \{0,\tfrac14\}$.
        \item $\x^2+\x\y+\frac{1}{4}\y^2+ \x^r$ for any $r \geq 3$.
        \item $\x^p+\x\y+\y^q \cong \x^q+\x\y+\y^p$ for any $(p,q) \neq (2,2)$.
        \item $\x^2+\x\y+\frac{1}{4}\y^2$.
        \item $\x^p+\x\y \cong \x\y+\y^p$ for any $p\geq 2$.
        \item $\x\y$.
\end{enumerate}
The Jacobi algebras of these potentials are all mutually non-isomorphic (except those isomorphisms stated), and in particular the Jacobi algebras with different parameters in the same item are non-isomorphic. 

The Jacobi algebras in \textnormal{(1)}, \textnormal{(2)}, \textnormal{(3)} are realized by crepant resolutions of isolated $cA_3$ singularities, and those in \textnormal{(4)}, \textnormal{(5)}, \textnormal{(6)} are realized by crepant resolutions of non-isolated $cA_3$ singularities.
\end{theorem}

\begin{theorem}[\ref{611}]\label{116}
The following groups the Type $A$ potentials on $Q$ with finite-dimensional Jacobi algebra into sets, where all the Jacobi algebras in a given set are derived equivalent.
\begin{enumerate}
\item $\{\x^2+\x\y+\uplambda' \y^2$ $\mid$ $\uplambda' = \uplambda,\frac{1-4\uplambda}{4},\frac{1}{4(1-4\uplambda)},\frac{\uplambda}{4\uplambda-1},\frac{4\uplambda-1}{16\uplambda},\frac{1}{16\uplambda}\}$ for any $\uplambda \in \mathbb{C} \setminus \{0,\tfrac14\}$.
\item $\{\x^p+\x\y+\y^2$, $\x^2+\x\y+\y^p$, $\x^2+\x\y+\frac{1}{4}\y^2+ \x^p\}$ for $p \geq 3$.
\item $\{\x^p+\x\y+\y^q$, $\x^q+\x\y+\y^p \}$ for $p \geq 3$ and $q \geq 3$.    
\end{enumerate}
Moreover, the Jacobi algebras of the sets in \textnormal{(1)--(3)} are all mutually not derived equivalent, and in particular the Jacobi algebras of different sets in the same item are not derived equivalent. 
In \textnormal{(1)} there are no further basic algebras in the derived equivalence class, whereas in \textnormal{(2)--(3)} there are an additional finite number of basic algebras in the derived equivalence class.
\end{theorem}

Next, recall the definition of the quaternion type quiver algebra $A_{p,q}(\upmu)$ in \cite{E1, H}, which is the completion of the path algebra of the quiver $Q$ modulo the relations
\begin{equation*}
 a_1a_2b_2 -(a_1b_1)^{p-1}a_1, b_2b_1a_1-\upmu (b_2a_2)^{q-1}b_2, a_2b_2b_1-(b_1a_1)^{p-1}b_1, b_1a_1a_2-\upmu(a_2b_2)^{q-1}a_2,
\end{equation*}
where $\upmu \in \C$ and $p,q \geq 2$. Note we have fewer relations than in \cite{E1, H} since we are working with the completion. In fact $A_{p,q}(\upmu) \cong \Jac(Q,f)$, where 
\begin{equation*}
    f=\frac{1}{p}\x^p-\x\y+\frac{\upmu}{q}\y^q \cong \x^p+\x\y+(-1)^qp^{-\frac{q}{p}}q^{-1}\upmu\y^q.
\end{equation*}

The following improves various results of Erdmann and Holm \cite{E1, H}.
\begin{cor}[\ref{631}]\label{cor:Apq}
The following groups those algebras $A_{p,q}(\upmu)$ which are finite-dimensional into sets, where all the algebras in a given set are derived equivalent.
\begin{enumerate}
\item $\{A_{2,2}(\upmu')$ $\mid$ $\upmu' = \upmu,1-\upmu, \frac{1}{1-\upmu}, \frac{\upmu}{\upmu-1}, \frac{\upmu-1}{\upmu}, \frac{1}{\upmu}\}$ for $\upmu \in \C \setminus \{0,1\}$.
\item $\{A_{p,q}(1)$, $A_{q,p}(1)\}$ for $(p,q)  \neq (2,2)$.
\end{enumerate}
Moreover, the algebras in different sets in \textnormal{(1)--(2)} are all mutually not derived equivalent.  
In \textnormal{(1)} there are no further basic algebras in the derived equivalence class, whereas in \textnormal{(2)} there are an additional finite number of basic algebras in the derived equivalence class.
\end{cor}

Among the families of potentials in \ref{116}, the quadratic case in \textnormal{(1)} exhibits a particularly rich structure, with derived equivalence classes given by finite sets of parameters related by explicit transformations.
Corollary \ref{cor:Apq} presents the same family via quaternion type algebras $A_{2,2}(\upmu)$, making the underlying $\mathfrak{S}_3$-symmetry explicit.
In \S\ref{subsection:S3}, we give a geometric explanation of this symmetry in terms of flops of crepant resolutions of $cA_3$ singularities, and relate it to the classical $\mathfrak{S}_3$-action on the Legendre parameter of elliptic curves.

\subsection*{Conventions}\label{con}
Throughout this paper, we work over the complex numbers $\mathbb{C}$, which is necessary for various geometric statements in \S \ref{geometry}. 
The definitions of $Q_{n, I}$ and $\x_i$ are fundamental, and are repeated in \S\ref{mono}. 
We also adopt the following notation.
\begin{enumerate}
\item Throughout, $n$ is the number of vertices in the quiver $Q_{n, I}$, and $I \subseteq \{1,2, \dots, n\}$ is the set of vertices without loops in $Q_{n, I}$. 
\item Set $m\colonequals 2n-1-|I|$, which equals the number of $\x_i$ in $Q_{n, I}$. 
\item Vector space dimension will be written $\dim_{\C}V$.
\item We write $\mathbb{C}^{\times}$ for the multiplicative group of nonzero complex numbers.
\end{enumerate}

\subsection*{Acknowledgements}
This work forms part of the author’s PhD at the University of Glasgow, funded by the China Scholarship Council.
The author would like to thank his supervisor, Michael Wemyss, for valuable guidance and many helpful discussions.

\section{Algebraic Preliminaries}

To set notation, let $Q=(Q_0,Q_1,t,h)$ be a \emph{quiver}, where $Q_0$ is a finite set of vertices, $Q_1$ is a finite set of arrows, and $t,h\colon Q_1\to Q_0$ are the tail and head maps.
A \emph{loop} $a$ is an arrow satisfying $h(a) = t(a)$, and
a \emph{path} is a formal expression $a_1a_2  \dots a_n$ where $h(a_i) = t(a_{i+1})$ for all $1 \leq i \leq n-1$. A path $a$ is \emph{cyclic} if $h(a) = t(a)$.

Let $k$ be a field. The \emph{complete path algebra} $k\lbl Q\rbl$ is the completion of the usual path algebra $kQ$ with respect to the arrow ideal. That is, the elements of $k\lbl Q\rbl$ are possibly infinite $k$-linear combinations of paths in $Q$.

Write $\mathfrak{m}_Q$, or simply $\mathfrak{m}$, for the two-sided ideal of $k\lbl Q\rbl$ generated by the elements of $Q_1$, and write $A_Q$, or simply $A$, for the $k$-span of the elements of $Q_1$.

\begin{definition}\label{QP}
Suppose that Q is a quiver with arrow span $A$.
\begin{enumerate}
\item A \emph{relation} of $Q$ is a $k$-linear combination of paths in $Q$, each with the same head and tail. 

\item Given finitely many relations $R_1,\dots,R_n$, let $R$ be the closure in $k\lbl Q\rbl$ of the two-sided ideal $kQ R_1 kQ + \cdots + kQ R_n kQ$.
We call $(Q,R)$ a \emph{quiver with relations}, and $k\lbl Q\rbl/R$ its complete path algebra.

\item  A \emph{quiver with potential} $($\emph{QP} for short$)$ is a pair $(Q, W)$, where $W$ is a $k$-linear combination of cyclic paths in $Q$. 

\item For any $n \geq 1$, set $W_n$ to be the $nth$ homogeneous component of $W$ with respect to the path length.  
\item For each $a \in Q_1$ and cyclic path $a_1 \dots a_d$ in $Q$, define the \emph{cyclic derivative} as
\begin{equation*}
\partial_{a}\left(a_{1} \ldots a_{d}\right)=\sum_{i=1}^{d} \delta_{a, a_{i}} a_{i+1} \ldots a_{d} a_{1} \ldots a_{i-1}
\end{equation*}
(where $\delta_{a, a_{i}}$ is the Kronecker delta), and then extend $\partial_{a}$ by linearity.

\item The \emph{Jacobi ideal} $J(W)$ is the closure of the two-sided ideal of $k\lbl Q\rbl$ generated by $\partial_a W$ for all $a\in Q_1$.

\item The \emph{Jacobi algebra} $\Jac(Q,W)$ or $\Jac(A,W)$ is the quotient $k\lbl Q\rbl/J(W)$. We write $\Jac(W)$ when the quiver $Q$ is obvious.
\item For every potential $W$, write $\partial W$ for the $k$-span of $\partial_a W$ for all $a \in Q_1$.
\item We call a QP $(Q,W)$ \emph{reduced} if $W_2=0$. It is called \emph{trivial} if $W_n=0$ for all $n \geq 3$, and further $\partial W =A$.
\begin{comment}
 \item We define the \emph{trivial and reduced arrow spans} of a QP $(Q, W)$ as
\begin{equation*}
    A_{triv} = \partial W_2, \quad A_{red} = A/\partial W_2.
\end{equation*}   
\end{comment}
\end{enumerate}
\end{definition}

\begin{example}
Consider the one loop quiver $Q$ with potential $W=a^2$,
\[
\begin{tikzpicture}[bend angle=15, looseness=1]
\node (d) at (2,0) [vertex] {};
 \draw[->]  (d) edge [in=55,out=120,loop,looseness=12] node[above] {$\scriptstyle a$}  (d);
\end{tikzpicture}
\]
The complete path algebra $k\lbl Q\rbl$ is $k\lal a\ral$. In this case, $\Jac(Q,W) \cong k\lal a \ral/(a) \cong k$ since $\partial_{a}(a^2)=2a$. Since $W_n=0$ for all $n \geq 3$ and $\partial W = ka=A_{Q}$, this $QP$ $(Q,W)$ is trivial. 
\end{example}

\begin{notation}
For $\scrA \colonequals k\lbl  Q \rbl $, let $\{\scrA,\scrA\}$ denote the commutator subspace of $\scrA$, consisting of finite sums
\[
\sum_{i=1}^n k_i(p_iq_i-q_ip_i),
\]
with $p_i,q_i\in\scrA$ and $k_i\in k$.
Write $\llcurve \scrA,\scrA\rrcurve$ for the closure of the commutator vector space $\{ \scrA,\scrA\}$. 
\end{notation}

\begin{definition}
Two potentials $W$ and $W^{\prime}$ are \emph{cyclically equivalent} (written $W \sim W^{\prime}$) if $W - W^{\prime} \in \llcurve \scrA,\scrA\rrcurve$. We write $W \stackrel{i}{\sim} W'$ if $W \sim W'$ and $W-W' \in \mathfrak{m}^i$.
\begin{comment}
Given an ideal $\mathfrak{n}$ in $k\lbl Q\rbl$, write $W \stackbin[\mathfrak{n}]{i}{\sim} W'$ if $W \sim W'$ and $W-W' \in \mathfrak{n}^i$. When furthermore $\mathfrak{n}$ is $\mathfrak{m}_Q$, we write $W \stackrel{i}{\sim} W'$ for short.   
\end{comment}

\end{definition}

\begin{remark}\label{024}
Note that if two potentials $W$ and $W^{\prime}$ are cyclically equivalent, then $\partial_{a}W = \partial_{a}W^{\prime}$ for all $a \in Q_1$, and hence $\Jac(Q,W) = \Jac(Q,W^{\prime})$ \cite[3.3]{DWZ1}. Since we aim to classify the Jacobi algebras up to isomorphism, we always consider the potentials up to cyclic equivalence. 
\end{remark}

Given an algebra homomorphism $\varphi \colon k\lbl Q\rbl \rightarrow k\lbl Q^{\prime}\rbl$ such that $\varphi|_k= id$ which sends $\mathfrak{m}_Q$ to $\mathfrak{m}_{Q^{\prime}}$, write $\left.\varphi\right|_{A_Q}=\left(\varphi_1, \varphi_2 \right)$ where $\varphi_1 \colon A_Q \rightarrow A_{Q'}$ and $\varphi_2 \colon A_Q \rightarrow \mathfrak{m}_{Q'}^2$ are $k$-module homomorphisms.

\begin{prop} \cite[2.4]{DWZ1} \label{thm28}
Given two quivers $Q$ and $Q'$, any pair $(\varphi_1,\varphi_2)$ of k-module homomorphisms $\varphi_1 \colon A_Q \rightarrow A_{Q'}$ and $\varphi_2 \colon A_Q \rightarrow \mathfrak{m}_{Q'}^2$ gives rise to a unique homomorphism of algebras $\varphi \colon k\lbl Q\rbl \rightarrow k\lbl Q^{\prime}\rbl$ such that $\varphi|_k= id$ and $\left.\varphi\right|_{A_Q}=\left(\varphi_1, \varphi_2\right)$.  Furthermore, $\varphi$ is an isomorphism if and only if $\varphi_1$ is a $k$-module isomorphism $A_Q \rightarrow A_{Q'}$.
\end{prop}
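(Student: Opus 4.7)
The plan is to first establish the existence and uniqueness of $\varphi$ from the pair $(\varphi_1,\varphi_2)$, and then prove the isomorphism criterion by comparing $\varphi_1$ with the map $\mathfrak{m}_Q/\mathfrak{m}_Q^2 \to \mathfrak{m}_{Q'}/\mathfrak{m}_{Q'}^2$ induced by $\varphi$.

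For the existence and uniqueness part, I would first use the universal property of the ordinary (non-completed) path algebra $kQ$. Once the vertex idempotents are fixed (which is forced by $\varphi|_k = \mathrm{id}$ under the standard convention of absorbing $kQ_0$ into the ground ring), any $k$-module homomorphism $A_Q \to \mathfrak{m}_{Q'}$ extends uniquely to a $k$-algebra homomorphism $kQ \to k\lbl Q'\rbl$. I would therefore set $\varphi|_{A_Q} := \varphi_1 + \varphi_2 \colon A_Q \to \mathfrak{m}_{Q'}$ and take the unique resulting map on $kQ$. To extend to the completion, the key observation is that $\varphi(\mathfrak{m}_Q) \subseteq \mathfrak{m}_{Q'}$ by construction, and hence $\varphi(\mathfrak{m}_Q^N) \subseteq \mathfrak{m}_{Q'}^N$ for all $N \geq 1$. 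Thus $\varphi$ is continuous with respect to the $\mathfrak{m}$-adic topologies and extends uniquely to a continuous algebra homomorphism $k\lbl Q\rbl \to k\lbl Q'\rbl$; uniqueness follows because any such extension is determined by its values on $k$ and $A_Q$.

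For the isomorphism criterion, the easy direction is that if $\varphi$ is an isomorphism, then it induces an isomorphism of associated graded rings with respect to the $\mathfrak{m}$-adic filtration, and in particular an isomorphism $\mathfrak{m}_Q/\mathfrak{m}_Q^2 \to \mathfrak{m}_{Q'}/\mathfrak{m}_{Q'}^2$. Since this induced map agrees with $\varphi_1$ under the identifications $A_Q \cong \mathfrak{m}_Q/\mathfrak{m}_Q^2$ and $A_{Q'} \cong \mathfrak{m}_{Q'}/\mathfrak{m}_{Q'}^2$, the map $\varphi_1$ is a $k$-module isomorphism.

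The harder direction, which I expect to be the main obstacle, is constructing an inverse when $\varphi_1$ is a $k$-module isomorphism. My approach is to build a two-sided inverse $\psi \colon k\lbl Q'\rbl \to k\lbl Q\rbl$ by successive approximation. I would set $\psi_1 \colonequals \varphi_1^{-1}$, which together with any choice of $\psi_2 \colon A_{Q'} \to \mathfrak{m}_Q^2$ defines an algebra homomorphism $\psi$ by the first part. The composition $\psi \circ \varphi$ is then an endomorphism of $k\lbl Q\rbl$ whose induced map on $\mathfrak{m}_Q/\mathfrak{m}_Q^2$ is the identity. Writing $\psi\varphi = \mathrm{id} + \eta$ where $\eta(\mathfrak{m}_Q) \subseteq \mathfrak{m}_Q^2$, one shows inductively that $\eta^n(\mathfrak{m}_Q) \subseteq \mathfrak{m}_Q^{n+1}$, so that $\mathrm{id} + \eta$ has a formal inverse $\sum_{n \geq 0}(-\eta)^n$ that converges in the $\mathfrak{m}_Q$-adic topology. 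Alternatively, one can choose $\psi_2$ iteratively, correcting the error at each graded piece: assuming that $\psi\varphi$ agrees with the identity modulo $\mathfrak{m}_Q^N$, one adjusts $\psi_2$ (or composes with an automorphism of the form $\mathrm{id}+\text{higher order}$) to improve agreement to $\mathfrak{m}_Q^{N+1}$, and passes to the limit. A symmetric argument produces a right inverse, and these must coincide, giving $\varphi^{-1}$. The main technical care is to check that all series arising converge in the completion, which reduces to the filtration estimates above.
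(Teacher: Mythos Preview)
Your argument is correct, but note that the paper does not actually supply a proof of this proposition: it is quoted verbatim from \cite[2.4]{DWZ1} and used as a black box, so there is no in-paper proof to compare against. Your sketch is essentially the standard one (and matches the argument in DWZ); the only place to tighten is the Neumann-series step. Writing $\psi\varphi = \mathrm{id} + \eta$ with $\eta$ a $k$-linear (not multiplicative) map, the series $\sum_{n\ge 0}(-\eta)^n$ converges to a $k$-linear inverse, but you should then observe that a bijective algebra homomorphism automatically has an algebra-homomorphism inverse, rather than asserting directly that the series is multiplicative. Alternatively, your second route---showing that $\psi\varphi$ induces the identity on each finite quotient $k\lbl Q\rbl/\mathfrak{m}_Q^N$ and passing to the inverse limit---avoids this wrinkle entirely and is cleaner.
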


From the above result, whenever we construct an automorphism $\varphi \colon k\lbl Q\rbl \rightarrow k\lbl Q\rbl$ in \S \ref{mono} and \S \ref{section:A3}, it will always be the case that $\varphi|_k= id$, so we will only describe $\varphi|_{A_Q}$.

\begin{definition}
An algebra homomorphism $\varphi \colon k\lbl Q\rbl \rightarrow k\lbl Q\rbl$ is called a \emph{unitriangular automorphism} if $\varphi|_k= id$ and $\varphi_1=id$. For $i \geq 1$, we say that $\varphi$ has \emph{depth $i$} provided that $\varphi_2(a) \in \mathfrak{m}_Q^{i+1}$ for all $a \in Q_1$.
\end{definition}

\begin{definition}\label{211}
Let $f$ and $g$ be potentials on a quiver $Q$.
\begin{enumerate}
\item We say that $f$ is \emph{isomorphic} to $g$ (written $f \cong g$) if their Jacobi algebras $\Jac(f) \cong \Jac(g)$ as algebras.
%\item We say that $f$ is \emph{derived equivalent} to $g$ (written $f \simeq g$) if $\mathrm{D}^{\mathrm{b}}(\Jac(f)) \cong \mathrm{D}^{\mathrm{b}}(\Jac(g))$ as triangulated categories.

\item If there exists an algebra isomorphism $\varphi\colon k\lbl Q\rbl \rightarrow k\lbl Q \rbl$ such that $\varphi|_k= id$ and $\varphi(f)=g$, then
we write $\varphi \colon f \mapsto g$ and say that $f$ is \emph{equivalent} to $g$.

\item If there exists an algebra isomorphism $\varphi\colon k\lbl Q\rbl \rightarrow k\lbl Q \rbl$ such that $\varphi|_k= id$ and $\varphi(f) \sim g$, then we write $\varphi \colon f \rightsquigarrow g$ and say that $f$ is \emph{right-equivalent} to $g$.

\item For $i \geq 1$, if there exists a unitriangular $\varphi\colon k\lbl Q\rbl \rightarrow k\lbl Q \rbl$ such that $\varphi$ has depth greater than or equal to $i$, and further $\varphi(f) \stackrel{i+1}{\sim} g$, then we write $\varphi \colon f \stackrel{i}{\rightsquigarrow} g$ and say that $f$ is \emph{path degree $i$ right-equivalent} to $g$.  
\end{enumerate}
\end{definition}

We follow the definition of right-equivalence in \cite[4.2]{DWZ1}. Moreover, from \cite[p12]{DWZ1}, $f \rightsquigarrow g$ induces $f \cong g$, and further a finite sequence of right-equivalences is still a right-equivalence. By \ref{thm28}, $f \stackrel{i}{\rightsquigarrow} g$ induces $f \rightsquigarrow g$.
Thus, together with the above definition, we obtain
\[
f \sim g \quad \text{or} \quad f \mapsto g \quad \text{or} \quad 
f \stackrel{i}{\rightsquigarrow} g \;\; \Longrightarrow \;\; f \rightsquigarrow g \;\; \Longrightarrow \;\; f \cong g .
\]

The Jacobi algebra isomorphism $\cong$ is the equivalence relation that we aim to classify the potentials up to. 
The strategy is to start with a potential $f$, then transform it by a sequence of automorphisms which chase terms into higher and higher degrees. Composing this sequence of automorphisms then gives a single automorphism which takes $f$ to the desired form (see \S\ref{mono} and \S\ref{A3_norm}). 

The subtle point is that at each stage, the automorphism only gives the desired potential up to cyclic equivalence (e.g. $\rightsquigarrow$, $\stackrel{i}{\rightsquigarrow}$). 
Given an infinite sequence of path degree $i$ right-equivalences $\varphi_i \colon f_i \stackrel{i}{\rightsquigarrow} f_{i+1}$ for $i \geq 1$, then the following asserts that $\lim f_i$ exists, and further there exists a right-equivalence $F \colon f_1 \rightsquigarrow \lim f_i$. 

\begin{theorem} \cite[2.9]{BW2} \label{31}
Let $f$ be a potential, and set $f_1=f$. Suppose  that there exist elements $f_2$, $f_3$, \dots and automorphisms $\varphi_1$, $\varphi_2$, \dots, such that
\begin{enumerate}
    \item Every $\varphi_i$ is unitriangular of depth of $\geq i$, and
    \item $\varphi_i(f_i) \stackrel{i+1}{\sim} f_{i+1}$, for all $i \geq 1$.
\end{enumerate}
Then $\lim f_i$ exists, and there exists an automorphism $F$ such that $F(f) \sim \lim f_i$.
\end{theorem}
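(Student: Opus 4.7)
The plan is to construct $F$ as the limit of the compositions $\Phi_i \colonequals \varphi_i \circ \varphi_{i-1} \circ \cdots \circ \varphi_1$, and then observe that since cyclic equivalence is governed by the closed subspace $\llcurve \scrA, \scrA \rrcurve$, the relations $\Phi_i(f) \sim f_{i+1}$ pass to the limit. I would execute this in three steps, with all convergence understood in the $\mathfrak{m}$-adic topology on $k\lbl Q \rbl$.

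First, I would record the filtration estimate that will be used repeatedly: if $\psi$ is unitriangular of depth $\geq j$, then $\psi(x) - x \in \mathfrak{m}^{k+j}$ for every $x \in \mathfrak{m}^k$. This follows by expanding the arrows in $x$ and noting that each replacement $a \mapsto a + \psi_2(a)$ with $\psi_2(a) \in \mathfrak{m}^{j+1}$ contributes an element of $\mathfrak{m}^{k+j}$. Combined with $\varphi_i(f_i) \stackrel{i+1}{\sim} f_{i+1}$ from hypothesis (2), this gives $f_i - f_{i+1} \in \mathfrak{m}^{i+1}$, so $\{f_i\}$ is Cauchy and the limit $f_\infty \colonequals \lim f_i$ exists.

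Second, the same estimate applied to each arrow $a \in Q_1$ gives
\[
\Phi_i(a) - \Phi_{i-1}(a) = \varphi_i(\Phi_{i-1}(a)) - \Phi_{i-1}(a) \in \mathfrak{m}^{i+1},
\]
so $\{\Phi_i(a)\}$ is Cauchy with limit $F(a)$. Since each $\Phi_i$ is unitriangular, the induced $k$-linear map $A_Q \to A_Q$ is the identity, and by \ref{thm28} the data $(F|_k = \mathrm{id}, F|_{A_Q})$ extend uniquely to an algebra automorphism $F$ of $k\lbl Q \rbl$; an analogous telescoping shows $\Phi_i(f) \to F(f)$.

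Third, I would prove by induction on $i$ that $\Phi_i(f) \sim f_{i+1}$: the base case $\varphi_1(f) \sim f_2$ is hypothesis (2), and since each $\varphi_j$ is a continuous algebra homomorphism and therefore preserves $\llcurve \scrA, \scrA \rrcurve$, applying $\varphi_{i+1}$ to $\Phi_i(f) \sim f_{i+1}$ yields $\Phi_{i+1}(f) \sim \varphi_{i+1}(f_{i+1}) \sim f_{i+2}$. Since $\llcurve \scrA, \scrA \rrcurve$ is closed by definition, the difference $F(f) - f_\infty = \lim_i (\Phi_i(f) - f_{i+1})$ also lies in $\llcurve \scrA, \scrA \rrcurve$, giving $F(f) \sim f_\infty$ as required. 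The main obstacle is the careful bookkeeping of the filtration — one must consistently track the shift by $\mathfrak{m}^{j+1}$ rather than $\mathfrak{m}^{j}$ so that all the relevant sequences are Cauchy at the correct rates — together with the observation that cyclic equivalence is preserved under $\mathfrak{m}$-adic limits, which is precisely why the relation is defined using the closure $\llcurve \scrA, \scrA \rrcurve$ rather than the bare commutator $\{\scrA, \scrA\}$.
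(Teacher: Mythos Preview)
The paper does not prove this statement: it is cited verbatim from \cite[2.9]{BW2}, so there is no in-paper proof to compare against. Your proposal is a correct and complete argument for the result, and it is the natural one: build $F$ as the $\mathfrak{m}$-adic limit of the finite compositions $\Phi_i$, use the depth hypothesis to obtain the Cauchy estimates both for $\{f_i\}$ and for $\{\Phi_i(a)\}$, and then pass the relation $\Phi_i(f) \sim f_{i+1}$ to the limit using closedness of $\llcurve \scrA, \scrA \rrcurve$. Two small points worth tightening if you write this up in full: first, the filtration estimate $\psi(x)-x \in \mathfrak{m}^{k+j}$ should be applied with $k \geq 1$ (potentials lie in $\mathfrak{m}$), which is what makes $f_i - f_{i+1} \in \mathfrak{m}^{i+1}$ rather than merely $\mathfrak{m}^i$; second, the convergence $\Phi_i(f) \to F(f)$ requires a word about why the estimate on arrows propagates uniformly through the possibly infinite sum defining $f$, which follows since both $\Phi_i$ and $F$ are filtration-preserving.
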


\section{Geometric Preliminaries}\label{geometry}
In this section we recall basic concepts, including NCCRs and contraction algebras, and then summarise some facts about $cA_n$ singularities and their NCCRs from \cite{IW2}.

Throughout the remainder of the paper, we reserve the notation $\scrR$ for complete local $\C$-algebras of the following form.
\begin{definition}
A complete local $\mathbb{C}$-algebra $\scrR$ is called a \emph{compound Du Val (cDV) singularity} if
\begin{equation*}
\scrR \cong \frac{\mathbb{C} \lal u, v, x, t \ral}{f+t g}
\end{equation*}
where $f \in \mathbb{C} \lal u, v, x \ral$ defines a Du Val, or equivalently Kleinian, surface singularity and $g \in \mathbb{C} \lal u, v, x, t \ral$ is arbitrary.
\end{definition}

\begin{definition}
A projective birational morphism $\uppi \colon \scrX \rightarrow \Spec \scrR$ is called a \emph{crepant partial resolution} if $\omega_{\scrX} \cong \uppi^{*}\omega_{\scrR}$.  When $\scrX$ is furthermore smooth, we call $\uppi$ a \emph{crepant resolution}. 
If $\scrR$ is isolated, crepant partial resolutions and crepant resolutions are equivalently called \emph{flopping contractions} and \emph{smooth flopping contractions}, respectively $($see e.g. \cite[\S 1]{R}$)$.
\end{definition}

\subsection{NCCRs and Contraction Algebras}
In this subsection, we will introduce NCCRs and contraction algebras of cDV singularities and then recall some associated theorems.

We will write $\CM \, \scrR$ for the category of maximal Cohen--Macaulay $\scrR$-modules and $\underline{\CM} \, \scrR$ for the stable category of $\CM \, \scrR$.
\begin{definition} \label{thm32}
A \emph{noncommutative crepant resolution (NCCR)} of $\scrR$ is a ring of the form $\Lambda(M) \colonequals \End_{\scrR}(M)$ for some finitely generated reflexive $\scrR$-module M, such that $\Lambda(M) \in \CM \, \scrR$ and has finite global dimension.
\end{definition}
Since $\scrR$ is cDV, there is a bijection between NCCRs of $\scrR$ and crepant resolutions of
$\Spec\scrR$; see \cite{W1}:
\[
\left\{\, M\in \CM\,\scrR \ \middle|\ \End_{\scrR}(M)\ \text{is an NCCR}\,\right\}
\ \longleftrightarrow\
\left\{\,\uppi\colon \scrX \to \Spec\scrR \ \text{a crepant resolution}\,\right\}.
\]
\begin{comment}
Since $\scrR$ is cDV, there is a bijection \cite{W1},
\begin{equation*}
  \left\{  M \in \mathrm{CM} \, \scrR \textnormal{ and satisfies \ref{thm32}}  \right\}  \longleftrightarrow\left\{\text { crepant resolutions } \uppi: \scrX \rightarrow \Spec  \scrR \right\}.
\end{equation*}    
\end{comment}
The passage from left to right takes a given $M$ and associates a certain moduli space of representations of $\End_{\scrR}(M)$. In particular, passing from a crepant resolution to the corresponding NCCR retains the geometric
information relevant for our purposes.

We next explain the passage from right to left in detail. 
Let $\uppi \colon \scrX \rightarrow \Spec \scrR$ be a crepant resolution with exceptional curves $\Curve_1,\dots, \Curve_n$. For any $1 \leq i \leq n$, there is a unique bundle $\mathcal{M}_i$ on $\scrX$ \cite[3.5.4]{V1},
and 
\begin{equation*}
    \mathcal{M} \colonequals \scrO_{\scrX} \oplus \bigoplus_{i=1}^n \mathcal{M}_i
\end{equation*}
is a tilting bundle on $\scrX$ \cite[3.5.5]{V1}. Pushing forward via $\uppi$ gives $\uppi_{*}(\scrO_{\scrX})= \scrR$ and $\uppi_{*}(\mathcal{M}_i)= M_i$ for some $\scrR$-module $M_i$. Set $M= \scrR \oplus \bigoplus_{i=1}^n M_i$. Then $M \in \CM \, \scrR$ and $\End_{\scrR}(M)$ is an NCCR. Thus, there is an equivalent definition of NCCR associated to the crepant resolution $\uppi$,
\begin{equation*}
    \Lambda(\uppi) \colonequals \End_{\scrX}(\mathcal{M})  \cong \End_{\scrR}\left(M\right)=\Lambda(M),
\end{equation*}
where the isomorphism follows from the crepancy of $\uppi$; see \cite[3.2.10]{V1}.

The contraction algebra can be defined as a quotient of the NCCR as follows.
\begin{definition}
Define the \emph{contraction algebra} associated to a crepant resolution $\uppi$ to be the stable endomorphism algebra
\[
\Lambda_{\mathrm{con}}(\uppi)\colonequals \underline{\End}_{\scrR}(M)
=\End_{\scrR}(M)/\langle \scrR\rangle,
\]
where $\langle \scrR \rangle$ denotes the two-sided ideal consisting of all morphisms which factor through $\add \scrR$. We also write $\Lambda_{\mathrm{con}}(M)$ for $\Lambda_{\mathrm{con}}(\uppi)$.
\end{definition}

The difference between flopping contractions and divisor-to-curve contractions can be detected by the finite dimensionality (or otherwise) of the contraction algebra as follows.
\begin{theorem}\label{352}
Suppose that $\uppi \colon \scrX \rightarrow \Spec \scrR$ is a crepant partial resolution, and write $Z$ for the locus in $\Spec \scrR$ for which $\uppi$ is not an isomorphism. Then $\operatorname{Supp}_{\scrR}\Lambda_{\mathrm{con}}(\uppi)= Z$, and 
\begin{equation*}
    \uppi  \text{ is a flopping contraction}  \iff  \operatorname{dim}_{\mathbb{C}}\Lambda_{\mathrm{con}}(\uppi) < \infty.
\end{equation*}
If moreover $\scrX$ is smooth, then these conditions are equivalent to $\scrR$ being an isolated singularity.
\end{theorem} 

\begin{proof}
The equivalences preceding the final assertion are established in \cite[4.8]{DW1}, so we only justify the case where $\scrX$ is smooth. 
If $\scrX$ is smooth, then the locus over which $\uppi$ fails to be an isomorphism coincides with the singular locus of $\scrR$, that is, $Z= \Sing \scrR$.  Hence $\Supp_{\scrR}\Lambda_{\mathrm{con}}(\uppi)= \Sing \scrR$. 

Since $\scrR$ is complete local and $\Lambda_{\mathrm{con}}(\uppi)$ is finitely generated over $\scrR$, it follows from \cite[2.13, 2.15, 2.17]{E3} that
\[
\operatorname{dim}_{\mathbb{C}}\Lambda_{\mathrm{con}}(\uppi) < \infty \iff \Supp_{\scrR}\Lambda_{\mathrm{con}}(\uppi)= V(\m),
\]
where $\m$ denotes the maximal ideal of $\scrR$ and $V(\m)\subseteq \Spec \scrR$ is the corresponding Zariski closed point.
Therefore, $\dim_{\mathbb{C}}\Lambda_{\mathrm{con}}(\uppi) < \infty  \iff \Sing\scrR=V(\m)$, which is equivalent to $\scrR$ having an isolated singularity.
\end{proof}

Donovan and Wemyss conjectured that the contraction algebra distinguishes the analytic type of the flop \cite[1.4]{DW2}, which has been proved as follows. 
\begin{theorem} \cite[A.2]{JKM}\label{037}
Let $\uppi_i \colon \scrX_i \rightarrow \Spec \scrR_i$ be smooth flopping contractions of isolated \textnormal{cDV} $\scrR_i$ for $i=1,2$. Then $\Lambda_{\mathrm{con}}(\uppi_1)$ and $\Lambda_{\mathrm{con}}(\uppi_2)$ are derived equivalent if and only if the singularities $\scrR_1$ and $\scrR_2$ are isomorphic.  
\end{theorem}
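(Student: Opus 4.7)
The plan is to treat the two directions separately, since they differ enormously in depth. The forward implication is essentially structural, while the reverse is the Donovan--Wemyss conjecture proper.

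First I would handle $\scrR_1 \cong \scrR_2 \Rightarrow \Lambda_{\mathrm{con}}(\uppi_1) \simeq_{\mathrm{der}} \Lambda_{\mathrm{con}}(\uppi_2)$. Setting $\scrR := \scrR_1 \cong \scrR_2$, both $\uppi_1, \uppi_2$ are smooth flopping contractions of the same isolated cDV, so by Kawamata's theorem they are connected by a finite sequence of flops. At the NCCR level, each flop is realised by Iyama--Wemyss mutation of the tilting summand decomposition of $M = \scrR \oplus \bigoplus M_i$; mutation replaces $M_i$ by a new summand $M_i^{*}$ while preserving the distinguished summand $\scrR$, and the resulting bimodule is a two-sided tilting complex. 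Because the ideal $\langle \scrR \rangle$ is characterised intrinsically as morphisms factoring through the projective summand corresponding to $\scrR$, this tilting equivalence descends to the stable quotients, giving a derived equivalence of the contraction algebras. Iterating across the sequence of flops yields the claim.

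For the reverse direction I would follow the derived Auslander--Iyama strategy. The point is that $\Lambda_{\mathrm{con}}(\uppi_i) = \underline{\End}_{\scrR_i}(M_i)$ is not just an algebra but the zeroth cohomology of a canonical pseudocompact dg (or minimal $A_\infty$) algebra $\mathbf{\Lambda}_i$ arising from the singularity category $\underline{\CM}\,\scrR_i$; this enhancement encodes all higher Massey products and carries a Calabi--Yau structure of the appropriate degree coming from the Serre functor on $\underline{\CM}\,\scrR_i$. The key step is a rigidity (or ``intrinsic formality'') theorem: for algebras of this type, any derived equivalence of the underlying algebras $\Lambda_{\mathrm{con}}(\uppi_1) \simeq_{\mathrm{der}} \Lambda_{\mathrm{con}}(\uppi_2)$ lifts uniquely to a quasi-isomorphism $\mathbf{\Lambda}_1 \simeq \mathbf{\Lambda}_2$ of enhanced objects. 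Via an Amiot/Keller-type reconstruction this upgrades to a triangle equivalence $\underline{\CM}\,\scrR_1 \simeq \underline{\CM}\,\scrR_2$. Finally, one recovers $\scrR_1 \cong \scrR_2$ from their equivalent singularity categories, using that an isolated Gorenstein complete local $\mathbb{C}$-algebra is determined by $\underline{\CM}$ together with its canonical structure (e.g.\ via the centre of the category or a Bondal--Orlov-style reconstruction in the local Gorenstein setting).

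The hardest step, and where essentially all the difficulty is concentrated, is the \emph{lifting/rigidity} statement: that a derived equivalence of the bare finite-dimensional algebras must come from an $A_\infty$-equivalence of the canonical enhancements. This is not true for general dg algebras, and genuinely exploits the finite-dimensionality, symmetry, and Calabi--Yau properties specific to contraction algebras. Everything else in the plan (mutation, Amiot reconstruction, singularity-category reconstruction) is essentially formal once this rigidity is in hand, which is why the theorem is best imported from \cite{JKM} rather than reproved here.
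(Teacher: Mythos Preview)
The paper does not give a proof of this statement at all: it is quoted verbatim as \cite[A.2]{JKM} and used as a black box (see also your own final sentence). So there is nothing in the paper to compare your argument against, and your concluding recommendation---import the result rather than reprove it---is exactly what the paper does.

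That said, your sketch of the two directions is broadly accurate as a summary of the literature. The forward direction via iterated flops and Iyama--Wemyss mutation descending to the contraction algebras is standard (cf.\ the paper's Theorem~\ref{thm:august}, due to August). For the reverse direction your outline of the Jasso--Keller--Muro strategy is correct in spirit: the contraction algebra carries a canonical $A_\infty$-enhancement coming from $\underline{\CM}\,\scrR$, and the core of \cite{JKM} is precisely the rigidity/uniqueness statement you isolate, which allows a derived equivalence of the bare algebras to be upgraded to a quasi-equivalence of enhancements. One small correction: the final reconstruction step $\underline{\CM}\,\scrR_1 \simeq \underline{\CM}\,\scrR_2 \Rightarrow \scrR_1 \cong \scrR_2$ is not a Bondal--Orlov-style argument; it uses the earlier Hua--Keller result that an isolated hypersurface singularity is determined by the dg singularity category, and this genuinely requires the dg (not merely triangulated) structure.
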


The following result connects the derived equivalence of contraction algebras with the flops in geometry.  
\begin{theorem}\cite[5.2.2]{A2}\label{thm:august}
Given a flopping contraction $\uppi \colon \scrX \to \Spec\scrR$ where $\scrR$ is isolated \textnormal{cDV}, then the basic algebras derived equivalent to $\Lambda_{\mathrm{con}}(\uppi)$ are precisely those $\Lambda_{\mathrm{con}}(\uppi')$ where $\uppi'$ is obtained by a sequence of iterated flops from $\uppi$. In particular, there are finitely many such algebras.
\end{theorem}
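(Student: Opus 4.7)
The plan is to transfer the problem from contraction algebras up to the enveloping NCCRs, where mutation theory parametrises derived equivalences, and then descend back. Write $\Lambda \colonequals \Lambda(\uppi)$, and recall that $\Lambda_{\mathrm{con}}(\uppi) \cong \Lambda / \langle \scrR \rangle$. The key point throughout is that the summand $\scrR$ comes from $\scrO_{\scrX}$ and plays a distinguished role: it is never mutated when performing a flop, so all mutations stay within the class of NCCRs of the same $\scrR$.

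For the forward direction, if $\uppi'$ is obtained from $\uppi$ by a single flop at some exceptional curve $\Curve_i$, then by Iyama--Wemyss mutation theory this geometric operation is realised by mutating the NCCR at the corresponding non-$\scrR$ summand. The mutation produces a tilting $\Lambda$-module with endomorphism algebra $\Lambda(\uppi')$, hence a derived equivalence of the NCCRs. Since the $\scrR$-summand is untouched, this equivalence preserves the ideal $\langle \scrR \rangle$ and descends to a derived equivalence $\Lambda_{\mathrm{con}}(\uppi) \simeq \Lambda_{\mathrm{con}}(\uppi')$. Iterating handles arbitrary sequences of flops.

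For the converse, let $\Gamma$ be any basic algebra derived equivalent to $\Lambda_{\mathrm{con}}(\uppi)$. By \ref{037} such a $\Gamma$ can only possibly be realised as the contraction algebra of a flopping contraction of the same $\Spec\scrR$, so the task is to produce an actual $\uppi'$. The heart of the argument is to lift the given equivalence to one between $\Lambda$ and some basic NCCR $\tilde\Gamma$, so that $\tilde\Gamma/\langle\scrR\rangle \cong \Gamma$. Once this lift is in hand, the classification of tilting modules over NCCRs of isolated $\mathrm{cDV}$ singularities (again Iyama--Wemyss) forces $\tilde\Gamma$ to be of the form $\Lambda(\uppi')$ for some crepant resolution $\uppi'$ obtained by iterated mutation of $\Lambda$, equivalently by iterated flops from $\uppi$, and hence $\Gamma \cong \Lambda_{\mathrm{con}}(\uppi')$.

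The main obstacle is precisely this lifting step: a tilting complex over the stable quotient $\Lambda/\langle\scrR\rangle$ need not a priori extend to one over $\Lambda$, and one has to use silting-theoretic control specific to isolated cDV NCCRs (which are Gorenstein with a rigid tilting graph generated under mutation) to construct the extension while keeping the $\scrR$-summand fixed. Finiteness of the derived equivalence class is then immediate once the bijection with iterated flops is in place, since an isolated $\mathrm{cDV}$ singularity admits only finitely many crepant resolutions by the finiteness of minimal models in the MMP, and each contributes a single basic algebra on the contraction side.
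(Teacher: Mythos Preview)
The paper does not prove this theorem; it is quoted as a black-box result from August \cite[5.2.2]{A2}. So there is no ``paper's own proof'' to compare against here, only the original argument in \cite{A2}.

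On the substance of your sketch: the forward direction is fine and is indeed how one argues (flop $\leadsto$ mutation of the NCCR fixing the $\scrR$-summand $\leadsto$ derived equivalence of contraction algebras). The converse, however, is not organised correctly. Your appeal to \ref{037} is circular: that result says that \emph{if} two contraction algebras are derived equivalent then the underlying singularities agree; it does \emph{not} tell you that an arbitrary basic algebra $\Gamma$ derived equivalent to $\Lambda_{\mathrm{con}}(\uppi)$ is itself a contraction algebra of anything. That is precisely what must be proved. Your proposed lifting strategy (extend a tilting complex from $\Lambda/\langle\scrR\rangle$ to $\Lambda$) is also not how August proceeds, and you correctly flag it as the main obstacle without resolving it.

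August's actual argument in \cite{A2} stays entirely on the contraction-algebra side. The key input is that $\Lambda_{\mathrm{con}}(\uppi)$ is \emph{silting-discrete}, and that iterated silting mutation of the contraction algebra is controlled by (and in bijection with) iterated mutation of the maximal modifying module $M$, which in turn corresponds to iterated flops. Silting-discreteness then forces every basic tilting complex, hence every derived-equivalent basic algebra, to be reachable by a finite mutation sequence from $\Lambda_{\mathrm{con}}(\uppi)$ itself. No lift to the NCCR is needed for the converse; the finiteness of minimal models enters only to verify silting-discreteness and the finiteness claim.
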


\subsection{$cA_n$ singularities}
This subsection summarises several facts about $cA_n$ singularities and their NCCRs in \cite{IW1}.

Every $cA_{t-1}$ singularity $\scrR$ has the following form 
\begin{equation*}
    \scrR \cong \frac{\mathbb{C} \lal u, v, x, y \ral}{uv-g_0g_1 \dots g_{n}},
\end{equation*}
where $t$ is the order of the power series $g_0g_1 \dots g_{n}$ and each $g_i$ is a prime element of $\mathbb{C}\lal x,y \ral$. Moreover, $\scrR$ admits a crepant resolution if and only if each $g_i$ has a linear term by e.g. \cite[5.1]{IW1}. 

In this subsection we restrict to those $\scrR$ that admit a crepant resolution. In this case each $g_i$ has a linear term, and hence the order $t$ equals the number of factors, so $t=n+1$ and $\scrR$ is a $cA_n$ singularity.
Consider the $\CM$ $\scrR$-module 
\[
M\colonequals \scrR \oplus (u,g_{0}) \oplus (u,g_{0}g_{1}) \oplus \ldots \oplus (u,\prod_{i=0}^{n-1} g_{i}),
\]
and let $\uppi\colon \scrX \to\Spec\scrR$ be the associated crepant resolution in \ref{36} below.
We fix $\scrR$, $M$ and $\uppi$ throughout this subsection. 
\begin{notation}\label{notation:cAn}
We adopt the following notation.
\begin{enumerate}\label{notation: cAn}
\item Consider the symmetric group $\mathfrak{S}_{n+1}$. For any $\sigma \in \mathfrak{S}_{n+1}$, set 
\begin{equation*}
M^{\sigma} \colonequals \scrR \oplus (u,g_{\sigma(0)}) \oplus (u,g_{\sigma(0)}g_{\sigma(1)}) \oplus \ldots \oplus (u,\prod_{i=0}^{n-1} g_{\sigma(i)}) \in \CM \, \scrR.      
\end{equation*}
\item Write $\uppi^{\sigma}\colon \scrX^{\sigma} \rightarrow \Spec\scrR$ for the associated crepant resolution of $M^{\sigma}$ in \ref{36} below.
\item Now let $k \geq 1$ and consider the $k$-tuple $\mathbf{r}= (r_1,\dots , r_k)$ with each $1 \leq r_i \leq n$. Set
\begin{equation*}
    \sigma(\mathbf{r}) \colonequals (r_k,r_k+1) \cdots (r_2,r_2+1)(r_1,r_1+1) \in \mathfrak{S}_{n+1},
\end{equation*}
and $M^{\mathbf{r}}\colonequals M^{\sigma(\mathbf{r})}$. Write $\uppi^{\mathbf{r}}\colon \scrX^{\mathbf{r}} \rightarrow \Spec\scrR$ for $\uppi^{\sigma(\mathbf{r})}\colon \scrX^{\sigma(\mathbf{r})} \rightarrow \Spec\scrR$.
\item For $1\leq i \leq n$, write $\uppi^i$, $\scrX^i$ and $M^i$ for $\uppi^{(i)}$, $\scrX^{(i)}$ and $M^{(i)}$ respectively.
\end{enumerate}
\end{notation}

The following two results describe NCCRs of crepant resolutions of $cA_n$ singularities.

\begin{prop} \cite[5.1, 5.27]{IW1}\label{36}
The $\CM $ $\scrR$-modules that satisfy \textnormal{\ref{thm32}} are precisely $M^{\sigma}$ where $\sigma \in \mathfrak{S}_{n+1}$. Moreover, there is a bijection satisfying $\Lambda(\uppi^{\sigma}) \cong \End_{\scrR}(M^{\sigma})$,
\begin{align*}
 & \left\{ M^{\sigma} \mid  \sigma \in \mathfrak{S}_{n+1}\right\}  \longleftrightarrow\left\{\text { crepant resolutions of } \scrR \right\},\\
 & \qquad \qquad \qquad  M^{\sigma} \longleftrightarrow \uppi^{\sigma}: \scrX^{\sigma} \rightarrow \Spec  \scrR,
\end{align*}
where $\scrX^{\sigma}$ is given pictorially by
\[
\begin{array}{ccc}
\begin{array}{c}
\scrX^{\sigma}
\end{array} &
\begin{array}{c}
\begin{tikzpicture}[xscale=0.6,yscale=0.6]
\draw[black] (-0.1,-0.04,0) to [bend left=25] (2.1,-0.04,0);
\draw[black] (1.9,-0.04,0) to [bend left=25] (4.1,-0.04,0);
\node at (5.5,0,0) {$\hdots$};
\draw[black] (6.9,-0.04,0) to [bend left=25] (9.1,-0.04,0);
%\draw[black] (8.9,-0.04,0) to [bend left=25] (11.1,-0.04,0);
\node at (1,0.6,0) {$\scriptstyle \Curve_{1}$};
\node at (3,0.6,0) {$\scriptstyle \Curve_{2}$};
\node at (8,0.6,0) {$\scriptstyle \Curve_{n}$};
\filldraw [black] (0,0,0) circle (1pt);
\filldraw [black] (2,0,0) circle (1pt);
\filldraw [black] (4,0,0) circle (1pt);
\filldraw [black] (7,0,0) circle (1pt);
\filldraw [black] (9,0,0) circle (1pt);
\node at (0,-0.4,0) {$\scriptstyle g_{\sigma(0)}$};
\node at (2,-0.4,0) {$\scriptstyle g_{\sigma(1)}$};
\node at (4,-0.4,0) {$\scriptstyle g_{\sigma(2)}$};
\node at (7,-0.4,0) {$\scriptstyle g_{\sigma(n-1)}$};
\node at (9,-0.4,0) {$\scriptstyle g_{\sigma(n)}$};
\end{tikzpicture}
\end{array}
\end{array}
\]
\end{prop}

\begin{prop} \cite{IW1, W1} \label{35}
Given any $\sigma \in \mathfrak{S}_{n+1}$, let $\uppi^{\sigma} \colon \scrX^{\sigma} \to\Spec\scrR$ be the associated crepant resolution. Then the \textnormal{NCCR} $\Lambda(\uppi^{\sigma}) $ can be presented as the following quiver (with possible loops):

\[
\begin{tikzpicture}[bend angle=7, looseness=1.2]
\node (a) at (0,0)  {$1$};
\node (b) at (2,0)  {$2$};
\node (c) at (4,0) {$3$};
\node (d) at (5,0) {$\hdots$};
\node (e) at (6,0) {$n-1$};
\node (f) at (8.5,0)  {$n$};
\node (g) at (4,-2) {$0$};

\draw[<-,bend right] (g) to node [gap] {$\scriptstyle inc$} (a);
\draw[->,bend left] (g) to node [gap] {$\scriptstyle g_{\sigma(0)}$} (a);
\draw[<-,bend right] (a) to node [gap] {$\scriptstyle inc$} (b);
\draw[->,bend left] (a) to node [gap] {$\scriptstyle g_{\sigma(1)}$} (b);
\draw[<-,bend right] (b) to node[gap] {$\scriptstyle inc$} (c);
\draw[->,bend left] (b) to node[gap] {$\scriptstyle g_{\sigma(2)}$} (c);
\draw[<-,bend right] (e) to node[gap] {$\scriptstyle inc$} (f);
\draw[->,bend left] (e) to node[gap] {$\scriptstyle g_{\sigma(n-1)}$} (f);

\draw[<-,bend right] (g) to node[gap] {$\scriptstyle \frac{g_{\sigma(n)}}{u}$} (f);
\draw[->,bend left] (g) to node[gap] {$\scriptstyle u$} (f);
\end{tikzpicture}\\
\]
where the vertex $0$ represents $\scrR$ and the vertex $i$ represents $(u,\prod_{j=0}^{i-1} g_{\sigma(j)})$ for $1 \leq i \leq n$.

There is a loop labelled $t$ at vertex $0$ if and only if $(g_{\sigma(0)}, g_{\sigma(n)} )\subsetneq( x, y )$ and there exists $t \in ( x,y )$ such that $( g_{\sigma(0)}, g_{\sigma(n)},t )=( x,y )$ in the ring $\C\lal x,y\ral$.
Further, for any $1 \leq i \leq n$, the possible loops at vertex $i$ are given by the following rules:
\begin{enumerate}
\item the normal bundle of curve $\Curve_i$ is $\scrO(-1) \oplus \scrO(-1) \iff$ $(g_{\sigma(i-1)}, g_{\sigma(i)} )=( x, y )$ in $\C\lal x,y\ral$ $\iff$  add no loop at vertex $i$.
\item the normal bundle of curve $\Curve_i$ is $\scrO(-2) \oplus \scrO$ $\iff$ $( g_{\sigma(i-1)}, g_{\sigma(i)} )\subsetneq ( x, y )$ and there exists $t \in ( x,y )$ such that $( g_{\sigma(i-1)}, g_{\sigma(i)},t )=( x,y )$ in $\C\lal x,y\ral$ $\iff$ add a loop labelled $t$ at vertex $i$.
\end{enumerate}
\end{prop}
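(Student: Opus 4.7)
The statement has two distinct parts: the arrow structure between distinct vertices, and the rule governing loops. Both reduce to explicit computations in $\scrR$ and its factor rings, and the strategy is to directly compute $\End_{\scrR}(M^{\sigma}) = \bigoplus_{i,j} \Hom_{\scrR}(M_i^{\sigma}, M_j^{\sigma})$, where $M_0^{\sigma} = \scrR$ and $M_i^{\sigma} = (u, \prod_{k=0}^{i-1} g_{\sigma(k)})$ for $1 \leq i \leq n$, then read off the quiver presentation.

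For the arrow structure, the plan is to identify the natural maps between consecutive summands: the inclusion $M_{i+1}^{\sigma} \hookrightarrow M_i^{\sigma}$ and multiplication by $g_{\sigma(i)}$ in the reverse direction; between $M_n^{\sigma}$ and $M_0^{\sigma}$, the defining relation $uv = g_0 g_1 \cdots g_n$ produces multiplication by $u$ and by $g_{\sigma(n)}/u$. A Nakayama-type argument in the complete setting, carried out in \cite{IW1}, then verifies that these generate the Hom spaces modulo the square of the Jacobson radical, yielding the non-loop arrows listed. Minimality of the presentation is forced by the completion topology.

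For a loop at vertex $i$ with $1 \leq i \leq n$, the observation is that any endomorphism of $M_i^{\sigma}$ that factors through a neighbouring vertex is, modulo the radical, multiplication by $g_{\sigma(i-1)}$ or $g_{\sigma(i)}$. Hence the loops count the extra generators of $\End_{\scrR}(M_i^{\sigma})$ beyond these two. An explicit local computation, identifying $\End_{\scrR}(M_i^{\sigma})$ with an appropriate subring of $\C\lal x,y\ral$, shows that no extra generator is needed precisely when $(g_{\sigma(i-1)}, g_{\sigma(i)}) = (x,y)$, and otherwise exactly one extra generator $t$ is required, where $(g_{\sigma(i-1)}, g_{\sigma(i)}, t) = (x,y)$. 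The vertex $0$ analysis is parallel, using $u$ and $g_{\sigma(n)}/u$ in place of the two $g$'s.

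The equivalence between this loop criterion and the normal bundle dichotomy for $\Curve_i$ follows from Van den Bergh's derived equivalence \cite{V1}: the normal bundle is detected by $\Ext^1_{\scrX^{\sigma}}(\mathcal{M}_i, \mathcal{M}_i)$, which under tilting corresponds to the loop count at vertex $i$ (zero in the $\scrO(-1) \oplus \scrO(-1)$ case and one-dimensional in the $\scrO(-2) \oplus \scrO$ case); this geometric identification is proved in \cite{W1}. The main technical obstacle will be the completeness check in the algebraic step: verifying that no further arrows exist beyond the stated generators requires careful handling of the strict inclusion case $(g_{\sigma(i-1)}, g_{\sigma(i)}) \subsetneq (x,y)$, which is precisely where the extra loop generator $t$ enters the picture, and tracking how this generator interacts with the cyclic structure of the quiver.
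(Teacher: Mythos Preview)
Your proposal misreads what is actually being proved here. The bulk of the statement --- the quiver structure and the loop trichotomy --- is quoted directly from \cite{IW1, W1}, and the paper does not re-derive it. The general result in those references gives \emph{three} possibilities at each vertex $i$: no loop when $(g_{\sigma(i-1)}, g_{\sigma(i)}) = (x,y)$; one loop $t$ when the ideal is proper but $(g_{\sigma(i-1)}, g_{\sigma(i)}, t) = (x,y)$ for some $t$; and \emph{two} loops labelled $x$ and $y$ when no such $t$ exists. The paper's only contribution is to rule out this third case under the hypothesis that $\scrR$ admits a crepant resolution.

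Your plan does not address this. You write that when $(g_{\sigma(i-1)}, g_{\sigma(i)}) \subsetneq (x,y)$, ``exactly one extra generator $t$ is required, where $(g_{\sigma(i-1)}, g_{\sigma(i)}, t) = (x,y)$'' --- but this is precisely the point that needs justification, and it is false without the crepant resolution hypothesis. If both $g_{\sigma(i-1)}$ and $g_{\sigma(i)}$ lie in $(x,y)^2$, no single $t$ suffices and two loops are required. The paper's argument is short: if case (3) occurred, both $g$'s would be in $(x,y)^2$, hence have no linear term, contradicting the characterisation in \cite[5.1]{IW1} that $\scrR$ admits a crepant resolution only when every $g_i$ has a linear term. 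Your proposal never invokes this hypothesis, so the argument as written has a gap exactly at the step you flag as the ``main technical obstacle''.
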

\begin{proof}
In general, \cite{IW1, W1} shows that either (1), (2) or the following third case holds.
\begin{enumerate}
\setcounter{enumi}{2}
\item $( g_{\sigma(i-1)}, g_{\sigma(i)} ) \subsetneq ( x, y )$ and there is no $t$ such that \textnormal{(2)} $\iff$ add two loops labelled $x$ and $y$ at vertex $i$.
\label{l3}
\end{enumerate}
We now prove that (3) is impossible when $\scrR$ is $cA_n$ and admits a crepant resolution. If there exist two loops at some vertex $i$, then $( g_{\sigma(i-1)}, g_{\sigma(i)} ) \subsetneq ( x, y )$ and there exists no $t \in (x,y )$ that satisfies $( g_{\sigma(i-1)}, g_{\sigma(i)},t)=(x,y )$. Hence both $g_{\sigma(i-1)}$ and $g_{\sigma(i)}$ must belong to $( x,y )^2$. But this contradicts the fact that $\scrR$ admits a crepant resolution $\scrX$.
\end{proof}

The following asserts that isomorphisms between contraction algebras of $cA_n$ singularities can only map $e_i$ to $e_i$ or $e_{n+1-i}$ for each $1 \leq i \leq n$, where $e_i$ denotes the trivial path at the vertex $i$.
\begin{prop}\label{37}
Let $\uppi_k \colon \scrX_k \rightarrow \Spec \scrR_k$ be two crepant resolutions of $cA_n$ singularities $\scrR_k$ for $k=1,2$. If there exists an algebra isomorphism $\phi \colon \Lambda_{\mathrm{con}}(\uppi_1) \xrightarrow{\sim} \Lambda_{\mathrm{con}}(\uppi_2)$, then $\phi$ must belong to one of the following two cases:
\begin{enumerate}
    \item $\phi(e_i)=e_i$ for $1 \leq i \leq n$,
    \item $\phi(e_i)=e_{n+1-i}$ for $1 \leq i \leq n$.
\end{enumerate}
\end{prop}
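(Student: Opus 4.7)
The plan is to reduce the claim to a combinatorial statement about the Gabriel quivers of $\Lambda_{\mathrm{con}}(\uppi_1)$ and $\Lambda_{\mathrm{con}}(\uppi_2)$, whose common underlying undirected graph is the Dynkin diagram $A_n$, so its automorphism group is $\mathbb{Z}/2$.

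First I would observe that both algebras are basic: each summand of $M^{\sigma}$ from \ref{notation:cAn} is pairwise non-isomorphic, so $\Lambda(\uppi) = \End_{\scrR}(M^\sigma)$ is basic, and quotienting by $\langle \scrR \rangle$ preserves this. In a basic semiperfect $\mathbb{C}$-algebra, any complete set of primitive orthogonal idempotents is unique up to simultaneous conjugation by a unit. Hence, after replacing $\phi$ by $\iota_u \circ \phi$ for some inner automorphism $\iota_u$ of $\Lambda_{\mathrm{con}}(\uppi_2)$, I may assume $\phi$ sends the standard set $\{e_1,\dots,e_n\}$ bijectively to the standard set on the target. Writing $\phi(e_i) = e_{\pi(i)}$ for the resulting $\pi \in \mathfrak{S}_n$, the two claimed cases correspond to $\pi = \mathrm{id}$ and $\pi(i) = n+1-i$.

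Next I would identify the Gabriel quivers explicitly. By \ref{35}, the NCCR $\Lambda(\uppi_k)$ has a quiver presentation on $\{0, 1, \dots, n\}$ where vertex $0$ corresponds to $\scrR$. Forming $\Lambda_{\mathrm{con}}(\uppi_k) = \Lambda(\uppi_k)/\langle e_0 \rangle$ has the effect of deleting vertex $0$ and all arrows incident to it, leaving a quiver on $\{1,\dots,n\}$ with a single arrow in each direction between consecutive vertices and possibly loops at certain vertices. A short check confirms that the remaining arrows span $J/J^2$ (where $J$ is the Jacobson radical), so this is in fact the Gabriel quiver. Because $\dim_{\mathbb{C}} e_j (J/J^2) e_i$ is preserved under any algebra isomorphism intertwining the idempotents, $\pi$ induces a quiver isomorphism between these two Gabriel quivers.

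Finally, for $n=1$ the statement is vacuous; for $n \geq 2$, the underlying undirected graph of the Gabriel quiver (forgetting loops and orientations) is the path $A_n$, whose automorphism group is $\mathbb{Z}/2$, generated by the reversal $i \mapsto n+1-i$. Hence $\pi$ must be either the identity or this reversal, giving precisely cases (1) and (2).

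The main obstacle I expect is the inner-automorphism reduction in the first step. When $\scrR$ is not isolated, $\Lambda_{\mathrm{con}}(\uppi)$ need not be finite-dimensional, so the classical conjugacy-of-idempotents statement needs to be extended to complete basic semiperfect algebras via lifting of idempotents modulo the Jacobson radical together with completeness. A secondary concern is verifying that the Gabriel-quiver identification really produces a double $A_n$ shape, i.e.\ that the passage from $\Lambda(\uppi_k)$ to $\Lambda_{\mathrm{con}}(\uppi_k)$ introduces no extra arrows in $J/J^2$; once that is checked, the rest of the argument is purely combinatorial.
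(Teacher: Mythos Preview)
Your approach is correct and reaches the same conclusion as the paper's, but via the algebra side rather than the module side. The paper works with simple modules and $\Ext^1$: the algebra isomorphism induces an equivalence of module categories sending simples to simples, giving a permutation $\sigma\in\mathfrak{S}_n$; then the fact that $\Ext^1_{\Lambda_{\mathrm{con}}}(\scrS_i,\scrS_j)\neq 0$ holds precisely when $|i-j|=1$ (read off from the intersection theory of the exceptional curves, via \cite[2.15]{W1}) forces $\sigma$ to preserve adjacency on the path $A_n$, hence $\sigma\in\{\mathrm{id},\,i\mapsto n+1-i\}$. Under the standard dictionary $\dim_{\C}\Ext^1(\scrS_j,\scrS_i)=\dim_{\C} e_j(J/J^2)e_i$, this is exactly dual to your Gabriel-quiver argument.

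Two points of comparison are worth noting. First, your inner-automorphism reduction makes precise something the paper states loosely: strictly, $\phi(e_i)$ is only guaranteed to equal $e_{\sigma(i)}$ after conjugation by a unit, and the paper's concluding line ``and furthermore $\phi(e_i)=e_i$'' elides this step; for the application in \ref{610} only the up-to-conjugacy statement is needed, so no harm is done, but you are right to flag it. Second, the paper's appeal to intersection theory bypasses your secondary concern about the shape of $J/J^2$ in the quotient $\Lambda(\uppi)/\langle e_0\rangle$: rather than computing the Gabriel quiver of the contraction algebra directly, it reads the $\Ext^1$-pattern between simples off the curve configuration, which is manifestly the $A_n$ graph. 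Your route is more self-contained algebraically but does require the extra verification you identified; the paper's route imports a geometric fact to avoid it.
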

\begin{proof}
Write $\mod \Lambda_{\mathrm{con}}(\uppi_k)$ for the category of finitely generated right $ \Lambda_{\mathrm{con}}(\uppi_k)$-modules for $k=1,2$. Write $\scrS_i$ for the simple $\Lambda_{\mathrm{con}}(\uppi_1)$-module corresponding to the vertex $i$ in the quiver of $\Lambda_{\mathrm{con}}(\uppi_1)$ (see \cite[\S 5.2]{HW}). Similarly, write $\scrS_i'$ for the simple $\Lambda_{\mathrm{con}}(\uppi_2)$-module corresponding to the vertex $i$ in the quiver of $\Lambda_{\mathrm{con}}(\uppi_2)$. 
By \cite[2.11]{W1}, for each $1 \leq i \leq n$ the simple module $\scrS_i$ corresponds to the $i$-th exceptional curve $\Curve_i$ in $\scrX_1$. The same holds for $\scrS_i'$ and the exceptional curves in $\scrX_2$. 

The algebra isomorphism $\phi$ induces an equivalence $\varphi \colon \mod \Lambda_{\mathrm{con}}(\uppi_1) \xrightarrow{\sim} \mod \Lambda_{\mathrm{con}}(\uppi_2)$. By Morita theory, $\varphi$ maps simple modules to simple modules, and furthermore there is a $\sigma$ in the symmetric group $\mathfrak{S}_{n}$ such that $\varphi(\scrS_i)= \scrS'_{\sigma(i)}$.

We next use the intersection diagram of exceptional curves for a $cA_n$ singularity—namely, a Dynkin diagram of type~$A_n$, obtained from the diagram in \ref{35} by removing the vertex~$0$—together with the correspondence between $\scrS_i$, $\scrS_i'$ and exceptional curves, to constrain the permutation $\sigma$.

Since $\uppi_1$ is a crepant resolution of a $cA_n$ singularity, $\scrS_2$ is the unique simple $\Lambda_{\mathrm{con}}(\uppi_1)$-module other than $\scrS_1$ that satisfies $\Ext^1_{\Lambda_{\mathrm{con}}(\uppi_1)}(\scrS_1,\scrS_2) \neq 0$ by \ref{35} and the intersection theory of \cite[2.15]{W1}. Since $\mod \Lambda_{\mathrm{con}}(\uppi_1)$ is equivalent to $\mod \Lambda_{\mathrm{con}}(\uppi_2)$, there exists unique simple $\Lambda_{\mathrm{con}}(\uppi_2)$-module $\scrT$ other than $\scrS'_{\sigma(1)}$ such that $\Ext^1_{\Lambda_{\mathrm{con}}(\uppi_2)}(\scrS'_{\sigma(1)},\scrT) \neq 0$. Thus the exceptional curve $\sigma(1)$ in $\uppi_2$ must be a edge curve, by \ref{35} and the intersection theory of \cite[2.15]{W1}. Thus $\sigma(1)=1$ or $n$. We split the proof into two cases.

(1) $\sigma(1)=1$.

Since $\Ext^1_{\Lambda_{\mathrm{con}}(\uppi_1)}(\scrS_1,\scrS_2) \neq 0$ and $\mod \Lambda_{\mathrm{con}}(\uppi_1)$ is equivalent to $\mod \Lambda_{\mathrm{con}}(\uppi_2)$, we have $\Ext^1_{\Lambda_{\mathrm{con}}(\uppi_2)}(\scrS'_{\sigma(1)},\scrS'_{\sigma(2)}) \neq 0$, and so $\Ext^1_{\Lambda_{\mathrm{con}}(\uppi_2)}(\scrS'_{1},\scrS'_{\sigma(2)}) \neq 0$.
Thus the curve $\sigma(2)$ in $\uppi_2$ must be connected to the curve $\sigma(1)=1$, and so $\sigma(2)=2$ by \ref{35} and the intersection theory of \cite[2.15]{W1}.
Repeating the same process, we can prove $\sigma(i)=i$, and so  $\varphi(\scrS_i)= \scrS'_{i}$, and furthermore $\phi(e_i)=e_i$ for each $i$.

(2) $\sigma(1)=n$.

Since $\Ext^1_{\Lambda_{\mathrm{con}}(\uppi_1)}(\scrS_1,\scrS_2) \neq 0$ and $\mod \Lambda_{\mathrm{con}}(\uppi_1)$ is equivalent to $\mod \Lambda_{\mathrm{con}}(\uppi_2)$, we have $\Ext^1_{\Lambda_{\mathrm{con}}(\uppi_2)}(\scrS'_{\sigma(1)},\scrS'_{\sigma(2)}) \neq 0$, and so $\Ext^1_{\Lambda_{\mathrm{con}}(\uppi_2)}(\scrS'_{n},\scrS'_{\sigma(2)}) \neq 0$.
Thus the curve $\sigma(2)$ in $\uppi_2$ must be connected to the curve $\sigma(1)=n$, and so $\sigma(2)=n-1$ by \ref{35} and the intersection theory of \cite[2.15]{W1}.
Repeating the same process, we can prove $\sigma(i)=n+1-i$, and so  $\varphi(\scrS_i)= \scrS'_{n+1-i}$, and furthermore $\phi(e_i)=e_{n+1-i}$ for each $i$.
\end{proof}

\section{Monomialization}\label{mono}
This section introduces the quiver $Q_{n,I}$ and Type~$A$ potentials. 
In \S\ref{Mono} we prove that every reduced Type~$A$ potential on $Q_{n,I}$ is right-equivalent to a reduced monomialised Type~$A$ potential (see \ref{047}), which is the starting point for the geometric realisation in \S\ref{GR}. 
Finally, \S\ref{trans} shows that any monomialised Type~$A$ potential on $Q_{n,I}$ is isomorphic to a (possibly non-reduced) monomialised Type~$A$ potential on $Q_n$ (see \ref{410}), so it suffices to work on $Q_n$.

\begin{definition}\label{def:contain}
Given a quiver $Q$, let $f$, $g$ and $h$ be potentials on $Q$. Write $f= \sum_{i} \uplambda_i c_i$ as a linear combination of cycles where each $\uplambda_i \in  \C^{\times}$. 
\begin{enumerate}
\item We write $V(f)$ for the $\C$-span of all cycles $c$ such that $c\sim c_i$ for some $i$.
\item We say $f$ is \emph{orthogonal} to $g$ if $V(f) \cap V(g) =\{0\}$.
\item We write $f= g \oplus h$ if $f=g+h$ and $g$ is orthogonal to $h$.
\item We say $f$ \emph{contains} $g$ if $f \sim \uplambda g\oplus h'$ for some $\uplambda \in  \C^{\times}$ and potential $h'$.
\end{enumerate} 
\end{definition}

Recall the definition of the quiver $Q_n$ in \ref{mainresult}, which is the double of the usual $A_n$ quiver, with a single loop at each vertex as follows.
 \[
\begin{array}{c}
\begin{tikzpicture}[bend angle=15, looseness=1.2]
\node (a) at (-1.5,0) [vertex] {};
\node (b) at (0,0) [vertex] {};
\node (c) at (1.5,0) [vertex] {};
\node (c2) at (2,0) {$\hdots$};
\node (d) at (2.5,0) [vertex] {};
\node (e) at (4,0) [vertex] {};
\node (a1) at (-1.5,-0.2) {$\scriptstyle 1$};
\node (a2) at (0,-0.2) {$\scriptstyle 2$};
\node (a3) at (1.5,-0.2) {$\scriptstyle 3$};
\node (a4) at (2.5,-0.25) {$\scriptstyle n-1$};
\node (a5) at (4,-0.25) {$\scriptstyle n$};
\draw[->,bend left] (a) to node[above] {$\scriptstyle a_{2}$} (b);
\draw[<-,bend right] (a) to node[below] {$\scriptstyle b_{2}$} (b);
\draw[->,bend left] (b) to node[above] {$\scriptstyle a_{4}$} (c);
\draw[<-,bend right] (b) to node[below] {$\scriptstyle b_{4}$} (c);
\draw[->,bend left] (d) to node[above] {$\scriptstyle a_{2n-2}$} (e);
\draw[<-,bend right] (d) to node[below] {$\scriptstyle b_{2n-2}$} (e);
\draw[<-]  (a) edge [in=120,out=55,loop,looseness=10] node[above] {$\scriptstyle a_{1}$} (a);
\draw[<-]  (b) edge [in=120,out=55,loop,looseness=11] node[above] {$\scriptstyle a_{3}$} (b);
\draw[<-]  (c) edge [in=120,out=55,loop,looseness=11] node[above] {$\scriptstyle a_{5}$} (c);
\draw[<-]  (d) edge [in=120,out=55,loop,looseness=11] node[above] {$\scriptstyle a_{2n-3}$} (d);
\draw[<-]  (e) edge [in=120,out=55,loop,looseness=11] node[above] {$\scriptstyle a_{2n-1}$} (e);
\node (z) at (-2.5,0) {$Q_{n} =$};
\end{tikzpicture}
\end{array}
\]
For any $I \subseteq \{1,2, \dots, n\}$, define the quiver \emph{$Q_{n, I}$} by removing the loop in $Q_n$ at each vertex $i \in I$, and then relabel $a_i$ and $b_i$ from left to right. 
As before, we now set $b_i\colonequals e_i$ whenever $a_i$ is a loop in $Q_{n, I}$, and set $\x_i \colonequals a_ib_i$ and $\x_i' \colonequals b_ia_i$ for each $i$. In particular, the quiver \(Q_{n,I}\) contains \(2n-1-|I|\) paths of each type \(a_i\), \(b_i\), \(\x_i\), and \(\x_i'\).

For example,
 \[
  \begin{array}{cl}
\begin{array}{c}
\begin{tikzpicture}
\node (a) at (-1,0) [vertex] {};
\node (b) at (0,0) [vertex] {};
\node (c) at (1,0) [vertex] {};

\node (a1) at (-1,-0.2) {$\scriptstyle 1$};
\node (a2) at (0,-0.2) {$\scriptstyle 2$};
\node (a3) at (1,-0.2) {$\scriptstyle 3$};

\draw[->,bend left] (a) to node[above] {$\scriptstyle a_{2}$} (b);
\draw[<-,bend right] (a) to node[below] {$\scriptstyle b_{2}$} (b);
\draw[->,bend left] (b) to node[above] {$\scriptstyle a_{3}$} (c);
\draw[<-,bend right] (b) to node[below] {$\scriptstyle b_{3}$} (c);
\draw[<-]  (a) edge [in=120,out=55,loop,looseness=13] node[above] {$\scriptstyle a_{1}$} (a);
\draw[<-]  (c) edge [in=120,out=55,loop,looseness=13] node[above] {$\scriptstyle a_{4}$} (c);
\node (z) at (-2,0) {$Q_{3,\{2\}}=$};
\end{tikzpicture}
\end{array}
&
\begin{array}{l}
b_1=e_1,\ \x_1=\x_1'=a_1\\
b_4=e_4,\ \x_4=\x_4'=a_4\\
\end{array}
\end{array}
\]
whereas $\x_2=a_2b_2$, $\x_2'=b_2a_2$, and $\x_3=a_3b_3$, $\x_3'=b_3a_3$. 
In this example, the quiver $Q_{3,\{2\}}$ is obtained by removing the loop at vertex $2$ in $Q_3$.
More precisely, the quiver $Q_3$ contains arrows $a_i$ and $b_i$ for $1 \leq i \leq 5$.
Since the loop at vertex $2$ corresponds to the arrows $a_3$ and $b_3$, these are removed.
The arrows $a_1, b_1, a_2,$ and $b_2$ are left unchanged, while the indices of
$a_4, b_4, a_5,$ and $b_5$ in $Q_3$ are shifted down by one, becoming
$a_3, b_3, a_4,$ and $b_4$ in $Q_{3,\{2\}}$.

\begin{notation}\label{no1}
Throughout this paper, $n$ denotes the number of vertices of $Q_{n, I}$, and $I \subseteq \{1,2, \dots, n\}$ is the set of vertices \emph{without} loop in $Q_{n, I}$. Note that $Q_{n,\emptyset}$ is just $Q_n$. Furthermore, set $m\colonequals 2n-1-|I|$, which equals the number of $\x_i$ in $Q_{n, I}$
\end{notation}

We now give several definitions and notations with respect to $Q_{n, I}$.

\begin{definition}\label{deg}
Given a cycle $c$ on $Q_{n, I}$, write $c$ as a composition of arrows.
For $1 \leq i \leq m$, let $q_i$ be the number of times $a_i$ appears in this composition. Then set $\mathbf{T}(c) \colonequals (q_1,q_2, \dots ,q_m)$, and define the \emph{degree} of $c$ to be $\deg(c) \colonequals \sum_{i=1}^m q_i$.
\end{definition}

\begin{definition}\label{TypeA}
We say that a potential $f$ on $Q_{n, I}$ is \emph{reduced Type $A$} if $f$ is reduced in the sense of \textnormal{\ref{QP}} and $f$ contains $\x_i^{\prime}\x_{i+1}$ in the sense of \textnormal{\ref{def:contain}} for each $1 \leq i \leq m-1$. Further, we say that a (possibly non-reduced) potential $f$ on $Q_{n, I}$ is \emph{Type $A$}
 if
\begin{enumerate}
\item All terms of $f$ have degrees greater than or equal to two in the sense of \textnormal{\ref{deg}}.
\item The reduced part $f_{\textnormal{red}}$ is Type $A$ on $Q_{n,I'}$ for some $I \subseteq I' \subseteq \{1,2,\dots,n\}$.
\end{enumerate}
\end{definition}
The Splitting Theorem \cite[4.6]{DWZ1} gives the existence and uniqueness of $f_{\textnormal{red}}$, so \ref{TypeA} is well defined.

\begin{lemma}\label{lam}
Given any potential $f \sim \sum_{i=1}^{m-1}\uplambda_i\x_i'\x_{i+1}+h$ where each $\uplambda_i \in \C^{\times}$ on $Q_{n, I}$, there exists a right-equivalence $f \rightsquigarrow f'$ such that 
\[
f'=\sum_{i=1}^{m-1}\x_i'\x_{i+1}+g
\]
for some potential $g$.
\end{lemma}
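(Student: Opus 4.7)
The plan is to construct an explicit diagonal rescaling automorphism of $\C\lbl Q_{n,I}\rbl$ that normalizes each coefficient $\uplambda_i$ to $1$, using only \ref{thm28} and the implications between $\sim$, $\mapsto$ and $\rightsquigarrow$ noted after \ref{211}.

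For each $1 \leq i \leq m$, observe that either (a) $a_i$ is a loop, in which case $b_i = e_i$ and $\x_i = \x_i' = a_i$, or (b) $a_i$ is not a loop, in which case $\x_i = a_i b_i$ and $\x_i' = b_i a_i$. In both cases, if we rescale $a_i \mapsto \gamma_i a_i$ for some $\gamma_i \in \C^\times$ and leave $b_i$ fixed (or, in case (a), where $b_i$ does not exist as an arrow), then both $\x_i$ and $\x_i'$ are rescaled by the factor $\gamma_i$. In particular $\x_i' \x_{i+1} \mapsto \gamma_i \gamma_{i+1}\, \x_i' \x_{i+1}$.

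Setting $\gamma_1 \colonequals 1$ and recursively $\gamma_{i+1} \colonequals (\uplambda_i \gamma_i)^{-1}$ for $1 \leq i \leq m-1$, every $\gamma_i$ is a well-defined nonzero complex number since each $\uplambda_i \neq 0$, and by construction $\uplambda_i \gamma_i \gamma_{i+1} = 1$ for all such $i$. By \ref{thm28}, these choices on the arrow span extend uniquely to an algebra automorphism $\varphi \colon \C\lbl Q_{n,I}\rbl \to \C\lbl Q_{n,I}\rbl$ with $\varphi|_\C = \mathrm{id}$. Writing $f_0 \colonequals \sum_{i=1}^{m-1}\uplambda_i \x_i'\x_{i+1} + h$ and $g \colonequals \varphi(h)$, we then have $\varphi(f_0) = \sum_{i=1}^{m-1}\x_i'\x_{i+1} + g$.

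Finally, since $f \sim f_0$ the identity map witnesses $f \rightsquigarrow f_0$, and $\varphi \colon f_0 \mapsto \varphi(f_0)$ gives $f_0 \rightsquigarrow \varphi(f_0)$ by the chain of implications after \ref{211}; composing the two right-equivalences produces $f \rightsquigarrow f'$ with $f' = \sum_{i=1}^{m-1}\x_i'\x_{i+1} + g$. There is no substantive obstacle in this argument; the only point worth care is to treat loop and non-loop arrows uniformly, so that one has exactly $m$ independent rescaling parameters $\gamma_i$ to solve the $m-1$ normalization equations, which is always possible with one degree of freedom to spare.
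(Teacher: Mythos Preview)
Your proof is correct and follows essentially the same approach as the paper: both apply the diagonal rescaling $a_i \mapsto k_i a_i$ and solve the resulting system $k_i k_{i+1}\uplambda_i = 1$ for $1 \leq i \leq m-1$. Your version is simply more explicit about the recursive solution $\gamma_1=1$, $\gamma_{i+1}=(\uplambda_i\gamma_i)^{-1}$, the uniform treatment of loop versus non-loop arrows, and the passage from $f \sim f_0$ to $f \rightsquigarrow f_0$, but these are elaborations of the same argument rather than a different route.
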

\begin{proof}
Applying $a_i \mapsto k_ia_i$ where $k_i\in \C$ for each $1\leq i \leq m$ gives
\begin{equation*}
    f \rightsquigarrow \sum_{i=1}^{m-1}k_ik_{i+1}\uplambda_i\x_i'\x_{i+1}+g,
\end{equation*}
for some potential $g$. Since each $\uplambda_i \neq 0$, we can always find some $(k_1,k_2,\dots,k_m)$ that ensures $k_ik_{i+1}\uplambda_i=1$ holds for $1\leq i \leq m-1$.
\end{proof}

\begin{remark}\label{rmk:middle_term}
The above lemma shows that any reduced Type $A$ potential $f$ can be transformed to the form of $ \sum_{i=1}^{m-1}\x_i'\x_{i+1} \oplus g$ for some potential $g$. Thus, in this paper, for any reduced Type $A$ potential $f$ on $Q_{n,I}$, we always assume that $f=\sum_{i=1}^{m-1}\x_i'\x_{i+1} \oplus g$.
\end{remark}

\begin{definition}\label{mTypeA}
We call a potential $f$ on $Q_{n, I}$ \emph{monomialized Type $A$} if $f  \sim \sum_{i=1}^{m-1}\x_i^{\prime}\x_{i+1} + \sum_{i=1}^m\sum_{j=2}^{\infty} \upkappa_{ij} \x_i^{j} $ for some $\upkappa_{ij} \in \C$.    
\end{definition}

Given any monomialized Type $A$ potential $f$, it is clear that $f$ is Type A. Moreover, $f$ is reduced if and only if $\upkappa_{s2}=0$ whenever $\x_s$ is a loop.

%We next introduce some notations, which will be used in the next section.
\begin{definition}\label{length}
Given a cycle $c$ on $Q_{n, I}$, consider $\mathbf{T}(c)$ from \textnormal{\ref{deg}}.
Define $\l(c)$ to be the smallest $i$ such that $q_i >0$, and $\r(c)$ to be the largest $i$ such that $q_i >0$. We then define the \emph{length} of $c$ by $\le(c)\colonequals \r(c)-\l(c)+1$.      
\end{definition}
From the above definition, if $\le(c)=1$ then $c \sim \x_i^j$ for some $1 \leq i \leq m$ and $j \geq 1$.
\begin{notation}\label{notation:dij}
We adopt the following notation regarding cycles on $Q_{n, I}$.
\begin{enumerate}
\item Write $F$ for the $\C$-span of $\{c \mid c\text{ is a cycle with } \deg(c) \geq 1 \}$ where the degree is defined in \ref{deg}.
\item For any $i \in \N$, write $D_i$ for the $\C$-span of $\{c \mid \text{c is a cycle with } \deg(c)=i \}$.
\item For any $i \in \N$, write $L_i$ for the $\C$-span of $\{c \mid \text{c is a cycle with } \le(c)=i \}$ where length is defined in \ref{length}.
\item For any $i$ and $j \in \N$ satisfying $1 \leq i \leq j \leq m$, write $V_{ij}$ for the $\C$-span of $\{c \mid \text{c is a cycle with } \l(c)=i \text{ and } \r(c)=j  \}$.
\end{enumerate}
\end{notation}
It is clear that $F = \bigoplus_{i} D_i$, $F = \bigoplus_{i} L_i$ and $F = \bigoplus_{i\leq j} V_{ij}$.

\begin{notation}\label{def:deg>}
Let $f$ be a potential on $Q_{n, I}$.
\begin{enumerate}
\item Write $\deg(f)=i$ if $f \in D_i$. Similarly write $\deg(f) \geq i$ if $f \in \oplus_{j \geq i} D_j$, with natural self-documenting variations such as $\deg(f)  \leq i$.
\item Write $\le(f)=i$ if $f \in L_i$. Similarly write $\le(f) \geq i$ if $f \in \oplus_{j \geq i} L_j$, with natural self-documenting variations such as $\le(f)  \leq i$. 
\end{enumerate}
\end{notation}
The above degree and length notations will be important, and they will replace the common notations such as path length.

\begin{notation}\label{no5}
Let $f$ and $g$ be potentials on $Q_{n, I}$. With the notation in \ref{notation:dij}, since $f, g \in F$, $F = \bigoplus_{i} D_i$ and $F = \bigoplus_{i\leq j}V_{ij}$, we will adopt the following notation.
\begin{enumerate}
\item Define $f_d$ by decomposing $f=\sum_d f_d$ where each $f_d \in D_d$. 
\item Define $f_{<d}= \sum_{i<d}f_i$ and $f_{>d}= \sum_{i>d}f_i$, with natural self-documenting variations such as $f_{\leq d}$ and $f_{\geq d}$. Thus, if $\deg(f) \geq 2$ then $f=f_2 +f_3 + f_{>3}$.
\item Define $f_{ij}$ by decomposing 
\[
f=\sum_{i,j:1 \leq i\leq j \leq m} f_{ij},
\]
where each $f_{ij} \in V_{ij}$. Variations such as $f_{ij,d}$, $f_{ij,<d}$, $f_{ij,\leq d}$, $f_{ij,>d}$ and $f_{ij,\geq d}$ are obtained by applying (1) and (2) to $f_{ij}$.

\item Given $s$ such that $1\leq s\leq m$, set
\[
f_{[s]} \colonequals\sum_{i,j: 1 \leq i \leq s \leq j \leq m} f_{ij}.
\]
Variations such as $f_{[s],d}$, $f_{[s],<d}$, $f_{[s],\leq d}$, $f_{[s],>d}$ and $f_{[s],\geq d}$ are obtained by applying (1) and (2) to $f_{[s]}$.
\item Write $f=g+\scrO_d$ if $f-g \in \bigoplus_{k \geq d}D_k$, and $f=g+\scrO_{ij,d}$ if $f-g \in V_{ij} \bigcap \bigoplus_{k \geq d}D_k$.
\end{enumerate}
\end{notation}
\begin{remark}
We will frequently work with sequences of potentials $(\mathsf{f}_d)_{d \geq 1}$ on $Q_{n, I}$, and write $f_d$ for the degree $d$ pieces of $f$ (see \ref{no5}). To avoid confusion, we will systematically use Greek font $\mathsf{f}_d$ to denote the $d$-th elements in a sequence, and not the $d$-th degree piece.
\end{remark}

\subsection{Monomialization}\label{Mono}
This subsection will prove that any reduced Type $A$ potential on $Q_{n, I}$ is right-equivalent to some reduced monomialized Type $A$ potential (see \ref{047}). 

\begin{notation}\label{notation:mono}
To ease notation, in this subsection $f$ will always refer to a reduced Type $A$ potential on $Q_{n, I}$ of the form $\sum_{i=1}^{m-1}\x_i^{\prime}\x_{i+1} \oplus g$ (see \ref{rmk:middle_term}).    
In the statements below, to ease notation, the $c$ and $c_k$ will refer to a cycle on $Q_{n, I}$, possibly with a coefficient.
\end{notation}

The following lemma allows us to monomialize the degree $2$ terms in $f$. 
\begin{lemma}\label{041}
Suppose that $g= h + c$ where $\le(c) \geq 3$ and $\deg(c)=2$. Then there exists a path degree $1$ right-equivalence (in the sense of \textnormal{\ref{211}}),
    \begin{equation*}
        \rho_c \colon f \stackrel{1}{\rightsquigarrow}   \sum_{i=1}^{m-1}\x_i^{\prime}\x_{i+1}\oplus (h+c_1)+\scrO_{3},
    \end{equation*}
such that $\le(c_1)=1$ and $\deg(c_1)=2$.
\end{lemma}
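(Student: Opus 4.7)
The plan has three parts: (i) structurally classify the cycle $c$, (ii) define an explicit unitriangular substitution $\rho_c$ determined by this classification, and (iii) verify that $\rho_c(f)$ takes the stated form. I expect the first step to be the main obstacle; once it is settled, the rest is essentially forced.

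For the classification: because $\deg(c)=2$ and $\le(c)\geq 3$, the tuple $\mathbf{T}(c)$ must have two entries equal to $1$ at positions $p<p+k$ with $k\geq 2$. At each vertex $v$ of $Q_{n,I}$ there are at most three local degree-$1$ cycles, carrying consecutive labels, and three only when $v$ carries a loop. Combined with the fact that every (non-lazy) $b$-arrow in $Q_{n,I}$ moves leftward, a short case analysis of how two distinct $a$'s can be closed into a cycle of degree $2$ forces $k=2$ and forces $a_{p+1}$ to be a loop at the common vertex $v$ of $\x_p'$ and $\x_{p+2}$, with $a_p$ and $a_{p+2}$ both non-loop. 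Consequently $c\sim\lambda\,\x_p'\x_{p+2}$ for some $\lambda\in\C^{\times}$.

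I would then define $\rho_c$ on generators by $a_{p+1}\mapsto a_{p+1}-\lambda\,\x_{p+2}$ and by the identity on every other arrow. Since $\x_{p+2}=a_{p+2}b_{p+2}\in\mathfrak{m}^2$, Proposition~\ref{thm28} extends this uniquely to an automorphism that is unitriangular of depth $\geq 1$. To compute $\rho_c(f)$: inside $\sum_{i=1}^{m-1}\x_i'\x_{i+1}$ only the two summands $\x_p'\x_{p+1}$ and $\x_{p+1}'\x_{p+2}$ involve $a_{p+1}$, and they transform to $\x_p'\x_{p+1}-\lambda\,\x_p'\x_{p+2}$ and $\x_{p+1}'\x_{p+2}-\lambda\,\x_{p+2}^2$ respectively. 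The cycle $c$ itself contains no $a_{p+1}$, so the extra $-\lambda\,\x_p'\x_{p+2}$ cancels it exactly. The crucial observation is that $h_2$ contains no $a_{p+1}$: the only degree-$2$ cycles at $v$ with $q_{p+1}\geq 1$ are $\x_{p+1}^2$ (forbidden because $f$ is reduced) and the length-$2$ cycles $\x_p'\x_{p+1}, \x_{p+1}'\x_{p+2}$ (both in the $\sum$-part, hence not in $h$). Therefore $\rho_c(h_2)=h_2$; and because the substitution preserves $\deg$ in the sense of \ref{deg}, any change to $h_{\geq 3}$ remains in $\scrO_3$. Combining everything yields
\begin{equation*}
\rho_c(f)\;=\;\sum_{i=1}^{m-1}\x_i'\x_{i+1}\;\oplus\;\bigl(h-\lambda\,\x_{p+2}^2\bigr)+\scrO_3,
\end{equation*}
so setting $c_1\colonequals-\lambda\,\x_{p+2}^2$, which satisfies $\le(c_1)=1$ and $\deg(c_1)=2$, gives the claimed form. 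The relation $\stackrel{2}{\sim}$ is automatic since both sides lie in $\mathfrak{m}^2$.

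The structural classification in the first step is the hardest part: once one sees that the missing middle index $p+1$ must correspond to a loop and that $a_p,a_{p+2}$ are both non-loop, the substitution $\rho_c$ is essentially forced, and the remainder is routine bookkeeping, with the one nontrivial point being the absence of $a_{p+1}$-terms in $h_2$.
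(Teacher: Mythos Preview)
Your proof is correct and follows essentially the same strategy as the paper: identify $c\sim\lambda\,\x_{s-1}'\x_{s+1}$ with the middle index $s$ a loop, then substitute on that loop to trade $c$ for a length-$1$ monomial. The only difference is that you send $a_{p+1}\mapsto a_{p+1}-\lambda\,\x_{p+2}$ (producing $c_1=-\lambda\,\x_{p+2}^2$), whereas the paper sends $a_s\mapsto a_s-\lambda\,\x_{s-1}'$ (producing $c_1=-\lambda\,\x_{s-1}^2$); these are mirror-image choices and either works. One small wording point: your ``cancels it exactly'' should be ``cancels it up to cyclic equivalence'', since $c$ may be the other cyclic representative $\lambda\,\x_{p+2}\x_p'$; this is harmless for the $\stackrel{2}{\sim}$ conclusion.
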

\begin{proof}
Since $\deg(c) =2$ and $\le(c) \geq 3$, $c$ must have the form of $c \sim \uplambda \x_{s-1}'\x_{s+1}$ for some $\uplambda \in  \C^{\times}$, where $s$ is such that $\x_s$ is a loop.  
Since $\x_s$ is a loop and $f$ is reduced, $f$ does not contain $\x_s^2$, and so $f_{[s],2}=\x_{s-1}'\x_s+\x_s'\x_{s+1}$. 

Rewrite $f=f_{[s],2} \oplus f_{[s],\geq 3} \oplus r$. Being a loop, $\x_s=a_s$, so applying the depth one unitriangular automorphism $\rho_c \colon a_s \mapsto a_s-\uplambda b_{s-1}a_{s-1}$ (in other words, $\x_s \mapsto \x_s- \uplambda \x_{s-1}'$) gives

\begin{align*}
 \rho_c(f) & = \x_{s-1}'(\x_s- \uplambda \x_{s-1}')+(\x_s- \uplambda \x_{s-1}')\x_{s+1} + f_{[s],\geq 3}+r +\scrO_{3} \\
 & = f- \uplambda (\x_{s-1}')^2 -\uplambda \x_{s-1}'\x_{s+1}+\scrO_3 \tag{$f=f_{[s],2}+ f_{[s],\geq 3} + r$}\\
 & = \sum_{i=1}^{m-1}\x_i^{\prime}\x_{i+1}+h+c- \uplambda (\x_{s-1}')^2 -\uplambda \x_{s-1}'\x_{s+1}+\scrO_3 \tag{$f= \sum_{i=1}^{m-1}\x_i^{\prime}\x_{i+1}+h+c$ }\\
 & \stackrel{2}{\sim} \sum_{i=1}^{m-1}\x_i^{\prime}\x_{i+1}+h- \uplambda \x_{s-1}^2+\scrO_3  \tag{$c \sim \uplambda \x_{s-1}'\x_{s+1}, \ (\x_{s-1}')^2 \sim \x_{s-1}^2$}\\
 & =\sum_{i=1}^{m-1}\x_i^{\prime}\x_{i+1} \oplus (h- \uplambda \x_{s-1}^2)+\scrO_3. \tag{$f=\sum_{i=1}^{m-1}\x_i^{\prime}\x_{i+1} \oplus (h + c), \ \le(c) \geq 3$}
\end{align*}

Set $c_1 = -\uplambda \x_{s-1}^2$, which satisfies $\le(c_1)=1$ and $\deg(c_1)=2$, and we are done. 
\end{proof}

The following lemmas allow us to monomialise the terms of degree at least three in $f$.
More precisely, given a cycle $c$ with $\le(c)\ge 2$ occurring in $f$, we repeatedly apply
right-equivalences that decrease $\r(c)$ (see \ref{043}) until all resulting terms have length one
(see \ref{044}).
 
\begin{lemma}\label{042}
Suppose that $\le(f_2) \leq 2$ and $g=  h + c $ where $\le(c) \geq 2$, $d \colonequals \deg(c) \geq 3$. Then there exists a path degree $d-1$ right-equivalence,
\begin{equation*}
    \upvartheta \colon f \stackrel{d-1}{\rightsquigarrow} \sum_{i=1}^{m-1}\x_i^{\prime}\x_{i+1} \oplus  (h + c_1 + c_2)  + \scrO_{d+1},
\end{equation*}
such that each $c_k$ is either zero or satisfies $\r(c_k) \leq \r(c)$, $\deg(c_k)=\deg(c)$ and $\mathbf{T}(c_k)_{\r(c)}=\mathbf{T}(c)_{\r(c)}-1$.   
\end{lemma}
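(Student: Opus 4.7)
The plan is to construct $\upvartheta$ as a depth $\geq d-1$ unitriangular automorphism, realized as a substitution on one of the arrows $a_{r-1}$ or $b_{r-1}$ (where $r\colonequals\r(c)$), designed so that the leading cyclic contribution from the quadratic term $\x_{r-1}'\x_r$ (which lies in $f$ since $f$ is reduced Type~$A$) cancels $c$ modulo cycles with strictly fewer $a_r$'s and modulo $\scrO_{d+1}$. The reason to substitute on $a_{r-1}$ or $b_{r-1}$, rather than on $a_r$ or $b_r$ themselves, is that the latter substitutions would force a contribution from $\x_r'\x_{r+1}$ involving $a_{r+1}$, which would violate the constraint $\r(c_k)\leq r$ in the conclusion.

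First I would cyclically rotate $c$ to expose a specific occurrence of $a_r$ and analyze its local neighborhood. Since $\r(c)=r$, the cycle $c$ uses only arrows with index $\leq r$; a direct local analysis of the arrows incident to $v\colonequals t(a_r)$ and $w\colonequals h(a_r)$, keyed off whether these vertices carry loops in $Q_{n,I}$, shows that each $a_r$ inside $c$ sits in a highly restricted neighborhood. For example, in the cleanest subcase, when neither $v$ nor $w$ carries a loop, each occurrence of $a_r$ in $c$ must be preceded by $a_{r-1}$ and followed by $b_r$, so $c$ contains the cyclic substring $a_{r-1}a_rb_rb_{r-1}$.

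Next I would exploit the identity $\partial_{a_{r-1}}(\x_{r-1}'\x_r)=a_rb_rb_{r-1}$. Under the substitution $\upvartheta\colon a_{r-1}\mapsto a_{r-1}+\phi'$ with $\phi'\in\mathfrak{m}^d$ of degree $d-1$, the potential $f$ changes cyclically, to leading order in $\phi'$, by $\phi'\cdot\partial_{a_{r-1}}(f)$. I would choose $\phi'$ so that $\phi'\cdot a_rb_rb_{r-1}$ is cyclically equivalent to $-c$; well-definedness of $\phi'$ follows from the local analysis of the previous step, since $\phi'$ is essentially the ``remainder'' of $c$ after excising the substring $a_rb_rb_{r-1}$. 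The other contributions would then be collected: $\partial_{a_{r-1}}(\x_{r-2}'\x_{r-1})$ and, if $\x_{r-1}$ is not a loop, $\partial_{a_{r-1}}(\lambda_{r-1}\x_{r-1}^2)$, will contribute the desired $c_1, c_2$ of degree $d$ with right-index at most $r$ and with $\mathbf{T}_r\leq \mathbf{T}(c)_r-1$; while contributions from $f_{\geq 3}$ and the quadratic-in-$\phi'$ corrections will lie in $\scrO_{d+1}$ because their degrees are at least $d+1$.

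The main obstacle will be the case analysis determining the choice of substituted arrow and of $\phi'$. The cyclic substring pattern enabling the direct cancellation depends on whether $v$, $w$, or $a_r$ itself carries a loop in $Q_{n,I}$; in non-generic configurations the substring $a_rb_rb_{r-1}$ may not appear, and one must either substitute $b_{r-1}$ instead (using $\partial_{b_{r-1}}(\x_{r-1}'\x_r)=a_{r-1}a_rb_r$), or substitute a loop arrow at $v$ or $w$, or first decompose $c$ as a sum of pieces each admitting its own cancellation. Verifying that this case analysis is exhaustive, and that the resulting piecewise definition of $\upvartheta$ uniformly yields the required two-term expansion $c_1+c_2$, will be the technical heart of the argument.
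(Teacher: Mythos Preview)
Your proposal is essentially the paper's approach: set $s=\r(c)-1$ (your $r-1$) and cancel $c$ against the quadratic term $\x_{s}'\x_{s+1}$ via a unitriangular substitution on $a_{s}$. The paper's execution is simpler than you anticipate, however. It splits into only two cases, according to whether $\x_{s}$ is a loop, and in both cases a single substitution on $a_{s}$ suffices; there is never any need to touch $b_{s}$, to substitute a loop arrow separately, or to decompose $c$. Rather than hunting for a specific cyclic substring such as $a_{r}b_{r}b_{r-1}$ (which need not occur in the position you describe when $c$ contains consecutive copies of $\x_{r}$), the paper simply rotates $c$ to start at an occurrence of $\x_{s+1}$, observes that the cycle must then pass through $b_{s}\cdots a_{s}$ (since $\le(c)\geq 2$), and writes $c\sim\lambda b_{s}p a_{s}q\x_{s+1}$; the substitution $a_{s}\mapsto a_{s}-\lambda p a_{s}q$ then directly produces the two terms $c_{1}=-\lambda\x_{s-1}'pa_{s}qb_{s}$ and $c_{2}=-2\lambda\upkappa\x_{s}pa_{s}qb_{s}$. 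In the loop case one writes $c\sim\lambda p\x_{s+1}$ and sends $a_{s}\mapsto a_{s}-\lambda p$, with $c_{2}=0$.
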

\begin{proof}
Set $s= \r(c)-1$. The assumption $\le(f_2) \leq 2$ says that the degree two part of $f$ (wrt.$\ \x_i$, as in \ref{def:deg>}) must be spread over at most two variables. 
Hence the only degree two cycles in $f$ involving $\x_s$ are $\x_{s-1}'\x_s$, $\x_s^2$ and $\x_{s}'\x_{s+1}$. In the notation of $\ref{no5}$, this gives $f_{[s],2}= \x_{s-1}'\x_s+\upkappa \x_s^2+\x_{s}'\x_{s+1}$ for some $\upkappa \in \C$. 
Decomposing $f$ into terms that do and do not involve $\x_s$, we may write
$f=f_{[s],2}\oplus f_{[s],\ge 3}\oplus r$. We treat two cases.

(1) $\x_{s}$ is not a loop.

The assumptions that $\le(c) \geq 2$ and $\r(c)=s+1$ imply that $a_s$, $b_s$, $a_{s+1}$ and $b_{s+1}$ all appear in $c$. Note that $\x_s$ is not a loop, thus $\x_s=a_sb_s$. Locally $Q_{n, I}$ looks like the following.

\[
\begin{tikzpicture}[scale=2]
\node (a) at (-1,0) [vertex] {};
\node (b) at (0,0) [vertex] {};
\node (c) at (1,0) [vertex] {};
\node (d) at (0,-0.6) {$\x_{s+1}$ is not a loop};
 
%\node (a1) at (-1,-0.25) {$\scriptstyle s-1$};
%\node (a2) at (0,-0.25) {$\scriptstyle s$};
%\node (a3) at (1,-0.25) {$\scriptstyle s+1$};

\draw[->,bend left,looseness=0.7] (a) to node[above] {$\scriptstyle a_{s}$} (b);
\draw[<-,bend right,looseness=0.7] (a) to node[below] {$\scriptstyle b_{s}$} (b);
\draw[->,bend left,looseness=0.7] (b) to node[above] {$\scriptstyle a_{s+1}$} (c);
\draw[<-,bend right,looseness=0.7] (b) to node[below] {$\scriptstyle b_{s+1}$} (c);

\node (e) at (2,0) [vertex] {};
\node (f) at (3,0) [vertex] {};
\node (g) at (2.5,-0.6) {$\x_{s+1}$ is a loop};

%\node (b1) at (2,-0.25) {$\scriptstyle s-1$};
%\node (b2) at (3,-0.25) {$\scriptstyle s$};

\draw[->,bend left,looseness=0.7] (e) to node[above] {$\scriptstyle a_{s}$} (f);
\draw[<-,bend right,looseness=0.7] (e) to node[below] {$\scriptstyle b_{s}$} (f);
\draw[->]  (f) edge [in=55,out=120,loop,looseness=18] node[above]{$\scriptstyle \x_{s+1}$}  (f);
\end{tikzpicture}
\]
Since $\r(c)=s+1$, we may assume that the cycle $c$ starts with $\x_{s+1}$, up to cyclic equivalence. Then $c$ begins with some power of $\x_{s+1}$, and the next arrow must be $b_s$.
Thus we may write
\begin{equation*}
    c \sim \uplambda \x_{s+1}^N b_{s}pa_{s}r \sim \uplambda  b_{s}pa_{s}r\x_{s+1}^{N}
\end{equation*}
for some $\uplambda \in \C^{\times}$, an integer $N \geq 1$, and paths $p$, $r$. Consider the path $q \colonequals r\x_{s+1}^{N-1}$, and rewrite $c \sim \uplambda b_{s}pa_{s}q\x_{s+1}$. 
Since $\deg(c) =d\geq 3$ and $\deg(\x_s)=\deg(\x_{s+1})=1$, we have $\deg(p)+\deg(q)=d-2\geq 1$.

Then applying the depth $d-1$ unitriangular automorphism $\upvartheta \colon a_{s} \mapsto a_{s}- \uplambda pa_{s}q$ gives
\begin{align*}
 \upvartheta (f)& =  \x_{s-1}^{\prime}(a_{s}- \uplambda pa_{s}q)b_{s}+\upkappa[(a_{s}- \uplambda pa_{s}q)b_{s}]^2
+b_{s}(a_{s}- \uplambda pa_{s}q)\x_{s+1}  +f_{[s],\geq 3}+r +\scrO_{d+1}\\
& \stackrel{d}{\sim} f - \uplambda \x_{s-1}'pa_{s}qb_{s}-2\uplambda \upkappa \x_{s}pa_{s}qb_{s}-\uplambda b_{s}pa_{s}q\x_{s+1} +\scrO_{d+1} \tag{$f=f_{[s],2} + f_{[s],\geq 3} + r$}\\
& =\sum_{i=1}^{m-1}\x_i^{\prime}\x_{i+1}  - \uplambda \x_{s-1}'pa_{s}qb_{s}-2\uplambda \upkappa \x_{s}pa_{s}qb_{s}-\uplambda b_{s}pa_{s}q\x_{s+1}+ c+h+\scrO_{d+1} \tag{$f= \sum_{i=1}^{m-1}\x_i^{\prime}\x_{i+1}+h+c$ }\\
& \stackrel{d}{\sim} \sum_{i=1}^{m-1}\x_i^{\prime}\x_{i+1}  - \uplambda \x_{s-1}'pa_{s}qb_{s}-2\uplambda \upkappa \x_{s}pa_{s}qb_{s}+h+\scrO_{d+1} \tag{$c  \sim \uplambda b_{s}pa_{s}q\x_{s+1}$}\\
& = \sum_{i=1}^{m-1}\x_i^{\prime}\x_{i+1} \oplus (- \uplambda \x_{s-1}'pa_{s}qb_{s}-2\uplambda \upkappa \x_{s}pa_{s}qb_{s}+h)+\scrO_{d+1}. 
\end{align*}
Since $\deg(p)+\deg(q)=d-2$, all terms not displayed explicitly lie in $\scrO_{d+1}$.

Set $c_1 =- \uplambda \x_{s-1}'pa_{s}qb_{s}$ and $c_2 = -2\uplambda \upkappa \x_{s}pa_{s}qb_{s}$. Since $\deg(p)+\deg(q) =d-2$, it follows that $\deg(c_1)= d = \deg(c_2)$.  
The conclusions for $c_1$ are clear. Either $c_2$ is zero or $\upkappa \neq 0$. In that case, the conclusions for $c_2$ are also clear. 

(2) $\x_{s}$ is a loop.

Since $\x_s$ is a loop, from the shape of the quiver $Q_{n, I}$, $\x_{s+1}$ is not a loop. Since $\r(c)=s+1$, we can assume that the cycle $c$ ends with $\x_{s+1}$, up to cyclic equivalence. Thus $c \sim \uplambda p\x_{s+1}$ for some path $p$ and $\uplambda \in  \C^{\times}$. 

Since $\deg(c) =d\geq 3$ and $\deg(\x_{s+1}) =1$, $\deg(p) =d-1\geq 2$. 
Moreover, since $\x_{s}$ is a loop and $f$ is reduced, the coefficient $\upkappa$ in $f_{[s],2}= \x_{s-1}'\x_s+\upkappa \x_s^2+\x_{s}'\x_{s+1}$ is zero. Locally $Q_{n, I}$ looks like the following.

\[
\begin{tikzpicture}[scale=2]

\node (e) at (2,0) [vertex] {};
\node (f) at (3,0) [vertex] {};
%\node (b1) at (2,-0.25) {$\scriptstyle s-1$};
%\node (b2) at (3,-0.25) {$\scriptstyle s$};

\draw[->,bend left,looseness=0.7] (e) to node[above] {$\scriptstyle a_{s+1}$} (f);
\draw[<-,bend right,looseness=0.7] (e) to node[below] {$\scriptstyle b_{s+1}$} (f);
\draw[->]  (e) edge [in=55,out=120,loop,looseness=18] node[above]{$\scriptstyle \x_{s}$}  (e);
\end{tikzpicture}
\]

Being a loop, $\x_s=a_s$, so applying the depth $d-1$ unitriangular automorphism $\upvartheta \colon a_{s} \mapsto a_{s} - \uplambda p$ (in other words, $\x_{s} \mapsto \x_{s} - \uplambda p$) gives
\begin{align*}
\upvartheta(f) & = \x_{s-1}'(\x_{s} - \uplambda p)+(\x_{s} - \uplambda p)\x_{s+1} +f_{[s],\geq 3}+r +\scrO_{d+1}\\
& \stackrel{d}{\sim} f-\uplambda \x_{s-1}'p-\uplambda p\x_{s+1} +\scrO_{d+1} \tag{$f=f_{[s],2} + f_{[s],\geq 3} + r$}\\
&= \sum_{i=1}^{m-1}\x_i^{\prime}\x_{i+1} -\uplambda \x_{s-1}'p-\uplambda p\x_{s+1}+c+h +\scrO_{d+1} \tag{$f= \sum_{i=1}^{m-1}\x_i^{\prime}\x_{i+1}+h+c$ }\\
&\stackrel{d}{\sim} \sum_{i=1}^{m-1}\x_i^{\prime}\x_{i+1} -\uplambda \x_{s-1}'p+h +\scrO_{d+1} \tag{$c \sim \uplambda p\x_{s+1}$}\\
& = \sum_{i=1}^{m-1}\x_i^{\prime}\x_{i+1} \oplus (-\uplambda \x_{s-1}'p+h) +\scrO_{d+1}. 
\end{align*}
Note that since $\deg(p) =d-1$, all terms other than those explicitly displayed above contribute only to the higher degree part $\scrO_{d+1}$.

Set $c_1 = -\uplambda \x_{s-1}'p$ and $c_2 =0$. Since $\deg(p) =d-1$, it follows that $\deg(c_1)= d$.  
The conclusions for $c_1$ and $c_2$ are clear.
\end{proof}

We next apply the previous lemma multiple times to decrease $\r(c)$.
\begin{cor}\label{043}
Suppose that $\le(f_2) \leq 2$ and $g=  h+c $ where $\le(c) \geq 2$, $d \colonequals \deg(c) \geq 3$. Then there exists a path degree $d-1$ right-equivalence
\begin{equation*}
    \upvartheta \colon f \stackrel{d-1}{\rightsquigarrow} \sum_{i=1}^{m-1}\x_i^{\prime}\x_{i+1} \oplus ( h +\sum_k  c_k)  + \scrO_{d+1} ,
\end{equation*}
such that $\r(c_k)  \leq \r(c) -1$ and $\deg(c_k)=\deg(c)$ for each $k$. 
%Furthermore, there also exists a degree $d-1$ right-equivalence  $\theta^{\prime} \colon f^{\prime} \stackrel{d-1}{\rightsquigarrow} f+\scrO_{d+1}$.    
\end{cor}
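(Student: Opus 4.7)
The plan is to iterate Lemma~\ref{042} finitely many times. Each application strips one occurrence of $\x_{\r(c)}$ from each newly produced cycle, and after enough rounds every output cycle contains no $\x_{\r(c)}$ at all and hence satisfies $\r < \r(c)$.

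Set $s \colonequals \r(c)$ and $N \colonequals \mathbf{T}(c)_s \geq 1$. I would induct on $N$. The base case $N = 1$ is immediate from \ref{042}: the two output cycles $c_1, c_2$ satisfy $\mathbf{T}(c_k)_s = 0$, hence $\r(c_k) \leq s - 1$. For the inductive step, apply \ref{042} once to produce a path degree $d-1$ right-equivalence $\upvartheta_1$ sending $f$ to $\sum_i \x_i'\x_{i+1} \oplus (h + c_1^{(1)} + c_2^{(1)}) + \scrO_{d+1}$, with $\mathbf{T}(c_k^{(1)})_s = N - 1$ and $\deg(c_k^{(1)}) = d$. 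To each resulting $c_k^{(1)}$ that still has $\r = s$, apply the inductive hypothesis. This requires verifying that $\le(f_2) \leq 2$ persists under $\upvartheta_1$: it does, because $\upvartheta_1$ is unitriangular of depth $\geq d-1 \geq 2$, and the explicit substitution $a_s \mapsto a_s - \uplambda p a_s q$ (respectively $a_s \mapsto a_s - \uplambda p$ in the loop case) is by an expression of degree $d-1 \geq 2$ in the sense of \ref{deg}, so applied to $f_2$ it introduces only terms of degree $\geq d$. Hence the degree-$2$ part of $\upvartheta_1(f)$ equals $f_2$ and the hypothesis carries over to each branch.

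The final step is to compose the resulting finite collection of path degree $d-1$ right-equivalences into a single one. Unitriangular automorphisms of depth $\geq d-1$ compose to unitriangular automorphisms of depth $\geq d-1$, so the composite $\upvartheta$ has the correct depth, and the cyclic equivalences modulo $\mathfrak{m}^d$ chain together. The main subtlety, which is where care is required, is that each application of \ref{042} contributes an $\scrO_{d+1}$ remainder, and subsequent automorphisms perturb these remainders. However, since every later automorphism is unitriangular of depth $\geq d-1$ with substitutions of degree $\geq d-1$ in the \ref{deg} sense, it preserves $\bigoplus_{k \geq d+1} D_k$ modulo material of even higher degree, so these errors remain absorbed into $\scrO_{d+1}$ throughout the iteration. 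The resulting composite $\upvartheta$ then produces exactly the form stated in the corollary.
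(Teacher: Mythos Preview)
Your proof is correct and follows essentially the same approach as the paper's: both iterate Lemma~\ref{042} until the $\r(c)$-th entry of $\mathbf{T}$ drops to zero, the paper phrasing this as a direct repeated application and you packaging it as induction on $N = \mathbf{T}(c)_{\r(c)}$. Your additional verification that $\le(f_2) \leq 2$ persists and that the accumulated $\scrO_{d+1}$ remainders stay in $\scrO_{d+1}$ under subsequent depth $\geq d-1$ automorphisms is more explicit than what the paper writes, but is exactly the implicit justification the paper relies on.
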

\begin{proof}
Set $\mathbf{q}=\mathbf{T}(c)$ and $j=\r(c)$. By \ref{042}, there exists a path degree $d-1$ right-equivalence,
\begin{equation*}
    \upvartheta_1 \colon f \stackrel{d-1}{\rightsquigarrow} \mathsf{f}_1 \colonequals  \sum_{i=1}^{m-1}\x_i^{\prime}\x_{i+1} \oplus( h + \sum_{s=1}^2 w_s) + \scrO_{d+1},
\end{equation*}
such that $w_s$ is either zero, or satisfies $\r(w_s) \leq \r(c)$, $\mathbf{T}(w_s)_{j}=q_j-1$ and $\deg(w_s)=\deg(c)$ for each $s$. 
%Furthermore, there also exists a degree $d-1$ right-equivalence $\theta_1^{\prime} \colon f_1 \stackrel{d-1}{\rightsquigarrow} f+\scrO_{d+1}$.

If all $w_s$ equal zero, or all satisfy $\mathbf{T}(w_s)_j=0$, we are done. Otherwise, we continue to apply \ref{042} to decrease $\mathbf{T}(w_s)_j$, as follows.
%for the $s$ with $w_s \neq 0$ and $\mathbf{T}(w_s)_j>0$. 
\begin{equation*}
    \upvartheta_2 \colon \mathsf{f}_1 \stackrel{d-1}{\rightsquigarrow} \mathsf{f}_2  \colonequals
     \sum_{i=1}^{m-1}\x_i^{\prime}\x_{i+1}\oplus ( h + \sum_{s=1}^2\sum_{t=1}^2 w_{st})+\scrO_{d+1},
\end{equation*}
such that each $w_{st}$ is either zero, or $\r(w_{st}) \leq \r(w_s) \leq \r(c)$, $\mathbf{T}(w_{st})_j=q_j-2$ and $\deg(w_{st})=\deg(c)$. 
The proof follows by induction.
\end{proof}

We iteratively apply \ref{043} to reach the case where all resulting terms have length one, i.e.\ a monomial-type potential.

\begin{cor}\label{044}
Suppose that $\le(f_2)\leq 2$ and $g=h + c$ where $\le(c) \geq 2$, $d \colonequals \deg(c) \geq 3$. Then there exists a path degree $d-1$ right-equivalence
\begin{equation*}
    \rho_c\colon f \stackrel{d-1}{\rightsquigarrow} \sum_{i=1}^{m-1}\x_i^{\prime}\x_{i+1} \oplus ( h+ \sum_{k} c_k) +  \scrO_{d+1}, 
\end{equation*}  
such that $\le(c_k)=1$ and $\deg(c_k)=\deg(c)$ for each $k$.  
\end{cor}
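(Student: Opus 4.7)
The strategy is to iterate Corollary \ref{043}. Each invocation of \ref{043} replaces a single cycle $c'$ of length $\geq 2$ at degree $d$ by cycles with strictly smaller $\r$, while preserving $\deg = d$ and the overall shape $\sum_i \x_i^{\prime}\x_{i+1} \oplus (\cdot) + \scrO_{d+1}$. Since $\r$ is bounded below by $1$ and any cycle with $\r = \l$ has $\le = 1$, after finitely many iterations no cycle of length $\geq 2$ at degree $d$ can remain.

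Concretely, I would induct on
\[
r^\ast(g) \colonequals \max\{\r(c') \mid c' \text{ is a cycle appearing in the degree $d$ part of } g,\ \le(c') \geq 2\},
\]
with the convention $r^\ast(g) = 1$ when no such $c'$ exists. The base case $r^\ast(g) \leq 1$ is vacuous, since $\le(c') \geq 2$ already forces $\r(c') \geq 2$; here $\rho_c = \mathrm{id}$ suffices. For the inductive step, enumerate the finitely many degree $d$ cycles $c^{(1)},\ldots,c^{(\ell)}$ in $g$ with $\r = r^\ast(g)$ and $\le \geq 2$. Apply \ref{043} to $c^{(1)}$, treating every other term of $g$ as the ``$h$'' of that statement. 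This produces a path degree $d-1$ right-equivalence whose underlying unitriangular automorphism has depth $\geq d - 1 \geq 2$; hence the degree $2$ part of $f$ is untouched, so the hypothesis $\le(f_2) \leq 2$ persists. Apply \ref{043} next to $c^{(2)}$, and repeat through $c^{(\ell)}$. After these $\ell$ applications the new potential $g'$ satisfies $r^\ast(g') \leq r^\ast(g) - 1$, and the inductive hypothesis yields a further path degree $d-1$ right-equivalence completing the reduction.

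Finally, composing this finite sequence of path degree $d-1$ right-equivalences yields the desired $\rho_c$, with the replacement cycles $c_k$ all satisfying $\le(c_k) = 1$ and $\deg(c_k) = d$ by construction. The only delicate point, and the main obstacle, is the bookkeeping: one must check that processing one cycle does not spoil the hypotheses needed to process the next. This is guaranteed by the depth $\geq 2$ property of each automorphism, which ensures that all terms of degree below $d$ (in particular $f_2$) are preserved modulo $\scrO_{d+1}$, and that the cycles introduced in place of the processed one all have strictly smaller $\r$. Since the whole procedure terminates in finitely many steps (bounded crudely by $\ell \cdot m$), no appeal to the limiting argument of Theorem \ref{31} is required.
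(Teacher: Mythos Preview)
Your strategy is correct and is exactly the paper's approach: iterate Corollary~\ref{043} to keep pushing $\r$ down until every surviving cycle has $\le=1$. The paper organizes this as a direct recursion on the descendants of the single cycle $c$ (apply \ref{043} to $c$, then to each resulting $w_s$ with $\le(w_s)>1$, and so on, at most $\r(c)-1$ rounds), rather than via an induction on a global invariant of $g$.

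There is one small framing mismatch in your write-up. The statement singles out \emph{one} cycle $c$ in $g=h+c$ and asks that $h$ be preserved in the conclusion. Your induction variable $r^\ast(g)$ ranges over \emph{all} degree-$d$ cycles in $g$, so your inductive step would also process any degree-$d$, length $\geq 2$ cycles sitting inside $h$; the output would then not literally be $h+\sum_k c_k$. The fix is immediate: replace $r^\ast(g)$ by the maximum of $\r$ over $c$ and the cycles it spawns under successive applications of \ref{043} (equivalently, just induct on $\r(c)$). With that adjustment your argument coincides with the paper's. Your observation that depth $\geq d-1\geq 2$ keeps $(\,\cdot\,)_2$ intact, so the hypothesis $\le(f_2)\leq 2$ survives each step, is exactly the bookkeeping the paper uses implicitly.
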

\begin{proof}
Set $j \colonequals \r(c)$.
By \ref{043}, there exists a path degree $d-1$ right-equivalence \begin{equation*}
    \upvartheta_1 \colon f \stackrel{d-1}{\rightsquigarrow} \mathsf{f_1} \colonequals \sum_{i=1}^{m-1}\x_i^{\prime}\x_{i+1}\oplus (h +\sum_s  w_s)  + \scrO_{d+1} ,
\end{equation*}
such that $\deg(w_s)=d$  and $\r(w_s)  \leq j -1$ for each $s$.

If all $\le(w_s)=1$, we are done. Otherwise, we continue to apply \ref{043} to those $\le(w_s)>1$ to decrease $\r(w_s)$, as follows.
\begin{equation*}
    \upvartheta_2 \colon \mathsf{f}_1 \stackrel{d-1}{\rightsquigarrow} \mathsf{f}_2 \colonequals  \sum_{i=1}^{m-1}\x_i^{\prime}\x_{i+1}\oplus( h + \sum_{s,t} w_{st})+ \scrO_{d+1} ,
\end{equation*}
such that $\deg(w_{st})=\deg(c)$ and the $w_{st}$ satisfies $\r(w_{st}) \leq j-2$. 

If all $\le(w_{st})=1$, we are done. 
Otherwise, we can repeat this process at most $j-1$ times, as follows.
\begin{equation*}
    \rho_c \colon f \stackrel{d-1}{\rightsquigarrow} \mathsf{f}_{j-1} \colonequals  \sum_{i=1}^{m-1}\x_i^{\prime}\x_{i+1} \oplus( h  +\sum_{k} c_k)+ \scrO_{d+1}  ,
\end{equation*}
such that $\deg(c_{k})=\deg(c)$, and either each $\le(c_k)=1$ or $\r(c_k)=1$. However if $\r(c_k)=1$, then $\le(c_k)=1$, we are done. 
\end{proof}

We now monomialise $f$ degree by degree, using the previous lemmas. 
We begin with the degree two part.
\begin{prop}\label{045}
There exists a path degree $1$ right-equivalence,
   \begin{equation*}
        \rho_2 \colon f \stackrel{1}{\rightsquigarrow}   \sum_{i=1}^{m-1}\x_i^{\prime}\x_{i+1} \oplus  h+\scrO_{3},
    \end{equation*}
such that $\le(h )=1$ and $\deg(h )=2$.   
\end{prop}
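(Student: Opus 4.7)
The plan is to apply Lemma \ref{041} once for each length-$3$ degree-$2$ cycle appearing in $g$, and then to compose the resulting path-degree-$1$ right-equivalences into a single $\rho_2$.

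First, I would classify the degree-$2$ cycles on $Q_{n,I}$. Any such cycle has length $1$, $2$, or $3$: an intermediate index $s$ strictly between $\l(c)$ and $\r(c)$ can be skipped only if $\x_s$ is a loop, and the construction of $Q_{n,I}$ (loops alternate with non-loops in $Q_n$, and removing loops does not delete the non-loops separating them) forbids two adjacent loops. Length-$2$ cycles are cyclically equivalent to $\x_i'\x_{i+1}$ and so lie in the fixed summand $\sum_{i=1}^{m-1}\x_i'\x_{i+1}$ of $f$. Hence
\begin{equation*}
g_2 = h_0 \oplus \sum_{s \in S} \uplambda_s\,\x_{s-1}'\x_{s+1},
\end{equation*}
where $\le(h_0) = 1$, $S$ is a finite set of indices with $\x_s$ a loop, and each $\uplambda_s \in \C^{\times}$.

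Next, I would enumerate $S = \{s_1,\ldots,s_k\}$ and set $\mathsf{f}_0 \colonequals f$. For each $j$, Lemma \ref{041} produces a unitriangular depth-$1$ automorphism $\rho_j$ with $\rho_j(\mathsf{f}_{j-1}) \stackrel{2}{\sim} \mathsf{f}_j$, where $\mathsf{f}_j$ has the term $\uplambda_{s_j}\x_{s_j-1}'\x_{s_j+1}$ replaced by the length-$1$ term $-\uplambda_{s_j}\x_{s_j-1}^2$, modulo $\scrO_3$. Two checks make the iteration legitimate: the $\scrO_3$-corrections leave the degree-$2$ part untouched, so the remaining length-$3$ targets survive in $\mathsf{f}_j$ with their original coefficients; and since $a_s$ being a loop forces $a_{s-1}$ to be a non-loop, the new term $\x_{s-1}^2$ has path length $\geq 4$, so $\mathsf{f}_j$ remains a reduced Type $A$ potential of the form $\sum_{i=1}^{m-1}\x_i'\x_{i+1} \oplus \tilde{g}$ required as input for the next application of Lemma \ref{041}.

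Finally, set $\rho_2 \colonequals \rho_k \circ \cdots \circ \rho_1$. Each $\rho_j$ is unitriangular of depth $\geq 1$ and preserves both $\mathfrak{m}^2$ and $\llcurve \scrA,\scrA\rrcurve$; its defining substitution $a_{s_j} \mapsto a_{s_j} - \uplambda_{s_j} b_{s_j-1}a_{s_j-1}$ is degree-preserving, so $\rho_j(\scrO_3) \subseteq \scrO_3$. These properties are stable under composition, so $\rho_2$ is unitriangular of depth $\geq 1$, and chaining the $\stackrel{2}{\sim}$ relations gives
\begin{equation*}
\rho_2(f) \stackrel{2}{\sim} \sum_{i=1}^{m-1}\x_i'\x_{i+1} \oplus \Bigl(h_0 - \sum_{s \in S}\uplambda_s\,\x_{s-1}^2\Bigr) + \scrO_3,
\end{equation*}
with the bracketed expression being the desired $h$. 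The hard part is not any single algebraic manipulation but the bookkeeping: verifying at each stage that the $\scrO_3$-corrections neither perturb the remaining degree-$2$ targets nor destroy the reducedness that Lemma \ref{041} demands.
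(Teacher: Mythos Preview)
Your proposal is correct and follows essentially the same approach as the paper: decompose $g_2$ into length-$1$ and length-$3$ cycles, apply Lemma~\ref{041} once to each length-$3$ term, and compose the resulting depth-$1$ unitriangular automorphisms. You are more careful than the paper on two points it leaves implicit---that degree-$2$ cycles cannot have length $\ge 4$ (since two adjacent $\x_s$ cannot both be loops), and that reducedness is preserved because the new term $\x_{s-1}^2$ has path length $\ge 4$---but the underlying argument is identical.
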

\begin{proof}
Decompose $g$ (from \ref{notation:mono}) by degree  (wrt.$\ \x_i$, as in \ref{def:deg>}) as $g=g_2\oplus g_{\ge 3}$, and write $g_2= \oplus_{k=1}^s c_k$ 
as a $\C$-linear combination of degree-two cycles.
Since there are only a finite number of cycles with degree two on $Q_{n, I}$, necessarily $s$ is finite.
Then we write
\begin{equation*}
    f= \sum_{i=1}^{m-1}\x_i^{\prime}\x_{i+1} \oplus g= \sum_{i=1}^{m-1}\x_i^{\prime}\x_{i+1} \oplus g_2\oplus g_{\geq 3}.
\end{equation*}
Since $f=\sum_{i=1}^{m-1}\x_i'\x_{i+1}\oplus g$ and $g_2$ is orthogonal to $\sum_{i=1}^{m-1}\x_i'\x_{i+1}$,
the degree two part $g_2$ contains no length two terms. Hence $\le(c_k)=1$ or $\le(c_k)\ge 3$ for each $k$.

If $\le(c_1)=1$, set $\rho_{c_1}=\mathrm{Id}$. Otherwise $\le(c_1) \geq 3$, so by \ref{041} there exists
\begin{equation*}
    \rho_{c_1} \colon f \stackrel{1}{\rightsquigarrow}   \mathsf{f}_1 \colonequals \sum_{i=1}^{m-1}\x_i^{\prime}\x_{i+1} \oplus(\sum_{k = 2}^s c_k  + \mathrm{h}_1)+\scrO_{3},
\end{equation*}
such that $\le(\mathrm{h}_1)=1$ and $\deg(\mathrm{h}_1)=2$. 

If $\le(c_2)=1$, set $\rho_{c_2}=\mathrm{Id}$. Otherwise $\le(c_2) \geq 3$, so again by \ref{041} there exists
\begin{equation*}
    \rho_{c_2} \colon  \mathsf{f}_1 \stackrel{1}{\rightsquigarrow}    \mathsf{f}_2 \colonequals \sum_{i=1}^{m-1}\x_i^{\prime}\x_{i+1} \oplus (\sum_{k=3}^s c_k + \sum_{k=1}^2 \mathrm{h}_k)+\scrO_{3},    
\end{equation*}
such that $\le(\mathrm{h}_2)=1$ and $\deg(\mathrm{h}_2)=2$. 

We repeat this process $s$ times and set $\rho_2\colonequals \rho_{c_s} \circ \dots \circ \rho_{c_2} \circ \rho_{c_1}$. It follows that,
\begin{equation*}
    \rho_{2} \colon f \stackrel{1}{\rightsquigarrow}   \sum_{i=1}^{m-1}\x_i^{\prime}\x_{i+1} \oplus \sum_{k=1}^s \mathrm{h}_k+\scrO_{3},
\end{equation*}
such that $\le(\mathrm{h}_k)=1$, $\deg(\mathrm{h}_k)=2$ for each $k$. Set $h=\sum_{k=1}^s\mathrm{h}_k$, we are done.
\end{proof}

The following will allow us to monomialize the higher degree terms.
\begin{prop}\label{046}
Suppose that $\le(f_2)\leq 2$. For any $d \geq 3$, there exists a path degree $d-1$ right-equivalence,
\begin{equation*}
    \rho_d \colon f \stackrel{d-1}{\rightsquigarrow}    \sum_{i=1}^{m-1}\x_i^{\prime}\x_{i+1} \oplus( g_{<d}+h)+\scrO_{d+1}, 
\end{equation*}
such that $\le(h)=1$ and $\deg(h)=d$. 
\end{prop}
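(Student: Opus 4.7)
The plan is to mirror the strategy of Proposition~\ref{045}, but target the degree $d$ part of $f$ instead of the degree 2 part, invoking Corollary~\ref{044} where Lemma~\ref{041} was used there. First, using $F = \bigoplus_k D_k$ from \ref{notation:dij}, I would write $g = g_{<d} + g_d + g_{>d}$ and further decompose the finite-dimensional piece $g_d$ as $g_d = \bigoplus_{k=1}^s c_k$, where each $c_k$ is a distinct (up to cyclic equivalence) cycle of degree $d$; this sum is finite because $D_d$ is a finite-dimensional $\C$-vector space.

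Second, iteratively monomialize the $c_k$. For any $c_k$ with $\le(c_k)=1$, set $\rho_{c_k} = \mathrm{Id}$, since $c_k$ is already a length-one monomial. For any $c_k$ with $\le(c_k) \geq 2$, apply Corollary~\ref{044} to obtain a path degree $d-1$ right-equivalence
\begin{equation*}
\rho_{c_1} \colon f \stackrel{d-1}{\rightsquigarrow} \mathsf{f}_1 \colonequals \sum_{i=1}^{m-1}\x_i^{\prime}\x_{i+1} \oplus \Bigl(g_{<d} + \sum_{k=2}^s c_k + \mathrm{h}_1\Bigr) + \scrO_{d+1},
\end{equation*}
where $\mathrm{h}_1$ is a finite sum of length-one cycles of degree $d$ replacing $c_1$, and $g_{>d} \subset \scrO_{d+1}$ is absorbed into the error term. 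Repeating this for $c_2, \ldots, c_s$ in turn and setting $\rho_d \colonequals \rho_{c_s} \circ \cdots \circ \rho_{c_1}$ yields, since depth $\geq d-1$ is stable under composition of unitriangular automorphisms, a path degree $d-1$ right-equivalence $\rho_d \colon f \stackrel{d-1}{\rightsquigarrow} \sum_{i=1}^{m-1}\x_i^{\prime}\x_{i+1} \oplus (g_{<d} + h) + \scrO_{d+1}$ where $h \colonequals \sum_k \mathrm{h}_k$ satisfies $\le(h) = 1$ and $\deg(h) = d$.

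The main point to verify is that the iterative applications of \ref{044} are internally consistent: each successive $\rho_{c_k}$ must leave the lower-degree piece $g_{<d}$ intact, preserve the length-one monomials produced by the prior $\rho_{c_j}$ with $j < k$, and maintain the hypothesis $\le(\,\cdot\,_2) \leq 2$ needed to reapply \ref{044}. All three are consequences of the depth condition: since each $\rho_{c_k}$ has depth $\geq d-1 \geq 2$, it modifies any cycle only by terms of strictly greater path length, and these perturbations are confined to $\scrO_{d+1}$ in the degree-in-$\x_i$ grading. This bookkeeping is the chief technical obstacle, but once it is established the conclusion follows by induction on $k$, parallel to the finite sequence of automorphisms composed in the proof of Proposition~\ref{045}.
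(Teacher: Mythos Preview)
Your proposal is correct and follows essentially the same approach as the paper: decompose $g_d$ into finitely many cycles, apply Corollary~\ref{044} to each cycle of length $\geq 2$ (setting $\rho_{c_k}=\mathrm{Id}$ for those of length one), and compose the resulting depth $\geq d-1$ automorphisms. Your explicit discussion of why the hypothesis $\le((\mathsf{f}_k)_2)\leq 2$ and the piece $g_{<d}$ persist under each $\rho_{c_k}$ is a useful elaboration that the paper leaves implicit.
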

\begin{proof}
Decompose $g$ (from \ref{notation:mono}) by degree (wrt. $\x_i$, as in \ref{def:deg>}) as
$g=g_{<d}\oplus g_d\oplus g_{>d}$, and write
$g_d= \oplus_{k=1}^s c_k$
as a  $\C$-linear combination of degree $d$ cycles. Since there are only a finite number of cycles with degree $d$ on $Q_{n, I}$, $s$ is finite.

If $\le(c_1)=1$, set $\rho_{c_1}=\mathrm{Id}$. Otherwise, by \ref{044} there exists
\begin{equation*}
\rho_{c_1} \colon f \stackrel{d-1}{\rightsquigarrow}   \mathsf{f}_1 \colonequals \sum_{i=1}^{m-1}\x_i^{\prime}\x_{i+1} \oplus( g_{<d}  +\sum_{k = 2}^s c_k  +\mathrm{h}_1)+\scrO_{d+1}, 
\end{equation*}
such that $\le(\mathrm{h}_1)=1$ and $\deg(\mathrm{h}_1)=d$. %Furthermore, there also exists a degree $d-1$ right-equivalence $\rho_{c_1}^{\prime} \colon f_1 \stackrel{d-1}{\rightsquigarrow} f+\scrO_{d+1}$.  

If $\le(c_2)=1$, set $\rho_{c_2}=\mathrm{Id}$. Otherwise, again by \ref{044} there exists
\begin{equation*}
\rho_{c_2} \colon \mathsf{f}_1 \stackrel{d-1}{\rightsquigarrow}   \mathsf{f}_2 \colonequals \sum_{i=1}^{m-1}\x_i^{\prime}\x_{i+1}\oplus( g_{<d}  +\sum_{k = 3}^s c_k +\sum_{k=1}^2 \mathrm{h}_k)+\scrO_{d+1},  
\end{equation*}
such that $\le(\mathrm{h}_k)=1$ and $\deg(\mathrm{h}_k)=d$. %Furthermore, there also exists a degree $d-1$ right-equivalence $\rho_{c_2}^{\prime} \colon f_2 \stackrel{d-1}{\rightsquigarrow} f_1+\scrO_{d+1}$.

We repeat this process $s$ times and set $\rho_d\colonequals \rho_{c_s} \circ \dots \circ \rho_{c_2} \circ \rho_{c_1}$. %$\rho_d'= \rho'_{c_1} \circ \dots \circ \rho'_{c_s-1} \circ \rho'_{c_s}$. 
It follows that,
\begin{equation*}
    \rho_{d} \colon f \stackrel{d-1}{\rightsquigarrow}   \sum_{i=1}^{m-1}\x_i^{\prime}\x_{i+1} \oplus ( g_{<d}+\sum_{k=1}^s \mathrm{h}_k)+\scrO_{d+1}, 
\end{equation*}
such that $\le(\mathrm{h}_k )=1$, $\deg(\mathrm{h}_k )=d$ for each $k$. %Furthermore, $\rho_{d}^{\prime}$ satisfies $\rho_{d}^{\prime} \colon f_s \stackrel{d-1}{\rightsquigarrow} f+\scrO_{d+1}$.
Set $h=\sum_{k=1}^s \mathrm{h}_k$, we are done. 
\end{proof}

The following is the main result of this subsection.
\begin{theorem}\label{047}
For any reduced Type $A$ potential $f$ on $Q_{n, I}$, there exists a right-equivalence $\rho \colon f \rightsquigarrow f'$ such that $f'$ is a reduced monomialized Type A potential. 
\end{theorem}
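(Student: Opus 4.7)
The plan is to construct, by iterated monomialization degree by degree, a coherent sequence of potentials whose limit is a reduced monomialized Type $A$ potential, and then invoke \ref{31} to package the infinitely many intermediate right-equivalences into a single right-equivalence from $f$. Concretely, I will produce potentials $\mathsf{f}_1,\mathsf{f}_2,\ldots$ and unitriangular automorphisms $\varphi_1,\varphi_2,\ldots$ where $\mathsf{f}_i$ already has its degree $\leq i$ part monomialized, and apply \ref{045} and \ref{046} alternately to drive the process.

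Set $\mathsf{f}_1 \colonequals f$ and use \ref{rmk:middle_term} to put it in the form $\sum_{i=1}^{m-1}\x_i'\x_{i+1}\oplus g$. Applying \ref{045} yields a path degree one right-equivalence $\varphi_1\colon \mathsf{f}_1 \stackrel{1}{\rightsquigarrow} \mathsf{f}_2$, where
\begin{equation*}
\mathsf{f}_2 = \sum_{i=1}^{m-1}\x_i'\x_{i+1}\oplus h_2 + \scrO_3,
\end{equation*}
with $h_2$ of length $1$ and degree $2$. In particular $\le((\mathsf{f}_2)_2)\leq 2$. Assume inductively that for some $i\geq 2$ we have built
\begin{equation*}
\mathsf{f}_i = \sum_{j=1}^{m-1}\x_j'\x_{j+1}\oplus \sum_{k=2}^{i}h_k + \scrO_{i+1},
\end{equation*}
with each $h_k$ of length $1$ and degree $k$. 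Since the degree-two part of $\mathsf{f}_i$ is $\sum\x_j'\x_{j+1}+h_2$, which has length at most $2$, the hypothesis of \ref{046} holds, and invoking it with parameter $i+1$ produces a path degree $i$ right-equivalence $\varphi_i\colon \mathsf{f}_i \stackrel{i}{\rightsquigarrow} \mathsf{f}_{i+1}$ of the same shape with one additional monomialized piece $h_{i+1}$. The depth $i$ condition on $\varphi_i$ guarantees that the previously monomialized pieces $h_2,\ldots,h_i$ survive unchanged in $\mathsf{f}_{i+1}$ modulo $\scrO_{i+2}$, so the induction proceeds.

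The family $(\mathsf{f}_i,\varphi_i)$ satisfies the hypotheses of \ref{31}, hence $\mathsf{f}_\infty \colonequals \lim_i \mathsf{f}_i$ exists and there is an automorphism $F$ with $F(f)\sim \mathsf{f}_\infty$. Explicitly,
\begin{equation*}
\mathsf{f}_\infty = \sum_{i=1}^{m-1}\x_i'\x_{i+1} + \sum_{k=2}^{\infty}h_k,
\end{equation*}
and since $\le(h_k)=1$, every cycle in $h_k$ is cyclically equivalent to $\x_i^k$ for some $i$, so $h_k = \sum_i \upkappa_{ik}\x_i^k$ for scalars $\upkappa_{ik}\in\C$. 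Thus $\mathsf{f}_\infty$ is a monomialized Type $A$ potential in the sense of \ref{mTypeA}, and $\rho\colonequals F$ is the required right-equivalence.

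The one subtle point, which I expect to be the main thing to check carefully, is that $\mathsf{f}_\infty$ is \emph{reduced}, i.e.\ no $\x_t^2$ with $\x_t$ a loop appears. Only the base step \ref{045} (and through it \ref{041}) can introduce degree-two monomials, and a direct reading of the proof of \ref{041} shows the introduced term is $-\uplambda\x_{s-1}^2$, where in $\x_{s-1}'\x_{s+1}$ the middle $\x_s$ is a loop and the geometry of $Q_{n,I}$ forces $\x_{s-1}$ to be a non-loop arrow (it must traverse the edge adjacent to the loop $\x_s$). Every subsequent step leaves the degree-two part of the potential untouched, so the reducedness of $f$ propagates to the reducedness of $\mathsf{f}_\infty$, finishing the proof.
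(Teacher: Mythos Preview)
Your proof is correct and follows essentially the same approach as the paper: apply \ref{045} once to monomialize degree two, then iterate \ref{046} to monomialize each higher degree in turn, and invoke \ref{31} to pass to the limit. The only difference is cosmetic indexing (the paper writes $\rho_2,\rho_3,\ldots$ where you write $\varphi_1,\varphi_2,\ldots$), and your closing paragraph on reducedness spells out a detail the paper dispatches in a single sentence.
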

\begin{proof}
We first apply the $\rho_2$ in \ref{045}, 
\begin{equation*}
    \rho_2 \colon f \stackrel{1}{\rightsquigarrow}   \mathsf{f}_1 \colonequals \sum_{i=1}^{m-1}\x_i^{\prime}\x_{i+1}\oplus \mathrm{h}_2+\scrO_{3},
\end{equation*}
such that $\le(\mathrm{h}_2)=1$ and $\deg(\mathrm{h}_2)=2$. 

Since $(\mathsf{f}_1)_2=\sum_{i=1}^{m-1}\x_i^{\prime}\x_{i+1}\oplus \mathrm{h}_2$, it is clear that $\le((\mathsf{f}_1)_2) \leq 2$.
Thus by \ref{046} applied to $\mathsf{f}_1$, there exists
\begin{equation*}
    \rho_3 \colon \mathsf{f}_1\stackrel{2}{\rightsquigarrow}   \mathsf{f}_2 \colonequals \sum_{i=1}^{m-1}\x_i^{\prime}\x_{i+1} \oplus \sum_{j=2}^3 \mathrm{h}_j+\scrO_{4}, 
\end{equation*}
such that $\le(h_3)=1$, $\deg(h_3)=3$. 
Iterating this procedure, for each $s \ge 3$ we obtain
\begin{equation*}
    \rho_s \circ \dots \circ \rho_3 \circ \rho_2 \colon f \rightsquigarrow  \mathsf{f}_s \colonequals \sum_{i=1}^{m-1}\x_i^{\prime}\x_{i+1} \oplus \sum_{j=2}^s \mathrm{h}_j+\scrO_{s+1}, 
\end{equation*}
such that $\le(\mathrm{h}_j)=1$ and $\deg(\mathrm{h}_j)=j$ for all $2\leq j\leq s$.

Since $\rho_d$ is a path degree $d-1$ right-equivalence for each $d \geq 2$ by \ref{045} and \ref{046}, by \ref{31} 
$\rho \colonequals \lim_{s \rightarrow \infty} \rho_s \circ \dots \circ \rho_3 \circ \rho_2$ exists, and further
\begin{equation*}
    \rho\colon f \rightsquigarrow \sum_{i=1}^{m-1}\x_i^{\prime}\x_{i+1} \oplus \sum_{j=2}^{\infty}\mathrm{h}_j,
\end{equation*}
such that $\le(\mathrm{h}_j)=1$ and $\deg(\mathrm{h}_j)=j$ for each $j$. 

Set $f'= \sum_{i=1}^{m-1}\x_i^{\prime}\x_{i+1}+\sum_{j=2}^{\infty}\mathrm{h}_j$. Since $\le(\mathrm{h}_j)=1$ for each $j$, $f'$ is a monomialized Type A potential. 
Moreover, since $f$ is reduced, $f'$ is also reduced.
\end{proof}

\subsection{Transform monomialized Type A potentials on $Q_{n, I}$ to $Q_n$}\label{trans}

To state unified results later, it will be convenient to show that any monomialized Type $A$ potential on $Q_{n, I}$ is isomorphic to a (possibly non-reduced) monomialized Type $A$ potential on $Q_n$. 
This required the following results, which provide a precise construction for adding a loop to $Q_{n, I}$.
\begin{lemma}\label{048}
Given any $ I \neq\{1,2, \dots, n\}$ and $i \in I^c$, let $\x_t$ be the loop at vertex $i$ of $Q_{n, I}$. Suppose that $h=\sum_{i=1}^{t-2}\x_i^{\prime}\x_{i+1} + \x_{t-1}'\x_{t+1}+\sum_{i=t+1}^{m-1}\x_i^{\prime}\x_{i+1}-\frac{1}{2}\x_t^2+\sum_{i\neq t}\sum_{j=2}^{\infty} \upkappa_{ij} \x_i^{j}$ where all $\upkappa_{ij} \in \C$.
There exists a right-equivalence
\begin{equation*}
    h \rightsquigarrow \sum_{i=1}^{m-1}\x_i^{\prime}\x_{i+1} +\sum_{i=1}^{m}\sum_{j=2}^{\infty} \upkappa_{ij}' \x_i^{j},
\end{equation*}
where $\upkappa_{ij}'$ are some scalars, and further $\upkappa'_{t2}\neq 0$.
\end{lemma}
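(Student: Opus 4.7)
The plan is to ``complete the square'' in the loop variable $\x_t = a_t$. Explicitly, define an algebra homomorphism $\sigma \colon \C\lbl Q_{n,I}\rbl \to \C\lbl Q_{n,I}\rbl$ by
\begin{equation*}
\sigma(a_t) = a_t - \x_{t-1}' - \x_{t+1},
\end{equation*}
with $\sigma(a) = a$ for every other arrow, with the convention that any boundary term outside the range $1 \leq \bullet \leq m$ is dropped. By \ref{thm28} this extends uniquely to an algebra automorphism. The first thing to observe is that neither $\x_{t-1}$ nor $\x_{t+1}$ can itself be a loop in $Q_{n,I}$, because in $Q_n$ each vertex carries at most one loop and the loop at the relevant vertex is already $\x_t$. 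Hence both $\x_{t-1}'$ and $\x_{t+1}$ have path-length two, $\sigma(a_t) - a_t \in \mathfrak{m}^2$, and $\sigma$ is a unitriangular automorphism of depth one; in particular it induces a right-equivalence.

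Next I would compute $\sigma(h)$ up to cyclic equivalence. Every summand of $h$ other than $-\tfrac{1}{2}\x_t^2$ is built from arrows that avoid $a_t$, so $\sigma$ fixes them. Writing $y = \x_{t-1}'$ and $z = \x_{t+1}$ and expanding $(a_t - y - z)^2$, the cyclic identities $a_t y \sim y a_t$, $a_t z \sim z a_t$, $yz \sim zy$, together with the loop identity $\x_t = \x_t'$, yield
\begin{equation*}
\sigma\bigl(-\tfrac{1}{2}\x_t^2\bigr) \;\sim\; -\tfrac{1}{2}\x_t^2 + \x_{t-1}'\x_t + \x_t'\x_{t+1} - \tfrac{1}{2}(\x_{t-1}')^2 - \x_{t-1}'\x_{t+1} - \tfrac{1}{2}\x_{t+1}^2.
\end{equation*}
Substituting back into $h$, the newly produced $-\x_{t-1}'\x_{t+1}$ cancels the $+\x_{t-1}'\x_{t+1}$ already present, while the two linear-chain terms $\x_{t-1}'\x_t + \x_t'\x_{t+1}$ fill in the missing slots and combine with the two partial sums in $h$ to reconstruct the full chain $\sum_{i=1}^{m-1}\x_i'\x_{i+1}$. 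Using $(\x_{t-1}')^2 \sim \x_{t-1}^2$, the leftover pieces $-\tfrac{1}{2}\x_{t-1}^2$, $-\tfrac{1}{2}\x_t^2$ and $-\tfrac{1}{2}\x_{t+1}^2$ merge into the monomial tail.

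Reading off coefficients gives the claimed form, with $\upkappa_{t,2}' = -\tfrac{1}{2}$ (in particular, nonzero), $\upkappa_{t\pm 1,2}' = \upkappa_{t\pm 1,2} - \tfrac{1}{2}$, and every other $\upkappa_{ij}'$ equal to the original $\upkappa_{ij}$. The only delicate point is selecting the coefficients $(-1,-1)$ in $\sigma$ so that the quadratic cross-term $\x_{t-1}'\x_{t+1}$ cancels exactly; after that the remainder of the argument is routine cyclic bookkeeping and minor edge-case handling when $t \in \{1,m\}$.
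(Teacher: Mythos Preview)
Your proposal is correct and follows essentially the same approach as the paper: both apply the unitriangular automorphism $a_t \mapsto a_t - \x_{t-1}' - \x_{t+1}$ (i.e.\ $\x_t \mapsto \x_t - \x_{t-1}' - \x_{t+1}$), expand $-\tfrac{1}{2}(\x_t - \x_{t-1}' - \x_{t+1})^2$ up to cyclic equivalence, and read off $\upkappa_{t2}' = -\tfrac{1}{2}$. Your write-up is slightly more detailed in justifying why $\sigma$ has depth one and in handling the boundary cases $t\in\{1,m\}$, but the substance is identical.
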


\begin{proof}
Since $\x_t$ is the loop at vertex $i$ of $Q_{n, I}$, the quiver looks like the following locally,
\[
\begin{tikzpicture}[scale=2]
\node (e) at (2,0) [vertex] {};
\node (f) at (3,0) [vertex] {};
\node (h) at (4,0) [vertex] {};

\node (b2) at (3,-0.15) {$\scriptstyle i$};

\draw[->,bend left,looseness=0.7] (e) to node[above] {$\scriptstyle a_{t}$} (f);
\draw[<-,bend right,looseness=0.7] (e) to node[below] {$\scriptstyle b_{t}$} (f);
\draw[->]  (f) edge [in=55,out=120,loop,looseness=18] node[above]{$\scriptstyle \x_{t}$}  (f);
\draw[->,bend left,looseness=0.7] (f) to node[above] {$\scriptstyle a_{t+1}$} (h);
\draw[<-,bend right,looseness=0.7] (f) to node[below] {$\scriptstyle b_{t+1}$} (h);
\end{tikzpicture}
\]
Being a loop, $\x_t=a_t$, so applying the automorphism $a_t \mapsto a_t -b_{t-1}a_{t-1}-a_{t+1}b_{t+1}$ (in other words, $\x_t \mapsto \x_t - \x_{t-1}'-\x_{t+1}$) gives,
\begin{align}
 h & \mapsto   \sum_{i=1}^{t-2}\x_i^{\prime}\x_{i+1} + \x_{t-1}'\x_{t+1}+\sum_{i=t+1}^{m-1}\x_i^{\prime}\x_{i+1}-\frac{1}{2}(\x_t - \x_{t-1}'-\x_{t+1})^2+\sum_{i\neq t}\sum_{j=2}^{\infty} \upkappa_{ij} \x_i^{j} \notag \\
& \sim \sum_{i=1}^{m-1}\x_i^{\prime}\x_{i+1} -\frac{1}{2}\x_{t-1}^2-\frac{1}{2}\x_t^2-\frac{1}{2}\x_{t+1}^2+\sum_{i\neq t}\sum_{j=2}^{\infty} \upkappa_{ij}\x_i^{j}. \label{eqaution:421}
\end{align}
The last step uses the cyclic equivalences
$\x_{t-1}'\x_t \sim \x_t\x_{t-1}'$, $\x_t\x_{t+1} \sim \x_{t+1}\x_t$, and
$\x_{t-1}'\x_{t+1} \sim \x_{t+1}\x_{t-1}'$.
Then define the scalars $\upkappa'_{ij}$ by the identity
\[
\sum_{i=1}^{m}\sum_{j=2}^{\infty} \upkappa_{ij}' \x_i^{j}
= -\frac{1}{2}\x_{t-1}^2-\frac{1}{2}\x_t^2-\frac{1}{2}\x_{t+1}^2+\sum_{i\neq t}\sum_{j=2}^{\infty} \upkappa_{ij}\x_i^{j}.
\]
Since $\upkappa_{t2}'$ is the coefficient of $\x_t^2$ in \eqref{eqaution:421}, $\upkappa_{t2}'=-\frac{1}{2} \neq 0$.
\end{proof}

\begin{cor}\label{049}
Given any $I \neq \emptyset$, $i \in I$ and a monomialized Type A potential $f$ on $Q_{n, I}$, then there exists a monomialized Type A potential $g$ on $Q_{n, I/i}$ such that $\Jac(Q_{n, I}, f) \cong \Jac(Q_{n, I/i}, g)$ and $g$ contains the square of the loop at vertex $i$.
\end{cor}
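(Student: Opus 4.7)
The plan is to \emph{lift} $f$ from $Q_{n, I}$ to an auxiliary potential $h$ on $Q_{n, I \setminus \{i\}}$ (which has an additional loop $\x_t$ at vertex $i$) whose Jacobi algebra is isomorphic to $\Jac(f)$, and then invoke \ref{048} to transform $h$ into the required monomialized Type A potential $g$ containing $\x_t^2$.

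To set up, let $t$ denote the label of the new loop at vertex $i$ on $Q_{n, I \setminus \{i\}}$. Since $Q_{n, I}$ is obtained from $Q_{n, I \setminus \{i\}}$ by deleting $a_t$ and relabelling, there is a canonical identification $\psi \colon \C\lbl Q_{n, I \setminus \{i\}}\rbl / \langle a_t \rangle \xrightarrow{\sim} \C\lbl Q_{n, I}\rbl$ that shifts the index of each arrow $a_k$ and $b_k$ with $k > t$ down by $1$. Under $\psi^{-1}$, the cycle $\tilde{\x}_{t-1}'\tilde{\x}_t$ on $Q_{n, I}$ (using tildes for $Q_{n, I}$) pulls back to $b_{t-1}a_{t-1}a_{t+1}b_{t+1} = \x_{t-1}'\x_{t+1}$ on $Q_{n, I \setminus \{i\}}$, and each monomial $\tilde{\x}_k^j$ in $f$ pulls back to $\x_k^j$ if $k < t$ or $\x_{k+1}^j$ if $k \geq t$.

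Next, I would define $h$ on $Q_{n, I \setminus \{i\}}$ in the form prescribed by \ref{048}: take the image of the chain and monomial terms of $f$ under $\psi^{-1}$, and add the single new term $-\tfrac{1}{2}\x_t^2$. Explicitly,
\[
h = \sum_{k=1}^{t-2}\x_k'\x_{k+1} + \x_{t-1}'\x_{t+1} + \sum_{k=t+1}^{m}\x_k'\x_{k+1} - \tfrac{1}{2}\x_t^2 + \sum_{k \neq t}\sum_{j=2}^{\infty}\upkappa_{kj}\x_k^j,
\]
where the $\upkappa_{kj}$ are inherited from $f$ via $\psi^{-1}$. The crucial observation is that $a_t$ occurs in $h$ \emph{only} in $-\tfrac{1}{2}\x_t^2$ (the jump term $\x_{t-1}'\x_{t+1}$ bypasses $\x_t$, and each $\x_k^j$ with $k \neq t$ does not involve $a_t$). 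Hence $\partial_{a_t}h = -a_t$, forcing $a_t = 0$ in $\Jac(h)$. Working modulo this relation, the remaining cyclic derivatives $\partial_{a_k}h$ and $\partial_{b_k}h$ for $k \neq t$ correspond, term by term under $\psi$, to the cyclic derivatives of $f$; this gives an algebra isomorphism $\Jac(h) \cong \Jac(f)$.

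Finally, \ref{048} applied to $h$ supplies a right-equivalence $h \rightsquigarrow g$ where $g = \sum_{k=1}^{m}\x_k'\x_{k+1} + \sum_{k,j}\upkappa_{kj}'\x_k^j$ is a monomialized Type A potential on $Q_{n, I \setminus \{i\}}$ with $\upkappa_{t2}' \neq 0$, so $g$ contains the square of the loop at $i$. Since right-equivalence preserves the Jacobi algebra, $\Jac(g) \cong \Jac(h) \cong \Jac(f)$, as required. The main subtlety is the careful bookkeeping of $\psi$ and verifying that the jump term $\x_{t-1}'\x_{t+1}$ correctly encodes the missing middle term $\tilde{\x}_{t-1}'\tilde{\x}_t$; once that is in place, the construction of $h$ and the identification of relations are essentially forced by the observation $\partial_{a_t}h = -a_t$.
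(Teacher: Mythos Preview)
Your proposal is correct and follows essentially the same route as the paper: relabel $f$ onto $Q_{n,I\setminus\{i\}}$, set $h = f - \tfrac{1}{2}\x_t^2$, observe $\Jac(h)\cong\Jac(f)$, and then invoke \ref{048}. The paper simply asserts the isomorphism $\Jac(Q_{n,I},f)\cong\Jac(Q_{n,I/i},h)$ as clear, whereas you spell out the reason via $\partial_{a_t}h=-a_t$ and the identification $\psi$; otherwise the arguments coincide.
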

\begin{proof}
Let $\x_t$ be the loop at vertex $i$ of $Q_{n, I/i}$. Locally, $Q_{n, I}$ and $Q_{n, I/i}$ look like the following, respectively.
\[
\begin{tikzpicture}[scale=2]
\node (a) at (-1,0) [vertex] {};
\node (b) at (0,0) [vertex] {};
\node (c) at (1,0) [vertex] {};
\node (d) at (0,-0.5) {$Q_{n, I}$};
\node (a2) at (0,-0.15) {$\scriptstyle i$};

\draw[->,bend left,looseness=0.7] (a) to node[above] {$\scriptstyle a_{t-1}$} (b);
\draw[<-,bend right,looseness=0.7] (a) to node[below] {$\scriptstyle b_{t-1}$} (b);
\draw[->,bend left,looseness=0.7] (b) to node[above] {$\scriptstyle a_{t}$} (c);
\draw[<-,bend right,looseness=0.7] (b) to node[below] {$\scriptstyle b_{t}$} (c);

\node (e) at (2,0) [vertex] {};
\node (f) at (3,0) [vertex] {};
\node (h) at (4,0) [vertex] {};
\node (g) at (3,-0.5) {$Q_{n,I/i}$};

\node (b2) at (3,-0.15) {$\scriptstyle i$};

\draw[->,bend left,looseness=0.7] (e) to node[above] {$\scriptstyle a_{t}$} (f);
\draw[<-,bend right,looseness=0.7] (e) to node[below] {$\scriptstyle b_{t}$} (f);
\draw[->]  (f) edge [in=55,out=120,loop,looseness=18] node[above]{$\scriptstyle \x_{t}$}  (f);
\draw[->,bend left,looseness=0.7] (f) to node[above] {$\scriptstyle a_{t+1}$} (h);
\draw[<-,bend right,looseness=0.7] (f) to node[below] {$\scriptstyle b_{t+1}$} (h);
\end{tikzpicture}
\]

Relabeling the paths allows us to consider $f$ as a potential on $Q_{n, I/i}$.
More precisely, we replace the $a_k$ and $b_k$ in $f$ by $a_{k+1}$ and $b_{k+1}$ respectively for any $k \geq t$.
Then set $h\colonequals f- \frac{1}{2}\x_t^2$. It is clear that $ \Jac(Q_{n, I},f) \cong \Jac(Q_{n, I/ i},h)$. 

By \ref{048}, there exists a right-equivalence $h \rightsquigarrow g$ such that $g$ is a monomialized Type A potential on $Q_{n, I/ i}$ and $g$ contains $\x_t^2$. Thus $\Jac(Q_{n, I/ i},h) \cong \Jac(Q_{n, I/ i},g)  $, and so $\Jac(Q_{n, I},f) \cong \Jac(Q_{n, I/ i},g)$.
\end{proof}

\begin{prop}\label{410}
Given any $I$ and a monomialized Type $A$ potential $f=\sum_{i=1}^{m-1}\x_i^{\prime}\x_{i+1} + \sum_{i=1}^m\sum_{j=2}^{\infty} \upkappa'_{ij} \x_i^{j}$ on $Q_{n, I}$ where all $\upkappa'_{ij} \in \C$, then there exists a monomialized Type $A$ potential $g$ on $Q_n$, namely
\begin{equation*}
  g= \sum_{i=1}^{2n-2}\x_i^{\prime}\x_{i+1} + \sum_{i=1}^{2n-1}\sum_{j=2}^{\infty} \upkappa_{ij} \x_i^{j}
\end{equation*}
for some $\upkappa_{ij} \in \C$, such that $\Jac(Q_{n}, g) \cong \Jac(Q_{n, I}, f)$ and $\upkappa_{2i-1,2}\neq 0$ for each $i \in  I$.
\end{prop}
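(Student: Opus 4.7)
The plan is to prove \ref{410} by iterating \ref{049} once for each vertex in $I$. Fix any ordering $I=\{i_1, \dots, i_k\}$ and recursively construct potentials $f=\mathsf{f}_0, \mathsf{f}_1, \dots, \mathsf{f}_k$, where each $\mathsf{f}_l$ is a monomialized Type $A$ potential on $Q_{n, I_l}$ with $I_l \colonequals I \setminus \{i_1, \dots, i_l\}$. At step $l$, apply \ref{049} at vertex $i_l$ to $\mathsf{f}_{l-1}$ to produce $\mathsf{f}_l$. By \ref{049}, each $\mathsf{f}_l$ remains a monomialized Type $A$ potential and $\Jac(Q_{n, I_{l-1}}, \mathsf{f}_{l-1}) \cong \Jac(Q_{n, I_l}, \mathsf{f}_l)$. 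Since $I_k = \emptyset$, we have $Q_{n, I_k} = Q_n$, and setting $g \colonequals \mathsf{f}_k$ and chaining isomorphisms gives $\Jac(Q_n, g) \cong \Jac(Q_{n, I}, f)$.

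The substantive work is to verify $\upkappa_{2i-1, 2} \neq 0$ in the final $g$ for every $i \in I$. Tracking what \ref{049} does via \ref{048}: the automorphism $\x_t \mapsto \x_t - \x_{t-1}' - \x_{t+1}$, combined with the manually inserted $-\tfrac{1}{2}\x_t^2$ term, acts on degree-2 square coefficients by sending the coefficient of $\x_t^2$ to $-\tfrac{1}{2}$, and decreasing the coefficients of $\x_{t-1}^2$ and $\x_{t+1}^2$ each by $\tfrac{1}{2}$ (with the boundary cases $t = 1$ or $t = m$ simply dropping the non-existent neighbor). Crucially, no other degree-2 square coefficient is changed, since the automorphism only touches $a_t$. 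Here $t$ is the position of the newly inserted loop in the intermediate quiver $Q_{n, I_l}$.

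The key is a parity observation under the eventual relabeling back to $Q_n$. In $Q_n$, the loop at vertex $j$ occupies the odd position $\x_{2j-1}$, while the two edges flanking it occupy the even positions $\x_{2j-2}$ and $\x_{2j}$. When \ref{049} is applied at vertex $i_l$, the relabeling-to-$Q_n$ correspondence sends the intermediate positions $\{t-1, t, t+1\}$ precisely to $\{2i_l-2, 2i_l-1, 2i_l\}$ in $Q_n$, because the loop at vertex $i_l$ always ends up at position $2i_l-1$ in $Q_n$ regardless of insertion order. Hence the only loop-square coefficient that can change at step $l$ is $\upkappa_{2i_l-1, 2}$, and it is set to $-\tfrac{1}{2}$. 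For any $j \neq i_l$, the index $2j-1$ avoids $\{2i_l-2, 2i_l-1, 2i_l\}$ (since $2j-1$ is odd while $2i_l-2, 2i_l$ are even, and $2j-1 \neq 2i_l-1$ whenever $j \neq i_l$), so step $l$ leaves $\upkappa_{2j-1, 2}$ unchanged.

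Applying this invariance across all $k$ iterations, the value $\upkappa_{2i_l-1, 2}$ is fixed to $-\tfrac{1}{2}$ at step $l$ and then preserved through steps $l+1, \dots, k$. Hence in the final $g$ we obtain $\upkappa_{2i-1, 2} = -\tfrac{1}{2} \neq 0$ for every $i \in I$, as required. I expect the main delicate point to be carefully bookkeeping the relabeling between the shifting intermediate indexings on each $Q_{n, I_l}$ and the fixed final indexing on $Q_n$; but the parity argument above is robust because loop positions and edge positions never collide in $Q_n$, regardless of the order in which the missing loops are reinserted.
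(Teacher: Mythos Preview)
Your proposal is correct and follows the same iterative strategy as the paper: apply \ref{049} once for each $i\in I$ to successively reinsert the missing loops, chaining Jacobi-algebra isomorphisms until landing on $Q_n$. The paper's proof simply asserts that after each step the newly created loop-square coefficient persists through subsequent iterations, whereas you supply an explicit parity argument (loop positions are odd in the final $Q_n$-indexing, the positions affected by each application of \ref{048} are one odd and two even) to justify this persistence; this is a welcome clarification of a point the paper leaves implicit.
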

\begin{proof}
If $I=\emptyset$, there is nothing to prove. Otherwise, given any $i \in  I$, by \ref{049} there exist a monomialized Type $A$ potential $\mathsf{g}_1$ on $Q_{n, I \backslash i  }$ such that $\Jac(Q_{n,I \backslash i},\mathsf{g}_1)\cong \Jac(Q_{n,I},f)$, where $\mathsf{g}_1$ contains the square of the loop at vertex $i$.

Similarly, by \ref{049} we can repeat the same argument to $\mathsf{g}_1$ on $Q_{n, I \backslash i }$ and any $j \in I \backslash \{i\}$ to construct a monomialized Type $A$ potential $\mathsf{g}_2$ on $Q_{n, I \backslash \{i,j\} }$ such that $\Jac(Q_{n,I \backslash \{i,j\}}, \mathsf{g}_2)\cong \Jac(Q_{n,I \backslash i},\mathsf{g}_1)$, where $\mathsf{g}_2$ contains the square of the loop at vertex $i$ and vertex $j$.
 
Set $s=|I|$. Thus we can repeat this process $s$ times to construct a monomialized Type $A$ potential $\mathsf{g}_s$ on $Q_{n,\emptyset}$ such that $\Jac(Q_{n,\emptyset}, \mathsf{g}_{s})\cong \Jac(Q_{n,I}, f)$, and $\mathsf{g}_s$ contains the square of all the loops at all vertices $i \in I$. 

Set $g \colonequals \mathsf{g}_{s}$. Since $\upkappa_{2i-1,2}$ are the coefficients of the square of the loops at the vertices $i \in I$, the statement follows. 
\end{proof}

\section{Geometric Realisation}\label{GR}
In \S\ref{Gr} we show that every Type~$A$ potential on $Q_{n,I}$ is realised by a crepant resolution of a $cA_n$ singularity (see~\ref{511}), thereby proving the Realisation Conjecture of Brown--Wemyss~\cite{BW2} for Type~$A$
potentials.  Conversely, \S\ref{Cor} establishes the reverse direction (see~\ref{514}) and then proves a correspondence between crepant resolutions of $cA_n$ singularities and our intrinsic Type~$A$ potentials on $Q_n$
(see~\ref{515} and~\ref{516}).

\subsection{Geometric Realisation}\label{Gr}

In this subsection we prove that, given any Type~$A$ potential $f$ on $Q_{n,I}$, there exists a crepant resolution
$\uppi$ of a $cA_n$ singularity such that $\Jac(f)\cong \Lambda_{\mathrm{con}}(\uppi)$ (see~\ref{511}).  This verifies
the Realisation Conjecture of Brown--Wemyss~\cite{BW2} in the setting of Type~$A$ potentials.

\begin{notation}\label{notation: gr1}
Fix a monomialised Type~$A$ potential $f$ on $Q_n$ of the form
\begin{equation}\label{f1}
    f=\sum_{i=1}^{2n-2}\x_i^{\prime}\x_{i+1} + \sum_{i=1}^{2n-1}\sum_{j=2}^{\infty}\upkappa_{ij}\x_i^j.
\end{equation}

Consider the system of equations \eqref{501} in unknowns $g_0,\dots,g_{2n}\in\C\lal x,y\ral$:
\begin{align}
   g_0+\sum_{j=2}^{\infty}j\upkappa_{1j}g_1^{j-1}+g_2&=0 \notag\\
     g_1+\sum_{j=2}^{\infty}j\upkappa_{2j}g_2^{j-1}+g_3&=0  \notag\\
 &\vdotswithin{=}  \label{501} \\
   g_{2n-2}+\sum_{j=2}^{\infty}j\upkappa_{2n-1,j}g_{2n-1}^{j-1}+g_{2n}&=0.  \notag
\end{align}
\end{notation}

The following lemma allows us to construct the geometric realisation of $f$ \eqref{f1} in \ref{notation: gr2} \eqref{gr2 1} by the system of equations \eqref{501}.
\begin{lemma}\label{054}
With notation in \textnormal{\ref{notation: gr1}}, fix some integer $t$ satisfying $0 \leq t \leq 2n-1$, and set $g_t=y$, $g_{t+1}=x$. 
Then there exists a sequence $(g_0,g_1,\dots,g_{2n})$ satisfying \eqref{501} such that each
$g_s\in (x,y ) \subseteq \C\lal x,y\ral$ is a prime element with a linear term.
Moreover, 
\begin{enumerate}
\item For any $0\leq s \leq 2n-1$, $( g_s,g_{s+1}) = (x,y)$.
\item For any $1\leq s \leq 2n-1$, $(g_{s-1},g_{s+1})\subsetneq (x,y)$ when $\upkappa_{s2}= 0$, and $( g_{s-1},g_{s+1}) = ( x,y )$ when $\upkappa_{s2}\neq 0$.
\end{enumerate}
\end{lemma}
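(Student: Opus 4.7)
The equations in \eqref{501} are tridiagonal in the $g_s$, so given the two seed values $g_t=y$ and $g_{t+1}=x$, the strategy is to solve the system recursively, sweeping downwards using the $s$-th equation for $s=t,t-1,\dots,1$ to produce $g_{t-1},g_{t-2},\dots,g_0$, and sweeping upwards using the $s$-th equation for $s=t+1,\dots,2n-1$ to produce $g_{t+2},\dots,g_{2n}$. Since each freshly constructed $g_s$ will lie in $\lcl x,y\rcl$ we have $g_s^{j-1}\in\lcl x,y\rcl^{j-1}$, so the infinite sums converge in $\C\lal x,y\ral$, and a routine induction confirms $g_s\in\lcl x,y\rcl$ for every $s$.

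For part (1), I would prove $\lcl g_s,g_{s+1}\rcl=\lcl x,y\rcl$ by induction on $|s-t|$, with the base case $s=t$ holding by construction. The inductive step rests on the observation that the $s$-th equation gives $g_{s-1}\equiv -g_{s+1}\pmod{\lcl g_s\rcl}$, so $\lcl g_{s-1},g_s\rcl=\lcl g_{s+1},g_s\rcl$; the upward sweep is symmetric.

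For part (2), I would split on whether $\upkappa_{s2}$ vanishes. When $\upkappa_{s2}=0$, the $s$-th equation forces $g_{s-1}+g_{s+1}\in\lcl g_s\rcl^2\subseteq\lcl x,y\rcl^2$, while part (1) guarantees that both $g_{s-1}$ and $g_{s+1}$ have nonzero linear parts (since each pairs with $g_s$ to generate $\lcl x,y\rcl$). Those linear parts must then be nonzero scalar multiples of one another, so the image of $\lcl g_{s-1},g_{s+1}\rcl$ in $\lcl x,y\rcl/\lcl x,y\rcl^2$ is at most one-dimensional, forcing $\lcl g_{s-1},g_{s+1}\rcl\subsetneq\lcl x,y\rcl$. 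When $\upkappa_{s2}\neq 0$, the same equation rearranges to
\[
g_s\Bigl(2\upkappa_{s2}+\sum_{j\ge 3}j\upkappa_{sj}g_s^{j-2}\Bigr)=-g_{s-1}-g_{s+1},
\]
and the bracketed factor is a unit in $\C\lal x,y\ral$, so $g_s\in\lcl g_{s-1},g_{s+1}\rcl$; combining with $\lcl g_{s-1},g_s\rcl=\lcl x,y\rcl$ from part (1) then gives $\lcl g_{s-1},g_{s+1}\rcl=\lcl x,y\rcl$.

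The work is essentially organisational rather than computational. The genuine obstacle is to set up the inductions in the right order so that (1) is available inside the $\upkappa_{s2}\neq 0$ branch of (2), and to verify that the two-sweep construction correctly handles the boundary cases $t=0$ and $t=2n-1$ where one sweep is empty. The key structural point that unlocks everything is that the tridiagonal form of \eqref{501} allows the system to be propagated in both directions from any chosen pair of consecutive seeds.
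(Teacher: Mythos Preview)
Your proposal is correct and follows essentially the same approach as the paper: the two-directional recursive sweep from the seeds $g_t=y$, $g_{t+1}=x$, the chain of equalities $\lcl g_{s-1},g_s\rcl=\lcl g_s,g_{s+1}\rcl$ obtained from the tridiagonal relation for part (1), and the unit/non-unit analysis of the factor $2\upkappa_{s2}+\sum_{j\ge 3}j\upkappa_{sj}g_s^{j-2}$ for part (2). The only cosmetic difference is that in the $\upkappa_{s2}=0$ branch the paper rewrites $\lcl g_{s-1},g_{s+1}\rcl=\lcl g_{s-1},\sum_{j\ge 2}j\upkappa_{sj}g_s^{j-1}\rcl$ and observes the second generator lies in $\lcl x,y\rcl^2$, whereas you argue via matching linear parts; these are the same argument.
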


\begin{proof}
We start with the equation $g_t+\sum_{j=2}^{\infty}j\upkappa_{t+1,j}g_{t+1}^{j-1}+g_{t+2}=0 $ in \eqref{501} which defines $g_{t+2}= -y-\sum_{j=2}^{\infty}j\upkappa_{t+1,j}x^{j-1} \in  (  x,y)$. 
Then we consider $g_{t+1}+\sum_{j=2}^{\infty}j\upkappa_{t+2,j}g_{t+2}^{j-1}+g_{t+3}=0$ which also defines $g_{t+3} \in  (  x,y )$. Thus we can repeat this process to construct $g_s\in  ( x,y )$ for $t+2 \leq s \leq 2n$.
Similarly, the equation $g_{t-1}+\sum_{j=2}^{\infty}j\upkappa_{t,j}g_{t}^{j-1}+g_{t+1}=0$ defines $g_{t-1} \in  ( x,y )$. We can repeat this process to construct $g_s\in  ( x,y)$ for $0 \leq s \leq t-1$.

(1) For any $0 \leq s \leq 2n-2$, using $g_s+\sum_{j=2}^{\infty}j\upkappa_{s+1,j}g_{s+1}^{j-1}+g_{s+2}=0 $ in \eqref{501}, we have $( g_s,g_{s+1})= (g_{s+1},g_{s+2})$. Moving either to the left or right until we hit $t$, it follows that $( g_s,g_{s+1})= ( g_{t},g_{t+1})= ( y,x )$. 
Hence $(g_s,g_{s+1})=(x,y)$ for all $0\leq s\leq 2n-1$. 

In particular, no $g_s$ can lie in $( x,y )^2$,
so each $g_s$ has a nonzero linear term. Moreover, since $(g_s,g_{s+1} )$ is the maximal ideal of $\C\lal x,y\ral$,
each $g_s$ is irreducible, hence prime.

(2) For any $1\leq s \leq 2n-1$, using $g_{s-1}+\sum_{j=2}^{\infty}j\upkappa_{sj}g_{s}^{j-1}+g_{s+1}=0 $ in \eqref{501}, we have $( g_{s-1},g_{s+1} ) =( g_{s-1},\sum_{j=2}^{\infty}j\upkappa_{sj}g_{s}^{j-1} )$. Thus, if $\upkappa_{s2}= 0$ then $( g_{s-1},g_{s+1}) \subsetneq ( x,y)$, and if $\upkappa_{s2}\neq 0$ then $(g_{s-1},g_{s+1}) = ( g_{s-1},g_{s})$ which equals $(x,y)$ by (1).
\end{proof}

\begin{notation}\label{notation: gr2}
For any $t$ with $0 \leq t \leq 2n-1$, \ref{054} calculates a solution of \eqref{501}.
Fix any such solution, say $(g_0,g_1, \dots ,g_{2n})$. From this, we adopt the following notation.
\begin{enumerate}
\item We first define the $cA_n$ singularity
\begin{equation*}
\scrR \colonequals \frac{\mathbb{C} \lal u, v, x, y \ral}{uv-g_{0}g_{2} \dots g_{2n}}.
\end{equation*}
Note that each $g_i$ is a prime element of $\C \lal x,y \ral$ with a linear term by \ref{054}, so $\scrR$ is a $cA_n$ singularity.
Then consider the $\CM$ $\scrR$-module 
\[
M\colonequals \scrR \oplus (u,g_{0}) \oplus (u,g_{0}g_{2}) \oplus \ldots \oplus (u,\prod_{j=0}^{n-1} g_{2j}).
\] \label{gr2 1}

\item We next define
\begin{equation*}
 \scrS_1 \colonequals \frac{ \C \lal u,v,x_0,x_1, x_2,x_3 \dots , x_{2n-1},x_{2n} \ral}{uv- x_0x_2 \dots x_{2n}}.
\end{equation*} \label{gr2 4}

\item Define a sequence $h_1, h_2,\dots , h_{2n-1} \in \scrS_1$ to be
\begin{align*}
    h_i \colonequals x_{i-1}+\sum_{j=2}^{\infty}j\upkappa_{ij}x_i^{j-1}+x_{i+1},
\end{align*}
and set $\scrS_i \colonequals \scrS_1/(h_1, h_2, \dots, h_{i-1} )$ for $2 \leq i \leq 2n$. \label{gr2 5}

\item For $1 \leq i \leq 2n$, by abuse of notation we regard
$(u, x_{0}), \ (u, x_{0}x_{2}), \ \ldots, \ \bigl(u, \prod_{j=0}^{\,n-1} x_{2j}\bigr)$ as $\scrS_i$-modules.  
Then we define the $\scrS_i$-module
\[
N_i \colonequals \scrS_i \oplus (u, x_{0}) \oplus (u, x_{0}x_{2}) \oplus \cdots \oplus \bigl(u, \prod_{j=0}^{\,n-1} x_{2j}\bigr).
\] 

\item Write $\uppi_1 \colon \scrX_1 \rightarrow \Spec \scrS_1$ for the universal flop of $\Spec \scrS_1$ corresponding to $N_1$, as constructed in the complete local setting in \cite[\S5]{IW2}. 
For $2 \leq i \leq 2n$, consider the morphism $\Spec \scrS_{i} \rightarrow \Spec \scrS_1$, and the fiber product $\scrX_i \colonequals \scrX_1 \times_{\Spec\scrS_1} \Spec\scrS_i$. These morphisms fit into the following commutative diagram.
\[
\begin{tikzpicture}[bend angle=8, looseness=1.2]

\node (a2) at (-2,0) {$\Spec \scrS_1$};
\node (a3) at (-4,0) {$\Spec \scrS_2$};
\node (a4) at (-8,0) {$\Spec \scrS_{2n}$};

\node (b2) at (-2,1.5) {$\scrX_1$};
\node (b3) at (-4,1.5) {$\scrX_2$};
\node (b4) at (-8,1.5) {$\scrX_{2n}$};

\draw[->] (-2,1.2) -- node[right] {$\scriptstyle \uppi_1$} (-2,0.3);
\draw[->] (-4,1.2) -- node[right] {$\scriptstyle \uppi_2$} (-4,0.3);
\draw[->] (-8,1.2) -- node[right] {$\scriptstyle \uppi_{2n}$} (-8,0.3);

\draw[->] (-3.7,1.5) -- node[above] {} (-2.3,1.5);
\draw[->] (-5.3,1.5) -- node[above] {} (-4.3,1.5);
\node (c2) at (-6,1.5) {$\dots$};
\draw[->] (-7.6,1.5) -- node[above] {} (-6.7,1.5);

\draw[->] (-3.3,0) -- node[above] {} (-2.7,0);
\draw[->] (-5.3,0) -- node[above] {} (-4.7,0);
\node (c2) at (-6,0) {$\dots$};
\draw[->] (-7.3,0) -- node[above] {} (-6.7,0);
\end{tikzpicture}
\] \label{gr2 7}

\item Consider the following quiver $Q$.
\[
\begin{tikzpicture}[bend angle=8, looseness=1.2]
\node (a) at (0,0)  {$1$};
\node (b) at (2.5,0)  {$2$};
\node (c) at (5,0) {$3$};
\node (d) at (6.5,0) {$\hdots \hdots$};
\node (e) at (8,0) {$n-1$};
\node (f) at (11,0)  {$n$};
\node (g) at (5.5,-3) {$0$};

\draw[<-,bend right,looseness=0.7] (g) to node [gap] {$\scriptstyle b_{0}$} (a);
\draw[->,bend left,looseness=0.7] (g) to node [gap] {$\scriptstyle a_{0}$} (a);
\draw[<-,bend right] (a) to node [gap] {$\scriptstyle b_{2}$} (b);
\draw[->,bend left] (a) to node [gap] {$\scriptstyle a_{2}$} (b);
\draw[<-,bend right] (b) to node[gap] {$\scriptstyle b_{4}$} (c);
\draw[->,bend left] (b) to node[gap] {$\scriptstyle a_{4}$} (c);
\draw[<-,bend right] (e) to node[gap] {$\scriptstyle b_{2n-2}$} (f);
\draw[->,bend left] (e) to node[gap] {$\scriptstyle a_{2n-2}$} (f);

\draw[<-,bend right,looseness=0.7] (g) to node[gap] {$\scriptstyle a_{2n}$} (f);
\draw[->,bend left,looseness=0.7] (g) to node[gap] {$\scriptstyle b_{2n}$} (f);
\draw[<-]  (g) edge [in=-120,out=-55,loop,looseness=8] node[below] {$\scriptstyle l_{0,0}, l_{0,1}, \dots,  l_{0,2n}$} (g);
\draw[<-]  (a) edge [in=120,out=55,loop,looseness=8] node[above] {$\scriptstyle l_{1,0}, l_{1,1}, \dots,  l_{1,2n}$} (a);
\draw[<-]  (b) edge [in=120,out=55,loop,looseness=8] node[above] {$\scriptstyle l_{2,0}, l_{2,1}, \dots,  l_{2,2n}$} (b);
\draw[<-]  (c) edge [in=120,out=55,loop,looseness=8] node[above] {$\scriptstyle l_{3,0}, l_{3,1}, \dots,  l_{3,2n}$} (c);
\draw[<-]  (e) edge [in=120,out=55,loop,looseness=8] node[above] {$\scriptstyle l_{n-1,0}, l_{n-1,1}, \dots,  l_{n-1,2n}$} (e);
\draw[<-]  (f) edge [in=120,out=55,loop,looseness=8] node[above] {$\scriptstyle l_{n,0}, l_{n,1}, \dots,  l_{n,2n}$} (f);
\end{tikzpicture}
\]
Then define the relations $R_1$ of $Q$ as follows.
\begin{equation}\label{22}
R_1 \colonequals
\begin{cases}
    l_{t,i}a_{2t}=a_{2t}l_{t+1,i}, \ l_{t+1,i}b_{2t}=b_{2t}l_{t,i}, \ l_{t,i}l_{t,j}=l_{t,j}l_{t,i}, \\
    l_{t,2t}=a_{2t}b_{2t}, \ l_{t+1,2t}=b_{2t}a_{2t} \text{ for any }t \in \mathbb{Z}/(n+1) \text{ and } 0 \leq i,  j \leq 2n.
\end{cases}
\end{equation}
For $2 \leq s \leq 2n$, define $R_s$ to be $R_1$ with the additional relations
\begin{equation} \label{21}
 l_{t,i-1}+\sum_{j=2}^{\infty}j\upkappa_{ij}l_{t,i}^{j-1}+l_{t,i+1}=0
  \text{ for any } 0 \leq t \leq n \text{ and } 1 \leq i \leq s-1.
\end{equation} \label{gr2 9}
\end{enumerate}
\end{notation}

To prepare for the main construction \ref{gr}, we now establish in \ref{lemma: gr1}–\ref{lemma: gr3} a quiver presentation of the NCCR $\End_{\scrR}(M)$, where $\scrR$ and $M$ are as in \ref{notation: gr2}\eqref{gr2 1}.

\begin{lemma}\label{lemma: gr1}
With notation in \textnormal{\ref{notation: gr2}}, for $2 \leq k \leq 2n$, $\scrS_k$ is an integral domain and normal. Furthermore, there exists a ring isomorphism $\varphi \colon \scrS_{2n} \xrightarrow{\sim} \scrR$ such that $\varphi(N_{2n}) = M$.
\end{lemma}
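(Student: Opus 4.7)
The plan is in three stages: first realize each $\scrS_k$ in a concrete hypersurface form $uv = F_k$ by iteratively eliminating variables; second, deduce integrality and the isomorphism $\scrS_{2n}\cong\scrR$ from this normal form; third, verify normality via Serre's criterion.

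\textbf{Stage 1 (normal form).} Each relation $h_i = x_{i-1} + \sum_{j\geq 2}j\upkappa_{ij}x_i^{j-1} + x_{i+1}$ has $x_{i+1}$ as a linear pivot, so it is a non-zero-divisor in $\scrS_1/(h_1,\dots,h_{i-1})$. Solving $h_1 = 0$ for $x_2$, then $h_2 = 0$ for $x_3$, and so on, produces power series $\varphi_i(x_0,x_1)$ satisfying $x_i \equiv \varphi_i(x_0,x_1)$ in $\scrS_k$ for $2 \leq i \leq k$. Substituting into the defining relation of $\scrS_1$ gives
\begin{equation*}
    \scrS_k \cong \frac{\C\lal u,v,x_0,x_1,x_{k+1},\dots,x_{2n}\ral}{(uv - F_k)},
\end{equation*}
where $F_k$ is a product of $n+1$ factors: $\varphi_0 = x_0$, the $\varphi_{2i}$ for $2i \leq k$, and the free variables $x_{2i}$ for $2i > k$. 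A short matrix induction on the linear parts, using the invertible recursion $v_{i+1} = -v_{i-1} - 2\upkappa_{i2}v_i$ on $\C^2$ (determinant $1$) with initial data $v_0 = (1,0)$, $v_1 = (0,1)$, shows each $\varphi_i$ has nonzero linear leading term. Consequently $F_k$ is nonzero of order exactly $n+1$.

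\textbf{Stage 2 (integrality and the isomorphism $\scrS_{2n}\cong\scrR$).} For integrality, localize at $u$: $\C\lal u,v,\mathbf{w}\ral[u^{-1}]/(uv-F_k)\cong\C\lal \mathbf{w}\ral[u,u^{-1}]$ via $v = u^{-1}F_k$, a domain. Passing back to $\scrS_k$ amounts to verifying $(uv-F_k) : u^\infty = (uv-F_k)$: if $uh = (uv-F_k)g$, reducing mod $u$ gives $F_k g \equiv 0$ in the domain $\C\lal\mathbf{w}\ral$, forcing $g \in (u)$ since $F_k \neq 0$; iterating completes the argument. For the isomorphism $\scrS_{2n}\cong\scrR$: at $k = 2n$ only $u,v,x_0,x_1$ remain, and the recursion defining the $\varphi_i$ is identical to that defining the $g_i$ in \ref{054} once we identify $(x_0,x_1)$ with $(g_0,g_1) = (y,x)$ (i.e.~choose $t = 0$). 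Hence $F_{2n} = g_0 g_2\cdots g_{2n}$ and $\scrS_{2n}\cong \C\lal u,v,x,y\ral/(uv - g_0g_2\cdots g_{2n}) = \scrR$.

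\textbf{Stage 3 (normality).} Being a hypersurface in a regular local ring, $\scrS_k$ is Cohen--Macaulay, so by Serre's criterion it suffices to verify $(R_1)$. The singular locus of $V(uv-F_k)$ is $\{u = v = 0,\ F_k = 0,\ \nabla F_k = 0\}$. Writing $F_k = \prod_{i=0}^n \tilde g_{2i}$ and expanding $\nabla F_k$ by the product rule: at a point $p$ where exactly one factor $\tilde g_{2i_0}(p) = 0$, one would need $\nabla \tilde g_{2i_0}(p) = 0$, but $\tilde g_{2i_0}$ has a nonzero linear part so at least one of its partials is a unit in $\C\lal\mathbf{w}\ral$, making this locus empty. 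Every singular point must therefore have at least two $\tilde g$'s vanishing; each pairwise locus $\{\tilde g_{2i} = \tilde g_{2j} = 0\}$ has codimension $\geq 1$ in the $\mathbf{w}$-variables, and combined with the codimension-$2$ condition $\{u = v = 0\}$ gives codimension $\geq 3$ in the ambient regular ring, hence $\geq 2$ in $\scrS_k$. This verifies $(R_1)$.

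\textbf{Main obstacle.} The subtlest point is the normality argument, namely ruling out that the pairwise intersections $\{\tilde g_{2i} = \tilde g_{2j} = 0\}$ are too large. The worst case is when two factors share an irreducible component, giving joint vanishing of codimension one; but the extra condition $\{u = v = 0\}$ still forces the singular locus into codimension $\geq 2$ of $\scrS_k$. The key technical input is the nonvanishing of the linear parts of the $\varphi_i$, which parallels the nondegeneracy statement \ref{054}(1) that $(g_s,g_{s+1}) = (x,y)$ for consecutive indices.
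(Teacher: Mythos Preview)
Your argument follows the same strategy as the paper: eliminate $x_2,\dots,x_k$ via the relations $h_i$ to present $\scrS_k$ as a hypersurface $uv = F_k$ in a complete regular local ring. Where the paper simply cites \cite[4.1.1]{S} for integrality and normality, you supply direct arguments, which is a nice self-contained addition. Your integrality argument via localisation at $u$ is fine. Your normality analysis is correct but more elaborate than necessary: once you know $F_k \neq 0$, the singular locus of $uv - F_k$ is $V(u,v,F_k,\nabla F_k)$, and since $V(F_k) \subset \C\lal\mathbf{w}\ral$ already has codimension $1$, the singular locus automatically has codimension $\geq 3$ in the ambient ring, hence $\geq 2$ in $\scrS_k$. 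The case split on how many factors vanish, and the unit-partial observation, are not needed.

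There is one genuine gap in your treatment of $\scrS_{2n} \cong \scrR$. The notation in \ref{notation: gr2} fixes an \emph{arbitrary} $t$ with $0 \leq t \leq 2n-1$ (so $g_t = y$, $g_{t+1} = x$), and $\scrR$ is built from that choice; your identification $(x_0,x_1) \leftrightarrow (g_0,g_1) = (y,x)$ assumes $t = 0$. The repair is straightforward and is exactly what the paper does: in $\scrS_{2n}$ all relations $h_1,\dots,h_{2n-1}$ hold, so one may re-solve the recursion starting from the pair $(x_t,x_{t+1})$ rather than $(x_0,x_1)$, obtaining $x_s = g_s(x,y)$ under $x_t \mapsto y$, $x_{t+1} \mapsto x$. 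Your own determinant-$1$ matrix recursion already shows that the linear map $(x_0,x_1) \mapsto (\varphi_t,\varphi_{t+1})$ is invertible, so this change of coordinates is legitimate.
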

\begin{proof}
Fix some $k$ with $2 \leq k \leq 2n$.
By the definition in \ref{notation: gr2}\eqref{gr2 4} and \ref{notation: gr2}\eqref{gr2 5},
\begin{equation*}
    \scrS_k \cong \frac{ \C \lal u,v,x_0,x_1, x_2, \dots , x_{2n} \ral}{(uv- x_0x_2 \dots x_{2n},h_1,h_2,\dots, h_{k-1})},
\end{equation*}
where each $h_i = x_{i-1}+\sum_{j=2}^{\infty}j\upkappa_{ij}x_i^{j-1}+x_{i+1}$.

As in the proof of \ref{054}, the relations $h_1,\dots,h_{k-1}$ allow us to eliminate $x_2,\dots,x_k$ recursively, expressing each $x_i$ ($2\leq i\leq k$) as a formal power series in $x_0$ and $x_1$.
Write this as $x_i=H_i(x_0,x_1)$.

Thus, when $k$ is even, 
\begin{equation*}
    \scrS_k \cong \frac{ \C \lal u,v,x_0,x_1, x_{k+1}, x_{k+2}, \dots , x_{2n} \ral}{uv- x_0H_2 H_4 \dots H_k x_{k+2} \dots x_{2n}}.
\end{equation*}
When $k$ is odd,
\begin{equation*}
    \scrS_k \cong \frac{ \C \lal u,v,x_0,x_1, x_{k+1}, x_{k+2}, \dots , x_{2n} \ral}{uv- x_0H_2 H_4 \dots H_{k-1} x_{k+1} \dots x_{2n}}.
\end{equation*}
In both cases, $\scrS_k$ is an integral domain and normal by e.g. \cite[4.1.1]{S}. 

Then we prove that $\scrS_{2n} \cong \scrR$. Recall from \ref{054} that we start with $g_t=y$ and $g_{t+1}=x$ and then construct $(g_0, g_1, \dots, g_{2n})$ where each $g_i \in \C \lal x, y \ral$ using the equation system \eqref{501}. Then, in \ref{notation: gr2}\eqref{gr2 1}, these $g_i$ were used to define $\scrR$. 

On the other hand, 
\begin{equation*}
    \scrS_{2n} \cong \frac{ \C \lal u,v,x_0,x_1, x_2, \dots , x_{2n} \ral}{(uv- x_0x_2 \dots x_{2n},h_1,h_2,\dots,h_{2n-1})}.
\end{equation*}
Similar to \ref{054}, we can express each $x_s$ as a formal power series of $x_t$ and $x_{t+1}$ using $h_1,h_2,\dots,h_{2n-1}$.
More precisely, in $\scrS_{2n}$ we have $x_s=g_s(x_{t+1},x_t)$ for all $s$, where $g_s$ is the power series in
variables $(x,y)$ obtained from \eqref{501} with $g_t=y$ and $g_{t+1}=x$. Hence

\begin{equation*}
    \scrS_{2n}  \cong \frac{ \C \lal u,v,x_t,x_{t+1} \ral}{uv- g_0(x_{t+1},x_t)g_2(x_{t+1},x_t) \dots g_{2n}(x_{t+1},x_t)}.
\end{equation*}
Define a ring homomorphism $\varphi \colon \scrS_{2n} \to \scrR$ by 
$u \mapsto u$, $v \mapsto v$, $x_{t+1} \mapsto x$, and $x_t \mapsto y$. 
It is immediate that $\varphi$ is an isomorphism, and moreover $\varphi(N_{2n}) = M$.
\end{proof}

With notation as in \textnormal{\ref{notation: gr2}}, let $\uppi_1\colon \scrX_1\to \Spec \scrS_1$ be the universal resolution with $\Lambda(\uppi_1)\cong \End_{\scrS_1}(N_1)$ \cite[\S5]{IW2}. As shown in
Appendix~\ref{thm31}, there is an isomorphism
\[
\End_{\scrS_1}(N_1)\cong \C\lbl Q\rbl/R_1,
\]
where $Q$ and $R_1$ are as in \eqref{22}.

For notational convenience, set $\Lambda \colonequals  \C  \lbl Q \rbl /R_1$. 
By \cite[6.2]{W1}, $\scrX_1$ is isomorphic to a moduli scheme of stable representations of $\Lambda $, of dimension vector $\updelta=(1,1, \dots , 1)$ and stability $\upvartheta =(-n,1,1, \dots , 1)$ where the $-n$ sits at vertex $0$ of $Q$. 
In notation, $\scrX_1 \cong \mathcal{M}_{\updelta}^\upvartheta(\Lambda)$, which is the moduli space of $\upvartheta$-stable representations of dimension vector $\updelta$.

Moreover, as in \cite[\S3]{W3}, the moduli space $\mathcal{M}_{\updelta}^{\upvartheta}(\Lambda)$ is covered by
$n+1$ affine charts $\scrU_{10},\dots,\scrU_{1n}$.
Accounting for the relations $R_1$ \eqref{22}, the first affine chart $\scrU_{10}$ is parameterised by
\[
\begin{tikzpicture}[bend angle=8, looseness=1.2]
\node (a) at (0,0)  {$\C$};
\node (b) at (2.8,0)  {$\C$};
\node (c) at (5.6,0) {$\C$};
\node (d) at (6.9,0) {$\hdots \hdots$};
\node (e) at (8.3,0) {$\C$};
\node (f) at (11.1,0)  {$\C$};
\node (g) at (5.5,-3) {$\C$};

\draw[<-,bend right,looseness=0.7] (g) to node [gap] {$\scriptstyle \mathsf{b}_{0}$} (a);
\draw[->,bend left,looseness=0.7] (g) to node [gap] {$\scriptstyle 1$} (a);
\draw[<-,bend right] (a) to node [gap] {$\scriptstyle \mathsf{b}_{2}$} (b);
\draw[->,bend left] (a) to node [gap] {$\scriptstyle 1$} (b);
\draw[<-,bend right] (b) to node[gap] {$\scriptstyle \mathsf{b}_{4}$} (c);
\draw[->,bend left] (b) to node[gap] {$\scriptstyle 1$} (c);
\draw[<-,bend right] (e) to node[gap] {$\scriptstyle \mathsf{b}_{2n-2}$} (f);
\draw[->,bend left] (e) to node[gap] {$\scriptstyle 1$} (f);

\draw[<-,bend right,looseness=0.7] (g) to node[gap] {$\scriptstyle \mathsf{a}_{2n}$} (f);
\draw[->,bend left,looseness=0.7] (g) to node[gap] {$\scriptstyle \mathsf{b}_{2n}$} (f);
\draw[<-]  (g) edge [in=-120,out=-55,loop,looseness=8] node[below] {$\scriptstyle \mathsf{b}_0, \mathsf{b}_2,\dots, \mathsf{b}_{2n-2}, \x_{2n},$} (g);
\draw[<-]  (a) edge [in=120,out=55,loop,looseness=8] node[above] {$\scriptstyle \x_1,\x_3, \dots , \x_{2n-1}$} (a);
\draw[<-]  (b) edge [in=120,out=55,loop,looseness=8] node[above] {$\scriptstyle \x_1,\x_3, \dots , \x_{2n-1}$} (b);
\draw[<-]  (c) edge [in=120,out=55,loop,looseness=8] node[above] {$\scriptstyle \x_1,\x_3, \dots , \x_{2n-1}$} (c);
\draw[<-]  (e) edge [in=120,out=55,loop,looseness=8] node[above] {$\scriptstyle \x_1,\x_3, \dots , \x_{2n-1}$} (e);
\draw[<-]  (f) edge [in=120,out=55,loop,looseness=8] node[above] {$\scriptstyle \x_1,\x_3, \dots , \x_{2n-1}$} (f);

\node (g1) at (5.5,-4.5) {$\scriptstyle \x_1,\x_3, \dots , \x_{2n-1}$};
\node (a1) at (0,1.5) {$\scriptstyle \mathsf{b}_0, \mathsf{b}_2,\dots, \mathsf{b}_{2n-2}, \x_{2n},$};
\node (b1) at (2.8,1.5) {$\scriptstyle \mathsf{b}_0, \mathsf{b}_2,\dots, \mathsf{b}_{2n-2}, \x_{2n},$};
\node (c1) at (5.6,1.5) {$\scriptstyle \mathsf{b}_0, \mathsf{b}_2,\dots, \mathsf{b}_{2n-2}, \x_{2n},$};
\node (e1) at (8.3,1.5) {$\scriptstyle \mathsf{b}_0, \mathsf{b}_2,\dots, \mathsf{b}_{2n-2}, \x_{2n},$};
\node (f1) at (11.1,1.5) {$\scriptstyle \mathsf{b}_0, \mathsf{b}_2,\dots, \mathsf{b}_{2n-2}, \x_{2n},$};
\end{tikzpicture}
\]
where $\x_{2n} = \mathsf{a}_{2n}\mathsf{b}_{2n}$ and (since we work on the completed path algebra) all cycles are nilpotent.  We claim that  $\scrU_{10} \cong \Spec \scrA_{10}$ where 
\begin{equation}\label{A10}
\scrA_{10} \colonequals \frac{\C \lal \mathsf{b}_0,\mathsf{b}_2,\dots ,\mathsf{b}_{2n-2},\mathsf{a}_{2n}, \x_1,\x_3,\dots, \x_{2n-1},\x_{2n}, \mathsf{v} \ral[\mathsf{b}_{2n}]}{(\x_{2n}-\mathsf{a}_{2n}\mathsf{b}_{2n}, \mathsf{v}-\mathsf{b}_0\mathsf{b}_2\dots \mathsf{b}_{2n}) }.   
\end{equation}
Indeed, on $\scrU_{10}$ we fix the clockwise arrows marked $1$ (except $\mathsf{a}_{2n}$), and then use all the relations in
$R_1$ to express all remaining arrow-parameters in terms of the displayed generators. 
So the question boils down to understanding nilpotent cycles.  It is clear that $\x_1,\x_3,\hdots,\x_{2n-3},\x_{2n-1},\x_{2n},\mathsf{b}_0,\mathsf{b}_2,\hdots,\mathsf{b}_{2n-2}$ are cycles, as is $\mathsf{a}_{2n}$ (once composed with all clockwise arrows marked $1$), thus they are nilpotent. As is $\mathsf{b}_{2n}\mathsf{b}_{2n-2}\hdots\mathsf{b}_{0}$.

There is no condition on $\mathsf{b}_{2n}$, so it is a polynomial variable. Introducing a new completion variable $\mathsf{v}$ to capture the nilpotency of $\mathsf{b}_{2n}\mathsf{b}_{2n-2}\hdots\mathsf{b}_{0}$, which has a mix of both polynomial and completion variables, the claim \eqref{A10} follows.

Moreover, $\uppi_1|_{\scrU_{10}} \colon \scrU_{10} \rightarrow \Spec \scrS_1$ is induced by the ring homomorphism $\varphi_{10} \colon  \scrS_1 \rightarrow \scrA_{10}$
\begin{align}\label{gather}
    & x_0 \mapsto \mathsf{b}_0, \quad x_2 \mapsto \mathsf{b}_2, \quad \dots , \quad  x_{2n-2} \mapsto \mathsf{b}_{2n-2}, \quad  x _{2n} \mapsto \x_{2n},   \nonumber \\
    & x_1 \mapsto \x_1, \quad  x_3 \mapsto \x_3, \quad \dots, \quad x_{2n-1} \mapsto \x_{2n-1}, \quad u \mapsto \mathsf{a}_{2n}, \quad  v \mapsto \mathsf{v}.
\end{align}

Similarly, the second affine chart $\scrU_{11}$ is parameterised by
\[
\begin{tikzpicture}[bend angle=8, looseness=1.2]
\node (a) at (0,0)  {$\C$};
\node (b) at (2.5,0)  {$\C$};
\node (c) at (5,0) {$\C$};
\node (d) at (6.5,0) {$\hdots \hdots$};
\node (e) at (8,0) {$\C$};
\node (f) at (10.5,0)  {$\C$};
\node (g) at (5.25,-3) {$\C$};

\draw[<-,bend right,looseness=0.7] (g) to node [gap] {$\scriptstyle \mathsf{b}_{0}$} (a);
\draw[->,bend left,looseness=0.7] (g) to node [gap] {$\scriptstyle 1$} (a);
\draw[<-,bend right] (a) to node [gap] {$\scriptstyle \mathsf{b}_{2}$} (b);
\draw[->,bend left] (a) to node [gap] {$\scriptstyle 1$} (b);
\draw[<-,bend right] (b) to node[gap] {$\scriptstyle \mathsf{b}_{4}$} (c);
\draw[->,bend left] (b) to node[gap] {$\scriptstyle 1$} (c);
\draw[<-,bend right] (e) to node[gap] {$\scriptstyle \mathsf{b}_{2n-2}$} (f);
\draw[->,bend left] (e) to node[gap] {$\scriptstyle \mathsf{a}_{2n-2}$} (f);
\draw[<-,bend right,looseness=0.7] (g) to node[gap] {$\scriptstyle \mathsf{a}_{2n}$} (f);
\draw[->,bend left,looseness=0.7] (g) to node[gap] {$\scriptstyle 1$} (f);

\draw[<-]  (g) edge [in=-120,out=-55,loop,looseness=8] node[below] {$\scriptstyle \mathsf{b}_0, \mathsf{b}_2, \dots, \mathsf{b}_{2n-4}, \x_{2n-2},\mathsf{a}_{2n},$} (g);

\draw[<-]  (a) edge [in=120,out=55,loop,looseness=8] node[above] {$\scriptstyle   \x_1,\x_3, \dots , \x_{2n-1}$} (a);
\draw[<-]  (b) edge [in=120,out=55,loop,looseness=8] node[above] {$\scriptstyle  \x_1,\x_3, \dots , \x_{2n-1}$} (b);
\draw[<-]  (c) edge [in=120,out=55,loop,looseness=8] node[above] {$\scriptstyle  \x_1,\x_3, \dots , \x_{2n-1}$} (c);
\draw[<-]  (e) edge [in=120,out=55,loop,looseness=8] node[above] {$\scriptstyle  \x_1,\x_3, \dots , \x_{2n-1}$} (e);
\draw[<-]  (f) edge [in=120,out=55,loop,looseness=8] node[above] {$\scriptstyle  \x_1,\x_3, \dots , \x_{2n-1}$} (f);

\node (a1) at (0,1.4) {$\scriptstyle \x_{2n-2},\mathsf{a}_{2n},$};
\node (a2) at (2.5,1.4) {$\scriptstyle \x_{2n-2},\mathsf{a}_{2n},$};
\node (a3) at (5,1.4) {$\scriptstyle \x_{2n-2},\mathsf{a}_{2n},$};
\node (a4) at (8,1.4) {$\scriptstyle \x_{2n-2},\mathsf{a}_{2n},$};
\node (a5) at (10.5,1.4) {$\scriptstyle \x_{2n-2},\mathsf{a}_{2n},$};

\node (g1) at (5.25,-4.5) {$\scriptstyle \x_1,\x_3, \dots , \x_{2n-1}$};
\node (a1) at (0,1.8) {$\scriptstyle  \mathsf{b}_0, \mathsf{b}_2, \dots, \mathsf{b}_{2n-4},$};
\node (b1) at (2.5,1.8) {$\scriptstyle  \mathsf{b}_0, \mathsf{b}_2, \dots, \mathsf{b}_{2n-4},$};
\node (c1) at (5,1.8) {$\scriptstyle  \mathsf{b}_0, \mathsf{b}_2, \dots, \mathsf{b}_{2n-4},$};
\node (e1) at (8,1.8) {$\scriptstyle  \mathsf{b}_0, \mathsf{b}_2, \dots, \mathsf{b}_{2n-4},$};
\node (f1) at (10.5,1.8) {$\scriptstyle \mathsf{b}_0, \mathsf{b}_2, \dots, \mathsf{b}_{2n-4},$};
\end{tikzpicture}
\]

where $\x_{2n-2} = \mathsf{a}_{2n-2} \mathsf{b}_{2n-2}$ and (since we work on the completed path algebra) all cycles are nilpotent. We claim that $\scrU_{11} \cong \Spec \scrA_{11}$ where
\begin{equation}\label{A11}
\scrA_{11} \colonequals \frac{\C \lal \mathsf{b}_0,\mathsf{b}_2,\dots ,\mathsf{b}_{2n-4},\mathsf{a}_{2n}, \x_1,\x_3 ,\dots, \x_{2n-1},\x_{2n-2}, \mathsf{u}, \mathsf{v} \ral[\mathsf{a}_{2n-2},\mathsf{b}_{2n-2}]}{(\x_{2n-2}-\mathsf{a}_{2n-2}\mathsf{b}_{2n-2}, \mathsf{u}-\mathsf{a}_{2n-2}\mathsf{a}_{2n},\mathsf{v}-\mathsf{b}_0\mathsf{b}_2\dots \mathsf{b}_{2n-2}) }.    
\end{equation}

Similarly, on $\scrU_{11}$ we fix the clockwise arrows marked $1$ (except $\mathsf{a}_{2n-2}$), and then use all the relations in
$R_1$ to express all remaining arrow-parameters in terms of the displayed generators. Clearly $\x_1,\x_3,\hdots,\x_{2n-3},\x_{2n-1},\x_{2n-2},\mathsf{b}_0,\mathsf{b}_2,\hdots,\mathsf{b}_{2n-4}, \mathsf{a}_{2n}$ are cycles, as is  $\mathsf{a}_{2n-2}\mathsf{a}_{2n}$ (once composed with all clockwise arrows marked $1$), thus they are nilpotent. As is $\mathsf{b}_{2n-2}\mathsf{b}_{2n-4}\hdots\mathsf{b}_{0}$.

There is no condition on $\mathsf{a}_{2n-2}$ and $\mathsf{b}_{2n-2}$, so they are polynomial variables. Introducing new completion variables $\mathsf{u}$ and $\mathsf{v}$ to capture the nilpotency of $\mathsf{a}_{2n-2}\mathsf{a}_{2n}$ and $\mathsf{b}_{2n-2}\mathsf{b}_{2n-4}\hdots\mathsf{b}_{0}$ respectively, which have a mix of both polynomial and completion variables, the claim \eqref{A11} follows.

Moreover, $\uppi_1|_{\scrU_{11}} \colon \scrU_{11} \rightarrow \Spec \scrS_1$ is induced by the ring homomorphism $\varphi_{11} \colon  \scrS_1 \rightarrow \scrA_{11}$
\begin{align}\label{gather2}
& x_0 \mapsto \mathsf{b}_0, \quad x_2 \mapsto \mathsf{b}_2, \quad \dots , \quad x_{2n-4} \mapsto \mathsf{b}_{2n-4}, \quad  x_{2n-2} \mapsto \x_{2n-2},   \quad x_{2n} \mapsto \mathsf{a}_{2n},    \nonumber\\
& x_1 \mapsto \x_1, \quad  x_3 \mapsto \x_3, \quad \dots, \quad x_{2n-1} \mapsto \x_{2n-1}, \quad u \mapsto \mathsf{u}, \quad  v \mapsto \mathsf{v}.
\end{align}

Each of the remaining affine charts $\scrU_{1j}$ of $\scrX_1$ admits a similar parametrisation, and the corresponding morphism $\uppi_1|_{\scrU_{1j}} \colon \scrU_{1j} \to \Spec \scrS_1$ is defined in the same way as above.

\begin{notation}\label{notation: gr3}
With the notation $\scrU_{1j}$ above and in \textnormal{\ref{notation: gr2}}, for each $2 \leq i \leq 2n$ and $0 \leq j \leq n$, we set
\begin{enumerate}
    \item $\scrU_{ij} \colonequals \scrU_{1j} \times_{\Spec \scrS_1} \Spec \scrS_i$, the base change of $\scrU_{1j}$ along $\Spec \scrS_i \to \Spec \scrS_1$;
    \item $\scrA_{ij} \colonequals \Gamma(\scrU_{ij}, \scrO_{\scrU_{ij}})$, the coordinate ring of $\scrU_{ij}$;
    \item $\varphi_{ij} \colon  \scrS_i\rightarrow \scrA_{ij}$, the ring homomorphism associated to $\uppi_i|_{\scrU_{ij}} \colon\scrU_{ij} \rightarrow \Spec \scrS_i$.
\end{enumerate}
\end{notation}
By definition \ref{notation: gr2}\eqref{gr2 7} $\scrX_i \colonequals \scrX_1 \times_{\Spec \scrS_1} \Spec \scrS_i$, hence $\scrX_i \cong \bigcup_{j=0}^{n} \scrU_{ij}$.

Since $\scrX_1$ is the universal resolution of $\Spec \scrS_1$, it is connected and smooth. We now show that, for $2\leq i\leq 2n$, the base change $\scrX_i$ is likewise connected and smooth.

The next result shows that the connectivity of $\scrX_i$ comes from the overlap of adjacent affine charts along the exceptional curves.  

\begin{prop}\label{lemma: gr4}
With notation in \textnormal{\ref{notation: gr2}}, $\scrX_i$ is connected for all $2 \leq i \leq 2n$.
\end{prop}
\begin{proof}
For $1 \leq j \leq n$, write $\Curve_j$ for the $j$-th exceptional curve of the universal resolution $\uppi_1 \colon \scrX_1 \rightarrow \Spec \scrS_1$ over the origin. 
By definition \ref{notation: gr2}\eqref{gr2 5}, for $2 \leq i \leq 2n$ 
\[
\scrS_i \colonequals \scrS_1/(h_1,h_2,\dots,h_{i-1}),
\]
where $h_k$ is a power series without a constant term for $1\leq k \leq i-1$.

Since each $h_k$ has no constant term, the closed immersion $\Spec \scrS_i\hookrightarrow \Spec \scrS_1$ meets the origin of $\Spec \scrS_1$.  Hence the base change $\scrX_i=\scrX_1\times_{\Spec\scrS_1}\Spec\scrS_i$ contains the exceptional fibre
$\bigcup_{j=1}^n \Curve_j$ over the origin.
Moreover, for each $1 \leq j \leq n$ the affine charts of $\scrX_i$ satisfy 
\[
\Curve_j \subset \scrU_{i,j-1}\cup \scrU_{ij}\qquad\Rightarrow\qquad \scrU_{i,j-1}\cap \scrU_{ij}\neq \emptyset,
\]
and so the affine charts of $\scrX_i$ pairwise overlap along the exceptional curves. Hence $\scrX_i$ is connected.
\end{proof}

We now prove that $\scrX_i$ is smooth for $2 \leq i \leq 2n$ by analysing each affine chart $\scrU_{ij}$ of $\scrX_i$.

\begin{prop}\label{lemma: gr2}
With notation in \textnormal{\ref{notation: gr2}}, for $2 \leq i \leq 2n$, $\scrX_i$ is smooth.
\end{prop}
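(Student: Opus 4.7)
The plan is a chart-by-chart analysis building on the affine cover of $\scrX_0$ just described. For $i = 1$, the definition $\scrX_1 \cong \scrX_0 \times \C^n$ immediately gives connectedness (inherited from $\scrX_0$), and the $n+1$ charts $U_j \cong \C^{n+2}$ of $\scrX_0$ lift to $U_j \times \C^n \cong \C^{2n+2} = \C^{2n - 1 + 3}$ on $\scrX_1$.

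For $i \geq 2$, I will use $\scrX_i \cong \scrX_1 \times_{\Spec\scrS_1} \Spec\scrS_i$ together with $\scrS_i \cong \scrS_1/(h_1, \dots, h_{i-1})$, and restrict to each chart $V_j \colonequals U_j \times \C^n$ of $\scrX_1$. On $V_j$, the generators $x_0, x_2, \dots, x_{2n}$ of $\scrS_0$ pull back so that exactly one of them, say $x_{2 e_j}$ for a unique \emph{exceptional index} $e_j \in \{0, 1, \dots, n\}$, becomes a product of two free chart coordinates, while each other $x_{2l}$ pulls back to a single free coordinate; together with the free coordinates $x_1, x_3, \dots, x_{2n-1}$ from the $\C^n$ factor, $V_j \cong \C^{2n+2}$. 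The key observation is that $h_k = x_{k-1} + \sum_{\ell \geq 2} \ell\upkappa_{k\ell} x_k^{\ell-1} + x_{k+1}$ is linear with unit coefficient in each of $x_{k-1}$ and $x_{k+1}$; since $k - 1 \neq k + 1$, at most one of these can equal $x_{2e_j}$, so at least one pulls back to a free coordinate with unit leading coefficient.

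The elimination strategy on $V_j$ is then: for $1 \leq k \leq 2e_j - 1$, solve $h_k = 0$ for $x_{k-1}$; for $2e_j \leq k \leq 2n - 1$, solve $h_k = 0$ for $x_{k+1}$. The range conditions guarantee that the variable chosen at step $k$ is distinct from $x_{2e_j}$, hence is a free coordinate. The substitutions carried out at earlier steps eliminate $x_0, \dots, x_{k-2}$ in the first range and $x_{2e_j + 1}, \dots, x_k$ in the second range, none coinciding with the variable eliminated at step $k$; hence its leading coefficient remains $1$ after all prior substitutions. Processing $h_1, \dots, h_{i-1}$ therefore reduces the coordinate ring of $V_j$ to a power series ring in $(2n + 2) - (i - 1) = 2n - i + 3$ variables, giving a chart $\C^{2n - i + 3}$ of $\scrX_i$.

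For connectedness when $i \geq 2$: $\scrS_i$ is an integral domain by Lemma \ref{lemma: gr1}, so $\Spec\scrS_i$ is irreducible. The morphism $\scrX_i \to \Spec\scrS_i$ is the base change of the birational crepant resolution $\scrX_0 \to \Spec\scrS_0$, so it is an isomorphism over a dense open of $\Spec\scrS_i$; hence $\scrX_i$ has an irreducible dense open and is covered by irreducible affine spaces, forcing $\scrX_i$ to be irreducible and thus connected. The main obstacle I foresee is explicitly pinning down the exceptional index $e_j$ and the coordinate form on each chart $U_j$ for $j \geq 1$ (the excerpt works out only $U_0$), which I would establish by parameterizing these charts via the cyclic symmetry of the universal flop moduli $\mathcal{M}_\updelta^\upvartheta(\Lambda)$, so that the elimination argument applies uniformly.
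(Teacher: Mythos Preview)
Your chart computation is essentially the same approach as the paper's: both use the unit linear coefficients of $h_k$ in $x_{k-1}$ and $x_{k+1}$ to eliminate one free coordinate per relation. The paper only writes out the $j=0$ chart explicitly (where, in your notation, $e_0 = n$, so every $h_k$ is solved for $x_{k-1}$) and then says ``in a similar way'' for the other charts; your exceptional-index device and the two-range elimination scheme make the uniform argument for all $j$ more explicit, which is a genuine improvement in presentation.

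Where you differ from the paper is connectedness. The paper's argument is more elementary: since $\Spec\scrS_i$ contains the origin of $\Spec\scrS_0$, the base change $\scrX_i$ contains the exceptional curves $\Curve_1,\dots,\Curve_n$; each $\Curve_j$ meets both $U_{i,j-1}$ and $U_{i,j}$, so adjacent charts overlap and $\scrX_i$ is connected. Your irreducibility argument via Lemma~\ref{lemma: gr1} is valid but needs one extra sentence to be complete: you should observe that the isomorphism locus of $\uppi_0$ contains $\{u\neq 0\}\subset\Spec\scrS_0$, and since $u$ is not in the ideal $(h_1,\dots,h_{i-1})$ the open $\{u\neq 0\}$ in $\Spec\scrS_i$ is non-empty, hence dense (by irreducibility). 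Then $\uppi_i^{-1}(\{u\neq 0\})$ is a non-empty open of each irreducible chart $V_j$ (one checks from the chart parametrization that $u$ pulls back to a non-nilpotent coordinate), so it is dense in $\scrX_i$, and its closure $\scrX_i$ is irreducible. Without this, the step ``it is an isomorphism over a dense open of $\Spec\scrS_i$; hence $\scrX_i$ has an irreducible dense open'' is asserted rather than shown. The paper's curve argument sidesteps all of this.
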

\begin{proof}
Since by definition \ref{notation: gr2}\eqref{gr2 7} $\scrX_i \colonequals \scrX_1 \times_{\Spec\scrS_1} \Spec\scrS_i$ for $2 \leq i \leq 2n$,  we have the following pullback squares for the $j$-th affine chart $\scrU_{ij}$ of $\scrX_i$:
\[
\begin{minipage}{0.45\textwidth}
\centering
\begin{tikzpicture}[>=stealth,scale=1.2]
\node[black!70!white] (U00) at (2,0) {$\scrU_{ij}$};
\node[black!70!white] (U0) at (4,0) {$\scrU_{1j}$};

\node[black!70!white] (S0) at (2,-2) {$\Spec \scrS_i$};
\node[black!70!white] (S) at (4,-2) {$\Spec \scrS_1$};

\draw[->] (U00)--(U0);
\draw[->] (S0)--(S);
\draw[->] (U00)--node[right]{$\uppi_i|_{\scrU_{ij}}$}(S0);
\draw[->] (U0)--node[right]{$\uppi_1|_{\scrU_{1j}}$}(S);
\end{tikzpicture}
\end{minipage}
\hfill
\begin{minipage}{0.45\textwidth}
\centering
\begin{tikzpicture}[>=stealth,scale=1.2]
\node[black!70!white] (U00) at (2,0) {$\scrA_{ij}$};
\node[black!70!white] (U0) at (4,0) {$\scrA_{1j}$};

\node[black!70!white] (S0) at (2,-2) {$ \scrS_i$};
\node[black!70!white] (S) at (4,-2) {$ \scrS_1$};

\draw[<-] (U00)--(U0);
\draw[<-] (S0)--(S);
\draw[<-] (U00)--node[right]{$\varphi_{ij}$}(S0);
\draw[<-] (U0)--node[right]{$\varphi_{1j}$}(S);
\end{tikzpicture}
\end{minipage}
\]
Recall from \ref{notation: gr2}\eqref{gr2 5} that for $2 \leq i \leq 2n$
\[
\scrS_i \colonequals \scrS_1/(h_1,h_2,\dots,h_{i-1})
\quad\text{with}\quad
h_i \colonequals x_{i-1} + \sum_{j=2}^{\infty} j\upkappa_{ij}x_i^{\,j-1} + x_{i+1}.
\]

Therefore, for $2 \leq i \leq 2n$ and $0 \leq j \leq n$,
\begin{equation}\label{gather7}
    \scrA_{ij} \cong \scrA_{1j} \otimes_{\scrS_1} \scrS_i \cong  \scrA_{1j} \otimes_{\scrS_1} \scrS_1/(h_1, h_2, \dots, h_{i-1} ) \cong \scrA_{1j}/(\scrA_{1j}h_1, \scrA_{1j}h_2, \dots, \scrA_{1j}h_{i-1}).
\end{equation}

\noindent\textbf{First chart $(j=0)$.}
From \eqref{A10} we have
\begin{equation*}
\scrA_{10} \cong \frac{\C \lal \mathsf{b}_0,\mathsf{b}_2,\dots ,\mathsf{b}_{2n-2},\mathsf{a}_{2n}, \x_1,\x_3,\dots, \x_{2n-1},\x_{2n}, \mathsf{v} \ral[\mathsf{b}_{2n}]}{(\x_{2n}-\mathsf{a}_{2n}\mathsf{b}_{2n}, \mathsf{v}-\mathsf{b}_0\mathsf{b}_2\dots \mathsf{b}_{2n}) }.   
\end{equation*}
Moreover, by \eqref{gather} $\varphi_{10}\colon \scrS_1 \to \scrA_{10}$ is given by
\begin{align*}
   & x_0 \mapsto \mathsf{b}_0, \quad x_2 \mapsto \mathsf{b}_2, \quad \dots , \quad  x_{2n-2} \mapsto \mathsf{b}_{2n-2}, \quad  x _{2n} \mapsto \x_{2n},   \nonumber \\
    & x_1 \mapsto \x_1, \quad  x_3 \mapsto \x_3, \quad \dots, \quad x_{2n-1} \mapsto \x_{2n-1}, \quad u \mapsto \mathsf{a}_{2n}, \quad  v \mapsto \mathsf{v}. 
\end{align*}

Thus, for $1 \leq i \leq 2n-1$, the images $\scrA_{10}h_i$ are
\begin{equation*}
\scrA_{10}h_i =
\begin{cases}
\mathsf{b}_{i-1}+\sum_{j=2}^{\infty}j\upkappa_{ij}\x_i^{j-1}+\mathsf{b}_{i+1}, \text{ for } i =1,3, \dots , 2n-3\\
\x_{i-1}+\sum_{j=2}^{\infty}j\upkappa_{ij}\mathsf{b}_i^{j-1}+\x_{i+1}, \text{ for } i =2,4, \dots , 2n-2\\
\mathsf{b}_{2n-2}+\sum_{j=2}^{\infty}j\upkappa_{2n-1,j}\x_{2n-1}^{j-1}+\x_{2n}, \text{ for } i=2n-1
\end{cases}
\end{equation*}

Introduce the notation obtained by successive elimination:
\[
\mathsf{b}_{01} \colonequals -\sum_{j=2}^{\infty} j\upkappa_{1j}\,\x_1^{\,j-1}-\mathsf{b}_2 \in \C \lal \x_1,\mathsf{b}_2\ral,
\]
and, using $\x_1=-\sum_{j=2}^{\infty} j\upkappa_{2j}\,\mathsf{b}_2^{\,j-1}-\x_3$,
\[
\mathsf{b}_{02} \colonequals -\sum_{j=2}^{\infty} j\upkappa_{1j}\!\left(-\sum_{r=2}^{\infty} r\upkappa_{2r}\,\mathsf{b}_2^{\,r-1}-\x_3\right)^{\!j-1}-\mathsf{b}_2 \in \C\lal \mathsf{b}_2,\x_3\ral.
\]
Continuing inductively, for $0\leq t\leq n-1$ and $2t<k\leq 2n-1$, define $\mathsf{b}_{2t,k}$ with
\[
\mathsf{b}_{2t,k} \in
\begin{cases}
\C\lal \x_k,\mathsf{b}_{k+1}\ral, & k \text{ odd},\ k\neq 2n-1,\\
\C\lal \mathsf{b}_k,\x_{k+1}\ral, & k \text{ even},\\
\C\lal \x_{2n-1},\x_{2n}\ral, & k=2n-1.
\end{cases}
\]

Each $\scrA_{10}h_i$ has a linear term, hence eliminates one variable in \eqref{gather7}. Consequently,
\begin{align*}
\scrA_{20} & \cong \scrA_{10}/(\scrA_{10}h_1) \cong \scrA_{10}/(\mathsf{b}_{0}+\sum_{j=2}^{\infty}j\upkappa_{1j}\x_1^{j-1}+\mathsf{b}_{2}) \\
& \cong \frac{\C \lal \mathsf{b}_2,\mathsf{b}_4,\dots ,\mathsf{b}_{2n-2},\mathsf{a}_{2n}, \x_1,\x_3,\dots, \x_{2n-1},\x_{2n}, \mathsf{v} \ral[\mathsf{b}_{2n}]}{(\x_{2n}-\mathsf{a}_{2n}\mathsf{b}_{2n}, \mathsf{v}-\mathsf{b}_{01}\mathsf{b}_2\dots \mathsf{b}_{2n}) },\\
\scrA_{30} &  \cong \scrA_{10}/(\scrA_{10}h_1, \scrA_{10}h_2) \cong \scrA_{10}/(\mathsf{b}_{0}+\sum_{j=2}^{\infty}j\upkappa_{1j}\x_1^{j-1}+\mathsf{b}_{2},\x_{1}+\sum_{j=2}^{\infty}j\upkappa_{2j}\mathsf{b}_2^{j-1}+\x_{3}) \\
& \cong  \frac{\C \lal \mathsf{b}_2,\mathsf{b}_4,\dots ,\mathsf{b}_{2n-2},\mathsf{a}_{2n}, \x_3,\x_5,\dots, \x_{2n-1},\x_{2n}, \mathsf{v} \ral[\mathsf{b}_{2n}]}{(\x_{2n}-\mathsf{a}_{2n}\mathsf{b}_{2n}, \mathsf{v}-\mathsf{b}_{02}\mathsf{b}_2\dots \mathsf{b}_{2n}) },\\
& \qquad \qquad  \qquad \qquad \qquad \qquad \qquad \qquad  \vdots\\
\scrA_{2n-1,0} & \cong \scrA_{10}/(\scrA_{10}h_1, \scrA_{10}h_2, \dots, \scrA_{10}h_{2n-2}) \\
 & \cong \frac{\C \lal \mathsf{b}_{2n-2},\mathsf{a}_{2n},  \x_{2n-1},\x_{2n}, \mathsf{v} \ral[\mathsf{b}_{2n}]}{(\x_{2n}-\mathsf{a}_{2n}\mathsf{b}_{2n}, \mathsf{v}-\mathsf{b}_{0,2n-2}\mathsf{b}_{2,2n-2}\dots \mathsf{b}_{2n-4,2n-2}\mathsf{b}_{2n-2}\mathsf{b}_{2n}) },\\
 \scrA_{2n,0}  & \cong \scrA_{10}/(\scrA_{10}h_1, \scrA_{10}h_2, \dots, \scrA_{10}h_{2n-2},\scrA_{10}h_{2n-1})  \\
 & \cong \frac{\C \lal \mathsf{a}_{2n},  \x_{2n-1},\x_{2n}, \mathsf{v} \ral[\mathsf{b}_{2n}]}{(\x_{2n}-\mathsf{a}_{2n}\mathsf{b}_{2n}, \mathsf{v}-\mathsf{b}_{0,2n-1}\mathsf{b}_{2,2n-1}\dots \mathsf{b}_{2n-4,2n-1}\mathsf{b}_{2n-2,2n-1}\mathsf{b}_{2n}) }.
\end{align*}
Hence, for $2 \leq i \leq 2n$, the first affine chart $\scrU_{i0} \colonequals \Spec \scrA_{i0}$ is smooth. 
Since the last affine chart $\scrU_{in}$ is analogous to $\scrU_{i0}$, it is also smooth.

\noindent\textbf{Second chart $(j=1)$.}
From \eqref{A11} we have
\begin{equation*}
\scrA_{11} \cong \frac{\C \lal \mathsf{b}_0,\mathsf{b}_2,\dots ,\mathsf{b}_{2n-4},\mathsf{a}_{2n}, \x_1,\x_3 ,\dots, \x_{2n-1},\x_{2n-2}, \mathsf{u}, \mathsf{v} \ral[\mathsf{a}_{2n-2},\mathsf{b}_{2n-2}]}{(\x_{2n-2}-\mathsf{a}_{2n-2}\mathsf{b}_{2n-2}, \mathsf{u}-\mathsf{a}_{2n-2}\mathsf{a}_{2n},\mathsf{v}-\mathsf{b}_0\mathsf{b}_2\dots \mathsf{b}_{2n-2})}.
\end{equation*}
Moreover, by \eqref{gather2} $\varphi_{11}\colon \scrS_1 \to \scrA_{11}$ is given by
\begin{align*}
& x_0 \mapsto \mathsf{b}_0, \quad x_2 \mapsto \mathsf{b}_2, \quad \dots , \quad x_{2n-4} \mapsto \mathsf{b}_{2n-4}, \quad  x_{2n-2} \mapsto \x_{2n-2},   \quad x_{2n} \mapsto \mathsf{a}_{2n},    \nonumber\\
& x_1 \mapsto \x_1, \quad  x_3 \mapsto \x_3, \quad \dots, \quad x_{2n-1} \mapsto \x_{2n-1}, \quad u \mapsto \mathsf{u}, \quad  v \mapsto \mathsf{v}. 
\end{align*}

Thus, for $1 \leq i \leq 2n-1$, the images $\scrA_{11}h_i$ are
\begin{equation*}
\scrA_{11}h_i =
\begin{cases}
\mathsf{b}_{i-1}+\sum_{j=2}^{\infty}j\upkappa_{ij}\x_i^{j-1}+\mathsf{b}_{i+1}, \text{ for } i =1,3, \dots , 2n-5\\
\x_{i-1}+\sum_{j=2}^{\infty}j\upkappa_{ij}\mathsf{b}_i^{j-1}+\x_{i+1}, \text{ for } i =2,4, \dots , 2n-4\\
\mathsf{b}_{2n-4}+\sum_{j=2}^{\infty}j\upkappa_{2n-3,j}\x_{2n-3}^{j-1}+\x_{2n-2}, \text{ for } i =2n-3\\
\x_{2n-3}+\sum_{j=2}^{\infty}j\upkappa_{2n-2,j}\x_{2n-2}^{j-1}+\x_{2n-1}, \text{ for } i =2n-2 \\
\x_{2n-2}+\sum_{j=2}^{\infty}j\upkappa_{2n-1,j}\x_{2n-1}^{j-1}+\mathsf{a}_{2n}, \text{ for } i=2n-1
\end{cases}
\end{equation*}
Define $\mathsf{b}_{2t,k}$ analogously for $0\leq t \leq n-2$ and $2t < k\leq 2n-1$.
From the last equation, set
\begin{align*}
   \x_{2n-2,2n-1}\colonequals -\sum_{j=2}^{\infty}j\upkappa_{2n-1,j}\x_{2n-1}^{j-1}-\mathsf{a}_{2n} \in \C \lal \x_{2n-1},\mathsf{a}_{2n}\ral.
\end{align*}

Again, each $\scrA_{11}h_i$ has a linear term, hence eliminates one variable in \eqref{gather7}. Consequently,
\begin{align*}
\scrA_{21} &  \cong \scrA_{11}/(\scrA_{11}h_1) \cong \scrA_{11}/(\mathsf{b}_{0}+\sum_{j=2}^{\infty}j\upkappa_{1j}\x_1^{j-1}+\mathsf{b}_{2}) \\
& \cong   \frac{\C \lal \mathsf{b}_2, \mathsf{b}_4,\dots ,\mathsf{b}_{2n-4},\mathsf{a}_{2n}, \x_1,\x_3 ,\dots, \x_{2n-1},\x_{2n-2}, \mathsf{u}, \mathsf{v} \ral[\mathsf{a}_{2n-2},\mathsf{b}_{2n-2}]}{(\x_{2n-2}-\mathsf{a}_{2n-2}\mathsf{b}_{2n-2}, \mathsf{u}-\mathsf{a}_{2n-2}\mathsf{a}_{2n},\mathsf{v}-\mathsf{b}_{01}\mathsf{b}_2\dots \mathsf{b}_{2n-2})},\\
 \scrA_{31}  & \cong \scrA_{11}/(\scrA_{11}h_1,\scrA_{11}h_2) \cong \scrA_{11}/(\mathsf{b}_{0}+\sum_{j=2}^{\infty}j\upkappa_{1j}\x_1^{j-1}+\mathsf{b}_{2},\x_{1}+\sum_{j=2}^{\infty}j\upkappa_{2j}\mathsf{b}_2^{j-1}+\x_{3})  \\
 & \cong  \frac{\C \lal \mathsf{b}_2, \mathsf{b}_4,\dots ,\mathsf{b}_{2n-4},\mathsf{a}_{2n}, \x_3,\x_5 ,\dots, \x_{2n-1},\x_{2n-2}, \mathsf{u}, \mathsf{v} \ral[\mathsf{a}_{2n-2},\mathsf{b}_{2n-2}]}{(\x_{2n-2}-\mathsf{a}_{2n-2}\mathsf{b}_{2n-2}, \mathsf{u}-\mathsf{a}_{2n-2}\mathsf{a}_{2n},\mathsf{v}-\mathsf{b}_{02}\mathsf{b}_2\dots \mathsf{b}_{2n-2})},\\
& \qquad \qquad  \qquad \qquad \qquad \qquad \qquad \qquad  \vdots\\
 \scrA_{2n-1,1} & \cong \scrA_{11}/(\scrA_{11}h_1, \scrA_{11}h_2, \dots, \scrA_{11}h_{2n-2}) \\
& \cong \frac{\C \lal \mathsf{a}_{2n},  \x_{2n-1},\x_{2n-2}, \mathsf{u}, \mathsf{v} \ral[\mathsf{a}_{2n-2},\mathsf{b}_{2n-2}]}{(\x_{2n-2}-\mathsf{a}_{2n-2}\mathsf{b}_{2n-2}, \mathsf{u}-\mathsf{a}_{2n-2}\mathsf{a}_{2n},\mathsf{v}-\mathsf{b}_{0,2n-2}\mathsf{b}_{2,2n-2}\dots \mathsf{b}_{2n-4,2n-2}\mathsf{b}_{2n-2})},\\
\scrA_{2n,1} & \cong \scrA_{11}/(\scrA_{11}h_1, \scrA_{11}h_2, \dots,\scrA_{11}h_{2n-2}, \scrA_{11}h_{2n-1}) \\
& \cong  \frac{\C \lal \mathsf{a}_{2n},  \x_{2n-1}, \mathsf{u}, \mathsf{v} \ral[\mathsf{a}_{2n-2},\mathsf{b}_{2n-2}]}{(\x_{2n-2,2n-1}-\mathsf{a}_{2n-2}\mathsf{b}_{2n-2}, \mathsf{u}-\mathsf{a}_{2n-2}\mathsf{a}_{2n},\mathsf{v}-\mathsf{b}_{0,2n-1}\mathsf{b}_{2,2n-1}\dots \mathsf{b}_{2n-4,2n-1}\mathsf{b}_{2n-2})}.
\end{align*}

Since $\x_{2n-2,2n-1}$ has a linear term $\mathsf{a}_{2n}$, 
it follows that for $2 \leq i \leq 2n$ the second affine chart 
$\scrU_{i1} \colonequals \Spec \scrA_{i1}$ is smooth.
For $2 \leq j \leq n-1$, the affine charts $\scrU_{ij}$ are analogous to $\scrU_{i1}$ (for each fixed $i$), hence smooth as well. 
Therefore $\scrX_i$ is smooth for all $2 \leq i \leq 2n$.
\end{proof}

\begin{cor}\label{lemma: gr3}
With notation in \textnormal{\ref{notation: gr2}}, $\End_{\scrS_i}(N_i) \cong \C \lbl Q \rbl /  R_i $ for $1 \leq i \leq 2n$.
\end{cor}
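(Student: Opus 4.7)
The plan is to argue inductively on $i$, starting from the appendix's description $\End_{\scrS_0}(N_0) \cong \C\lbl Q'\rbl/R'$ and exploiting the step-by-step construction of $\scrS_i$ from \ref{notation: gr2}(2)--(3). The base case $i=1$ is handled via the tensor-product decomposition of $\scrS_1$, while the inductive step $i \geq 2$ is handled by interpreting the central relations $h_s$ inside $\scrS_1$ as relations among the loops of $Q$.

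For the base case, I would begin from the tensor decomposition $\scrS_1 \cong \scrS_0 \otimes_\C \C\lal x_1,x_3,\dots,x_{2n-1}\ral$, which lifts to $N_1 \cong N_0 \otimes_\C \C\lal x_1,x_3,\dots,x_{2n-1}\ral$ since $N_i$ is defined by extending the same ideal-theoretic description of $N_0$. This yields
\[
\End_{\scrS_1}(N_1) \cong \End_{\scrS_0}(N_0) \otimes_\C \C\lal x_1,x_3,\dots,x_{2n-1}\ral \cong (\C\lbl Q'\rbl/R') \otimes_\C \C\lal x_1,x_3,\dots,x_{2n-1}\ral.
\]
Each adjoined central variable $x_{2k-1}$ acts on every summand of $N_1$ and thus produces one loop $l_{t,2k-1}$ at each vertex $t$; its centrality forces precisely the commutation relations $l_{t,i}a_{2t}=a_{2t}l_{t+1,i}$, $l_{t+1,i}b_{2t}=b_{2t}l_{t,i}$, $l_{t,i}l_{t,j}=l_{t,j}l_{t,i}$. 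Together with the surviving relations of $R'$ (giving $l_{t,2t}=a_{2t}b_{2t}$ and $l_{t+1,2t}=b_{2t}a_{2t}$), this is exactly $R_1$ as written in \eqref{22}.

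For the inductive step, I would use $\scrS_i = \scrS_1/(h_1,\dots,h_{i-1})$ together with $N_i \cong N_1 \otimes_{\scrS_1} \scrS_i$ and reduce the problem to establishing
\[
\End_{\scrS_i}(N_i) \cong \End_{\scrS_1}(N_1)/(h_1,\dots,h_{i-1}).
\]
Under the identification from the base case, multiplication by the central element $x_s \in \scrS_1$ on the summand of $N_1$ supported at vertex $t$ is precisely the loop $l_{t,s}$, and the matching relations in $R_1$ identify $x_s$ with the family $\{l_{t,s}\}_{t}$ coherently across vertices. Consequently, each central relation $h_s=0$ in $\scrS_i$ becomes the relation $l_{t,s-1}+\sum_{j\geq 2}j\upkappa_{sj}l_{t,s}^{j-1}+l_{t,s+1}=0$ at every vertex $t$, which is exactly the additional family \eqref{21} passing from $R_1$ to $R_i$.

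The main obstacle is justifying the interchange of $\End$ with the quotient by the central ideal $(h_1,\dots,h_{i-1})$, which is not automatic. The decisive input is already available from the proof of \ref{lemma: gr2}: each $h_s$ has a nontrivial linear part (the $x_{s-1}+x_{s+1}$ term), so the sequence $h_1,\dots,h_{i-1}$ is a regular sequence in $\scrS_1$ cutting out the complete intersection $\scrS_i$, and each direct summand of $N_1$ is a rank-one reflexive $\scrS_1$-module that remains reflexive after this quotient. Standard Tor-vanishing along a regular sequence then gives $\End_{\scrS_i}(N_i)\cong\End_{\scrS_1}(N_1)\otimes_{\scrS_1}\scrS_i$, delivering the required presentation $\C\lbl Q\rbl/R_i$.
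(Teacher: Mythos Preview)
Your base case and overall inductive strategy match the paper exactly: tensor up to get $\End_{\scrS_1}(N_1) \cong \C\lbl Q\rbl/R_1$, then mod out by the central elements $h_1,\dots,h_{i-1}$ one at a time.

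The gap is in your justification of the step $\End_{\scrS_i}(N_i) \cong \End_{\scrS_{i-1}}(N_{i-1})/(h_{i-1})$. ``Standard Tor-vanishing along a regular sequence'' only tells you that $N_i \cong N_1 \otimes_{\scrS_1} \scrS_i$ with no higher $\mathrm{Tor}$; it does \emph{not} give base change for $\Hom$. From the exact sequence $0 \to N_{i-1} \xrightarrow{h_{i-1}} N_{i-1} \to N_i \to 0$ one gets
\[
0 \to \End_{\scrS_{i-1}}(N_{i-1})/h_{i-1} \to \End_{\scrS_i}(N_i) \to \Ext^1_{\scrS_{i-1}}(N_{i-1},N_{i-1})[h_{i-1}] \to 0,
\]
so what you actually need is that $h_{i-1}$ acts without torsion on $\Ext^1_{\scrS_{i-1}}(N_{i-1},N_{i-1})$ at every step. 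Reflexivity of the rank-one summands and regularity of the sequence do not by themselves force this.

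The paper supplies exactly the missing input: \ref{lemma: gr1} shows each $\scrS_i$ is a normal domain, and \ref{lemma: gr2} shows each $\scrX_i$ is smooth and connected (via an explicit chart calculation). Smoothness of $\scrX_i$ is what guarantees, through the tilting-bundle description, that the relevant higher $\Ext$ groups vanish; the paper packages this as a citation to \cite[2.11]{V3}, which is precisely the $\End$-base-change statement under these geometric hypotheses. Your sketch invokes \ref{lemma: gr2} only for the linear-part observation, but the real content of that proposition---smoothness of all intermediate $\scrX_i$---is what makes the inductive step go through.
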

\begin{proof}
Recall from \ref{notation: gr2} the commutative diagram
\[
\begin{tikzpicture}[bend angle=8, looseness=1.2]

\node (a2) at (-2,0) {$\Spec \scrS_1$};
\node (a3) at (-4,0) {$\Spec \scrS_2$};
\node (a4) at (-8,0) {$\Spec \scrS_{2n}$};

\node (b2) at (-2,1.5) {$\scrX_1$};
\node (b3) at (-4,1.5) {$\scrX_2$};
\node (b4) at (-8,1.5) {$\scrX_{2n}$};

\draw[->] (-2,1.2) -- node[right] {$\scriptstyle \uppi_1$} (-2,0.3);
\draw[->] (-4,1.2) -- node[right] {$\scriptstyle \uppi_2$} (-4,0.3);
\draw[->] (-8,1.2) -- node[right] {$\scriptstyle \uppi_{2n}$} (-8,0.3);

\draw[->] (-3.7,1.5) -- node[above] {} (-2.3,1.5);
\draw[->] (-5.3,1.5) -- node[above] {} (-4.3,1.5);
\node (c2) at (-6,1.5) {$\dots$};
\draw[->] (-7.6,1.5) -- node[above] {} (-6.7,1.5);

\draw[->] (-3.3,0) -- node[above] {} (-2.7,0);
\draw[->] (-5.3,0) -- node[above] {} (-4.7,0);
\node (c2) at (-6,0) {$\dots$};
\draw[->] (-7.3,0) -- node[above] {} (-6.7,0);
\end{tikzpicture}
\]
together with the $\scrS_i$-module $N_i$ for $1 \leq i \leq 2n$.

By \cite[\S 5]{IW2}, $N_1$ is the tilting bundle for $\uppi_1$, and by \eqref{502} we have
\begin{equation}\label{502}
    \End_{\scrS_1}(N_1) \cong \C \lbl Q \rbl /R_1,
\end{equation}
where $Q$ and $R_1$ are given in \eqref{22}. 

Note that $\scrS_1$ is an integral domain and normal, and $\scrX_1$ is connected and smooth. 
By \ref{lemma: gr1}, $\scrS_2$ is also an integral domain and normal. 
By \ref{lemma: gr4} and \ref{lemma: gr2}, $\scrX_2$ is connected and smooth. 
Since $N_1$ is the tilting bundle for $\uppi_1$, we can apply \cite[2.11]{V3} to deduce that $N_2 \cong N_1 \otimes_{\scrS_1} \scrS_2$ is the tilting bundle for $\uppi_2$ and
\begin{align*}
    \End_{\scrS_2}(N_2) & \cong  \End_{ \scrS_{1}/h_{1}}(N_{1}\otimes_{\scrS_{1}}\scrS_{1}/h_{1}) \tag{since $\scrS_2 \cong \scrS_{1}/h_{1}, \ N_{2} \cong N_{1}\otimes_{\scrS_{1}}\scrS_{2} $} \\
   &  \cong \End_{\scrS_{1}}(N_{1})/(h_{1}) \tag{by \cite[2.11]{V3}}\\
   & \cong \C \lbl Q \rbl / R_2. \tag{by \eqref{502}}
\end{align*}
Here $R_2$ is obtained from $R_1$ by adding the relation \eqref{21} with $i=1$, namely
\begin{equation*} 
 l_{t,0}+\sum_{j=2}^{\infty}j\upkappa_{1j}l_{t,1}^{j-1}+l_{t,2}=0, \quad  \text{ for }t \in \Z/(n+1),
\end{equation*}
which corresponds to
\[
h_1 = x_0+\sum_{j=2}^{\infty} j\upkappa_{1j}\,x_1^{\,j-1}+x_2.
\]
Iterating this argument, for any $2 \leq i \leq 2n$, we have $N_i \cong N_{i-1} \otimes_{\scrS_{i-1}} \scrS_i$ is the tilting bundle for $\uppi_i$, and 
\begin{align*}
   \End_{\scrS_i}(N_i) & \cong \End_{\scrS_{i-1}}(N_{i-1})/(h_{i-1}) \\
   & \cong  \End_{\scrS_{i-2}}(N_{i-2})/(h_{i-1},h_{i-2}) \\
   & \qquad \vdots \\
   & \cong \End_{\scrS_{1}}(N_{1})/(h_{i-1},h_{i-2}, \dots , h_1) \\
   &\cong \C \lbl Q \rbl / R_i . \qedhere
\end{align*}
\end{proof}

The following theorem shows that any monomialized Type~$A$ potential on $Q_n$, not necessarily reduced, can be realised by a crepant resolution of a $cA_n$ singularity.
\begin{theorem}\label{gr}
With the monomialized Type~$A$ potential $f$ in \eqref{f1} on $Q_n$, the $cA_n$ singularity $\scrR$, and the
$\CM$ $\scrR$-module $M$ as in \textnormal{\ref{notation: gr2}}, one has
$\underline{\End}_{\scrR}(M) \cong \Jac(Q_n,f)$.
\end{theorem}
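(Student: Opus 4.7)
The plan is to leverage Corollary \ref{lemma: gr3} as the starting point: since $\scrS_{2n} \cong \scrR$ by \ref{lemma: gr1} and $N_{2n} = M$ by construction, we get $\End_{\scrR}(M) \cong \C\lbl Q\rbl/R_{2n}$, where $Q$ is the big quiver (with $2n+1$ loops at each vertex). The summand $\scrR$ of $M$ corresponds to vertex $0$ of $Q$, so the morphisms factoring through $\add \scrR$ form the two-sided ideal $\langle e_0\rangle$. Hence
\[
\underline{\End}_{\scrR}(M) \cong \C\lbl Q\rbl/(R_{2n} + \langle e_0\rangle),
\]
and the theorem reduces to identifying the right-hand side with $\Jac(Q_n, f)$ via mutually inverse algebra maps.

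I would first build the forward map $\varphi\colon \C\lbl Q_n\rbl \to \C\lbl Q\rbl/(R_{2n}+\langle e_0\rangle)$ that fixes the bridge arrows $a_{2t}, b_{2t}$ and sends the loop $a_{2t-1}$ of $Q_n$ to the central loop $l_{t,2t-1}$ of $Q$. To check $\varphi$ descends to the Jacobi algebra, one inspects each cyclic derivative: the loop derivative $\partial_{a_{2t-1}} f = \x_{2t-2}' + \sum_j j\upkappa_{2t-1,j} a_{2t-1}^{j-1} + \x_{2t}$ maps exactly onto the recursion relation \eqref{21} with $i = 2t-1$, after using the identifications $l_{t,2t-2} = b_{2t-2}a_{2t-2}$ and $l_{t,2t} = a_{2t}b_{2t}$ from $R_1$. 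The boundary terms vanish compatibly because $l_{1,0} = b_0 a_0$ and $l_{n,2n} = a_{2n}b_{2n}$ both lie in $\langle e_0\rangle$. For the arrow derivatives, $\partial_{b_{2t}}f$ and $\partial_{a_{2t}}f$ factor as an arrow times a relation \eqref{21} at index $i = 2t$ at the adjacent vertex, once one applies the $R_1$ commutations $l_{t,2t-1}a_{2t} = a_{2t}l_{t+1,2t-1}$ and $l_{t+1,2t-1}b_{2t} = b_{2t}l_{t,2t-1}$.

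For the inverse $\psi\colon \C\lbl Q\rbl/(R_{2n}+\langle e_0\rangle) \to \Jac(Q_n, f)$, I would send $a_{2t}, b_{2t}$ identically, send $a_0, b_0, a_{2n}, b_{2n}$ to zero, set $\psi(l_{t,2t-1}) = a_{2t-1}$, and define $\psi(l_{t,i})$ for all other $i$ by iterating the recursion \eqref{21} outward from $i = 2t-1$, seeded by $\psi(l_{t,2t}) = a_{2t}b_{2t}$ and $\psi(l_{t,2t-2}) = b_{2t-2}a_{2t-2}$ (vanishing at the endpoints). The main obstacle is showing $\psi$ respects the commutation relations in $R_1$, namely $l_{t,i}a_{2t} = a_{2t}l_{t+1,i}$, $l_{t+1,i}b_{2t} = b_{2t}l_{t,i}$, and $l_{t,i}l_{t,j} = l_{t,j}l_{t,i}$. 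My strategy is to prove the base case $i = 2t-1$ directly from the Jacobi relations $\partial_{a_{2t}}f = 0$ and $\partial_{b_{2t}}f = 0$ (a short manipulation that shows $a_{2t-1}$ commutes with $a_{2t}b_{2t}$ and $b_{2t-2}a_{2t-2}$ in $\Jac(Q_n, f)$), and then to propagate to general $i$ by induction: once the commutation holds at index $i$, the recursive definition of $l_{t,i\pm 1}$ as a polynomial in $l_{t,i}$ and $l_{t,i\mp 1}$ transfers the commutation formally. The loop-loop relations then reduce to commutation among the three surviving generators at each vertex. With $\psi$ well-defined, $\varphi\circ\psi$ and $\psi\circ\varphi$ act as the identity on generators, yielding the desired isomorphism.
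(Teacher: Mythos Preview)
Your proposal is correct and follows essentially the same approach as the paper: both start from $\underline{\End}_{\scrR}(M) \cong \C\lbl Q\rbl/(R_{2n}+\langle e_0\rangle)$ via \ref{lemma: gr1} and \ref{lemma: gr3}, then eliminate the redundant loops $l_{t,i}$ (for $i\neq 2t-1$) using the recursion \eqref{21}, and verify that the $R_1$ commutation relations are forced by the Jacobi relations of $f$ at the base indices and propagate inductively through the recursion. The paper packages the elimination-and-commutation step as a separate Lemma \ref{057} (working with derived relations $T$ on an intermediate quiver $\mathcal{Q}_n$), whereas you phrase it as constructing explicit mutually inverse algebra maps, but the mathematical content---in particular the key observation that $\x_{2t-1}$ commutes with $\x_{2t}$ in $\Jac(Q_n,f)$ by combining $a_{2t}\cdot\partial_{a_{2t}}f$ with $\partial_{b_{2t}}f\cdot b_{2t}$, and that this seeds the induction---is identical.
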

\begin{proof}
By \ref{lemma: gr1}, $\scrR \cong \scrS_{2n}$ and $\End_{\scrR}(M) \cong \End_{\scrS_{2n}}(N_{2n})$. By \ref{lemma: gr3}, $\End_{\scrS_{2n}}(N_{2n}) \cong \C \lbl Q \rbl /  R_{2n} $. 
Thus $\End_{\scrR}(M) \cong \C \lbl Q \rbl / R_{2n}$, where $Q$ and $R_{2n}$ are as in \textnormal{\ref{notation: gr2}\eqref{gr2 9}}.

Similar to $Q_n$, we also define $\x_i$ and $\x_i'$ on $Q$ as follows: 
for any $0 \leq i \leq n$, set $\x_{2i} \colonequals a_{2i}b_{2i}$ and $\x_{2i}' \colonequals b_{2i}a_{2i}$, and for any $1 \leq i \leq n$, set $\x_{2i-1} \colonequals l_{i,2i-1} =: \x_{2i-1}'$. 

%Set the potential $\mathsf{f} \colonequals f+ \x_0'\x_1+ \x_{2n-1}'\x_{2n}$ on $Q$.
%Then we prove that the relations $R_1$ \eqref{22} and $R_2$ \eqref{21} can induce the relations generated by potential $\mathsf{f}$.
Next, we consider the following relations induced by $R_{2n}$.
For any $1 \leq t \leq n-1$, left multiplying the $i=2t$ case of $\eqref{21}$ by $b_{2t}$ gives
\begin{align*}
 & \quad b_{2t} l_{t,2t-1}+\sum_{j=2}^{\infty}j\upkappa_{2t,j}b_{2t}l_{t,2t}^{j-1}+b_{2t}l_{t,2t+1} \\
&   = b_{2t} l_{t,2t-1}+\sum_{j=2}^{\infty}j\upkappa_{2t,j}b_{2t}l_{t,2t}^{j-1}+l_{t+1,2t+1}b_{2t} \tag{since $b_{2t}l_{t,2t+1}=l_{t+1,2t+1}b_{2t}$ by \eqref{22}} \\
&  = b_{2t}\x_{2t-1}'+\sum_{j=2}^{\infty}j\upkappa_{2t,j}b_{2t}\x_{2t}^{j-1} +\x_{2t+1}b_{2t}. \tag{since $l_{t,2t}=a_{2t}b_{2t} =\x_{2t}$, $l_{t,2t-1}=\x_{2t-1}'$ and $l_{t+1,2t+1}=\x_{2t+1}$ by \eqref{22}} 
\end{align*}
Similarly, for any $1 \leq t \leq n-1$, right multiplying the $i=2t$ case of $\eqref{21}$ by $a_{2t}$ gives
\begin{align*}
&\quad l_{t,2t-1}a_{2t}+\sum_{j=2}^{\infty}j\upkappa_{2t,j}l_{t,2t}^{j-1}a_{2t}+l_{t,2t+1}a_{2t} \\
& = l_{t,2t-1}a_{2t}+\sum_{j=2}^{\infty}j\upkappa_{2t,j}l_{t,2t}^{j-1}a_{2t}+a_{2t}l_{t+1,2t+1} \tag{since $l_{t,2t+1}a_{2t}=a_{2t}l_{t+1,2t+1}$ by \eqref{22}} \\
& = \x_{2t-1}'a_{2t}+\sum_{j=2}^{\infty}j\upkappa_{2t,j}\x_{2t}^{j-1}a_{2t} +a_{2t}\x_{2t+1}.\tag{since $l_{t,2t}=a_{2t}b_{2t} =\x_{2t}$, $l_{t,2t-1}=\x_{2t-1}'$ and $l_{t+1,2t+1}=\x_{2t+1}$ by \eqref{22}}
\end{align*}
For any $1 \leq t \leq n$, the $i=2t-1$ case of \eqref{21} is
\begin{align*}
 l_{t,2t-2}+\sum_{j=2}^{\infty}j\upkappa_{ij}l_{t,2t-1}^{j-1}+l_{t,2t}=  \x_{2t-2}'+\sum_{j=2}^{\infty}j\upkappa_{2t-1,j}\x_{2t-1}^{j-1}+\x_{2t}.
 \tag{since $l_{t,2t-1}=\x_{2t-1}$, $l_{t,2t-2}=b_{2t-2}a_{2t-2}=\x'_{2t-2}$ and $l_{t,2t}=a_{2t}b_{2t}=\x_{2t}$ by notation and \eqref{22}}
\end{align*}
Combining the above three types of relations gives the following,

\begin{equation}\label{26}
T \colonequals
\begin{cases}
    b_{i}\x_{i-1}'+\sum_{j=2}^{\infty}j\upkappa_{ij}b_i\x_i^{j-1}+\x_{i+1}b_{i}=0 \text{, for } i =2,4,\dots , 2n-2.\\
    \x_{i- 1}'a_{i}+\sum_{j=2}^{\infty}j\upkappa_{ij}\x_{i}^{j-1}a_i+a_{i}\x_{i+1}=0\text{, for } i =2,4,\dots , 2n-2. \\
     \x_{i-1}'+\sum_{j=2}^{\infty}j\upkappa_{ij}\x_{i}^{j-1}+\x_{i+1}  = 0 \text{, for }i = 1,3,\dots, 2n-1.
\end{cases}    
\end{equation}

Then we define the quiver $\mathcal{Q}_n$ by deleting loops on $Q$ as follows.
For each vertex $t$ on $Q$ with $1 \leq t \leq n$, we delete all loops $l_{tj}$ except $l_{t,2t-1}$ (namely $\x_{2t-1}$). Note that $Q_n$ is $\mathcal{Q}_n$ by removing the vertex $0$ and loops on it.

In \ref{057} below we will show that $\C \lbl Q\rbl/ \langle R_{2n},e_0 \rangle \cong \C \lbl \mathcal{Q}_n \rbl/\langle T,e_0 \rangle$. 
Together with the isomorphism $\End_{\scrR}(M) \cong  \C \lbl Q \rbl /  R_{2n}$ at the start of the proof, this gives
\begin{equation*}
\underline{\End}_{\scrR}(M) \cong \C \lbl Q \rbl / \langle R_{2n},e_0 \rangle \cong \C \lbl \mathcal{Q}_n \rbl/\langle T,e_0 \rangle.
\end{equation*}

Thus $\underline{\End}_{\scrR}(M)$ is isomorphic to $\C \lbl Q_n \rbl$ factored by the relations $T$, which after deleting paths that factor through vertex $0$, become

\begin{align*} 
 &  b_{i}\x_{i-1}'+\sum_{j=2}^{\infty}j\upkappa_{ij}b_i\x_i^{j-1}+\x_{i+1}b_{i}=0 \text{, for } i =2,4,\dots , 2n-2.\\
 &  \x_{i- 1}'a_{i}+\sum_{j=2}^{\infty}j\upkappa_{ij}\x_{i}^{j-1}a_i+a_{i}\x_{i+1}=0 \text{, for } i =2,4,\dots , 2n-2. \\
& \x_{i-1}'+\sum_{j=2}^{\infty}j\upkappa_{ij}\x_{i}^{j-1}+\x_{i+1}  = 0 \text{, for }i = 3,\dots, 2n-2.\\
& \sum_{j=2}^{\infty}j\upkappa_{1j}\x_{1}^{j-1}+\x_{2}  = 0, \ \x_{2n-2}'+\sum_{j=2}^{\infty}j\upkappa_{2n-1,j}\x_{2n-1}^{j-1} = 0.
\end{align*}
These are exactly the relations generated by the derivatives of $f$.
Thus $\underline\End_{\scrR}(M)\cong \Jac(Q_{n},f)$.
\end{proof}

\begin{lemma}\label{057}
With notation in \textnormal{\ref{notation: gr2}} and \textnormal{\ref{gr}}, $\C \lbl Q\rbl/ \langle R_{2n},e_0 \rangle \cong \C \lbl \mathcal{Q}_n \rbl/\langle T,e_0 \rangle$.
\end{lemma}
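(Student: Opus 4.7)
The plan is to construct mutually inverse algebra homomorphisms between the two quotient algebras. In the forward direction, the inclusion of quivers $\mathcal{Q}_n \hookrightarrow Q$, together with $\x_{2t-1} \mapsto l_{t,2t-1}$, induces a homomorphism $\phi \colon \C\lbl\mathcal{Q}_n\rbl \to \C\lbl Q\rbl/\langle R_{2n},e_0\rangle$. To show $\phi$ kills $T$ (and so descends to $\bar{\phi}$ on the quotient), I would invoke the derivation just preceding the statement: the odd $T$-relations are precisely the $h_{2t-1}$ relations at vertex $t$ after substituting $l_{t,2t-2} = \x'_{2t-2}$, $l_{t,2t-1} = \x_{2t-1}$ and $l_{t,2t} = \x_{2t}$ (or zero when an index reaches vertex $0$), while the even ones come from multiplying $h_{2t}$ by $a_{2t}$ or $b_{2t}$ and using the commutation relations from $R_{2n}$.

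For the inverse, I would define $\psi \colon \C\lbl Q\rbl \to \C\lbl\mathcal{Q}_n\rbl/\langle T, e_0\rangle$ on generators: arrows map to themselves, everything touching vertex $0$ maps to zero, and at each vertex $t$ with $1 \leq t \leq n$ the loops $l_{t,j}$ map to elements $L_{t,j}$ defined inductively. Seed with $L_{t,2t-1} \colonequals \x_{2t-1}$ and $L_{t,2t} \colonequals \x_{2t}$ (using $\x_{2n} = 0$), then extend to all $j$ by the three-term recursion $L_{t,i+1} \colonequals -L_{t,i-1} - \sum_j j\upkappa_{ij} L_{t,i}^{j-1}$. Convergence in the completed path algebra is automatic since each recursive step strictly raises path degree.

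The main obstacle is verifying $\psi$ kills every relation in $R_{2n}$. The $h_i$ relations hold by construction, and the boundary identity $l_{t,2t-2} = \x'_{2t-2}$ reduces via $h_{2t-1}$ to $-\sum_j j\upkappa_{2t-1,j}\x_{2t-1}^{j-1} - \x_{2t} = \x'_{2t-2}$, which is exactly the odd $T$-relation at $i = 2t-1$. The commutation relations require the most care: for loops at a single vertex, $L_{t,i}L_{t,j} = L_{t,j}L_{t,i}$ reduces by induction on the $h$-recursion to the single base case $\x_{2t-1}\x_{2t} = \x_{2t}\x_{2t-1}$, a two-line manipulation using both the left and right even $T$-relations at $i = 2t$; the inter-vertex relations $L_{t,i}a_{2t} = a_{2t}L_{t+1,i}$ and $L_{t+1,i}b_{2t} = b_{2t}L_{t,i}$ have base case $i = 2t-1$ given directly by the even $T$-relations, and propagate through the recursion because both sides satisfy the same $h_i$ three-term relation. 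Once $\bar\psi$ is shown to exist, the compositions $\bar\phi \circ \bar\psi$ and $\bar\psi \circ \bar\phi$ act as the identity on generators by inspection, so $\bar\phi$ and $\bar\psi$ are mutually inverse, yielding the claimed isomorphism.
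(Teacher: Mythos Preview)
Your approach is essentially the same as the paper's, just packaged differently: the paper phrases it as ``eliminate the extra loops using the $h_i$ relations (called $T_2$), then verify that the substituted commutation and boundary relations (called $\overline{T}_1$) are consequences of $T$'', whereas you phrase it as constructing mutually inverse homomorphisms. The three computational checks you outline---commutativity of $\x_{2t-1}$ and $\x_{2t}$ from the pair of even $T$-relations, propagation of the inter-vertex relations $L_{t,i}a_{2t}=a_{2t}L_{t+1,i}$ through the three-term recursion, and the boundary identity $L_{t+1,2t}=\x'_{2t}$---are exactly the steps labelled (2), (3), (1) in the paper's proof, and the inductive mechanism is identical.
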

\begin{proof}
We first divide the relations $R_{2n}$ in \textnormal{\ref{notation: gr2}\eqref{gr2 9}} into three parts. The following are the relations in $R_{2n}$ that factor through the vertex $0$.
\begin{equation*}
    T_0 \colonequals
     \begin{cases}
     l_{00} = a_{0}b_{0},\text{  }l_{0,2n} = b_{2n}a_{2n}.\\
     l_{0i}a_{0} = a_{0}l_{1i},\ l_{ni}a_{2n} = a_{2n}l_{0i},\ l_{0i}b_{2n} = b_{2n}l_{ni},\ l_{1i}b_{0} = b_{0}l_{0i}, \\
     l_{0i}l_{0j} = l_{0j}l_{0i} \text{, for } 0 \leq i, j \leq 2n.\\
     l_{0,i-1}+\sum_{j=2}^{\infty}j\upkappa_{ij}l_{0,i}^{j-1}+l_{0,i+1}=0
  \text{, for } 1 \leq i \leq 2n-1.
 \end{cases}
\end{equation*}
Then we divide the remaining relations of $R_{2n}$ into the following two parts.
\begin{equation*}
     T_1 \colonequals
\begin{cases}
l_{t,2t-2}=b_{2t-2}a_{2t-2}, \text{ for } 1 \leq t \leq n. \\
l_{ti}l_{tj} = l_{tj}l_{ti},  \text{ for } 1 \leq t \leq n\text { and } 0 \leq i, j \leq 2n. \\
l_{ti}a_{2t} = a_{2t}l_{t+1,i},\ l_{t+1,i}b_{2t} = b_{2t}l_{ti}, \text { for } 1 \leq t \leq n-1 \text { and } 0 \leq i\leq 2n. 
\end{cases}
\end{equation*}
\begin{equation*}
T_2  \colonequals
\begin{cases}
l_{t,2t}=a_{2t}b_{2t}, \text{ for any } 1 \leq t \leq n.\\
l_{t,i-1}+\sum_{j=2}^{\infty}j\upkappa_{ij}l_{t,i}^{j-1}+l_{t,i+1}=0
  \text{ for any } 1 \leq t \leq n \text{ and } 1 \leq i \leq 2n-1.
\end{cases}
\end{equation*}

Since $T$ in \eqref{26} is induced by $R_{2n}$, necessarily 
\begin{equation}\label{44}
    \C \lbl Q\rbl/ \langle R_{2n} \rangle \cong \C \lbl Q\rbl/ \langle R_{2n},T \rangle \cong \C \lbl Q\rbl/ \langle T_0,T_1,T_2,T \rangle.
\end{equation}
We next use $T_2$ to eliminate some loops at vertex $1,2,\dots, n$ of $Q$, as follows.

Fix some vertex $t$ with $1\leq t \leq n$ and consider the loops $l_{ti}$ on it. 
Since $l_{t,2t}=a_{2t}b_{2t}$ in $T_2$, we can eliminate $l_{t,2t}$. 
From our notation, $l_{t,2t-1} \colonequals \x_{2t-1}$ and $\x_{2t}\colonequals a_{2t}b_{2t}$. Thus we can write $l_{t,2t}= \x_{2t}$.
Since $l_{ti}l_{tj}=l_{tj}l_{ti}$ in $T_1$  for $0 \leq i,  j \leq 2n$, we can consider $\C \lbl l_{t,2t-1},l_{t,2t} \rbl$ as the polynomial ring $\C \lal l_{t,2t-1},l_{t,2t} \ral$. By the relation
\begin{equation}\label{25}
     l_{t,2t-1}+\sum_{j=2}^{\infty}j\upkappa_{ij}l_{t,2t}^{j-1}+l_{t,2t+1}=0,
\end{equation}
in $T_2$, we can express $l_{t,2t+1} \in \C \lal l_{t,2t-1},l_{t,2t} \ral = \C\lal\x_{2t-1},\x_{2t}\ral$. 
Thus we can eliminate $l_{t,2t+1}$. Similar to the argument in \ref{054}, for each $i \neq 2t-1$ we can express $l_{ti} \colonequals \bar{l}_{ti}(x_{2t-1},\x_{2t}) \in \C\lal\x_{2t-1},\x_{2t}\ral$ and eliminate it.
So we only leave one loop $l_{t,2t-1}=\x_{2t-1}$ on vertex $t$. 

Thus we can use all the relations in $T_2$ to eliminate all such loops at vertices $1,2,\dots, n$.
For $0 \leq k \leq 2$, write $\overline{T}_k$ for the the relations where we have substituted $l_{ti}$ in $T_k$ by the polynomial $\bar{l}_{ti}$ for $1 \leq t \leq n$ and $0 \leq i \leq 2n$. So we have
\begin{equation}\label{43}
    \C \lbl Q\rbl/ \langle T_0,T_1,T_2,T \rangle \cong  \C \lbl \mathcal{Q}_n \rbl/ \langle \overline{T}_0,\overline{T}_1,T  \rangle.
\end{equation}
Now during the above substitution process, the following expressions in $\overline{T}_2$
\begin{align*}
   &  \bar{l}_{t,2t}= \x_{2t}=a_{2t}b_{2t} \tag{since $\x_{2t}= a_{2t}b_{2t}$}\\
   & \bar{l}_{t,2t-1}+\sum_{j=2}^{\infty}j\upkappa_{ij}\bar{l}_{t,2t}^{j-1}+\bar{l}_{t,2t+1}=0 \tag{by \eqref{25}}
\end{align*}
hold in $\C \lbl \mathcal{Q}_n \rbl$ tautologically. Similarly, tautologically, all the other expressions in $\overline{T}_2$ also hold in $\C \lbl \mathcal{Q}_n \rbl$.

We next prove that $T$ in \eqref{26} induces $\overline{T}_1$. 

(1) Firstly, we prove that $T$ induces $\bar{l}_{t,2t-2}=b_{2t-2}a_{2t-2}$ for  $1 \leq t \leq n$. Since
\begin{align*}
 & \x_{2t-2}'+\sum_{j=2}^{\infty}j\upkappa_{2t-1,j}\x_{2t-1}^{j-1}+\x_{2t}  = 0, 
 \tag{by the $i=2t-1$ case of the third line in \eqref{26}}\\
 &\bar{l}_{t,2t-2}+\sum_{j=2}^{\infty}j\upkappa_{2t-1,j}\bar{l}_{t,2t-1}^{j-1}+\bar{l}_{t,2t}=0.\tag{since $\overline{T}_2$ holds in $\C \lbl \mathcal{Q}_n\rbl$}
\end{align*}
and by notation $\bar{l}_{t,2t-1}=\x_{2t-1}$ and $\bar{l}_{t,2t}=\x_{2t}$, then $\bar{l}_{t,2t-2}=\x_{2t-2}'=b_{2t-2}a_{2t-2}$.

(2) Secondly, we prove that $T$ induces $\bar{l}_{ti}\bar{l}_{tj} = \bar{l}_{tj}\bar{l}_{ti}$ for $1 \leq t \leq n$ and $ 0 \leq i, j \leq 2n$. 

Left multiplying the $i=2t$ case of the first line in \eqref{26} by $a_{2t}$ gives
\begin{align*}
   0= a_{2t}(b_{2t}\x_{2t-1}'+\sum_{j=2}^{\infty}j\upkappa_{2t,j}b_{2t}\x_{2t}^{j-1}+\x_{2t+1}b_{2t})= \x_{2t}\x_{2t-1}'+\sum_{j=2}^{\infty}j\upkappa_{2t,j}\x_{2t}^{j}+a_{2t}\x_{2t+1}b_{2t}. \tag{since $\x_{2t}= a_{2t}b_{2t}$}
\end{align*}
Right multiplying the $i=2t$ case of the second line in \eqref{26} by $b_{2t}$ gives
\begin{align*}
0=(\x_{2t- 1}'a_{2t}+\sum_{j=2}^{\infty}j\upkappa_{2t,j}\x_{2t}^{j-1}a_{2t}+a_{2t}\x_{2t+1})b_{2t}=\x_{2t-1}'\x_{2t} +\sum_{j=2}^{\infty}j\upkappa_{2t,j}\x_{2t}^{j}+a_{2t}\x_{2t+1}b_{2t}. \tag{since $\x_{2t}= a_{2t}b_{2t}$}
\end{align*}
Thus $\x_{2t}\x_{2t-1}'= \x_{2t-1}'\x_{2t}$. Since $\x_{2t-1}$ is the loop at vertex $t$, then by definition $\x_{2t-1}'=\x_{2t-1}$, and so $\x_{2t}\x_{2t-1}= \x_{2t-1}\x_{2t}$.
Together with the fact that each $\bar{l}_{ti} \in \C\lal\x_{2t-1},\x_{2t}\ral$ gives $\bar{l}_{ti}\bar{l}_{tj} = \bar{l}_{tj}\bar{l}_{ti}$ for $0\leq i, j \leq 2n$.

(3) Finally, we prove that $T$ induces $\bar{l}_{ti}a_{2t} = a_{2t}\bar{l}_{t+1,i},\ \bar{l}_{t+1,i}b_{2t} = b_{2t}\bar{l}_{ti}$ for $1 \leq t \leq n-1$ and $0 \leq i\leq 2n$. 
For each vertex $t$ with $1 \leq t \leq n-1$, we have
\begin{align*}
  &  \bar{l}_{t,2t}=a_{2t}b_{2t}=\x_{2t}, \tag{since $\overline{T}_2$ holds in $\C \lbl \mathcal{Q}_n\rbl$}\\ 
  & \bar{l}_{t+1,2t}=b_{2t}a_{2t}=\x_{2t}', \tag{by (1)}\\  
  & \bar{l}_{t,2t-1}=\x_{2t-1}=\x_{2t-1}',\ \bar{l}_{t+1,2t+1}=\x_{2t+1}=\x_{2t+1}'. \tag{by the definition of $\x_{2t-1}$ and $\x_{2t+1}$}
\end{align*}
Thus
\begin{align*}
 \bar{l}_{t,2t}a_{2t}
 &= a_{2t}b_{2t}a_{2t}  \tag{since $\bar{l}_{t,2t}=a_{2t}b_{2t}$}\\
 &=a_{2t}\bar{l}_{t+1,2t}, \tag{since $\bar{l}_{t+1,2t}=b_{2t}a_{2t}$}
\end{align*}
and
\begin{align*}
\bar{l}_{t,2t-1}a_{2t} 
 & =\x_{2t-1}'a_{2t}  \tag{since $\bar{l}_{t,2t-1}=\x_{2t-1}'$}\\
& =  -\sum_{j=2}^{\infty}j\upkappa_{2t,j}\x_{2t}^{j-1}a_{2t}-a_{2t}\x_{2t+1} \tag{by the $i=2t$ case of the second line in \eqref{26}}\\
& = -\sum_{j=2}^{\infty}j\upkappa_{2t,j}a_{2t}\bar{l}_{t+1,2t}^{j-1}-a_{2t}\bar{l}_{t+1,2t+1} \tag{since $\x_{2t}=a_{2t}b_{2t}$, $\bar{l}_{t+1,2t}=b_{2t}a_{2t}$ and $\x_{2t+1}=\bar{l}_{t+1,2t+1}$}\\
&= -a_{2t}(\sum_{j=2}^{\infty}j\upkappa_{2t,j}\bar{l}_{t+1,2t}^{j-1}+\bar{l}_{t+1,2t+1})  \notag \\
& = a_{2t}\bar{l}_{t+1,2t-1}.  \tag{since $\overline{T}_2$ holds in $\C \lbl \mathcal{Q}_n\rbl$}
\end{align*}

Since $\overline{T}_2$ holds in $\C \lbl \mathcal{Q}_n\rbl$, then similar to the argument in \ref{054}, each $\bar{l}_{ti} \in \C\lbl \bar{l}_{t,2t-1},\bar{l}_{t,2t}\rbl$ and $\bar{l}_{t+1,i} \in \C\lbl \bar{l}_{t+1,2t-1},\bar{l}_{t+1,2t}\rbl$. Furthermore,
\begin{equation*}
    \bar{l}_{ti}  = H_i(\bar{l}_{t,2t-1},\bar{l}_{t,2t}), \ \bar{l}_{t+1,i}=H_i(\bar{l}_{t+1,2t-1},\bar{l}_{t+1,2t}).
\end{equation*}
for the same $H_i$. 
Together with the above $\bar{l}_{t,2t}a_{2t}=a_{2t}\bar{l}_{t+1,2t}$ and $\bar{l}_{t,2t-1}a_{2t}=a_{2t}\bar{l}_{t+1,2t-1}$, this gives $\bar{l}_{ti}a_{2t}=a_{2t}\bar{l}_{t+1,i}$ for each $i$.

Similarly, $T$ \eqref{26} also induces $\bar{l}_{t+1,i}b_{2t}=b_{2t}\bar{l}_{ti}$ for each $i$.

Combining (1), (2) and (3), it follows that $T$ induces $\overline{T}_1$, and so $\C \lbl \mathcal{Q}_n \rbl/ \langle \overline{T}_0,\overline{T}_1,T  \rangle \cong \C \lbl \mathcal{Q}_n \rbl/ \langle \overline{T}_0,T  \rangle$. 
Together with \eqref{44}, this gives
\begin{equation*}
    \C \lbl Q\rbl/ \langle R_{2n}\rangle \cong \C \lbl Q\rbl/ \langle T_0,T_1,T_2,T \rangle 
    \stackrel{\scriptstyle \eqref{43}}{\cong} 
    \C \lbl \mathcal{Q}_n \rbl/ \langle \overline{T}_0,\overline{T}_1,T  \rangle \cong \C \lbl \mathcal{Q}_n \rbl/ \langle \overline{T}_0,T  \rangle,
\end{equation*}
and so $\C \lbl Q\rbl/ \langle R_{2n},e_0 \rangle \cong \C \lbl \mathcal{Q}_n \rbl/ \langle \overline{T}_0,T,e_0  \rangle \cong  \C \lbl \mathcal{Q}_n \rbl/\langle T,e_0 \rangle$.
\end{proof}

We now consider the quiver $Q_{n, I}$ for some $I \subseteq \{1,2, \dots, n\}$ and prove that any Type $A$ potential on it can be realized by a crepant resolution of a $cA_n$ singularity as follows.

\begin{definition}\label{def: Type An}
We say that $\uppi$ is \emph{Type $A_{n}$} if $\uppi$ is a crepant resolution $\scrX \rightarrow \Spec\scrR$ where $\scrR$ is $cA_n$. Moreover, we say that $\uppi$ is \emph{Type $A_{n, I}$} if the normal bundle of the exceptional curve $\Curve_i$ is $\scrO(-1) \bigoplus \scrO(-1)$ if and only if $i \in I$, else the normal bundle is $\scrO(-2) \bigoplus \scrO$. 
\end{definition}

\begin{theorem}\label{511}
For any Type $A$ potential $f$ on $Q_{n, I}$, there exists a Type $A_n$ crepant resolution $\uppi \colon \scrX \rightarrow \Spec\scrR$ such that $\Lambda_{\mathrm{con}}(\uppi) \cong \Jac(Q_{n,I}, f)$. If furthermore $f$ is reduced, then $\uppi$ is Type $A_{n, I}$.
\end{theorem}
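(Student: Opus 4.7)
The plan is to chain the earlier reductions with the geometric realization \ref{gr}. Given any Type $A$ potential $f$ on $Q_{n,I}$, the Splitting Theorem \cite[4.6]{DWZ1} furnishes a reduced part $f_{\textnormal{red}}$ with $\Jac(Q_{n,I},f) \cong \Jac(Q_{n,I'}, f_{\textnormal{red}})$ for some $I \subseteq I' \subseteq \{1,\dots,n\}$, where by the definition \ref{TypeA} the potential $f_{\textnormal{red}}$ is reduced Type $A$ on $Q_{n,I'}$. Applying the monomialization theorem \ref{047} yields a right-equivalence $f_{\textnormal{red}} \rightsquigarrow f_1$ with $f_1$ reduced monomialized Type $A$ on $Q_{n,I'}$, and then \ref{410} produces a (possibly non-reduced) monomialized Type $A$ potential $g$ on $Q_n$ with $\Jac(Q_n, g) \cong \Jac(Q_{n,I'}, f_1)$.

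Feed $g$ into the geometric realization \ref{gr}: this outputs an explicit $cA_n$ singularity $\scrR$ and $\CM$ $\scrR$-module $M$ as in \ref{notation: gr2}(1) with $\underline{\End}_\scrR(M) \cong \Jac(Q_n, g)$. By the bijection \ref{36}, $M$ corresponds to a crepant resolution $\uppi\colon \scrX \to \Spec \scrR$, and by definition $\Lambda_{\mathrm{con}}(\uppi) = \underline{\End}_\scrR(M)$. Composing all the isomorphisms gives the first claim $\Lambda_{\mathrm{con}}(\uppi) \cong \Jac(Q_{n,I}, f)$.

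For the refinement, suppose $f$ is reduced. Then $I' = I$ and $f_1$ is reduced monomialized Type $A$ on $Q_{n,I}$, so $\upkappa_{s2} = 0$ for every $s$ such that $\x_s$ is a loop in $Q_{n,I}$, i.e.\ for $s$ corresponding to vertices in $I^c$. Inspecting the automorphisms in the proofs of \ref{048}--\ref{410} shows that each time a new loop $\x_t$ is introduced at a vertex $i \in I$, the transformation only perturbs the squared terms $\x_{t\pm 1}^2$, whose generators $\x_{t\pm 1}$ are compositions $ab$ of path length two and hence contribute only at path length four. Thus at pre-existing loops ($j \in I^c$) the coefficients $\upkappa_{2j-1,2}$ remain zero in $g$, while \ref{410} explicitly guarantees $\upkappa_{2i-1,2} \neq 0$ at each newly added loop ($i \in I$). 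Combining with \ref{054}(2) gives $(g_{2i-2}, g_{2i}) = (x,y)$ if and only if $i \in I$; \ref{35} then shows there is no loop at vertex $i$ in the NCCR of $\Lambda(\uppi)$ exactly when $i \in I$, which by \ref{35}(1)--(2) is equivalent to the normal bundle of $\Curve_i$ being $\scrO(-1)\oplus\scrO(-1)$. Hence $\uppi$ is Type $A_{n,I}$ in the sense of \ref{def: Type An}.

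The main obstacle is the bookkeeping in the last paragraph: one must verify through the nested applications of \ref{048} hidden inside the proof of \ref{410} that the path-length-2 coefficient $\upkappa_{s2}$ at a pre-existing loop never gets accidentally switched on, and symmetrically that each newly introduced loop does acquire a nonzero squared coefficient. Once this is pinned down, the translation to normal bundles via \ref{054} and \ref{35} is mechanical.
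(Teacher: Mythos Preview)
Your proof is correct and follows the same route as the paper: reduce via the Splitting Theorem to a reduced Type~$A$ potential on $Q_{n,I'}$, monomialize via \ref{047}, transport to $Q_n$ via \ref{410}, and realize geometrically via \ref{gr} and \ref{36}. For the refinement you argue exactly as the paper does, invoking \ref{054}(2) and \ref{35} to read off the normal bundles from the coefficients $\upkappa_{2i-1,2}$.

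Your extra care in the last paragraph is warranted and in fact slightly more explicit than the paper. The statement of \ref{410} only asserts $\upkappa_{2i-1,2}\neq 0$ for $i\in I$; the paper simply writes ``by \ref{410}'' for the converse claim $\upkappa_{2i-1,2}=0$ for $i\notin I$ as well. Your observation that in each application of \ref{048} the perturbed squares $\x_{t\pm 1}^2$ sit at non-loop generators (so pre-existing loop coefficients are untouched) is exactly what justifies this, and is implicit in the structure of \ref{048}--\ref{410} but not spelled out there.
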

\begin{proof}
By the Splitting Theorem (\cite[4.6]{DWZ1}) and \ref{TypeA}, there is a reduced Type A potential $f_{\red}$ on $Q_{n,I'}$ for some $I \subseteq I' \subseteq \{1,2,\dots,n\}$ such that $\Jac(Q_{n,I'},f_{\red}) \cong \Jac(Q_{n,I},f)$. Then, by \ref{047}, there exists a reduced monomialized Type $A$ potential $g$ on $Q_{n, I'}$ such that $f_{\red} \cong g$. By \ref{410}, there exists a monomialized Type $A$ potential $h$ on $Q_n$ such that $\Jac(Q_{n,I'},g) \cong \Jac(Q_n,h)$. Thus we have
\begin{equation*}
     \Jac(Q_{n,I},f) \cong \Jac(Q_{n,I'},f_{\red}) \cong \Jac(Q_{n,I'},g)  \cong \Jac(Q_n,h).
\end{equation*}

By \ref{gr}, there exists a $cA_n$ singularity $\scrR$ and a maximal $\CM$ $\scrR$-module $M$ such that $\underline\End_{\scrR}(M) \cong \Jac(Q_{n}, h)$. Denote $\uppi$ to be the crepant resolution of $\Spec \scrR$, which corresponds to $M$ in \ref{36}. Thus $\Lambda_{\mathrm{con}}(\uppi) \cong \Jac(Q_n,h)$, and so $ \Lambda_{\mathrm{con}}(\uppi) \cong \Jac(Q_{n,I},f)$.

If furthermore $f$ is reduced, then $I'=I$, $f_{\red}= f$ and $g$ is a reduced monomialized Type $A$ potential on $Q_{n, I}$. Then write 
\begin{equation*}
    h= \sum_{i=1}^{2n-2}\x_i^{\prime}\x_{i+1} + \sum_{i=1}^{2n-1}\sum_{j=2}^{\infty} \upkappa_{ij} \x_i^{j},
\end{equation*}
for some $\upkappa_{ij} \in \C$. Since $g$ is reduced on $Q_{n,I}$, then by \ref{410}
$\upkappa_{2i-1,2} \neq 0$ when $i \in I$, and $\upkappa_{2i-1,2} = 0$ when $i \notin I$. Write $\scrR$ and $M$ as follows,
\begin{equation*}
    \scrR = \frac{\mathbb{C} \lal u, v, x, y \ral}{uv-g_{0}g_{2} \dots g_{{2n}}}
\end{equation*}
and $M = \scrR \oplus (u,g_{0}) \oplus (u,g_{0}g_{2}) \oplus \dots  \oplus (u,\prod_{i=0}^{n-1} g_{2i})$. We next prove that $\uppi$ is Type $A_{n, I}$.  
\begin{enumerate}
\item For any vertex $i \in I$, since $\upkappa_{2i-1,2} \neq 0$, then $(g_{2i-2},g_{2i}) = (x,y)$ by \ref{054}, and so the normal bundle of the exceptional curve $\Curve_i$ of $\uppi$ is $\scrO(-1) \bigoplus \scrO(-1)$ by \ref{35}.
\item For any vertex $i \notin I$, since $\upkappa_{2i-1,2} = 0$, then $( g_{2i-2},g_{2i}) \subsetneq ( x,y)$ by \ref{054}, and so the normal bundle of the exceptional curve $\Curve_i$ of $\uppi$ is $\scrO(-2) \bigoplus \scrO$ by \ref{35}. \qedhere
\end{enumerate}
\end{proof}

The Brown--Wemyss Realisation Conjecture \cite{BW2} states that if $f$ is any potential which satisfies $\textnormal{Jdim}(f) \leq 1$ (see \cite[3.4]{BW2} for the definition), then $\Jac(f)$ is isomorphic to the contraction algebra of some crepant resolution $\scrX \rightarrow \Spec\scrR$ with $\scrR$ $\textnormal{cDV}$.
Therefore, \ref{511} verifies the Brown--Wemyss Realisation Conjecture for all Type~$A$ potentials
on $Q_{n,I}$, for arbitrary $n \geq 1$ and $I \subseteq \{1,2,\dots,n\}$.

\subsection{Type $A_{n,I}$ crepant resolutions and potentials}\label{Cor}
In this subsection we prove the converse to \ref{511}; namely, given any Type~$A_{n,I}$ crepant resolution $\uppi$, there exists a reduced Type~$A$ potential $f$ on $Q_{n,I}$ such that $\Lambda_{\mathrm{con}}(\uppi)\cong \Jac(f)$ (see~\ref{514}).  
Together with \ref{511}, this yields a correspondence between Type~$A_n$ crepant resolutions and monomialized Type~$A$ potentials on $Q_n$,
made precise in \ref{515} and~\ref{516}.

The following \ref{591} and~\ref{581} show that commutativity of the algebras $e_i \Lambda_{\mathrm{con}}(\uppi)e_i$ and $e_i \Jac(f) e_i$ provides the key link between Type~$A_n$ crepant resolutions and Type~$A$ potentials on $Q_n$; this will be crucial in the proof of \ref{514}.

\begin{lemma}\label{591}
If $\uppi \colon \scrX \rightarrow \Spec \scrR$ is a Type $A_n$ crepant resolution, then $e_i \Lambda_{\mathrm{con}}(\uppi)e_i$ is commutative for any $1 \leq i \leq n$.
\end{lemma}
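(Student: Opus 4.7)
The plan is to prove the stronger statement that $e_i \Lambda(\uppi) e_i$ is already commutative; the conclusion for $\Lambda_{\mathrm{con}}(\uppi)$ then follows automatically, since $\Lambda_{\mathrm{con}}(\uppi) = \Lambda(\uppi)/\langle \scrR \rangle$ is a quotient of $\Lambda(\uppi)$ and thus $e_i \Lambda_{\mathrm{con}}(\uppi) e_i$ is a quotient of $e_i \Lambda(\uppi) e_i$.

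First, I would invoke \ref{36} to write $\Lambda(\uppi) \cong \End_{\scrR}(M^{\sigma})$ for some $\sigma \in \mathfrak{S}_{n+1}$, using the Peirce decomposition $M^{\sigma} = \bigoplus_{j=0}^{n} M_j$ with $M_0 = \scrR$ and $M_i = (u, \prod_{j=0}^{i-1} g_{\sigma(j)})$ for $1 \leq i \leq n$. With respect to this decomposition, the idempotent $e_i$ is the projector onto $M_i$, so
\[
e_i \, \End_{\scrR}(M^{\sigma}) \, e_i \;\cong\; \End_{\scrR}(M_i).
\]

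The key step is then to show that $\End_{\scrR}(M_i)$ is commutative via a rank-one argument. Note first that $\scrR$ is a domain: viewing $uv - g_0 g_1 \cdots g_n$ as a polynomial of degree one in $u$ over $\mathbb{C}\lal v, x, y \ral$, its leading coefficient $v$ and constant term $-g_0 \cdots g_n$ are coprime (as $v$ does not divide $g_0 \cdots g_n$), hence the defining relation of $\scrR$ is irreducible. Each $M_i$ for $1 \leq i \leq n$ is a nonzero ideal of $\scrR$ (it contains $u$) and therefore is torsion-free of generic rank one. Writing $K = \mathrm{Frac}(\scrR)$, the natural map
\[
\End_{\scrR}(M_i) \;\hookrightarrow\; \End_{K}(M_i \otimes_{\scrR} K) \;=\; \End_{K}(K) \;=\; K
\]
is injective, so $\End_{\scrR}(M_i)$ embeds in the field $K$ and is therefore commutative.

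There is essentially no obstacle here: the argument reduces entirely to the standard fact that a rank-one torsion-free module over a domain has commutative endomorphism ring, combined with the explicit form of the NCCR from \ref{36}. The only detail worth verifying carefully is that $\scrR$ is a domain, which follows from the hypothesis that each $g_j$ is prime (used in the irreducibility check above).
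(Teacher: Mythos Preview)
Your proof is correct and follows essentially the same strategy as the paper: both identify $e_i\Lambda(\uppi)e_i$ with $\End_{\scrR}(M_i)$, show this is commutative because $M_i$ has rank one, and then pass to the quotient $\Lambda_{\mathrm{con}}(\uppi)$. The only minor difference is that the paper invokes the stronger fact $\End_{\scrR}(M_i)\cong\scrR$ from \cite[5.4]{IW1} and passes through the stable category $\underline{\CM}\,\scrR$, whereas you give the more elementary fraction-field embedding directly; both routes are valid and yield the same conclusion.
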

\begin{proof}
Since $\uppi$ is a Type $A_n$ crepant resolution, we have $\Lambda(\uppi) \cong\End_{\scrR}(M)$ and $\Lambda_{\mathrm{con}}(\uppi) \cong \underline{\End}_{\scrR}(M)$ for some maximal Cohen--Macaulay ($\CM$) $\scrR$-module $M$ where $M  = \scrR \oplus M_1 \oplus \dots \oplus M_n$ and each $M_i$ is an indecomposable rank one CM $\scrR$-module. Thus $\End_{\scrR}(M_i) \cong \scrR$ for $1 \leq i \leq n$ from e.g. \cite[5.4]{IW1}. 

Let $\mathscr{C}$ denote the stable category $\underline{\mathrm{CM}}\,\scrR$ of $\CM \, \scrR$-modules.
Then $\underline{\End}_{\scrR}(M) \cong \End_{\mathscr{C}}(M)$ and $\underline{\End}_{\scrR}(M_i)\cong \End_{\mathscr{C}}(M_i)$.
Thus for any $1 \leq i \leq n$,
\begin{equation*}
e_i\Lambda_{\mathrm{con}}(\uppi)e_i \cong e_i\underline{\End}_{\scrR}(M)e_i \cong e_i\End_{\mathscr{C}} (M)e_i \cong \End_{\mathscr{C}} (M_i) \cong \underline{\operatorname{End}}_{\scrR}(M_i).
\end{equation*}
Since $\End_{\scrR}(M_i) \cong \scrR$ is commutative and $\underline{\End}_{\scrR}(M_i)$ is a quotient of $\End_{\scrR}(M_i)$, then $\underline{\End}_{\scrR}(M_i)$ is also commutative, and so $e_i \Lambda_{\mathrm{con}}(\uppi)e_i$ is commutative.
\end{proof}

\begin{lemma}\label{581}
Suppose that $f$ is a reduced potential on $Q_{n, I}$. If there is some integer $j$ where $1\leq j \leq m-1$ such that $f$ does not contain $\x_j'\x_{j+1}$, then there exists some integer $i$ (depending on $j$) where $1 \leq i \leq n$ such that $e_i \Jac(f) e_i$ is not commutative.
\end{lemma}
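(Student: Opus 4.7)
The plan is to let $v \in \{1,\ldots,n\}$ be the common vertex at which both $\x_j'$ and $\x_{j+1}$ sit, and show that the commutator $\x_j'\x_{j+1}-\x_{j+1}\x_j'$ is non-zero in $\Jac(f)$. Since both elements lie in $e_v \Jac(f) e_v$, this would establish non-commutativity with $i=v$.

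My tool would be the degree filtration $F_k \colonequals \{\text{paths of }\deg\leq k\}$ on $\C\lbl Q_{n,I}\rbl$, where $\deg$ is as in \ref{deg} (counting only the $a_i$-arrows). Since $f$ is reduced, every term of $f$ has path length $\geq 3$ and hence $\deg \geq 2$; consequently $\deg(\partial_{a_i} f)\geq 1$ and $\deg(\partial_{b_i} f)\geq 2$, so the degree-$\leq 2$ part of $J(f)$ is spanned by products $p \cdot \partial_a f^{(d)} \cdot q$ of total degree $\leq 2$, which strictly limits the possible $(p,q,d)$.

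The key combinatorial step is to enumerate the ways in which $\x_j'\x_{j+1}$ (or $\x_{j+1}\x_j'$) arises as $p \cdot r \cdot q$, where $r$ is a summand of $\partial_a t$ for some arrow $a$ and some cycle $t$ occurring in $f$. Each such decomposition pins down $t$ up to cyclic equivalence, as $t \sim a \cdot r$. A short case analysis would show that, after discarding $t$'s of path length $\leq 2$ (excluded by reducedness) and $t \sim \x_j'\x_{j+1}$ (excluded by hypothesis), the only surviving candidate is a degree-$3$ term of the form $\x_j^2\x_{j+1}$ or $\x_j'\x_{j+1}^2$, which can arise only when one of $\x_j,\x_{j+1}$ is a loop at $v$.

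The main obstacle is this loop case, but it turns out to be benign: a direct computation gives $\partial_{a_j}(\x_j^2\x_{j+1}) = \x_j'\x_{j+1} + \x_{j+1}\x_j'$, so the two monomials enter $J(f)$ with \emph{equal} (rather than opposite) coefficients. Consequently, in any element of $F_2 \cap J(f)$ the coefficients of $\x_j'\x_{j+1}$ and $\x_{j+1}\x_j'$ must agree, so the antisymmetric commutator $\x_j'\x_{j+1} - \x_{j+1}\x_j'$ cannot lie in $J(f)$, and is therefore non-zero in $\Jac(f)$.
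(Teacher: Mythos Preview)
Your approach is correct and genuinely different from the paper's. The paper argues by passing to a carefully chosen quotient: it kills $\mathfrak{m}^4$ or $\mathfrak{m}^5$, all distant idempotents, and stray loops, so that $\Jac(f)$ maps onto a tiny algebra $J$ on a two- or three-vertex sub-quiver; there the relations are few enough that one checks by inspection that $\x_j'\x_{j+1}\neq\x_{j+1}\x_j'$ in $e_iJe_i$, hence also in $e_i\Jac(f)e_i$. You instead stay inside $\C\lbl Q_{n,I}\rbl$ and use the $a$-degree filtration to control the degree-$2$ slice of $J(f)$ directly. Your enumeration of decompositions $\x_j'\x_{j+1}=p\cdot r\cdot q$ (and the symmetric one for $\x_{j+1}\x_j'$) is correct: in the no-loop case every candidate $t\sim ar$ is ruled out by reducedness or by the hypothesis, so both coefficients vanish; in the loop case the sole survivor $t\sim\x_j^2\x_{j+1}$ (resp.\ $\x_j'\x_{j+1}^2$) enters $J(f)$ only through the single generator $\partial_{a_j}f$ (resp.\ $\partial_{a_{j+1}}f$) with $p=q=e_v$, and there contributes the \emph{symmetric} combination $\x_j'\x_{j+1}+\x_{j+1}\x_j'$, so the antisymmetric commutator cannot lie in $J(f)$.

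Two small points worth making explicit in a full write-up. First, ``path length $\geq 3 \Rightarrow \deg\geq 2$'' for cycles follows from $\mathrm{pathlength}=(\#\text{loop }a\text{'s})+2(\#\text{non-loop }a\text{'s})\leq 2\deg$; you are implicitly also assuming $W_1=0$ (no bare loop terms), which is the standard convention but not literally part of the paper's definition of ``reduced''. Second, your ``extract the degree-$2$ part'' step requires only that the linear functional ``coefficient of the fixed path $\x_j'\x_{j+1}$'' is $\mathfrak{m}$-adically continuous, so it passes to the closure $J(f)$; this is immediate but worth saying since paths of $a$-degree $\leq 2$ do not form a finite-dimensional space in general. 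What your route buys is a uniform argument that avoids constructing auxiliary quotient algebras; what the paper's route buys is a more tangible small algebra in which the non-commutativity is visible by inspection.
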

\begin{proof}
(1) When $\x_j$ and $\x_{j+1}$ are not loops, then there exists a vertex $i\in I$ such that $Q_{n, I}$ at vertex $i$ locally looks like the following. 
\[
\begin{array}{c}
\begin{tikzpicture}[scale=1.5]
\node (a) at (-1,0) [vertex] {};
\node (b) at (0,0) [vertex] {};
\node (c) at (1,0) [vertex] {};

\node (a1) at (-1,-0.25) {$\scriptstyle i-1$};
\node (a2) at (0,-0.25) {$\scriptstyle i$};
\node (a3) at (1,-0.25) {$\scriptstyle i+1$};

\draw[->,bend left,looseness=0.7] (a) to node[above] {$\scriptstyle a_{j}$} (b);
\draw[<-,bend right,looseness=0.7] (a) to node[below] {$\scriptstyle b_{j}$} (b);
\draw[->,bend left,looseness=0.7] (b) to node[above] {$\scriptstyle a_{j+1}$} (c);
\draw[<-,bend right,looseness=0.7] (b) to node[below] {$\scriptstyle b_{j+1}$} (c);

\node (z) at (-1.5,0) {$Q'\colonequals$};
\end{tikzpicture}
\end{array}
\]

We denote the above quiver with only the three vertices shown, as $Q'$. Then consider the noncommutative algebra $J$, defined as $\Jac( f)$ quotiented by the ideal generated by the following paths:
\begin{itemize}
\item $\mathfrak{m}^5_{Q_{n,I}}$ where $\mathfrak{m}_{Q_{n,I}}$ is the ideal generated by all the arrows of $Q_{n, I}$ (see \ref{QP}).
\item $e_k$ for all $1 \leq k < i-1$ and $i+1 <  k \leq n$.
\item possible loops $\x_{j-1}$ on vertex $i-1$ and $\x_{j+2}$ on vertex $i+1$. 
\end{itemize}
It is clear that $J \cong \Jac(Q', g)$ where $g\sim  \uplambda_1 \x_{j}^2 +  \uplambda_2\x_j'\x_{j+1}+\uplambda_3 \x_{j+1}^2$ for some $\uplambda_1, \ \uplambda_2 , \ \uplambda_3\in \C$. 
Then we suppose that $e_i \Jac(f) e_i$ is commutative and $f$ does not contain $\x_j'\x_{j+1}$, and aim for a contradiction.

Since $e_i \Jac(f) e_i$ is commutative and $e_iJe_i$ is a factor of $e_i \Jac(f) e_i$, then $e_iJe_i$ is also commutative, and furthermore $\x_{j}'\x_{j+1}=\x_{j+1}\x_j'$ in $e_iJe_i$. Since $f$ does not contain $\x_j'\x_{j+1}$, $g \sim \uplambda_1 \x_{j}^2 +\uplambda_3 \x_{j+1}^2 $. It is clear that the four relations induced by differentiating $ \uplambda_1 \x_{j}^2 +\uplambda_2 \x_{j+1}^2$ can not induce the relation $(b_ja_j)(a_{j+1}b_{j+1})= (a_{j+1}b_{j+1})(b_ja_j)$. 
Thus $\x_{j}'\x_{j+1} \neq \x_{j+1}\x_j'$ in $e_iJe_i$, a contradiction.

(2) When $\x_j$ is not a loop and $\x_{j+1}$ is a loop, then there exists a vertex $i \notin I$ such that $Q_{n, I}$ at vertex $i$ locally looks like the following.
\[
\begin{array}{c}
\begin{tikzpicture}[scale=1.5]
\node (a) at (-1,0) [vertex] {};
\node (b) at (0,0) [vertex] {};

\node (a1) at (-1,-0.25) {$\scriptstyle i-1$};
\node (a2) at (0,-0.25) {$\scriptstyle i$};

\draw[->,bend left,looseness=0.7] (a) to node[above] {$\scriptstyle a_{j}$} (b);
\draw[<-,bend right,looseness=0.7] (a) to node[below] {$\scriptstyle b_{j}$} (b);
\draw[->]  (b) edge [in=55,out=120,loop,looseness=18] node[above]{$\scriptstyle \x_{j+1}$}  (b);

\node (z) at (-1.5,0) {$Q'\colonequals$};
 \end{tikzpicture}
\end{array}
\]

We again denote the above quiver with the only two vertices shown, as $Q'$. Then consider the noncommutative algebra $J$, defined as $\Jac( f)$ quotiented by the ideal generated by the following paths:
\begin{itemize}
\item $\mathfrak{m}^4_{Q_{n,I}}$ where $\mathfrak{m}_{Q_{n,I}}$ is the ideal generated by all the arrows of $Q_{n, I}$ (see \ref{QP}).
\item $e_k$ for all $1 \leq k < i-1$ and $i <  k \leq n$.
\item the possible loop $\x_{j-1}$ on vertex $i-1$. 
\item $\x_{j+1}^2$.
\end{itemize}
It is clear that $J \cong \Jac(Q', g)/(\x_{j+1}^2)$ where $g\sim  \uplambda_1 \x_{j}^2 +  \uplambda_2\x_j'\x_{j+1}+\uplambda_3\x_j'\x_{j+1}^2+\uplambda_4 \x_{j+1}^2+\uplambda_5 \x_{j+1}^3+\uplambda_6 \x_{j+1}^4$ for some $\uplambda_k \in \C$. 
We suppose that $e_i \Jac(f) e_i$ is commutative and $f$ does not contain $\x_j'\x_{j+1}$, and aim for a contradiction.

Since $e_i \Jac(f) e_i$ is commutative and $e_iJe_i$ is a factor of $e_i \Jac(f) e_i$, then $e_iJe_i$ is also commutative, and furthermore $\x_{j}'\x_{j+1}=\x_{j+1}\x_j'$ in $e_iJe_i$. Since $f$ does not contain $\x_j'\x_{j+1}$, $\uplambda_2=0$. Since $f$ is reduced, $\uplambda_4=0$. Thus 
\begin{equation*}
    J \cong \frac{\C\lbl Q'\rbl}{(\uplambda_3 (b_ja_j)\x_{j+1}+\uplambda_3 \x_{j+1}(b_ja_j),\uplambda_1b_ja_jb_j,\uplambda_1a_jb_ja_j,\x_{j+1}^2)}.
\end{equation*}
Again, it is clear that the above relations can not induce $(b_ja_j)\x_{j+1} =\x_{j+1}(b_ja_j)$. Thus $\x_{j}'\x_{j+1} \neq \x_{j+1}\x_j'$ in $e_iJe_i$, a contradiction.

(3) When $\x_j$ is a loop and $\x_{j+1}$ is not a loop, the proof is analogous to that of (2).
\end{proof}

\begin{prop}\label{514}
Given any Type $A_{n, I}$ crepant resolution $\uppi \colon \scrX \rightarrow \Spec \scrR$, there exists a reduced Type $A$ potential $f$ on $Q_{n, I}$ such that $\Lambda_{\mathrm{con}}(\uppi) \cong \Jac(f)$.   
\end{prop}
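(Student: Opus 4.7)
The plan is to first obtain a potential description of $\Lambda_{\mathrm{con}}(\uppi)$, then show the quiver is exactly $Q_{n,I}$, then use the commutativity criteria from \ref{591} and \ref{581} to force the potential to be Type $A$.

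First I would invoke Van den Bergh's result \cite{VM} to write $\Lambda_{\mathrm{con}}(\uppi) \cong \Jac(Q', W)$ for some quiver $Q'$ with potential $W$. To identify $Q'$, start from the NCCR description in \ref{35}, which presents $\Lambda(\uppi)$ as the double $A_n$ quiver with a central vertex $0$, together with loops added at those vertices $i$ whose exceptional curve $\Curve_i$ has normal bundle $\scrO(-2)\oplus\scrO$. Passing from $\End_{\scrR}(M)$ to its stable endomorphism algebra kills vertex $0$ and the arrows through it. By the Type $A_{n,I}$ hypothesis, the surviving loops occur precisely at vertices $i \notin I$. Hence the underlying quiver of $\Lambda_{\mathrm{con}}(\uppi)$ is exactly $Q_{n,I}$, and so $Q' = Q_{n,I}$.

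Next I would use the Splitting Theorem \cite[4.6]{DWZ1} to replace $W$ by its reduced part $W_{\mathrm{red}}$ without changing the isomorphism type of the Jacobi algebra. This means we may assume $W$ is a reduced potential on $Q_{n,I}$ satisfying $\Lambda_{\mathrm{con}}(\uppi) \cong \Jac(W)$.

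It then remains to show $W$ is Type $A$, meaning (by \ref{TypeA}) that $W$ contains $\x_i'\x_{i+1}$ for all $1 \leq i \leq m-1$. This is where \ref{591} and \ref{581} combine cleanly. By \ref{591}, each $e_i\Lambda_{\mathrm{con}}(\uppi)e_i$ is commutative, hence each $e_i\Jac(W)e_i$ is commutative. The contrapositive of \ref{581} then forces $W$ to contain $\x_j'\x_{j+1}$ for every $1 \leq j \leq m-1$, so $W$ is a reduced Type $A$ potential on $Q_{n,I}$, completing the proof.

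The only real subtlety, and what I would expect to be the main obstacle, is the identification step $Q' = Q_{n,I}$: one must be confident that Van den Bergh's presentation yields precisely the double quiver with loops prescribed by the normal bundle data, and that stabilising by the projective summand $\scrR$ exactly removes vertex $0$ and its adjacent arrows while preserving the loop structure at the remaining vertices. Once this is in place, the commutativity bootstrap via \ref{591} and \ref{581} is formal.
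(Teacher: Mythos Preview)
Your overall strategy matches the paper's proof: obtain a Jacobi presentation on $Q_{n,I}$, then combine \ref{591} with the contrapositive of \ref{581} to force the Type~$A$ condition. That second half is exactly what the paper does, and your summary of it is correct.

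The one genuine misstep is in the first half. You write that you would invoke \cite{VM} to get $\Lambda_{\mathrm{con}}(\uppi) \cong \Jac(Q',W)$ directly. But Van den Bergh's result applies to complete local $3$-Calabi--Yau algebras, and the contraction algebra is not $3$-CY; it is the NCCR $\Lambda(\uppi) = \End_{\scrR}(M)$ that is $3$-CY (this is \cite[2.8]{IW2}). The paper therefore applies \cite{VM} to $\Lambda(\uppi)$, obtaining a reduced potential $g$ on the extended quiver of \ref{35} (with vertex $0$), and only then passes to $\Lambda_{\mathrm{con}}(\uppi) \cong \Lambda(\uppi)/\langle e_0\rangle$. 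Quotienting a Jacobi algebra by a vertex idempotent yields the Jacobi algebra of the restricted potential on the subquiver obtained by deleting that vertex, which is precisely $Q_{n,I}$ by the Type~$A_{n,I}$ hypothesis and the loop rules in \ref{35}. You essentially describe this passage in your second paragraph, so the correct argument is already implicit in what you wrote; you just need to reroute \cite{VM} through $\Lambda(\uppi)$ rather than $\Lambda_{\mathrm{con}}(\uppi)$.

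Once you do this, the Splitting Theorem step becomes unnecessary: since the Gabriel quiver of $\Lambda(\uppi)$ is already the quiver of \ref{35}, the potential $g$ produced by \cite{VM} is automatically reduced there, and its restriction $f$ to $Q_{n,I}$ remains reduced. Invoking the Splitting Theorem is harmless but redundant.
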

\begin{proof}
By \ref{35}, the NCCR $\Lambda(\uppi)$ can be presented as the quiver in \ref{35} with some relations. Since $\scrR$ is complete local, $\Lambda(\uppi)$ is also complete local by e.g. \cite[8.4]{BW2}. Moreover, $\Lambda(\uppi)$ is $3$-CY \cite[2.8]{IW2}. 
Since any complete local $3$--CY algebra is a Jacobi algebra, as shown in \cite[\S10]{VM}, the relations of $\Lambda(\uppi)$ are generated by some reduced potential $g$.
Since $\Lambda_{\mathrm{con}}(\uppi) \cong \Lambda(\uppi)/\langle e_0 \rangle$,
it follows that $\Lambda_{\mathrm{con}}(\uppi)$ is isomorphic to $\Jac(Q_{n,I},f)$ for some reduced potential $f$, obtained from $g$ by deleting all terms involving arrows with source or target at the vertex $0$.

It remains to show that $f$ is of Type~$A$, that is, that $f$ contains $\x_i'\x_{i+1}$ for every $1\leq i\leq m-1$.
Since $\Lambda_{\mathrm{con}}(\uppi) \cong \Jac(f)$ and $e_i \Lambda_{\mathrm{con}}(\uppi) e_i$ is commutative for each $1 \leq i \leq n$ by \ref{591}, $e_i \Jac(f) e_i$ is also commutative for each $i$. 
Thus $f$ contains $\x_i^{\prime}\x_{i+1}$ for each $i$, by \ref{581}.
\end{proof}

We are now in a position to show that our definition of Type A potential \ref{TypeA} is intrinsic.
\begin{cor}\label{cor: TypeA_intrin}
Let $f$ be a reduced potential on $Q_{n, I}$. The following are equivalent.
\begin{enumerate}
   \item $f$ is Type A.
   \item There exists a Type $A_{n, I}$ crepant resolution $\uppi$ such that $\Jac(f) \cong \Lambda_{\mathrm{con}}(\uppi)$.
   \item $e_i \Jac(f) e_i$ is commutative for $1 \leq i \leq n$.
\end{enumerate}
\end{cor}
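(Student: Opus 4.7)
The plan is to prove the three implications $(1) \Rightarrow (2) \Rightarrow (3) \Rightarrow (1)$, each of which is essentially an immediate application of a result already established in the paper. Since $f$ is assumed to be reduced throughout, we are in the setting where the strongest form of \ref{511} applies.

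For $(1) \Rightarrow (2)$, the assumption that $f$ is a reduced Type $A$ potential on $Q_{n,I}$ puts us exactly in the hypothesis of \ref{511}. That theorem produces a crepant resolution $\uppi \colon \scrX \to \Spec\scrR$ with $\scrR$ a $cA_n$ singularity, such that $\Lambda_{\mathrm{con}}(\uppi) \cong \Jac(f)$. Moreover, because $f$ is reduced, the second statement of \ref{511} guarantees that the normal bundle of the $i$-th exceptional curve is $\scrO(-1)\oplus\scrO(-1)$ precisely when $i \in I$, which is the condition that makes $\uppi$ of Type $A_{n,I}$ in the sense of \ref{def: Type An}.

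For $(2) \Rightarrow (3)$, suppose $\Jac(f) \cong \Lambda_{\mathrm{con}}(\uppi)$ for a Type $A_{n,I}$ (hence Type $A_n$) crepant resolution $\uppi$. Then \ref{591} tells us that $e_i \Lambda_{\mathrm{con}}(\uppi) e_i$ is commutative for each $1 \leq i \leq n$. Transporting across the algebra isomorphism $\Jac(f) \cong \Lambda_{\mathrm{con}}(\uppi)$, we obtain that $e_i \Jac(f) e_i$ is commutative for each $i$. (One should note that this requires the isomorphism to send the vertex idempotents to vertex idempotents; by \ref{37}, the only alternative is that it swaps $e_i \leftrightarrow e_{n+1-i}$, which still preserves commutativity of the $e_i\Jac(f)e_i$.)

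For $(3) \Rightarrow (1)$, we argue by contrapositive using \ref{581}. If $f$ is not Type $A$, then since $f$ is reduced, by definition there must exist some $j$ with $1 \leq j \leq m-1$ such that $f$ does not contain $\x_j'\x_{j+1}$. Then \ref{581} produces a vertex $i$ such that $e_i \Jac(f) e_i$ is not commutative, contradicting $(3)$. Hence $(3)$ forces $f$ to contain every $\x_j'\x_{j+1}$, and combined with the standing assumption that $f$ is reduced this is precisely the definition of a reduced Type $A$ potential.

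There is no real obstacle, since each implication is a clean citation of a previously established result. The only subtle point is the invocation of \ref{37} in $(2) \Rightarrow (3)$ to ensure the commutativity is preserved under the unspecified algebra isomorphism; this is a minor but worth-mentioning check.
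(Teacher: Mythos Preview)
Your proof is correct and follows essentially the same cycle of implications as the paper: $(1)\Rightarrow(2)$ via \ref{511}, $(2)\Rightarrow(3)$ via \ref{591}, and $(3)\Rightarrow(1)$ via \ref{581}. The only difference is that you are more explicit in $(2)\Rightarrow(3)$ about why commutativity of the corner algebras transfers across the isomorphism (invoking \ref{37}); the paper treats this step as immediate, and indeed it follows already from the fact that an isomorphism of basic algebras permutes the primitive idempotents, so \ref{37} is not strictly required here.
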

\begin{proof}
(1) $\Rightarrow$ (2): Since $f$ is a reduced Type A potential on $Q_{n, I}$, it is immediate by \ref{511}.

(2) $\Rightarrow$ (3): Since $\uppi$ is a Type $A_n$ crepant resolution, then $e_i \Lambda_{\mathrm{con}}(\uppi)e_i$ is commutative by \ref{591}, and so $e_i \Jac(f) e_i$ is commutative for any $1 \leq i \leq n$. 

(3) $\Rightarrow$ (1): Since $f$ is a reduced potential on $Q_{n, I}$ and $e_i \Jac(f) e_i$ is commutative for any $1 \leq i \leq n$, then $f$ contains $\x_i'\x_{i+1}$ for any $1\leq i \leq m-1$ by \ref{581}, and so $f$ is Type A.
\end{proof}

\begin{comment}
\begin{cor}\label{cor: TypeA_intrin2}
Let $f$ be a potential on $Q_n$ and $\deg(f) \geq 2$. The following are equivalent.
\begin{enumerate}
\item $f$ is Type A.
\item There exists a Type $A_{n}$ crepant resolution $\uppi$ such that $\Jac(f) \cong \Lambda_{\mathrm{con}}(\uppi)$.
\item $e_i \Jac(f) e_i$ is commutative for $1 \leq i \leq n$.
\end{enumerate}
\end{cor}
\begin{proof}
(1) $\Rightarrow$ (2): Since $f$ is a Type A potential on $Q_{n}$, it is immediate by \ref{511}.    

(2) $\Rightarrow$ (3) is similar to \ref{cor: TypeA_intrin}.

(3) $\Rightarrow$ (1): Note that $f_{\textnormal{red}}$ is reduced on $Q_{n,I}$ for some $I \subseteq \{1,2,\dots,n\}$ and $\Jac(f) \cong \Jac(f_{\textnormal{red}})$. Since $e_i \Jac(f) e_i$ is commutative, then $e_i \Jac(f_{\textnormal{red}}) e_i$ is commutative for $1 \leq i \leq n$, and so $f_{\textnormal{red}}$ is Type A by \ref{cor: TypeA_intrin}. Together with $\deg(f) \geq 2$ gives that $f$ is Type A.
\end{proof}
\end{comment}

\begin{definition}
We say two crepant resolutions $\uppi_i \colon \scrX_i \rightarrow \Spec \scrR_i$ for $i=1, 2$ have the same \emph{noncommutative deformation type (NC deformation type)} if $\Lambda_\mathrm{con}(\uppi_1) \cong \Lambda_\mathrm{con}(\uppi_2)$. 
\end{definition}

The name NC deformation type comes from the fact that the contraction algebra represents the noncommutative deformation functor of the exceptional curves \cite{DW2}. 

Together with \ref{047}, the above \ref{514} induces a map $\varphi$ from Type $A_{n, I}$ crepant resolutions to the isomorphism classes of reduced monomialized Type $A$ potentials on $Q_{n, I}$. More precisely, for any Type $A_{n, I}$ crepant resolution $\uppi$, we define $\varphi(\uppi)$ to be the reduced monomialized Type $A$ potential $f$ on $Q_{n, I}$ that satisfies $\Lambda_{\mathrm{con}}(\uppi) \cong \Jac(f)$ by \ref{514} and \ref{047}. Moreover, $\varphi$ is well-defined since if there are two such $f_1$ and $f_2$, then $\Jac(f_1)\cong \Jac(f_2)$.

\begin{theorem}\label{515}
The above $\varphi$ induces a one-to-one correspondence as follows.  
\[
\begin{tikzpicture}[bend angle=0, looseness=0]
\node[align=left] (a) at (0,-0.5) {isomorphism classes of reduced monomialized Type $A$ potentials on $Q_{n, I}$};
\node[align=left] (b) at (0,0.75) {Type $A_{n, I}$ crepant resolutions up to NC deformation type};
\draw[->,bend left] (b) to node[below] {} (a);
\draw[->,bend left] (a) to node[above] {} (b);
\end{tikzpicture}
\] 
\end{theorem}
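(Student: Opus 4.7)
The plan is to verify that $\varphi$ descends to a well-defined map on NC deformation types and then check surjectivity and injectivity separately; each of these reduces to a direct consequence of the preceding results, so the ``proof'' is essentially bookkeeping that pieces together \ref{047}, \ref{511}, and \ref{514}.

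First I would verify that $\varphi$ descends to the quotient by NC deformation type. Suppose two Type $A_{n,I}$ crepant resolutions $\uppi_1, \uppi_2$ share the same NC deformation type, so $\Lambda_{\mathrm{con}}(\uppi_1)\cong\Lambda_{\mathrm{con}}(\uppi_2)$. By the defining property of $\varphi$ recorded in the paragraph preceding the statement, $\Jac(\varphi(\uppi_i))\cong\Lambda_{\mathrm{con}}(\uppi_i)$, hence $\Jac(\varphi(\uppi_1))\cong\Jac(\varphi(\uppi_2))$. This is exactly $\varphi(\uppi_1)\cong\varphi(\uppi_2)$ in the sense of \ref{211}, so the two potentials represent the same isomorphism class.

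For surjectivity, given a reduced monomialized Type $A$ potential $f$ on $Q_{n,I}$, I would observe that $f$ is in particular a reduced Type $A$ potential. Theorem \ref{511} then supplies a crepant resolution $\uppi$ with $\Lambda_{\mathrm{con}}(\uppi)\cong\Jac(f)$, and the second sentence of \ref{511} ensures that, because $f$ is reduced on $Q_{n,I}$ (with loops exactly at vertices outside $I$), $\uppi$ is genuinely Type $A_{n,I}$ rather than Type $A_{n,I'}$ for some larger $I'$. The isomorphism class $[\varphi(\uppi)]$ is then characterised by $\Jac(\varphi(\uppi))\cong\Lambda_{\mathrm{con}}(\uppi)\cong\Jac(f)$, so $[\varphi(\uppi)]=[f]$.

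For injectivity, suppose $\varphi(\uppi_1)\cong\varphi(\uppi_2)$. Chaining the isomorphisms
\[
\Lambda_{\mathrm{con}}(\uppi_1)\cong\Jac(\varphi(\uppi_1))\cong\Jac(\varphi(\uppi_2))\cong\Lambda_{\mathrm{con}}(\uppi_2)
\]
shows that $\uppi_1$ and $\uppi_2$ have the same NC deformation type. The main ``obstacle'' is really just that nothing substantive needs to be done here beyond invoking the earlier results: the monomialization theorem \ref{047} turns arbitrary reduced Type $A$ potentials into the canonical monomialized form, \ref{511} realises every reduced Type $A$ potential on $Q_{n,I}$ as a Type $A_{n,I}$ contraction algebra, and \ref{514} produces a reduced Type $A$ potential from any Type $A_{n,I}$ crepant resolution; once these are in hand, the bijection is formal.
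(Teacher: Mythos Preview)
Your proposal is correct and follows essentially the same approach as the paper: surjectivity comes directly from \ref{511} (using the reducedness to ensure the resulting $\uppi$ is genuinely Type $A_{n,I}$), and injectivity is the formal chain of isomorphisms through $\Jac(\varphi(\uppi_i))\cong\Lambda_{\mathrm{con}}(\uppi_i)$. Your additional explicit check that $\varphi$ descends to NC deformation types is a small elaboration of what the paper absorbs into the paragraph defining $\varphi$ just before the statement, but the substance is identical.
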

\begin{proof}
Firstly, we prove the map from top to bottom is surjective, namely that for any reduced monomialized Type $A_{n, I}$ potential $f$, there is a Type $A_{n, I}$ crepant resolution $\uppi \colon \scrX \rightarrow \Spec\scrR$ such that $\Jac(f) \cong \Lambda_{\mathrm{con}}(\uppi)$. This is immediate from \ref{511}.

Then we prove that the map from top to bottom is injective. Let $\uppi \colon \scrX_k \rightarrow \Spec \scrR_k$ be two Type $A_{n, I}$ crepant resolutions for $k=1,2$. If $\Lambda_{\mathrm{con}}(\uppi_1) \cong \Jac(f) \cong \Lambda_{\mathrm{con}}(\uppi_2)$ for some reduced monomialized Type $A$ potential $f$ on $Q_{n, I}$, then $\uppi_1$ and $\uppi_2$ have the same NC deformation type. 
\end{proof}
The following asserts that Type $A$ potentials on $Q_n$ describe the contraction algebra of all Type $A_n$ crepant resolutions.
\begin{cor}\label{516}
The set of isomorphism classes of contraction algebras associated to Type $A_n$ crepant resolutions is equal to the set of isomorphism classes of Jacobi algebras of monomialized Type $A$ potentials on $Q_n$.
\end{cor}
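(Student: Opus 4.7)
The plan is to prove the two inclusions separately, using the machinery already developed. The forward direction (every contraction algebra of a Type $A_n$ crepant resolution arises from a monomialized Type $A$ potential on $Q_n$) will combine Proposition \ref{514}, the monomialization Theorem \ref{047}, and the transfer result Proposition \ref{410}. The reverse direction is essentially a direct application of the geometric realization Theorem \ref{511}, once we observe that monomialized Type $A$ potentials on $Q_n$ are in particular Type $A$.

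First, for the reverse inclusion, let $f$ be any monomialized Type $A$ potential on $Q_n$. By the remark immediately after Definition \ref{mTypeA}, such an $f$ is automatically Type $A$. Since $Q_n = Q_{n,\emptyset}$, Theorem \ref{511} applies and produces a Type $A_n$ crepant resolution $\uppi\colon \scrX\to\Spec\scrR$ with $\Lambda_{\mathrm{con}}(\uppi)\cong \Jac(Q_n,f)$, giving one inclusion of isomorphism classes.

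For the forward inclusion, let $\uppi\colon \scrX\to\Spec\scrR$ be an arbitrary Type $A_n$ crepant resolution. By definition, $\uppi$ is Type $A_{n,I}$ for a uniquely determined subset $I\subseteq\{1,\dots,n\}$ (recording which exceptional curves have normal bundle $\scrO(-1)\oplus\scrO(-1)$). Proposition \ref{514} then provides a reduced Type $A$ potential $f$ on $Q_{n,I}$ with $\Lambda_{\mathrm{con}}(\uppi)\cong \Jac(Q_{n,I},f)$. Applying the monomialization Theorem \ref{047} yields a reduced monomialized Type $A$ potential $f'$ on $Q_{n,I}$ that is right-equivalent (hence, by Remark \ref{024}, isomorphic at the level of Jacobi algebras) to $f$. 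Finally, Proposition \ref{410} transforms $f'$ into a (possibly non-reduced) monomialized Type $A$ potential $g$ on $Q_n$ with $\Jac(Q_{n,I},f')\cong\Jac(Q_n,g)$. Concatenating these isomorphisms gives $\Lambda_{\mathrm{con}}(\uppi)\cong \Jac(Q_n,g)$, completing the other inclusion.

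There is no real obstacle in this proof: the entire content is the orchestration of results that were built precisely for this purpose in Sections \ref{mono} and \ref{Gr}. The only minor point worth emphasizing is that when passing from $Q_{n,I}$ to $Q_n$ via Proposition \ref{410}, the resulting monomialized potential on $Q_n$ need not be reduced (it picks up explicit $-\frac{1}{2}\x_{2i-1}^2$ terms from each added loop), but this is harmless since the statement of the corollary does not require reducedness. The argument is genuinely a two-way translation: Theorem \ref{511} supplies the geometry-to-algebra direction, while the combination of \ref{514}, \ref{047}, \ref{410} supplies the algebra-to-geometry direction through the intermediate stop at reduced monomialized potentials on $Q_{n,I}$.
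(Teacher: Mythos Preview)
Your proof is correct and follows essentially the same approach as the paper: the paper packages the forward direction through Theorem \ref{515} (which itself is built from \ref{514} and \ref{047}) together with \ref{410}, while you invoke \ref{514}, \ref{047}, and \ref{410} directly, and both use \ref{511} for the reverse direction. The paper additionally phrases the argument as constructing a map $\phi$ and verifying well-definedness, injectivity, and surjectivity, but since the statement only asserts equality of sets of isomorphism classes, those checks are tautological and your two-inclusion formulation is entirely adequate.
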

\begin{proof}
We first define a map $\phi$ from the isomorphism classes of contraction algebra associated with Type $A_n$ crepant resolutions to the isomorphism classes of Jacobi algebra of monomialized Type $A$ potentials on $Q_n$. 

Given any contraction algebra $\Lambda_{\mathrm{con}}(\uppi)$ where $\uppi$ is a Type $A_n$ crepant resolution, then $\uppi$ belongs to Type $A_{n, I}$ crepant resolution for some $I$. Thus $\Lambda_{\mathrm{con}}(\uppi) \cong \Jac(Q_{n,I}, f')$ for some reduced monomialized Type $A$ potential $f'$ on $Q_{n, I}$ by \ref{515}. Moreover, $f'$ is isormphic to some monomialized Type $A$ potential $f$ on $Q_n$ by \ref{410}. We define $\phi(\Lambda_{\mathrm{con}}(\uppi))\colonequals \Jac(f)$. 

Secondly, we prove that $\phi$ is well-defined. If there are two Type $A_n$ crepant resolutions $\uppi_1$ and $\uppi_2$ such that $\Lambda_{\mathrm{con}}(\uppi_1) \cong \Lambda_{\mathrm{con}}(\uppi_2)$, then $\phi(\Lambda_{\mathrm{con}}(\uppi_1))=\Jac(f_1)$ and $\phi(\Lambda_{\mathrm{con}}(\uppi_2))=\Jac(f_2)$, so $\Jac(f_1) \cong \Jac(f_2)$ from the above definition of $\phi$.

Thirdly, we prove that $\phi$ is injective. If there are two Type $A_n$ crepant resolutions $\uppi_1$ and $\uppi_2$ such that $\phi(\Lambda_{\mathrm{con}}(\uppi_1))\cong \Jac(f) \cong \phi(\Lambda_{\mathrm{con}}(\uppi_2))$ for some monomialized Type $A$ potential $f$ on $Q_n$, then $\Lambda_{\mathrm{con}}(\uppi_1) \cong \Lambda_{\mathrm{con}}(\uppi_2)$ from the above definition of $\phi$.

Finally, by \ref{511} $\phi$ is surjective.
\end{proof}

\begin{notation}\label{notation:derived}
Let $f$ and $g$ be potentials on a quiver $Q$.
We say that $f$ is \emph{derived equivalent to} $g$ (written $f \simeq g$) if the derived categories $\mathrm{D}^{\mathrm{b}}(\Jac(f))$ and $\mathrm{D}^{\mathrm{b}}(\Jac(g))$ are triangle equivalent.
\end{notation}

Given any isolated $cA_n$ singularity $\scrR$ which admits a crepant resolution, let $\uppi \colon \scrX \rightarrow \Spec\scrR$ be one of the crepant resolutions. Then, by \ref{516}, there exists some monomialized Type $A$ potential $f$ on $Q_n$ such that $\Lambda_{\mathrm{con}}(\uppi) \cong \Jac(f)$, so it induces a map $\Phi$ from isolated $cA_n$ singularities, which admit a crepant resolution to monomialized Type $A$ potentials on $Q_n$.
\begin{theorem}\label{cor}
The above $\Phi$ induces a one-to-one correspondence as follows.    
\[
\begin{tikzpicture}[bend angle=0, looseness=0]
\node[align=left] (a) at (0,-1) {derived equivalence classes of monomialized Type $A$ potentials on $Q_n$\\ with finite-dimensional Jacobi algebra};
\node[align=left] (b) at (0,0.5) {isomorphism classes of isolated $cA_n$ singularities\\ which admit a crepant resolution};
\draw[->,bend left] (b) to node[below] {} (a);
\draw[->,bend left] (a) to node[above] {} (b);
\end{tikzpicture}
\]
\end{theorem}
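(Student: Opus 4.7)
The plan is to verify that $\Phi$ descends to a well-defined map on derived equivalence classes, and then to establish injectivity and surjectivity by repeatedly playing off the hard direction of \ref{037} (derived equivalence of contraction algebras detects the singularity) against the intrinsic correspondence \ref{516} and the Contraction Theorem \ref{352}.

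First I would check that $\Phi$ is well-defined on the source. If $\scrR$ is an isolated $cA_n$ singularity admitting a crepant resolution $\uppi$, then \ref{352} gives $\dim_\C \Lambda_{\mathrm{con}}(\uppi)<\infty$, so the associated monomialized Type $A$ potential $f$ on $Q_n$ (obtained via \ref{516}) has $\dim_\C\Jac(f)<\infty$; this puts $f$ in the target set. If $\uppi_1,\uppi_2$ are two crepant resolutions of the same $\scrR$, then they are both smooth flopping contractions since $\scrR$ is isolated, so by \ref{037} we have $\Lambda_{\mathrm{con}}(\uppi_1)\simeq\Lambda_{\mathrm{con}}(\uppi_2)$. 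Composing with the Jacobi-algebra isomorphisms from \ref{516} yields $\Jac(f_1)\simeq\Jac(f_2)$, hence $f_1\simeq f_2$; so the derived equivalence class of $f$ is independent of the choice of $\uppi$, and an isomorphism $\scrR\cong\scrR'$ clearly produces the same derived equivalence class. Thus $\Phi$ induces a well-defined map on the stated equivalence classes.

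Next I would verify injectivity. Suppose $\scrR_1$ and $\scrR_2$ are isolated $cA_n$ singularities with crepant resolutions $\uppi_1,\uppi_2$ such that the associated potentials $f_1,f_2$ satisfy $f_1\simeq f_2$. By construction $\Jac(f_i)\cong\Lambda_{\mathrm{con}}(\uppi_i)$, so $\Lambda_{\mathrm{con}}(\uppi_1)\simeq\Lambda_{\mathrm{con}}(\uppi_2)$. Both $\uppi_i$ are smooth flopping contractions of isolated cDV singularities, so \ref{037} applies and gives $\scrR_1\cong\scrR_2$. This is the step that really requires the deep input from \cite{JKM}; everything else is bookkeeping.

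For surjectivity, start from a monomialized Type $A$ potential $f$ on $Q_n$ with $\dim_\C\Jac(f)<\infty$. By \ref{516} (or equivalently by \ref{511}) there exist a $cA_n$ singularity $\scrR$ and a crepant resolution $\uppi\colon\scrX\to\Spec\scrR$ with $\Lambda_{\mathrm{con}}(\uppi)\cong\Jac(f)$. Then $\dim_\C\Lambda_{\mathrm{con}}(\uppi)<\infty$, so the Contraction Theorem \ref{352} forces $\scrR$ to be isolated. By construction $\Phi$ sends $\scrR$ to the derived equivalence class of $f$, which establishes surjectivity and completes the proof. The only conceptually delicate point is the use of \ref{037}; the remaining steps are routine consequences of results already established in the paper.
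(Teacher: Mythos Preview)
Your proof is correct and follows essentially the same route as the paper's: well-definedness via finite-dimensionality from \ref{352} and derived equivalence of contraction algebras of different resolutions of the same $\scrR$; injectivity via the Donovan--Wemyss conjecture \ref{037}; and surjectivity via \ref{516}/\ref{gr} together with \ref{352}. The only cosmetic difference is that for well-definedness the paper cites August's theorem \ref{thm:august} (flops connect all crepant resolutions and preserve derived equivalence of contraction algebras), whereas you invoke the ``if'' direction of \ref{037} directly; both are valid.
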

\begin{proof}
Firstly, we prove that the map from top to bottom is well-defined. Given any isolated $cA_n$ $\scrR$ which admits a crepant resolution, let $\uppi \colon \scrX \rightarrow \Spec \scrR$ be one of the crepant resolutions. Then there exists some monomialized Type $A$ potential $f$ on $Q_n$ such that $\Lambda_{\mathrm{con}}(\uppi) \cong \Jac(f)$ by \ref{516}. Moreover, since $\scrR$ is isolated, $\Jac(f)$ is finite-dimensional by \ref{352}. Let $\pi' \colon \scrX' \rightarrow \Spec \scrR$ be another crepant resolution such that $\Lambda_{\mathrm{con}}(\uppi') \cong \Jac(f')$ for some monomialized Type $A$ potential $f'$ on $Q_n$. Since $\uppi'$ is a flop of $\uppi$ and $\scrR$ is isolated, $f$ is derived equivalent to $f'$ by \ref{thm:august}.

Secondly, we prove that the map from top to bottom is surjective. Given any monomialized Type $A$ potential $f$ on $Q_n$ with finite-dimensional Jacobi algebra, there exists a Type $A_n$ crepant resolution $\uppi \colon \scrX \rightarrow \Spec\scrR$ such that $\Jac(f) \cong \Lambda_{\mathrm{con}}(\uppi)$ by \ref{gr}. Moreover, since $\Jac(f)$ is finite-dimensional, $\scrR$ is isolated by \ref{352}.

Finally, we prove the map from top to bottom is injective. 
This uses the proof of the Donovan-Wemyss conjecture in \ref{037}.
Let $\uppi_i \colon \scrX_i \rightarrow \Spec \scrR_i$ be two crepant resolutions of isolated $cA_n$ $\scrR_i$ with $\Lambda_{\mathrm{con}}(\uppi_i)\cong \Jac(f_i)$ for $i=1, 2$. If $f_1$ is derived equivalent to $f_2$, together with $R_1$ and $R_2$ isolated, then $R_1 \cong R_2$ by \ref{037}.
\end{proof}

\begin{remark}
Since both \ref{037} and \ref{thm:august} need the assumption of isolated cDVs, we only prove the correspondence in \ref{cor} for isolated $cA_n$ singularities.
Moreover, by testing contraction algebras of crepant resolutions of the non-isolated $cA_2$ singularity $\C \lal u,v,x,y \ral / (uv-x^2y)$, it seems that \ref{cor} can not be generalised directly to the non-isolated cases.    
\end{remark}

\section{Special cases: \texorpdfstring{$A_3$}{A3}}\label{section:A3}
This section considers the special case $Q_{3,\{1,2,3\}}$, namely  
\[
\begin{array}{cl}
\begin{array}{c}
\begin{tikzpicture}[bend angle=30, looseness=0.9]
\node (a) at (-2,0) [vertex] {};
\node (b) at (0,0) [vertex] {};
\node (c) at (2,0) [vertex] {};
\node (a1) at (-2,-0.2) {$\scriptstyle 1$};
\node (a1) at (0,-0.2) {$\scriptstyle 2$};
\node (a1) at (2,-0.2) {$\scriptstyle 3$};
\draw[<-,bend right] (a) to node[below] {$\scriptstyle b_1$} (b);
\draw[->,bend left] (a) to node[above] {$\scriptstyle a_1$} (b);
\draw[<-,bend right] (b) to node[below] {$\scriptstyle b_2 $} (c);
\draw[->,bend left] (b) to node[above] {$\scriptstyle a_2$} (c);
%\node (z) at (0,-1.2) {Quiver $Q_{3,\{1,2,3\}}$};
\end{tikzpicture}
\end{array}
&
\begin{array}{l}
\x_1=a_1b_1, \ \x_1'=b_1a_1\\
\x_2=a_2b_2, \ \x_2'=b_2a_2
\end{array}
\end{array}
\]
We classify Type~$A$ potentials on $Q_{3,\{1,2,3\}}$ up to isomorphism, and determine the derived equivalence classes among those with finite-dimensional Jacobi algebras. This generalises results in \cite{DWZ1,E1,H}.

\begin{notation}\label{no6}
In this section, for simplicity, we will adopt the following notation.
Recall the notation $f_d$, $f_{ \geq d}$ in \ref{no5}.
\begin{enumerate}
\item Write $Q$ for $Q_{3,\{1,2,3\}}$, $\x \colonequals \x_1'$ and $\y \colonequals \x_2$, whereas $\x' \colonequals \x_1$ and $\y' \colonequals \x_2'$.

\item Suppose that $f$ is a Type~$A$ potential on $Q$.
Since the aim of this section is to classify Type~$A$ potentials up to isomorphism,
we may assume that $f$ is reduced.
Define the \emph{base part} of $f$ by
\[
f_b \colonequals \upkappa_1\x^p+\x\y+\upkappa_2\y^q,
\]
where $\upkappa_1\x^p$ and $\upkappa_2\y^q$ are the lowest degree (nonzero) pure powers of $\x$ and $\y$
appearing in $f$, respectively.
Since $f$ is reduced, we necessarily have $p,q\ge 2$.
If no pure power of $\x$ (respectively, $\y$) appears in $f$, we set $\upkappa_1=0$ (respectively, $\upkappa_2=0$).
The \emph{redundant part} of $f$ is then defined by $f_r\colonequals f-f_b$.

\item Given any Type A potential $f$ on $Q$ with $f_b= \upkappa_1\x^p+\x\y+\upkappa_2\y^q$, we give a new definition of degree as follows, which differs from \ref{deg}. For any $t \geq 0$, define
\begin{equation*}
    \deg(\x^{p+t}) \colonequals t+2, \quad \deg(\y^{q+t}) \colonequals t+2.
\end{equation*}
The degree of binomials in $\x$ and $\y$ is the same as \ref{deg}.
We also write $f_d$ for the degree $d$ piece of $f$ with respect to this new grading (overwriting \ref{no5}). Similar for $f_{ij,d}$, $\scrO_d$ and $\scrO_{ij,d}$.
This new definition of degree is natural since now $f_b=f_2$ and $f_r=f_{\geq 3}$, which will unify the proof below.

\item Given any Type A potential $f$ on $Q$ with $f_b= \upkappa_1\x^p+\x\y+\upkappa_2\y^q$, consider the matrices
\begin{equation*}
    A_{11}(f) = \begin{bmatrix}  p\upkappa_1 \end{bmatrix},
    \qquad
     A_{22}(f) = \begin{bmatrix}  q\upkappa_2 \end{bmatrix},
     \qquad
     A_{12}(f) \colonequals
\begin{bmatrix}
    \upvarepsilon_{12}(f)  & 1\\
    1 & \upvarepsilon_{22}(f)
\end{bmatrix},
\end{equation*}
where $\upvarepsilon_{12}(f)= \begin{cases}
    2\upkappa_1 \text{ if } p=2\\
    0 \ \ \ \text{ if } p>2
\end{cases}$
and 
$\upvarepsilon_{22}(f)= \begin{cases}
    2\upkappa_2 \text{ if } q=2\\
    0 \ \ \ \text{ if } q>2
\end{cases}$.

\end{enumerate}
\end{notation}

\subsection{Normalization}\label{A3_norm}
The purpose of this subsection is to prove \ref{610}, which gives the isomorphism classes of Type A potentials on $Q$. 

\begin{notation}\label{no7}
In this section, we assume $f=f_b+f_r$ is a Type A potential on $Q$ with $f_b= \upkappa_1\x^p+\x\y+\upkappa_2\y^q$, and will freely use the notations $f_d$, $f_{\geq d}$, $f_{ij,d}$, $\scrO_d$ and $\scrO_{ij,d}$ in \ref{no6}(3).    
\end{notation}

The following results show that, up to right-equivalence, we can freely commute occurrences of $\y\x$ into $\x\y$ in the redundant part $f_r$.
\begin{lemma}\label{lemma:commute}
Suppose that $f_r=g+\uplambda \y\x c$ where $\uplambda \in  \C^{\times}$, $d \colonequals \deg(\y\x c)$ and $c$  is a cycle with $\deg(c) \geq 1$. Then there exists a path degree $d-1$ right-equivalence, 
\begin{equation*}
    \upvartheta \colon f \stackrel{d-1}{\rightsquigarrow} f_b+g+\uplambda\x\y c +\scrO_{d+1}.
\end{equation*}
\end{lemma}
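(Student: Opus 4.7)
The plan is to define a unitriangular automorphism $\upvartheta$ of depth $d-1$ which, modulo cyclic equivalence, replaces the $\uplambda\y\x c$ term in $f$ by $\uplambda\x\y c$ while changing $f_b$ and $g$ only by elements of $\scrO_{d+1}$. The key identity driving the construction is
\begin{equation*}
c(\x\y - \y\x) \sim c b_1 \partial_{b_1} f_b - a_1 c \partial_{a_1} f_b,
\end{equation*}
valid in $\C\lbl Q\rbl$ modulo cyclic equivalence. To derive it, compute $\partial_{b_1} f_b = a_1(p\upkappa_1\x^{p-1} + \y)$ and $\partial_{a_1} f_b = (p\upkappa_1\x^{p-1}+\y)b_1$. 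Then $cb_1\partial_{b_1}f_b = p\upkappa_1 c\x^p + c\x\y$ and $c\partial_{a_1}f_b\cdot a_1 = p\upkappa_1 c\x^p + c\y\x$; the $p\upkappa_1 c\x^p$ terms cancel under subtraction, and cyclically rotating $a_1$ to the front gives $c\partial_{a_1}f_b\cdot a_1 \sim a_1 c\partial_{a_1}f_b$. (When $\upkappa_1 = 0$ the cancellation is trivial.)

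Guided by this identity, I would take $\upvartheta\colon b_1 \mapsto b_1 + \uplambda c b_1$ and $a_1 \mapsto a_1 - \uplambda a_1 c$, with $a_2,b_2$ fixed. Since $c$ has arrow length $2(d-2)$, both $cb_1$ and $a_1 c$ lie in $\mathfrak{m}^{2d-3}\subseteq\mathfrak{m}^d$ (using $d\geq 3$), so $\upvartheta$ has depth $\geq d-1$. Expanding gives $\upvartheta(\x) = \x + \uplambda(c\x-\x c) - \uplambda^2 c\x c$, so
\begin{equation*}
\upvartheta(\x^p) = \x^p + \uplambda\sum_{i=0}^{p-1}\x^i(c\x-\x c)\x^{p-1-i} + \scrO_{d+1},
\end{equation*}
where the sum telescopes to $c\x^p - \x^p c$, which is cyclically trivial. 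Similarly $\upvartheta(\x\y) = \upvartheta(\x)\y = \x\y + \uplambda c\x\y - \uplambda\x c\y + \scrO_{d+1}$, and the cyclic rotations $c\x\y \sim \x\y c$ and $\x c\y \sim \y\x c$ give $\upvartheta(\x\y)\sim \x\y + \uplambda(\x\y-\y\x)c + \scrO_{d+1}$. Combined with $\upvartheta(\y^q)=\y^q$, this yields $\upvartheta(f_b) \sim f_b + \uplambda(\x\y-\y\x)c + \scrO_{d+1}$.

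It remains to check $\upvartheta(g + \uplambda\y\x c) \sim g + \uplambda\y\x c + \scrO_{d+1}$. Every monomial in $f_r$ has new degree $\geq 3$ (using $p,q\geq 2$), and applying $\upvartheta$ inserts at least one additional factor of $c$ (of new degree $d-2$), producing a correction of new degree $\geq (d-2)+3 = d+1$. Combining, $\upvartheta(f) \sim f_b + g + \uplambda\x\y c + \scrO_{d+1}$, which is the required $f \stackrel{d-1}{\rightsquigarrow} f_b + g + \uplambda\x\y c + \scrO_{d+1}$. The main point requiring care is in this last step: because the non-standard degree convention in \ref{no6}(3) behaves non-additively on pure-$\x$ or pure-$\y$ monomials, one must treat the pure monomials $\x^{p+t}$ and $\y^{q+t}$ in $g$ separately from the mixed case, using $p,q\geq 2$ to guarantee that insertions of $c$ always raise the new degree to at least $d+1$.
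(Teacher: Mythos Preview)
Your proof is correct and uses essentially the same approach as the paper: both apply the unitriangular automorphism $\upvartheta\colon a_1\mapsto a_1-\uplambda a_1 c$, $b_1\mapsto b_1+\uplambda c b_1$ and check that $\upvartheta(f_b)\sim f_b+\uplambda(\x\y-\y\x)c+\scrO_{d+1}$ while $\upvartheta(f_r)\sim f_r+\scrO_{d+1}$. Your write-up is somewhat more detailed than the paper's (which simply expands $\upvartheta(f)$ in one displayed line and appeals to the analogue of Lemma~\ref{042} for the $f_r$ part), and your final caveat about the non-additive behaviour of the degree in \ref{no6}(3) on pure monomials is a valid point that the paper leaves implicit.
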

\begin{proof}
We apply the unitriangular automorphism $\upvartheta$ of depth $d-1$ given by
$\upvartheta: a_1 \mapsto a_1 - \uplambda a_1c$, $b_1 \mapsto b_1 + \uplambda cb_1$.
Equivalently, since $\x=b_1a_1$, the induced action is
\[
\upvartheta(\x)
=
\x - \uplambda \x c + \uplambda c \x - \uplambda^2 c\x c .
\]
Applying $\upvartheta$ to $f$ gives
\begin{align*}
 \upvartheta \colon f & \mapsto \upkappa_1(\x- \uplambda \x c+ \uplambda c \x - \uplambda^2 c\x c)^p + (\x- \uplambda \x c+ \uplambda c \x - \uplambda^2 c\x c)\y +\upkappa_2\y^q +\uplambda \y\x c+g +\scrO_{d+1} \\
  & = f_b - \uplambda \x c\y +\uplambda c \x\y+\uplambda \y\x c+g +\scrO_{d+1} \tag{since $f_b= \upkappa_1\x^p+\x\y+\upkappa_2\y^q$}\\
   & \stackrel{d}{\sim} f_b +g +\uplambda  \x\y c+\scrO_{d+1}.
\end{align*}    
The above equality holds as follows.
First, as in the proof of \ref{042}, all terms produced from
$f_r=g+\uplambda \y\x c$ under $\upvartheta$ have degree at least $d+1$, and
may therefore be absorbed into $\scrO_{d+1}$ in the first line.
Second, since $\deg(c)\ge1$ by assumption and $p\ge2$ as specified in \ref{no6}(2), every term in
\[
\upkappa_1\big(\x - \uplambda \x c + \uplambda c \x - \uplambda^2 c\x c\big)^p
\]
other than $\upkappa_1\x^p$ has degree at least $d+1$; similarly,
$\deg(\uplambda^2 c\x c\y)\ge d+1$.
Hence all such terms are again absorbed in $\scrO_{d+1}$ in the second line.
\end{proof}

\begin{prop}\label{prop:commute}
Suppose that $f_r= g + \uplambda c $ where $\uplambda \in  \C^{\times}$, $d \colonequals \deg(c)$ and $c$ is a cycle with $\mathbf{T}(c)_1=i$ and $\mathbf{T}(c)_2=j$. Then there exists a path degree $d-1$ right-equivalence,
\begin{equation*}
    \uptheta \colon f \stackrel{d-1}{\rightsquigarrow}  f_b + g+\uplambda \x^i \y^j + \scrO_{d+1}.
\end{equation*}  
\end{prop}
\begin{proof}
If $j=0$, then $c \sim \x^i$, so there is nothing to prove. The case of $i=0$ is similar. Thus we assume $i$, $j >0$. Firstly, note that $c \sim \x^{i_1}\y^{j_1}\x^{i_2}\y^{j_2} \dots \x^{i_k}\y^{j_k}$ where $\sum_{t=1}^k i_t=i$ and $\sum_{t=1}^k j_t=j$. 
Since lemma \ref{lemma:commute} allows us to commute each occurrence of $\y\x$ into $\x\y$ (up to $\scrO_{d+1}$),
iterating it yields a path degree $d-1$ right-equivalence
\begin{equation*}
\uptheta \colon f \stackrel{d-1}{\rightsquigarrow}  f_b + g+\uplambda \x^i \y^j + \scrO_{d+1}. \qedhere
\end{equation*}
\end{proof}

\begin{remark}\label{412}
With notation as in \ref{prop:commute}, any term $\uplambda c$ of degree $d=\deg(c)$ can be rewritten, up to $\scrO_{d+1}$, in the form $\uplambda \x^i\y^j$.
Henceforth, since we normalise the potential degree by degree, we may assume throughout this section that $c=\x^i\y^j$.
\end{remark}

We now normalise $f$ degree by degree: the base part $f_b$ is used to eliminate higher-degree terms in $f_r$ via unitriangular automorphisms.

For any integer $s \geq 1$, we define the following depth $s+1$ unitriangular automorphisms.
\begin{align}
&\upvarphi_{11,s} \colon a_1 \mapsto a_1+\uplambda a_1\x^s, \label{norm11}\\
& \upvarphi_{22,s} \colon a_2 \mapsto a_2+\uplambda \y^s a_2, \label{norm22} \\
& \upvarphi_{12,s} \colon a_1 \mapsto a_1+\uplambda_1a_1 \x^{s-1}\y, \ a_2 \mapsto a_2+\uplambda_2 \x^{s}a_2, \label{norm12} 
\end{align}
where $\uplambda$, $\uplambda_1$, $\uplambda_2 \in \C$.

\begin{lemma}\label{lemma:norm11}
The $\upvarphi_{11,s}$ \eqref{norm11} induces a degree $s+1$ right-equivalence,
\begin{equation*}
\upvarphi_{11,s} \colon  f \stackrel{s+1}{\rightsquigarrow} f+ \uplambda p\upkappa_1 \x^{p+s} +\scrO_{12,s+2}+\scrO_{s+3}.
\end{equation*}
\end{lemma}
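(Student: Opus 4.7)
The plan is to verify the depth of $\upvarphi_{11,s}$ and then compute $\upvarphi_{11,s}(f)$ by splitting $f = f_b + f_r$ and tracking each piece in the new degree of \ref{no6}(3). Under $\upvarphi_{11,s}$ the arrow $a_1$ transforms as $a_1 \mapsto a_1 + \uplambda a_1 \x^s$ while $b_1, a_2, b_2$ are fixed, so $\x = b_1 a_1 \mapsto \x + \uplambda \x^{s+1}$ and $\y$ is unchanged. Depth $\geq s+1$ follows from $\upvarphi_{11,s}(a_1) - a_1 = \uplambda a_1 \x^s \in \mathfrak{m}^{2s+1} \subseteq \mathfrak{m}^{s+2}$ (using $s \geq 1$).

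For the base part, applying the binomial theorem gives $\upvarphi_{11,s}(\upkappa_1 \x^p) = \upkappa_1 \x^p + p\upkappa_1 \uplambda \x^{p+s} + \sum_{k \geq 2}\binom{p}{k}\upkappa_1\uplambda^k \x^{p+ks}$; each term with $k \geq 2$ is a pure power $\x^{p+ks}$ of new degree $ks+2 \geq s+3$, hence in $\scrO_{s+3}$. The middle term transforms as $\x\y = b_1 a_1 a_2 b_2 \mapsto \x\y + \uplambda \x^{s+1}\y$, and the extra piece $\uplambda \x^{s+1}\y$ is a genuine binomial in $V_{12}$, so by \ref{no6}(3) its new degree equals its old degree $s+2$, placing it in $\scrO_{12, s+2}$. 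Finally $\upvarphi_{11,s}(\upkappa_2 \y^q) = \upkappa_2 \y^q$ since $\y$ does not involve $a_1$.

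For the redundant part, every cycle $c \in f_r$ has new degree $\deg(c) \geq 3$. At first order in $\uplambda$, the substitution inserts $\x^s$ at each $a_1$-occurrence in $c$, and by the cyclic sliding argument of \ref{lemma:commute} each such insertion is cyclically equivalent to a cycle with $\mathbf{T}$-counts $(\mathbf{T}(c)_1 + s, \mathbf{T}(c)_2)$. A short case split (the binomial case $\mathbf{T}(c)_2 \geq 1$ versus the pure-$\x$ case, where $\mathbf{T}(c)_1 \geq p+1$ since $\x^p \in f_b$ forces $\x^p \notin f_r$) shows the resulting new degree equals $\deg(c) + s \geq s+3$. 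The $\uplambda^{\geq 2}$ corrections lie in strictly higher new degree. Hence $\upvarphi_{11,s}(f_r) - f_r \in \scrO_{s+3}$.

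Assembling the three computations yields $\upvarphi_{11,s}(f) = f + \uplambda p\upkappa_1 \x^{p+s} + \scrO_{12,s+2} + \scrO_{s+3}$, which is the desired path degree $s+1$ right-equivalence. The main obstacle will be careful bookkeeping under the two different degree conventions from \ref{no6}(3): the stray $\x\y$-correction $\uplambda \x^{s+1}\y$ has new degree exactly $s+2$ and must be collected in $\scrO_{12, s+2}$ rather than $\scrO_{s+3}$, which is precisely what signals that a subsequent off-diagonal automorphism such as $\upvarphi_{12,s}$ in \eqref{norm12} will be needed to kill such cross-terms later in the normalisation procedure.
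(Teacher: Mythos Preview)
Your proof is correct and follows essentially the same approach as the paper: split $f=f_b+f_r$, compute the effect of $\upvarphi_{11,s}$ on each monomial of $f_b$, and absorb the change in $f_r$ into $\scrO_{s+3}$. The paper's version is terser, simply writing $\upvarphi_{11,s}(f)=\upkappa_1(b_1(a_1+\uplambda a_1\x^s))^p+b_1(a_1+\uplambda a_1\x^s)\y+\upkappa_2\y^q+f_r+\scrO_{s+3}$ without spelling out the $f_r$ bookkeeping or the $k\ge 2$ binomial terms, whereas you supply those details explicitly; the reference to \ref{lemma:commute} is unnecessary (the $\mathbf{T}$-count increase is immediate from the substitution, no cyclic sliding required), but harmless.
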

\begin{proof}
Applying $\upvarphi_{11,s} \colon a_1 \mapsto a_1+\uplambda a_1\x^s$ to $f$ gives
\begin{align*}
\upvarphi_{11,s} \colon  f & \mapsto \upkappa_1(b_1(a_1+\uplambda a_1\x^s))^p+ b_1(a_1+\uplambda a_1\x^s)\y +  \upkappa_2 \y^q +f_r +\scrO_{s+3}\\
& \stackrel{s+2}{\sim} f + \uplambda p\upkappa_1\x^{p+s}+ \uplambda\x^{s+1}\y+ \scrO_{s+3}\\
& = f + \uplambda p\upkappa_1\x^{p+s}+ \scrO_{12,s+2}+ \scrO_{s+3}.\qedhere
\end{align*}
\end{proof}

\begin{lemma}\label{lemma:norm22}
The $\upvarphi_{22,s}$ \eqref{norm22} induces a degree $s+1$ right-equivalence,
\begin{equation*}
\upvarphi_{22,s} \colon  f \stackrel{s+1}{\rightsquigarrow} f+ \uplambda q\upkappa_2 \y^{q+s} +\scrO_{12,s+2}+\scrO_{s+3}.
\end{equation*}    
\end{lemma}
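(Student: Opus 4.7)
The plan is to mirror the proof of \ref{lemma:norm11} almost verbatim, interchanging the roles of $(\x, a_1, b_1, p, \upkappa_1)$ with $(\y, a_2, b_2, q, \upkappa_2)$. First I would observe that $\upvarphi_{22,s}\colon a_2 \mapsto a_2 + \uplambda\y^s a_2$ fixes $a_1$ and $b_1$, hence fixes $\x = b_1 a_1$, while a direct computation gives
\[
\upvarphi_{22,s}(\y) \;=\; (a_2 + \uplambda\y^s a_2)b_2 \;=\; \y + \uplambda\y^{s+1}.
\]

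Then I would substitute into $f = \upkappa_1\x^p + \x\y + \upkappa_2\y^q + f_r$ and compute term by term. The $\upkappa_1\x^p$ piece is untouched. The bilinear piece becomes $\x\y + \uplambda\x\y^{s+1}$, and the extra cycle $\uplambda\x\y^{s+1}$ sits in $V_{12}$ with ordinary degree $s+2$, contributing a term in $\scrO_{12,s+2}$. Expanding the last pure $\y$-piece by the binomial theorem yields
\[
\upkappa_2(\y + \uplambda\y^{s+1})^q \;=\; \upkappa_2\y^q + q\uplambda\upkappa_2\y^{q+s} + \sum_{k \geq 2}\binom{q}{k}\upkappa_2\uplambda^{k}\y^{q+ks},
\]
and for $k \geq 2$ one has $\deg(\y^{q+ks}) = ks+2 \geq 2s+2 \geq s+3$ under the convention of \ref{no6}(3), so the entire $k \geq 2$ sum lies in $\scrO_{s+3}$. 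Finally, $f_r \in \scrO_3$ combined with the depth $s+1$ of $\upvarphi_{22,s}$ forces every new cycle generated from $f_r$ into $\scrO_{s+3}$, by exactly the reasoning recorded at the end of the proof of \ref{042}.

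Collecting the four contributions gives
\[
\upvarphi_{22,s}(f) \;\stackrel{s+2}{\sim}\; f + \uplambda q\upkappa_2\y^{q+s} + \scrO_{12,s+2} + \scrO_{s+3},
\]
which is exactly the claim. The only point needing any attention is the degree bookkeeping under the modified convention of \ref{no6}(3) that collapses $\deg(\y^{q+t})$ to $t+2$; but this convention works in our favour, since it is precisely what puts the distinguished term $\uplambda q\upkappa_2\y^{q+s}$ on the nose in degree $s+2$ and sends every $k \geq 2$ tail into $\scrO_{s+3}$. I therefore expect no substantive obstacle beyond this transcription.
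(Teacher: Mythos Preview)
Your proposal is correct and takes essentially the same approach as the paper, which simply says ``The proof is similar to \ref{lemma:norm11}.'' You have written out in full the symmetric computation the paper leaves implicit, and your degree bookkeeping under the convention of \ref{no6}(3) is accurate.
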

\begin{proof}
The proof is analogous to that of \ref{lemma:norm11}.
\end{proof}
\begin{remark}
Lemma~\ref{lemma:norm22} can be deduced from \ref{lemma:norm11} by applying
the automorphism of $Q$ which interchanges $e_1$ and $e_3$ and fixes $e_2$.
Explicitly, this automorphism is given by
\[
a_1 \mapsto b_2,\quad b_1 \mapsto a_2,\quad
a_2 \mapsto b_1,\quad b_2 \mapsto a_1,
\]
or equivalently, it interchanges $\x$ and $\y$.
\end{remark}

\begin{lemma}\label{lemma:norm12}
The $\upvarphi_{12,s}$ \eqref{norm12} induces a degree $s+1$ right-equivalence,
\begin{equation*}
\upvarphi_{12,s} \colon  f \stackrel{s+1}{\rightsquigarrow} f+ \begin{bmatrix}
    \x^{s+1}\y & \x^s \y^2 \end{bmatrix}A_{12}(f)\begin{bmatrix}
        \uplambda_1 \\ \uplambda_2 \end{bmatrix} +\scrO_{s+3}.
\end{equation*}
\end{lemma}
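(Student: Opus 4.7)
The plan is to track the action of $\upvarphi_{12,s}$ directly on the two degree-$2$ cycles $\x$ and $\y$ based at vertex $2$, and then expand term by term on $f = \upkappa_1 \x^p + \x\y + \upkappa_2 \y^q + f_r$, discarding everything of new-degree $\geq s+3$. First compute
\[
\x = b_1 a_1 \mapsto b_1(a_1 + \uplambda_1 a_1 \x^{s-1}\y) = \x + \uplambda_1 \x^s \y,
\]
\[
\y = a_2 b_2 \mapsto (a_2 + \uplambda_2 \x^s a_2) b_2 = \y + \uplambda_2 \x^s \y.
\]
Substituting these into $f_b$ and using cyclic equivalence isolates the contributions landing in degree $s+2$.

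Specifically, the first-order correction from $\upkappa_1 \x^p$ is $p\upkappa_1 \uplambda_1 \x^{p+s-1}\y$ cyclically, which has (new) degree $p+s$; it lies in degree $s+2$ exactly when $p=2$, giving $\upvarepsilon_{12}(f)\uplambda_1 \x^{s+1}\y$, and otherwise lies in $\scrO_{s+3}$. The mixed term $\x\y$ contributes $(\x + \uplambda_1 \x^s\y)(\y + \uplambda_2 \x^s\y) - \x\y = \uplambda_2 \x^{s+1}\y + \uplambda_1 \x^s \y^2$ modulo $\scrO_{s+3}$, since the remaining cross-term $\uplambda_1\uplambda_2\x^s\y\x^s\y$ has much larger degree. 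Symmetrically, the first-order correction from $\upkappa_2 \y^q$ is $q\upkappa_2 \uplambda_2 \x^s \y^q$ cyclically, of degree $s+q$, which lies in degree $s+2$ exactly when $q=2$ and gives $\upvarepsilon_{22}(f)\uplambda_2 \x^s \y^2$, else belongs to $\scrO_{s+3}$. Finally every monomial in $f_r$ has degree $\geq 3$ and its image under $\upvarphi_{12,s}$ differs from it by terms carrying at least one extra factor of $\x^{s-1}\y$ or $\x^s$, so the entire change on $f_r$ lies in $\scrO_{s+3}$.

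Collecting these four pieces yields
\[
\upvarphi_{12,s}(f) \stackrel{s+2}{\sim} f + (\upvarepsilon_{12}(f)\uplambda_1 + \uplambda_2)\x^{s+1}\y + (\uplambda_1 + \upvarepsilon_{22}(f)\uplambda_2)\x^s\y^2 + \scrO_{s+3},
\]
which is precisely $f + \begin{bmatrix}\x^{s+1}\y & \x^s\y^2\end{bmatrix} A_{12}(f) \begin{bmatrix}\uplambda_1\\ \uplambda_2\end{bmatrix} + \scrO_{s+3}$. The only real obstacle is careful bookkeeping with the nonstandard degree convention of \ref{no6}(3), under which $\x^p$ and $\y^q$ both count as degree $2$: the case splits $p=2$ versus $p\geq 3$ (and likewise for $q$) are exactly what produce the diagonal entries $\upvarepsilon_{12}(f)$ and $\upvarepsilon_{22}(f)$ of $A_{12}(f)$, while the anti-diagonal $1$'s arise from the cross-term $\x\y$.
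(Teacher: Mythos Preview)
Your proof is correct and follows essentially the same approach as the paper: compute the effect of $\upvarphi_{12,s}$ on $\x$ and $\y$, expand $f_b$ to first order, and use the case split $p=2$ versus $p\geq 3$ (resp.\ $q=2$ versus $q\geq 3$) to decide whether the $\x^{p+s-1}\y$ and $\x^s\y^q$ contributions land in degree $s+2$ or in $\scrO_{s+3}$. Your version is in fact slightly more explicit about the degree bookkeeping for $f_r$ than the paper's, but the argument is the same.
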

\begin{proof}
Applying $\upvarphi_{12,s}$ to $f$ gives 
\begin{align*}
 \upvarphi_{12,s} \colon f & \mapsto  \upkappa_1(\x+\uplambda_1\x^{s}\y)^p + (\x+\uplambda_1\x^{s}\y)(\y+\uplambda_2 \x^{s}\y) +\upkappa_2(\y+\uplambda_2 \x^{s}\y)^q +f_r +\scrO_{s+3}   \\
& \stackrel{s+2}{\sim} f + \uplambda_1 p \upkappa_1 \x^{p+s-1}\y+ \uplambda_2 \x^{s+1}\y+\uplambda_1 \x^s\y^2+ \uplambda_2 q \upkappa_2 \x^s\y^q +\scrO_{s+3} \\
& = f+ \begin{bmatrix}
    \x^{s+1}\y & \x^s \y^2 \end{bmatrix}A_{12}(f)\begin{bmatrix}
        \uplambda_1 \\ \uplambda_2 \end{bmatrix} +\scrO_{s+3}.
\end{align*}
Recall the definition of $A_{12}(f)$ in \ref{no6}(4).
In the last line, the term $\uplambda_1 p \upkappa_1 \x^{p+s-1}\y$ contributes to the displayed matrix expression
only when $p=2$ (and is absorbed into $\scrO_{s+3}$ when $p>2$); similarly,
$\uplambda_2 q \upkappa_2 \x^s\y^q$ contributes only when $q=2$.
\end{proof}

\begin{prop}\label{prop:norm}
With notation in \textnormal{\ref{no7}},
for any $d \geq 3$, there exists a path degree $d-1$ right-equivalence
\begin{equation*}
\upphi_d \colon f \stackrel{d-1}{\rightsquigarrow}  f_{<d} +c_d+ \scrO_{d+1},
\end{equation*}  
where the $c_d$ is defined to be
\begin{equation*}
    c_d = \left\{
    \begin{array}{cl}
      0 & \text{if } \det(A_{12}(f)) \neq 0\\
      \upmu \x^{p+d-2}  & \text{if } \det(A_{12}(f)) = 0
    \end{array}\right.
\end{equation*}
for some $\upmu \in \C$.
\end{prop}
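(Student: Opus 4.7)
The plan is to construct $\upphi_d$ as a composition of finitely many depth-$\geq (d-1)$ unitriangular automorphisms that, modulo $\scrO_{d+1}$, reduce $f_d$ to $c_d$. As a first step, applying \ref{prop:commute} to each cycle appearing in $f_d$ gives
\[
f_d\equiv \upalpha_0\x^{p+d-2}+\sum_{k=1}^{d-1}\upalpha_k\x^{d-k}\y^k+\upalpha_d\y^{q+d-2}\pmod{\scrO_{d+1}}
\]
with $\upalpha_i\in\C$, so that $f_d$ lies in the $(d+1)$-dimensional subspace $V_d$ spanned by these monomials.

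Beyond $\upvarphi_{11,d-2},\upvarphi_{22,d-2},\upvarphi_{12,d-2}$ of \ref{lemma:norm11}--\ref{lemma:norm12}, to access the interior mixed monomials I would introduce, for each $k$ with $2\leq k\leq d-1$, the auxiliary depth-$\geq (d-1)$ unitriangular automorphisms
\[
\uppsi_k\colon a_1\mapsto a_1+\uplambda\, a_1\x^{d-k-1}\y^{k-1},\qquad \uppsi_k'\colon a_2\mapsto a_2+\uplambda\,\x^{d-k-1}\y^{k-1}a_2.
\]
A direct computation in the spirit of \ref{lemma:norm11}--\ref{lemma:norm12} shows that, modulo $\scrO_{d+1}$, they contribute
\[
\uplambda\bigl(\x^{d-k}\y^k+\upvarepsilon_{12}(f)\,\x^{d-k+1}\y^{k-1}\bigr)\quad\text{and}\quad \uplambda\bigl(\x^{d-k}\y^k+\upvarepsilon_{22}(f)\,\x^{d-k-1}\y^{k+1}\bigr)
\]
respectively to $f$ in degree $d$.

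The proof then reduces to linear algebra on $V_d$: let $W\subseteq V_d$ be the subspace spanned by the first-order degree-$d$ effects of $\upvarphi_{11,d-2},\upvarphi_{22,d-2},\upvarphi_{12,d-2}$ and of all $\uppsi_k,\uppsi_k'$. When $\det(A_{12}(f))\neq 0$ one checks directly that $W=V_d$: $\upvarphi_{12,d-2}$ together with the $\uppsi_k,\uppsi_k'$ span every mixed monomial $\x^{d-k}\y^k$ (via a short triangular cascade in the subcases $p=2$ or $q=2$), and then $\upvarphi_{11,d-2},\upvarphi_{22,d-2}$ recover the pure-power ends $\x^{p+d-2},\y^{q+d-2}$, so $c_d=0$ suffices. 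When $\det(A_{12}(f))=0$, necessarily $p=q=2$ and $4\upkappa_1\upkappa_2=1$; the identity $2\upkappa_2=1/(2\upkappa_1)$ forces each $\uppsi_k'$ to be proportional to $\uppsi_{k+1}$, and $W$ collapses to the $d$-dimensional bidiagonal span $\langle 2\upkappa_1\x^{d-k}\y^k+\x^{d-k-1}\y^{k+1}:0\leq k\leq d-1\rangle$. A direct triangular elimination then shows that $\x^{p+d-2}=\x^d$ represents a nonzero class in the $1$-dimensional quotient $V_d/W$, forcing $c_d=\upmu\x^{p+d-2}$ for some $\upmu\in\C$.

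Having identified $W$ and the valid residual $c_d$, the assembly step is to solve the resulting triangular linear system for the scalar parameters of the auto's, and compose them into a single unitriangular $\upphi_d$ of depth $\geq d-1$ satisfying $\upphi_d(f)\equiv f_{<d}+c_d\pmod{\scrO_{d+1}}$ up to cyclic equivalence. The main technical obstacle is the linear-algebra bookkeeping in the degenerate case $\det(A_{12}(f))=0$, namely verifying that the auxiliary families $\uppsi_k,\uppsi_k'$ collapse to rank $d$ rather than rank $d+1$ under $4\upkappa_1\upkappa_2=1$, and identifying $\x^{p+d-2}$ as a legitimate representative of the surviving $1$-dimensional cokernel.
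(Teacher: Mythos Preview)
Your approach is correct but takes a genuinely different route from the paper's. You introduce a full family of auxiliary automorphisms $\uppsi_k,\uppsi_k'$ (for $2\leq k\leq d-1$) and then solve a $(d{+}1)\times(d{+}1)$ linear system on $V_d$ directly, case-splitting on the rank of the resulting bidiagonal matrix. The paper instead recycles the monomialization machinery already built in \S\ref{mono}: after killing the pure-power ends with $\upvarphi_{22,d-2}$ and $\upvarphi_{11,d-2}$, it invokes Lemma~\ref{042} repeatedly (equivalently Corollary~\ref{044}/Proposition~\ref{046}) to collapse the entire mixed part $\scrO_{12,d}$ down to a single term $\upbeta\x^{d-1}\y$, and then appeals only to the $2\times 2$ matrix $A_{12}(f)$ via $\upvarphi_{12,d-2}$ to finish. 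In the degenerate case $\det A_{12}(f)=0$ the paper simply quotes Proposition~\ref{046} to push everything to a monomial in $\x$, bypassing your rank computation entirely.

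What each buys: the paper's argument is shorter and avoids any linear algebra beyond $2\times 2$, at the price of depending on the \S\ref{mono} lemmas (and in particular on checking that their hypothesis $\le(f_2)\leq 2$ holds for the \emph{old} degree of \S\ref{mono}, which it does since the old-degree-$2$ part of $f$ is just $\x\y$). Your argument is self-contained within \S\ref{section:A3} and makes the obstruction space $V_d/W$ completely transparent --- in particular it explains structurally \emph{why} the residual is one-dimensional exactly when $4\upkappa_1\upkappa_2=1$, rather than obtaining this as a consequence of earlier black-box reductions. Both are valid; your linear-algebra bookkeeping in the degenerate case (that the two families $\uppsi_k,\uppsi_k'$ become proportional under $2\upkappa_2=1/(2\upkappa_1)$, collapsing $W$ to the bidiagonal span of dimension $d$ with $\x^d\notin W$) is correct as stated.
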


\begin{proof}
We first rewrite $f_r=f_d+g$ and $f_d=f_{11,d}+f_{12,d}+f_{22,d}$ where $f_{11,d}=\upalpha_1\x^{p+d-2}$ and $f_{22,d}=\upalpha_2\y^{q+d-2}$ for some $\alpha_1, \alpha_2 \in \C$. 

Recall that $f_b= \upkappa_1\x^p+\x\y+\upkappa_2\y^q$ in \ref{no7}.
If $\upkappa_2=0$, then there is no monomial of $\y$ in $f$, and so $\alpha_2=0$. Otherwise, $\upkappa_2 \neq 0$, so set $\uplambda= -\upalpha_2/(q\upkappa_2)$ and applying \ref{lemma:norm22} to obtain,
\begin{align}
  \upvarphi_{22,d-2}  \colon f & \stackrel{d-1}{\rightsquigarrow}  f+ \uplambda q\upkappa_2 \y^{q+d-2} +\scrO_{12,d}+\scrO_{d+1}\notag\\
  & = f_b + g +f_d+ \uplambda q\upkappa_2 \y^{q+d-2} +\scrO_{12,d}+\scrO_{d+1} \tag{$f= f_b+f_r, \ f_r=f_d+g$} \\
  & = f_b+g+f_{11,d}+f_{12,d}+(\upalpha_2+ \uplambda q\upkappa_2)\y^{q+d-2}  +\scrO_{12,d}+\scrO_{d+1} \tag{$f_d=f_{11,d}+f_{12,d}+f_{22,d}, \ f_{22,d}=\upalpha_2\y^{q+d-2}$}\\
  & = f_b+g+f_{11,d}+f_{12,d}+\scrO_{12,d}+\scrO_{d+1} \tag{since $\uplambda= -\upalpha_2/(q\upkappa_2)$}\\
  & =f_b+g+f_{11,d}+\scrO_{12,d}+\scrO_{d+1}.\label{30}
\end{align}

Set $\mathsf{f}_1 \colonequals  f_b+g+f_{11,d}+\scrO_{12,d}+\scrO_{d+1}$.
The proof splits into cases.

(1) $\det(A_{12}(f)) =0$.

By \ref{046}, when $\det(A_{12}(f))=0$ we may absorb the degree $d$ binomial terms $\scrO_{12,d}$ into monomial terms in $\x$ (up to $\scrO_{d+1}$). More precisely, there exists a path degree $d-1$ right-equivalence,
\begin{align*}
    \rho_d \colon \mathsf{f}_1 
    & \stackrel{d-1}{\rightsquigarrow}   f_b+g+\upmu \x^{p+d-2}+\scrO_{d+1},  \tag{$f_{11,d}=\upalpha_1\x^{p+d-2}$}\\
    & = f_{<d}+\upmu \x^{p+d-2}+\scrO_{d+1}. \tag{$f_b+g= f_{<d}+f_{>d}$}
\end{align*}
for some $\upmu \in \C$. Set $\upphi_d \colonequals \rho_d \circ \upvarphi_{22,d-2}$, we are done.

(2) $\det(A_{12}(f)) \neq 0$.

Similar to \eqref{30}, applying $ \ref{lemma:norm11}$ to $\mathsf{f}_1$ gives
\begin{equation*}
    \upvarphi_{11,d-2}  \colon \mathsf{f}_1  \stackrel{d-1}{\rightsquigarrow} \mathsf{f}_2 \colonequals f_b+g+\scrO_{12,d}+\scrO_{d+1}.
\end{equation*}

Then we continue to normalize the $\scrO_{12,d}$ in $\mathsf{f}_2$. 
By construction, $\mathsf{f}_2$ satisfies the hypotheses of \ref{042}, hence we may apply \ref{042} repeatedly to obtain
\begin{align*}
  \upvartheta \colon  \mathsf{f}_2 \stackrel{d-1}{\rightsquigarrow} \mathsf{f}_3 
 \colonequals f_b+g+\upbeta \x^{d-1}\y  +\scrO_{d+1}
\end{align*}
for some $\upbeta \in \C$. Then by \ref{lemma:norm12}, 
\begin{align*}
\upvarphi_{12,d-2} \colon \mathsf{f}_3 
& \stackrel{d-1}{\rightsquigarrow}  \mathsf{f}_3 +\begin{bmatrix}
    \x^{d-1}\y & \x^{d-2} \y^2 \end{bmatrix}A_{12}(f)\begin{bmatrix}
        \uplambda_1 \\ \uplambda_2 \end{bmatrix} +\scrO_{d+1} \\
        &  = f_b+g+\upbeta \x^{d-1}\y+\begin{bmatrix}
    \x^{d-1}\y & \x^{d-2} \y^2 \end{bmatrix}A_{12}(f)\begin{bmatrix}
        \uplambda_1 \\ \uplambda_2 \end{bmatrix} +\scrO_{d+1}.
\end{align*}

Since $\det A_{12}(f)\neq 0$, the linear system in $(\uplambda_1,\uplambda_2)$ is solvable, so we may choose $\uplambda_1,\uplambda_2$ such that
\begin{equation*}
    \upbeta \x^{d-1}\y+ \begin{bmatrix}
    \x^{d-1}\y & \x^{d-2} \y^2 \end{bmatrix}A_{12}(f)\begin{bmatrix}
        \uplambda_1 \\ \uplambda_2 \end{bmatrix}=0.
\end{equation*}        
Thus we have
\begin{align*}
    \upvarphi_{12,d-2} \colon \mathsf{f}_3 
    &\stackrel{d-1}{\rightsquigarrow} f_b+g +\scrO_{d+1}\\
    & =f_{<d}+ \scrO_{d+1}. \tag{$f_b+g=f_{<d}+f_{>d}$}
\end{align*}
Set $\upphi_d \colonequals \upvarphi_{12,d-2} \circ \upvartheta  \circ \upvarphi_{11,d-2} \circ \upvarphi_{22,d-2}$, we are done.
\end{proof}

\begin{prop}\label{prop:normal}
With notation in \textnormal{\ref{no7}}, there exists a right-equivalence,
\begin{equation*}
    \Phi \colon f \rightsquigarrow f_b + c
\end{equation*}
where $c$ is given by
\begin{equation*}
 c=\left\{
\begin{array}{cl}
 0 &\text{if } \det A_{12}(f) \neq 0 \\
\sum_{i=1}^{\infty} \upmu_i\x^{p+i} &\text{if } \det A_{12}(f) =0
\end{array}\right.
\end{equation*}
for some $\upmu_i\in \mathbb{C}$.
\end{prop}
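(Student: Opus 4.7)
The plan is to iterate \ref{prop:norm} for $d = 3, 4, 5, \ldots$ and assemble the resulting sequence of path-degree right-equivalences into a single right-equivalence $\Phi$ using the limit theorem \ref{31}, in direct analogy with the proof of \ref{047}.

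Set $\mathsf{f}_1 \colonequals f$. Applying \ref{prop:norm} to $\mathsf{f}_1$ produces a path degree two right-equivalence $\upphi_3 \colon \mathsf{f}_1 \stackrel{2}{\rightsquigarrow} \mathsf{f}_2$ with $\mathsf{f}_2 = f_b + c_3 + \scrO_4$, where $c_3 = 0$ if $\det A_{12}(f) \neq 0$ and $c_3 = \upmu_1 \x^{p+1}$ otherwise. Inductively, suppose that $\mathsf{f}_{d-2} = f_b + c_3 + \cdots + c_{d-1} + \scrO_d$; then \ref{prop:norm} applied to $\mathsf{f}_{d-2}$ yields a path degree $d-1$ right-equivalence $\upphi_d \colon \mathsf{f}_{d-2} \stackrel{d-1}{\rightsquigarrow} \mathsf{f}_{d-1}$ with $\mathsf{f}_{d-1} = f_b + c_3 + \cdots + c_d + \scrO_{d+1}$.

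The key consistency check is that $A_{12}(\mathsf{f}_{d-2}) = A_{12}(f)$ at every stage of the iteration. Since each $\upphi_d$ is unitriangular of depth $\geq d-1 \geq 2$, it preserves the base part $f_b$ up to cyclic equivalence, so the matrix $A_{12}$ computed from $\mathsf{f}_{d-2}$ agrees with the original $A_{12}(f)$. The dichotomy from \ref{prop:norm} therefore persists: either every $c_d$ vanishes (when $\det A_{12}(f) \neq 0$), or every $c_d$ equals $\upmu_{d-2} \x^{p+d-2}$ for some $\upmu_{d-2} \in \C$ (when $\det A_{12}(f) = 0$). Setting $\varphi_i \colonequals \upphi_{i+2}$, each $\varphi_i$ is unitriangular of depth $\geq i+1 \geq i$ and $\varphi_i(\mathsf{f}_i) = \mathsf{f}_{i+1}$, so the hypotheses of \ref{31} hold. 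That theorem then produces $\lim_{i \to \infty} \mathsf{f}_i$ together with an automorphism $\Phi$ satisfying $\Phi(f) \sim \lim_{i \to \infty} \mathsf{f}_i$, which is either $f_b$ or $f_b + \sum_{i=1}^\infty \upmu_i \x^{p+i}$, exactly as required.

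The only real subtlety is the invariance of the base part (hence of $A_{12}$) under depth $\geq 2$ unitriangular automorphisms. When $p = 2$ or $q = 2$, some entries of $A_{12}(f)$ involve $\upkappa_1$ or $\upkappa_2$, and one must verify that the images $\upphi_d(\x^p)$, $\upphi_d(\y^q)$, $\upphi_d(\x\y)$ differ from $\x^p$, $\y^q$, $\x\y$ only by terms cyclically equivalent to elements of new degree $\geq 3$. This is a routine comparison of path degree with the $\x,\y$-degree of \ref{no6}(3), and forms the main bookkeeping that underpins the induction.
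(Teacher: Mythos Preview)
Your proposal is correct and follows essentially the same route as the paper: iterate \ref{prop:norm} through increasing degrees and assemble the limit via \ref{31}. Your explicit verification that the base part---hence $A_{12}$---is preserved at each stage is a point the paper leaves implicit, so your version is slightly more careful; the only minor slip is writing $\varphi_i(\mathsf{f}_i) = \mathsf{f}_{i+1}$ where $\stackrel{i+1}{\sim}$ is meant.
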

\begin{proof}
We first apply the $\upphi_3$ in \ref{prop:norm},
\begin{equation*}
\upphi_3 \colon f \stackrel{2}{\rightsquigarrow}  \mathsf{f}_1 \colonequals f_{<3} +c_3+ \scrO_{4},
\end{equation*}
where the $c_3$ is the same as in \ref{prop:norm}. Then we continue to apply the $\upphi_4$ in \ref{prop:norm} to $\mathsf{f}_1$, 
\begin{equation*}
 \upphi_4 \colon \mathsf{f}_1 \stackrel{3}{\rightsquigarrow} \mathsf{f}_2 \colonequals (\mathsf{f}_1)_{<4}+c_4+\scrO_5= f_{<3} +\sum_{d=3}^4 c_d+  \scrO_5.
\end{equation*}
where the $c_4$ is the same as in \ref{prop:norm}. For $1 \leq s \leq 3$, repeating this process $s-2$ times gives
\begin{equation*}
    \upphi_s \circ \dots \circ \upphi_4 \circ \upphi_3 \colon f \rightsquigarrow \mathsf{f}_s \colonequals f_{<3}+\sum_{d=3}^s c_d+ \scrO_{s+1}.
\end{equation*}
Since $\upphi_d$ is a path degree $d-1$ right-equivalence for each $d \geq 3$ by \ref{prop:norm}, by \ref{31} $\Phi \colonequals \lim_{s \rightarrow \infty}\upphi_s \circ \dots \circ \upphi_4 \circ \upphi_3$ exists, and further
\begin{equation*}
\Phi  \colon f \rightsquigarrow f_{<3}+\sum_{d=3}^{\infty} c_d.
\end{equation*}
where each $c_d$ is the same as in \ref{prop:norm}.
By the grading in \ref{no6}(3), we have $f_{<3}=f_2=f_b$, hence
\[
\Phi \colon f \rightsquigarrow f_b+\sum_{d=3}^{\infty} c_d.
\]
Thus set $c \colonequals \sum_{d=3}^{\infty} c_d$, we are done.
\end{proof}

Proposition~\ref{prop:normal} shows that if $\det A_{12}(f)\neq 0$, then all terms of $f_r$ can be eliminated.
We therefore restrict to the case $\det A_{12}(f)=0$.
The following lemma is immediate from the definition of $A_{12}(f)$ in \ref{no6}(4).

\begin{lemma}\label{lemma:A12}
$\det A_{12}(f)=0$ if and only if $f_b=\upkappa_1\x^2+\x\y+\upkappa_2\y^2$ with $4\upkappa_1\upkappa_2 = 1$.
\end{lemma}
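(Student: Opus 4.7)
The proof is a direct computation from the definitions. The plan is to simply compute $\det A_{12}(f)$ from the block form in \ref{no6}(4) and then split into cases on the exponents $p$ and $q$ of $f_b$.

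From the definition in \ref{no6}(4),
\[
\det A_{12}(f) = \upvarepsilon_{12}(f)\,\upvarepsilon_{22}(f) - 1,
\]
so the condition $\det A_{12}(f) = 0$ is equivalent to $\upvarepsilon_{12}(f)\,\upvarepsilon_{22}(f) = 1$. I would then examine the two cases separately. If either $p > 2$ or $q > 2$, then by definition at least one of $\upvarepsilon_{12}(f)$, $\upvarepsilon_{22}(f)$ equals zero, and so the product cannot equal $1$. Hence for $\det A_{12}(f) = 0$ we must be in the remaining case $p = q = 2$, in which case $\upvarepsilon_{12}(f) = 2\upkappa_1$ and $\upvarepsilon_{22}(f) = 2\upkappa_2$, giving $\det A_{12}(f) = 4\upkappa_1\upkappa_2 - 1$. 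This vanishes precisely when $4\upkappa_1\upkappa_2 = 1$, yielding $f_b = \upkappa_1\x^2 + \x\y + \upkappa_2\y^2$ with $4\upkappa_1\upkappa_2 = 1$.

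Conversely, if $f_b$ has this form, then $p = q = 2$ and the same calculation gives $\det A_{12}(f) = 4\upkappa_1\upkappa_2 - 1 = 0$.

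There is no real obstacle here; the lemma is essentially an unpacking of the piecewise definitions of $\upvarepsilon_{12}(f)$ and $\upvarepsilon_{22}(f)$. The only thing to be careful about is to note that when $p = 2$ and $\upkappa_1 = 0$ (i.e.\ there is no $\x^2$ term in $f_b$), the formula $\upvarepsilon_{12}(f) = 2\upkappa_1$ still gives $\upvarepsilon_{12}(f) = 0$, which is consistent with the forward direction since $4\upkappa_1\upkappa_2 = 0 \neq 1$; similarly for $\upkappa_2$. So the case analysis is exhaustive and unambiguous.
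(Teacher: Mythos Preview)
Your proof is correct and matches the paper's treatment: the paper simply states that the lemma holds immediately from the definition of $A_{12}(f)$ in \ref{no6}(4), and your argument is exactly the direct unpacking of that definition.
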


\begin{lemma}\label{6100}
With notation in \textnormal{\ref{no7}}, suppose that $f$ satisfies $\det A_{12}(f)=0$, and $f_r = \upmu \x^s+\scrO_{t}$ where $t>s\geq 3$ and $ \upmu \in \C^{\times}$. Then there exists a path degree $t-s+1$ right-equivalence $\uppsi_{t}$ such that
\begin{equation*}
    \uppsi_{t} \colon f \stackrel{t-s+1}{\rightsquigarrow} f_b + \upmu\x^s+\scrO_{t+1}.
\end{equation*}
\end{lemma}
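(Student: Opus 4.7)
The plan is as follows. By \ref{412} or \ref{prop:commute}, first reduce the custom-degree-$t$ part of $f_r$ to the monomial form $G = \sum_{i+j=t}\upalpha_{ij}\x^i\y^j$. The task is then to construct a depth-$(t-s+1)$ unitriangular $\uppsi_t$ that cancels $G$ at custom degree $t$ without creating junk at the clean intermediate degrees $3,4,\ldots,t-1$. The structural input is \ref{lemma:A12}: the hypothesis $\det A_{12}(f)=0$ forces $4\upkappa_1\upkappa_2=1$, so cyclically $f_b \sim \upkappa_1 \mathsf{z}^2$ with $\mathsf{z} := \x+2\upkappa_2\y$, and $A_{12}(f)$ has rank one with an explicit one-dimensional kernel.

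I would combine two families of unitriangular moves. The first is a ``staircase'' family of deformations $a_1\mapsto a_1+\uplambda_{ij}\,a_1\x^i\y^j$ for each $(i,j)$ with $i+j=t-2$, together with $\upvarphi_{22,t-2}$; each has depth $\geq t-1$, contributes to $f_b$ only at custom degree $t$, and leaves the $\upmu\x^s$ term untouched modulo $\scrO_{t+1}$. Using $4\upkappa_1\upkappa_2=1$, these $t$ perturbations span a codimension-one subspace of the $(t+1)$-dimensional space of degree-$t$ monomials; the missing line is spanned by $u := \sum_{k=0}^t (-2\upkappa_1)^k \x^{t-k}\y^k$, which is (up to scalar) the unique cyclic degree-$t$ cycle outside the two-sided ideal generated by $\mathsf{z}$. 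The second family is a ``kernel-paired'' move using $\upmu\x^s$: for any $(i,j)$ with $i+j=t-s$ and $j\geq 1$, the simultaneous substitution
\[
a_1\mapsto a_1+\uplambda\,a_1\x^i\y^j, \qquad a_2\mapsto a_2-2\upkappa_1\uplambda\,\x^{i+1}\y^{j-1}a_2
\]
has its paired parameter in the kernel of the relevant degree-$(t-s+2)$ analogue of $A_{12}(f)$, so its first-order $f_b$-perturbation at the intermediate degree $t-s+2$ cancels cyclically (again using $4\upkappa_1\upkappa_2=1$), while its first-order $\upmu\x^s$-perturbation at degree $t$ is $s\upmu\uplambda\,\x^{t-j}\y^j$, which has non-zero component along $u$ since $(-2\upkappa_1)^j\neq 0$.

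The staircase family together with a single kernel-paired move thus supplies a $(t+1)$-parameter collection of depth-$(t-s+1)$ deformations spanning the full degree-$t$ monomial space, and a unique choice of parameters cancels $G$ exactly at first order while leaving all degrees $3,\ldots,t-1$ untouched at first order. When $t\geq 2s-1$, all higher-order composition terms sit at custom degree $\geq t+1$ and are absorbed into $\scrO_{t+1}$, yielding $\uppsi_t$ directly as a finite composition. In the remaining regime $t<2s-1$ a second-order contribution can a priori populate intermediate degrees; this is removed by iterating the same construction at successively higher depth, with overall convergence guaranteed by \ref{31}.

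The main obstacle is precisely this linear-algebraic verification together with the higher-order bookkeeping in the small-$t$ regime, both of which hinge on the uniform identity $4\upkappa_1\upkappa_2=1$: it makes every relevant $2\times 2$ matrix singular with computable kernel, forces the ideal of $\mathsf{z}$ to have codimension exactly one at each degree, and is what allows the kernel-paired moves to compensate cleanly without disturbing any lower-degree terms.
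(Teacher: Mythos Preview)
Your strategy shares the paper's essential ingredient --- the kernel-direction move that exploits $4\upkappa_1\upkappa_2=1$ --- but is organised less efficiently and has a genuine gap in the treatment of the quadratic term.

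The paper's route is shorter. Rather than assembling a $(t+1)$-parameter linear system, it first uses the depth-$(t-1)$ ``staircase'' moves (your family, together with $\upvarphi_{11,t-2}$, $\upvarphi_{22,t-2}$) to reduce the entire degree-$t$ part to a single monomial $\upbeta\x^{t-1}\y$, and then applies \emph{one} kernel-paired move $\upvarphi_{12,t-s}$ with $(\uplambda_1,\uplambda_2)$ on the kernel line $\uplambda_2=-2\upkappa_1\uplambda_1$, scaled so that the $\upmu\x^s$-perturbation cancels $\upbeta\x^{t-1}\y$. This is exactly your kernel-paired move at the specific index $(i,j)=(t-s-1,1)$.

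The gap is your handling of the regime $t<2s-1$. Your proposed iteration does not work: the quadratic $f_b$-contribution from the kernel-paired move lands at degree $d'=2(t-s)+2$, and for small $t-s$ this can drop \emph{below} $s$ (for example $s=5$, $t=6$ gives $d'=4$). At such a degree you have no $\upmu\x^s$-leverage, so there is no kernel-paired move available, and the staircase moves alone only span codimension one; you cannot ``iterate at successively higher depth'' because the junk has moved \emph{down}, not up. What actually saves the argument --- and what the paper computes explicitly --- is that this quadratic term vanishes identically: writing $\x\mapsto\x+\uplambda_1 w$ and $\y\mapsto\y+\uplambda_2 w$ with $\uplambda_2=-2\upkappa_1\uplambda_1$, the quadratic coefficient of $w^2$ in $f_b$ is
\[
\upkappa_1\uplambda_1^2+\uplambda_1\uplambda_2+\upkappa_2\uplambda_2^2
=\bigl(\upkappa_1-2\upkappa_1+4\upkappa_1^2\upkappa_2\bigr)\uplambda_1^2=0,
\]
again by $4\upkappa_1\upkappa_2=1$. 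Once you record this, no iteration is needed at all, and your argument (or the paper's simpler two-step version) goes through directly.
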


\begin{proof}
By \ref{lemma:A12}, we have $f_b=\upkappa_1\x^2+\x\y+\upkappa_2\y^2$ with $4\upkappa_1\upkappa_2=1$.
If the degree $t$ part of $\scrO_t$ vanishes, there is nothing to prove.
Otherwise, applying \ref{042} repeatedly yields a right-equivalence
\begin{equation*}
 \upvartheta  \colon f \stackrel{t-1}{\rightsquigarrow}  \mathsf{f}_1 \colonequals f_b +\upmu \x^s +\upbeta \x^{t-1}\y+ \scrO_{t+1},
\end{equation*}
for some $\upbeta \in \C$. If $\upbeta=0$, we are done. Otherwise, we next apply $\upvarphi_{12,t-s}$ in \ref{lemma:norm12} which gives
\begin{align*}
    \upvarphi_{12,t-s}\colon \mathsf{f}_1 
 \mapsto & \upkappa_1(\x+\uplambda_1\x^{t-s}\y)^2+(\x+\uplambda_1\x^{t-s}\y)(\y+\uplambda_2\x^{t-s}\y)+\upkappa_2(\y+\uplambda_2\x^{t-s}\y)^2\\
& +\upmu(\x+\uplambda_1\x^{t-s}\y)^s+\upbeta\x^{t-1}\y+\scrO_{t+1}\\
  \stackrel{t-s+2}{\sim} & f_b+ \upmu\x^s+(2\upkappa_1\uplambda_1+\uplambda_2)\x^{t-s+1}\y+(\uplambda_1+2\upkappa_2\uplambda_2)\x^{t-s}\y^2+(s\upmu\uplambda_1+\upbeta)\x^{t-1}\y\\
& + (\upkappa_1\uplambda_1^2+\upkappa_2\uplambda_2^2+\uplambda_1\uplambda_2)\x^{2(t-s)}\y^2+\scrO_{t+1}.
\end{align*}
Note that in the above simplification we rewrite mixed monomials into the normal form $\x^i\y^j$ (up to higher degree terms) using \ref{412}, and then collect like terms.

Since $4\upkappa_1\upkappa_2=1$, set $\uplambda_2\colonequals -2\upkappa_1\uplambda_1$.
Then
\[
2\upkappa_1\uplambda_1+\uplambda_2=0,
\qquad
\uplambda_1+2\upkappa_2\uplambda_2
=\uplambda_1-4\upkappa_1\upkappa_2\uplambda_1=0,
\]
and moreover
\[
\upkappa_1\uplambda_1^2+\upkappa_2\uplambda_2^2+\uplambda_1\uplambda_2
=\upkappa_1\uplambda_1^2+4\upkappa_1^2\upkappa_2\uplambda_1^2-2\upkappa_1\uplambda_1^2
=\upkappa_1\uplambda_1^2(4\upkappa_1\upkappa_2-1)=0.
\]
Finally, choose $\uplambda_1$ so that $s\upmu\uplambda_1+\upbeta=0$.
This makes the coefficients of $\x^{t-s+1}\y$, $\x^{t-s}\y^2$, $\x^{t-1}\y$ and $\x^{2(t-s)}\y^2$ equal to zero in the above potential. Set $\uppsi_{t} \colonequals  \upvarphi_{12,t-s} \circ \upvartheta$, we are done.
\begin{comment}
Since $4\upkappa_1\upkappa_2 =1$, then $\frac{1}{2\upkappa_1}=2\upkappa_2$, and so any $\uplambda_1$ and $\uplambda_2$ with $\frac{\uplambda_1}{\uplambda_2}=-\frac{1}{2\upkappa_1}=-2\upkappa_2$ satisfies the following system of equations
\begin{equation*}
\begin{cases}
    2\upkappa_1\uplambda_1+\uplambda_2=0 \\
    \uplambda_1+2\upkappa_2\uplambda_2=0\\
    \upkappa_1\uplambda_1^2+\upkappa_2\uplambda_2^2+\uplambda_1\uplambda_2=0.
\end{cases}
\end{equation*}
We next choose $\uplambda_1$ to satisfy $s\upmu\uplambda_1+\upbeta=0$, and set $\uplambda_2 =-2\upkappa_1\uplambda_1$.
\end{comment}
\end{proof}

The following shows that when $\det A_{12}(f)=0$, the leading term of $f_r$ can eliminate all the other terms.
\begin{prop}\label{prop:normal2}
With notation in \textnormal{\ref{no7}}, suppose that $f$ satisfies $\det A_{12}(f)=0$. Then there exists a right-equivalence $\Psi$ such that
\begin{equation*}
    \Psi \colon f \rightsquigarrow f_b  \textnormal{ or } f_b+ \upmu\x^s,
\end{equation*}   
where $\upmu \in \C^{\times}$ and $s \geq 3$.
\end{prop}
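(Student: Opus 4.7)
The plan is to combine Proposition \ref{prop:normal} with an iterative application of Lemma \ref{6100}, the convergence being guaranteed by Theorem \ref{31}. First I would invoke \ref{prop:normal} under the standing hypothesis $\det A_{12}(f)=0$ to obtain a right-equivalence
\[
\Phi \colon f \rightsquigarrow f_b + \sum_{i=1}^{\infty} \upmu_i \x^{p+i}.
\]
By Lemma \ref{lemma:A12} the assumption $\det A_{12}(f)=0$ forces $p=2$, so the redundant part of $\Phi(f)$ is a power series in $\x$ supported on exponents $\geq 3$. If every $\upmu_i=0$, then $\Phi(f) \sim f_b$ and setting $\Psi \colonequals \Phi$ finishes the first alternative. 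Otherwise, let $s \geq 3$ be the least exponent for which the coefficient $\upmu \neq 0$ appears, so that in the degree convention of \ref{no6}(3) we have $\Phi(f) \sim f_b + \upmu \x^s + \scrO_{s+1}$.

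Next I would argue by induction on the degree, using Lemma \ref{6100}. The current potential satisfies the hypothesis of that lemma with $t=s+1$, and since $\det A_{12}$ is preserved by cyclic equivalence of the quadratic part, \ref{6100} is still applicable; it produces a path degree $2$ right-equivalence $\uppsi_{s+1}$ after which the potential becomes $f_b + \upmu\x^s + \scrO_{s+2}$. The output again satisfies the hypothesis of \ref{6100}, now with $t=s+2$, so we obtain $\uppsi_{s+2}$ of path degree $3$, and so on. Relabelling $\uppsi_{s+i}$ as $\varphi_i$, each $\varphi_i$ is unitriangular of depth $\geq i+1 \geq i$ and satisfies $\varphi_i(\mathsf{f}_i) \stackrel{i+1}{\sim} \mathsf{f}_{i+1}$, where $\mathsf{f}_i \colonequals f_b + \upmu \x^s + \scrO_{s+i}$. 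Thus Theorem \ref{31} applies to the sequence $(\varphi_i)_{i\geq 1}$: the limit $\lim \mathsf{f}_i = f_b + \upmu \x^s$ exists, and there is an automorphism $\Psi'$ with $\Psi'(\Phi(f)) \sim f_b + \upmu\x^s$. Setting $\Psi \colonequals \Psi' \circ \Phi$ yields the required right-equivalence.

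The substantive obstacle, namely constructing the single-step elimination $\uppsi_t$, has already been overcome inside Lemma \ref{6100}, where the simultaneous vanishing of the linear system in $\uplambda_1,\uplambda_2$ and the quadratic form $\upkappa_1\uplambda_1^2 + \upkappa_2\uplambda_2^2 + \uplambda_1\uplambda_2$ is exactly the content of the relation $4\upkappa_1\upkappa_2 = 1$ encoded by $\det A_{12}(f)=0$. The only issue remaining for the present proof is bookkeeping: one must check that the composition of infinitely many $\uppsi_t$ converges and that the leading term $\upmu\x^s$ is never disturbed along the way. Both are immediate from the statement of \ref{6100}, since each $\uppsi_t$ modifies only coefficients of cycles of degree $\geq t+1 > s$ and has strictly increasing path degree $t - s + 1$, which feeds cleanly into Theorem \ref{31}.
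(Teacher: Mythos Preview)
Your proof is correct and follows essentially the same approach as the paper: apply Proposition~\ref{prop:normal} to reduce to $f_b+\sum_i\upmu_i\x^{p+i}$, then iterate Lemma~\ref{6100} and invoke Theorem~\ref{31} for convergence. Your additional remarks about why $\det A_{12}$ remains zero and why the leading term $\upmu\x^s$ is untouched are helpful clarifications, but the structure matches the paper exactly.
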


\begin{proof}
Since $\det A_{12}(f)=0$, then by \ref{prop:normal}
\begin{equation*}
    \Phi \colon  f \rightsquigarrow \mathsf{f}_1 \colonequals f_b +\sum_{i=1}^{\infty} \upmu_i\x^{i+2}.
\end{equation*}
If all $\upmu_i=0$, then $ f \rightsquigarrow f_b$. 
Otherwise, let $s\ge 3$ be minimal such that the coefficient of $\x^s$ in $\mathsf{f}_1$ is nonzero,
and write this coefficient as $\upmu\in\C^\times$.
Applying \ref{6100} to $\mathsf{f}_1$, there exists a right-equivalence
\begin{equation*}
    \uppsi_{s+1} \colon \mathsf{f}_1 \stackrel{2}{\rightsquigarrow} \mathsf{f}_2 \colonequals f_b + \upmu\x^s+\scrO_{s+2}.
\end{equation*}
Thus, repeating this process $k$ times gives
\begin{equation*}
    \uppsi_{s+k} \circ \dots \circ \uppsi_{s+2} \circ \uppsi_{s+1} \colon  \mathsf{f}_1  \rightsquigarrow \mathsf{f}_{k+1} \colonequals f_b + \upmu\x^s +\scrO_{s+k+1}.
\end{equation*}
Since by \ref{6100} each $\uppsi_t$ is a degree $t-s+1$ right-equivalence for $t >s$, \ref{31} implies that
$\Psi' \colonequals \lim_{k \rightarrow \infty}\uppsi_{s+k} \circ \dots \circ \uppsi_{s+2} \circ \uppsi_{s+1}$ exists, and further
\begin{equation*}
    \Psi'  \colon  \mathsf{f}_1 \rightsquigarrow f_b + \upmu\x^s.
\end{equation*}
Set $\Psi \colonequals \Psi' \circ \Phi$, we are done.
\end{proof}

Combining \ref{prop:normal}, \ref{lemma:A12} and \ref{prop:normal2} gives the following result.
\begin{prop}\label{prop:A3norm}
Any Type A potential on $Q$ must be right-equivalent to one of the following potentials:
\begin{enumerate}
\item $\upkappa_1\x^2+\x\y+\upkappa_2\y^2$ where $\upkappa_1,\upkappa_2  \in \C^{\times}$ and $4\upkappa_1\upkappa_2 \neq 1$.\label{a1}
\item $\upkappa_1\x^2+\x\y+\upkappa_2\y^2+\upmu \x^s$ where $4\upkappa_1\upkappa_2 = 1$, $\upmu \in \C^{\times}$ and $s \geq 3$.\label{a2}
\item $\upkappa_1\x^p+\x\y+\upkappa_2\y^q$ where $(p,q)\neq (2,2)$ and $\upkappa_1,\upkappa_2 \in \C^{\times}$.\label{a3}
\item $\upkappa_1\x^2+\x\y+\upkappa_2\y^2$ where $4\upkappa_1\upkappa_2 = 1$.\label{a4}
\item $\upkappa_1\x^p+\x\y$ where $p \geq 2$ and $\upkappa_1 \in\C^{\times}$.\label{a5}
\item $\x\y+\upkappa_2\y^q$ where $q \geq 2$ and $\upkappa_2 \in \C^{\times}$.\label{a6}
\item $\x\y$.\label{a7}
\end{enumerate}
\end{prop}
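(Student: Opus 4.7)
The plan is to run the machinery of \ref{prop:normal} and \ref{prop:normal2} and then enumerate the cases of the base part $f_b = \upkappa_1\x^p+\x\y+\upkappa_2\y^q$, using \ref{lemma:A12} to decide which of the two propositions applies. Since every Type $A$ potential $f$ on $Q$ has, by definition, $f$ containing $\x\y$ and having a well-defined base part $f_b$, the whole argument reduces to a bookkeeping on $(\upkappa_1, p, \upkappa_2, q)$ together with the sign of $\det A_{12}(f)$.

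First, I would split into two top-level cases according to \ref{lemma:A12}: either $f_b = \upkappa_1\x^2 + \x\y + \upkappa_2\y^2$ with $4\upkappa_1\upkappa_2 = 1$ (so $\det A_{12}(f) = 0$), or $\det A_{12}(f) \neq 0$. In the latter case, \ref{prop:normal} gives $\Phi \colon f \rightsquigarrow f_b$, so it remains to subdivide by the shape of $f_b$:
\begin{itemize}
\item if $\upkappa_1 = \upkappa_2 = 0$, then $f_b = \x\y$, landing in case \eqref{a7};
\item if exactly one of $\upkappa_1, \upkappa_2$ is zero, we land in cases \eqref{a5} or \eqref{a6};
\item if both $\upkappa_1, \upkappa_2 \neq 0$ and $(p,q) = (2,2)$, the hypothesis $\det A_{12}(f) = 4\upkappa_1\upkappa_2 - 1 \neq 0$ places $f$ in case \eqref{a1};
\item if both $\upkappa_1, \upkappa_2 \neq 0$ and $(p,q) \neq (2,2)$, then $\det A_{12}(f) = -1 \neq 0$ automatically, and $f$ sits in case \eqref{a3}.
\end{itemize}

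In the remaining case $\det A_{12}(f) = 0$, by \ref{lemma:A12} we automatically have $f_b = \upkappa_1\x^2 + \x\y + \upkappa_2\y^2$ with $4\upkappa_1\upkappa_2 = 1$, and \ref{prop:normal2} produces a right-equivalence $\Psi \colon f \rightsquigarrow f_b$ or $\Psi \colon f \rightsquigarrow f_b + \upmu\x^s$ with $0 \neq \upmu \in \C$ and $s \geq 3$; these are precisely cases \eqref{a4} and \eqref{a2}.

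The only subtlety, and really the only thing to verify rather than quote, is that the case split above is exhaustive and mutually covers every Type $A$ potential. Exhaustiveness follows because $\det A_{12}(f)$ is computed directly from $f_b$ via the formula in \ref{no6}(4), so every Type $A$ potential has a well-defined case, and the formula $\det A_{12}(f) = 4\upkappa_1\upkappa_2 - 1$ when $(p,q)=(2,2)$ versus $\det A_{12}(f) = -1$ otherwise shows that the vanishing of $\det A_{12}(f)$ is precisely the condition isolating case \eqref{a4}/\eqref{a2} from case \eqref{a1}/\eqref{a3}. I do not expect any substantive obstacle here: all of the work is already done in \ref{prop:normal}, \ref{lemma:A12}, and \ref{prop:normal2}, and this statement is simply the organized enumeration of their outputs.
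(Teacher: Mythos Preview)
Your proposal is correct and follows essentially the same approach as the paper: split on $\det A_{12}(f)$ via \ref{lemma:A12}, apply \ref{prop:normal} when the determinant is nonzero and \ref{prop:normal2} when it vanishes, then enumerate the resulting base parts according to which of $\upkappa_1,\upkappa_2$ vanish and whether $(p,q)=(2,2)$. The paper's proof is organized identically (only the order of the two top-level cases is reversed), and your explicit computation of $\det A_{12}(f)$ in each sub-case is a helpful addition that makes the exhaustiveness check transparent.
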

\begin{proof}
Recall in \ref{no6} and \ref{no7}, any Type A potential on $Q$ has the form of $f=f_b+f_r$ where $f_b= \upkappa_1\x^p+\x\y+\upkappa_2\y^q$.

When $\det A_{12}(f)=0$, namely $p=q=2$ and $4\upkappa_1\upkappa_2 = 1$ by \ref{lemma:A12}, then by \ref{prop:normal} $f \cong f_b  \textnormal{ or } f_b+ \upmu\x^s$ where $\upmu \in \C^{\times}$ and $s \geq 3$. This gives exactly the cases (4) and (2).

When $\det A_{12}(f) \neq 0$, by \ref{prop:normal} $f\cong f_b$. Again by \ref{lemma:A12}, we have $(p,q) \neq (2,2)$ or $4\upkappa_1\upkappa_2 \neq 1$, so $f$ must belong to one of the following cases.
\begin{enumerate}[label=\alph*)]
\item $\upkappa_1 \neq 0$ and $\upkappa_2 =0 $.
\item $\upkappa_1 = 0$ and $\upkappa_2  \neq 0$.
\item $\upkappa_1  = 0$ and $\upkappa_2 = 0$.
\item $\upkappa_1  ,\upkappa_2 \neq 0$, $4\upkappa_1\upkappa_2 \neq 1$ and $p=q=2$.
\item $\upkappa_1 , \upkappa_2 \neq 0$ and $(p,q) \neq (2,2)$.
\end{enumerate}
The a), b), c), d) and e) are items (5), (6), (7), (1) and (3) in the statement.
\end{proof}

Then we continue to normalize the coefficients of the potentials in \ref{prop:A3norm}.
\begin{cor}\label{cor:A3norm}
Any Type A potential on $Q$ must be isomorphic to one of the following potentials:   
\begin{enumerate}
    \item $\x^2+\x\y+\uplambda\y^2$ where $\uplambda \in \C \setminus \{0,\tfrac14\}$. \label{b1}
    \item $\x^2+\x\y+\frac{1}{4}\y^2+ \x^s$ where $s \geq 3$.\label{b2}
     \item $\x^p+\x\y+\y^q $ where $(p,q) \neq (2,2)$.\label{b3}
     \item $\x^2+\x\y+\frac{1}{4}\y^2$.\label{b4}
     \item $\x^p+\x\y$ where $p \geq 2$.\label{b5}
     \item $\x\y+\y^q$ where $q \geq 2$.\label{b6}
     \item $\x\y$.\label{b7}
\end{enumerate}
\end{cor}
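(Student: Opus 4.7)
The plan is to use Proposition~\ref{prop:A3norm}, which already writes any Type~A potential on $Q$ as one of seven parametrized families (labelled (1)--(7)), and then scale away the remaining coefficients using two complementary operations. First, for $A, B \in \C^{\ast}$, the assignment $a_1 \mapsto A a_1$, $a_2 \mapsto B a_2$ (with $b_1, b_2$ unchanged) extends by \ref{thm28} to an algebra automorphism of $\C\lbl Q \rbl$ fixing $\C$, inducing $\x \mapsto A\x$, $\y \mapsto B\y$, and $\x\y \mapsto AB\, \x\y$; the corresponding Jacobi algebras are therefore isomorphic. Second, for any $c \in \C^{\ast}$ the potentials $W$ and $cW$ have identical Jacobi ideals, so $\Jac(cW) = \Jac(W)$ as algebras.

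Applying these operations to a potential $\upkappa_1 \x^p + \x\y + \upkappa_2 \y^q$ from item (3) of \ref{prop:A3norm}, and dividing by $c = AB$ afterwards, produces
\begin{equation*}
(A^{p-1}/B)\upkappa_1\, \x^p \;+\; \x\y \;+\; (B^{q-1}/A)\upkappa_2\, \y^q,
\end{equation*}
and the goal is to choose $A$ and $B$ so that both nontrivial coefficients become $1$. Eliminating $B = A^{p-1}\upkappa_1$ from the first condition and substituting into the second leaves
\begin{equation*}
A^{(p-1)(q-1)-1} \;=\; \upkappa_1^{-(q-1)}\, \upkappa_2^{-1}.
\end{equation*}
Since $(p-1)(q-1) = 1$ if and only if $p = q = 2$, this admits a solution $A \in \C^{\ast}$ exactly when $(p,q) \neq (2,2)$. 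This produces form (3) of the corollary from item (3) of \ref{prop:A3norm}, and the same computation with $\upkappa_1$ or $\upkappa_2$ set to zero handles items (5)--(7).

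In the remaining $p = q = 2$ cases (items (1), (2), (4) of \ref{prop:A3norm}), the exponent $(p-1)(q-1) - 1$ vanishes and the two normalization equations collapse into a single condition $A/B = 1/\upkappa_1$, which then forces the $\y^2$-coefficient to become $\upkappa_1\upkappa_2$. This yields form (1) of the corollary with $\uplambda = \upkappa_1\upkappa_2$, and form (4) when the constraint $4\upkappa_1\upkappa_2 = 1$ fixes $\uplambda = \tfrac{1}{4}$. In item (2) of \ref{prop:A3norm}, the extra $\upmu\x^s$ term rescales to $(\upmu A^{s-2}/\upkappa_1)\x^s$ once $A/B$ is pinned, and the still-free individual value of $A$ can be chosen so that this coefficient equals $1$, which is possible precisely because $s \geq 3$.

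The main obstacle is the accounting of degrees of freedom: there are only two arrow-scaling parameters together with one overall rescaling constant, versus three or four target coefficients. The key algebraic observation that resolves this is that the determinant of the relevant exponent matrix vanishes exactly on $(p,q) = (2,2)$, which simultaneously explains why item (3) collapses to the single family $\x^p + \x\y + \y^q$ and why the $(2,2)$ cases split into the three parametrized families (1), (2), (4) of the corollary.
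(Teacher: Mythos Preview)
Your proposal is correct and follows essentially the same approach as the paper: both start from Proposition~\ref{prop:A3norm} and normalize coefficients via the scaling $a_1 \mapsto A a_1$, $a_2 \mapsto B a_2$, arriving at the identical key equation $A^{(p-1)(q-1)-1}\upkappa_1^{q-1}\upkappa_2 = 1$ in case (3) and the same analysis of the degenerate $(2,2)$ situation. The only cosmetic difference is that you make the overall rescaling $W \mapsto cW$ explicit as a separate operation, whereas the paper either solves directly for exact equality of coefficients (cases (1), (3)--(7)) or uses ratio arguments that amount to the same thing (case (2)).
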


\begin{proof}
(1) Applying $a_1 \mapsto \uplambda_1a_1,\  a_2 \mapsto \uplambda_2a_2$ where $\uplambda_1, \uplambda_2 \in \C$ to \eqref{a1} gives
\begin{equation*}
    \upkappa_1\x^2+\x\y+\upkappa_2\y^2 \mapsto \uplambda_1^2\upkappa_1\x^2+\uplambda_1\uplambda_2\x\y+\uplambda_2^2\upkappa_2\y^2.
\end{equation*}
Since $\upkappa_1\neq 0$, we can solve for $(\uplambda_1,\uplambda_2)$ such that
$\uplambda_1^2\upkappa_1=\uplambda_1\uplambda_2=1$. Moreover, since $\upkappa_2 \neq 0$ and $4\upkappa_1\upkappa_2 \neq 1$, $\uplambda_2^2\upkappa_2 =\upkappa_1\upkappa_2\neq 0,\frac{1}{4}$. Set $\uplambda \colonequals \uplambda_2^2\upkappa_2$.
Thus $\upkappa_1\x^2+\x\y+\upkappa_2\y^2 \mapsto \x^2+\x\y+\uplambda\y^2$ where $  \uplambda \neq 0,\frac{1}{4}$.

(2) Applying $\varphi \colon a_1 \mapsto \uplambda_1a_1, \ a_2 \mapsto \uplambda_2a_2$ where $\uplambda_1, \uplambda_2 \in \C$ to \eqref{a2} gives
\begin{equation*}
\upkappa_1\x^2+\x\y+\upkappa_2\y^2+\upmu \x^s \mapsto \uplambda_1^2\upkappa_1\x^2+\uplambda_1\uplambda_2\x\y+\uplambda_2^2\upkappa_2\y^2 + \uplambda_1^{s}\upmu\x^{s}.
\end{equation*}
We next claim that we can find some $(\uplambda_1, \uplambda_2)$ which satisfies
\begin{equation*}
    \uplambda_1^2\upkappa_1 : \uplambda_1\uplambda_2 : \uplambda_2^2\upkappa_2: \uplambda_1^{s}\upmu =1:1: \frac{1}{4}:1.
\end{equation*}
Once the claim is certified, it follows at once that
$\upkappa_1\x^2+\x\y+\upkappa_2\y^2+\upmu \x^s \cong \x^2+\x\y+\frac{1}{4}\y^2+ \x^s$.

To prove the claim, since $s \geq 3$ and $\upmu \neq 0$, we can solve for $\uplambda_1$ such that $\uplambda_1^2\upkappa_1 : \uplambda_1^{s}\upmu = 1:1$ holds. Then we solve $\uplambda_2$ from $\uplambda_1$ and $\uplambda_1^2\upkappa_1 : \uplambda_1\uplambda_2=1:1$. Moreover, this choice of $\uplambda_1$ and $\uplambda_2$ also satisfies $  \uplambda_1\uplambda_2: \uplambda_2^2\upkappa_2 = 1: \frac{1}{4}$ since $4\upkappa_1\upkappa_2 =1$. Combining these together, $(\uplambda_1, \uplambda_2)$ satisfies the claim.

(3) Applying $\varphi \colon a_1 \mapsto \uplambda_1a_1,\  a_2 \mapsto \uplambda_2a_2$ where $\uplambda_1, \uplambda_2 \in \C$ to \eqref{a3} gives
\begin{equation*}
    \upkappa_1\x^p+\x\y+\upkappa_2\y^q \mapsto \uplambda_1^p\upkappa_1\x^p+\uplambda_1\uplambda_2\x\y+\uplambda_2^q\upkappa_2\y^q.
\end{equation*}
Similar to (2), the statement follows once we find some $(\uplambda_1, \uplambda_2)$ which satisfies
\begin{equation*}
\uplambda_1^p\upkappa_1:\uplambda_1\uplambda_2:\uplambda_2^q\upkappa_2=1:1:1.
\end{equation*}

The above equations induce $\uplambda_2=\uplambda_1^{p-1}\upkappa_1$ and $\uplambda_1=\uplambda_2^{q-1}\upkappa_2$, and so $\uplambda_1^{(p-1)(q-1)-1}\upkappa_1^{q-1}\upkappa_2=1$. Since $\upkappa_1$, $\upkappa_2 \neq 0$ and $(p,q) \neq (2,2)$, we can solve $\uplambda_1$, and then $\uplambda_2$ such that the above equations hold.

(4) Applying $a_1 \mapsto \uplambda_1a_1,\ a_2 \mapsto \uplambda_2a_2$ where $\uplambda_1, \uplambda_2 \in \C$ to \eqref{a4} gives
\begin{equation*}
    \upkappa_1\x^2+\x\y+\upkappa_2\y^2 \mapsto \uplambda_1^2\upkappa_1\x^2+\uplambda_1\uplambda_2\x\y+\uplambda_2^2\upkappa_2\y^2.
\end{equation*}
Similar to (1), we can solve for $(\uplambda_1, \uplambda_2)$ such that 
$\uplambda_1^2\upkappa_1=\uplambda_1\uplambda_2=1$ holds, and then $\uplambda_2^2\upkappa_2 =\upkappa_1\upkappa_2=\frac{1}{4}$ since $4\upkappa_1\upkappa_2 = 1$. Thus $\upkappa_1\x^2+\x\y+\upkappa_2\y^2 \mapsto \x^2+\x\y+\frac{1}{4}\y^2$.

(5) Applying $a_1 \mapsto \uplambda_1a_1, \ a_2 \mapsto \uplambda_2a_2$ where $\uplambda_1, \uplambda_2 \in \C$ to \eqref{a5} gives
\begin{equation*}
    \upkappa_1\x^p+\x\y \mapsto  \uplambda_1^p\upkappa_1\x^p+\uplambda_1\uplambda_2\x\y.
\end{equation*}
Since $\upkappa_1 \neq 0$, we can solve for $(\uplambda_1,\uplambda_2)$ such that
$\uplambda_1^p\upkappa_1=\uplambda_1\uplambda_2=1$ holds. Thus $ \upkappa_1\x^p+\x\y \mapsto  \x^p+\x\y$.

The proof of (6) and (7) is similar to (5).
\end{proof}

We now simplify the previous geometric realization in $\S \ref{GR}$ for the potentials in \ref{cor:A3norm}.
\begin{prop}\label{715}
Each Jacobi algebra of potentials in \textnormal{\ref{cor:A3norm}} is realized by a crepant resolution of a $cA_3$ singularity $\scrR \colonequals \C \lal u, v, x, y \ral/(uv-h_{0}h_{1}h_2h_3)$, which corresponds to the $\scrR$-module $M \colonequals \scrR \oplus (u,h_{0}) \oplus (u,h_{0}h_{1}) \oplus (u,h_{0}h_{1}h_2)$ in \textnormal{\ref{36}} as follows.
\[
\begin{tabular}{ p{0.5cm}p{2cm}p{0.5cm}p{0.5cm}p{1.5cm}  }
\hline
 & $h_0$ & $h_1$ &  $h_2$ &  $h_3$\\
\hline
$(1)$ & $2x+y$ & $x$ & $y$ & $x+2\uplambda y$\\
$(2)$ & $2x+y+sx^{s-1}$  & $x$ & $y$ & $x+\frac{1}{2} y$\\
$(3)$ & $px^{p-1}+y$  & $x$ & $y$ & $x+ qy^{q-1}$\\
$(4)$ & $2x+y$  & $x$ & $y$ & $x+\frac{1}{2} y$\\
$(5)$ & $px^{p-1}+y$  & $x$ & $y$ & $x$\\
$(6)$ & $y$  & $x$ & $y$ & $x+ qy^{q-1}$\\
$(7)$ & $y$  & $x$ & $y$ & $x$\\
\hline
\end{tabular}
\]
\end{prop}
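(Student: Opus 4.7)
The plan is to apply Theorem~\ref{gr} to each of the seven cases of Corollary~\ref{cor:A3norm} individually. Since Theorem~\ref{gr} takes as input a monomialized Type $A$ potential on $Q_n$ (here $n=3$), the proof has two main steps, carried out case by case.

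The first step is to lift each potential $f$ from $Q_{3,\{1,2,3\}}$ to a monomialized Type $A$ potential $\tilde f$ on $Q_3$ via Proposition~\ref{410}. This involves three iterated applications of Lemma~\ref{049}, one for each vertex $i\in\{1,2,3\}$, each feeding into Lemma~\ref{048}, whose proof supplies the explicit automorphism $\x_t\mapsto \x_t-\x_{t-1}'-\x_{t+1}$ and the corresponding accumulated contribution $-\tfrac12\x_{t-1}^2-\tfrac12\x_t^2-\tfrac12\x_{t+1}^2$ from \eqref{eqaution:421}. After the three iterations one obtains the explicit coefficients $\upkappa_{ij}$ of $\tilde f = \sum_{i=1}^{4}\x_i'\x_{i+1}+\sum_{i,j}\upkappa_{ij}\x_i^j$.

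The second step is to apply Theorem~\ref{gr} to $\tilde f$: there is an isomorphism $\Jac(Q_3,\tilde f)\cong \underline{\End}_\scrR(M)$, with $\scrR=\C\lal u,v,x,y\ral/(uv-g_0g_2g_4g_6)$ and $M=\scrR\oplus(u,g_0)\oplus(u,g_0g_2)\oplus(u,g_0g_2g_4)$, where $(g_0,\dots,g_6)$ solves the linear system \eqref{501} via Lemma~\ref{054}. Setting $h_i=g_{2i}$ and making an appropriate linear change of coordinates on $(x,y)$ in $\Spec\scrR$ (which does not change $\scrR$ up to isomorphism) recovers the entries of the table. The guiding observation is that in every case $h_0$ and $h_3$ come out to be, up to coordinate change, the formal partial derivatives $\partial_x f_b$ and $\partial_y f_b$ of the base part $f_b$ of the potential (treated as a polynomial in the commuting variables $x,y$), while $h_1=x$ and $h_2=y$. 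This is forced by the structure of equations $i=1$ and $i=5$ of \eqref{501} once one normalizes $g_2$ and $g_4$ to $x$ and $y$ respectively, and matches each table row by inspection.

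The main obstacle is the bookkeeping in the first step: the three iterated loop additions each contribute terms to the coefficients $\upkappa_{ij}$ of $\tilde f$, and the edge cases $t=1$ and $t=m$ require a minor modification of the automorphism in the proof of Lemma~\ref{048} (namely $\x_1\mapsto \x_1-\x_2$ or $\x_m\mapsto \x_m-\x_{m-1}'$, since one of $\x_{t-1}$ or $\x_{t+1}$ is absent). Nonetheless, once $\tilde f$ is made explicit, solving \eqref{501} reduces to linear back-substitution starting from the chosen $g_t=y,\,g_{t+1}=x$, and the identification with the table entries in each row is then a routine finite computation.
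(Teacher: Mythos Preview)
Your proposal is correct and follows the same architecture as the paper: lift the potential from $Q_{3,\{1,2,3\}}$ to $Q_3$ via iterated applications of \ref{049}, then invoke \ref{gr} using a solution of \eqref{501} from \ref{054}. The paper streamlines this in two ways you may find useful. First, rather than treating the seven cases separately, it works with the single generic potential $f=\upkappa_1\x^p+\x\y+\upkappa_2\y^q+\upkappa_3\x^s$, computes the lifted $f'$ and solves \eqref{501} once, then specialises the parameters to read off each row of the table; this avoids repeating the loop-addition and back-substitution seven times. Second, the paper makes the explicit choice $g_2=x$, $g_3=x+y$ (i.e.\ $t=2$ in \ref{054}) and obtains the table entries directly as $(h_0,h_1,h_2,h_3)=(-g_0,g_2,g_4,-g_6)$, so no coordinate change on $\Spec\scrR$ is needed beyond harmless sign flips.

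One small correction: your guiding observation that $h_0$ and $h_3$ are the partial derivatives of $f_b$ fails for row~(2), where $h_0=2x+y+sx^{s-1}$ includes the contribution of the redundant term $\x^s$. The correct statement is that $h_0=\partial_x f$ and $h_3=\partial_y f$ for the \emph{full} potential $f$ (viewed as a commutative polynomial), not just its base part. This is consistent with what \eqref{501} forces once $g_2,g_4$ are normalised to $x,y$, and holds uniformly across all seven rows.
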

\begin{proof}
In order to construct the geometric realization by \ref{gr} and \eqref{501}, we first transform the potentials in \ref{cor:A3norm} to some potentials in $Q_3$, which has a single loop at each vertex, as illustrated below (see also \ref{no1}). 
\[
\begin{tikzpicture}[bend angle=15, looseness=1.2]
\node (a) at (-1,0) [vertex] {};
\node (b) at (0,0) [vertex] {};
\node (c) at (1,0) [vertex] {};

\node (a1) at (-1,-0.2) {$\scriptstyle 1$};
\node (a2) at (0,-0.2) {$\scriptstyle 2$};
\node (a3) at (1,-0.2) {$\scriptstyle 3$};

\draw[->,bend left] (a) to node[above] {$\scriptstyle a_{1}$} (b);
\draw[<-,bend right] (a) to node[below] {$\scriptstyle b_{1}$} (b);
\draw[->,bend left] (b) to node[above] {$\scriptstyle a_{2}$} (c);
\draw[<-,bend right] (b) to node[below] {$\scriptstyle b_{2}$} (c);
\draw[<-]  (a) edge [in=120,out=55,loop,looseness=11] node[above] {$\scriptstyle l_{1}$} (a);
\draw[<-]  (b) edge [in=120,out=55,loop,looseness=11] node[above] {$\scriptstyle l_{2}$} (b);
\draw[<-]  (c) edge [in=120,out=55,loop,looseness=11] node[above] {$\scriptstyle l_{3}$} (c);
\node (z) at (-1.7,0) {$Q_{3} =$};
\end{tikzpicture}
\]

Consider a potential $f=\upkappa_1\x^p+\x\y+\upkappa_2\y^q+\upkappa_3\x^s$ on $Q$ where $\upkappa_1, \upkappa_2, \upkappa_3 \in \C$. By applying \ref{049} three times, each of which adds a loop $l_i$ at vertex $i$ of $Q$ for $1 \leq i \leq 3$, we have $\Jac(Q,f) \cong \Jac(Q_3,f')$ where
\begin{equation*}
    f' = l_1\x'+ \x l_2+l_2\y+\y'l_3-\frac{1}{2}l_1^2-\frac{1}{2}l_2^2-\frac{1}{2}l_3^2
    -\x^2-\y^2+\upkappa_1\x^p+\upkappa_2\y^q+\upkappa_3\x^s.
\end{equation*}
Then by \ref{gr}, \ref{054} and \eqref{501}, we can realize $f'$ by setting $g_2=x$, $g_3=x+y$ and then solving the following system of equations where each $g_i \in \C\lal x,y \ral$
\begin{align*}
    & g_0-g_1+g_2=0\\
    & g_1-2g_2+\upkappa_1pg_2^{p-1}+\upkappa_3sg_2^{s-1}+g_3=0\\
    & g_2-g_3+g_4=0\\
    & g_3-g_4+\upkappa_2g_4^{q-1}+g_5=0\\
    & g_4-g_5+g_6=0.
\end{align*}
Thus $(g_0,g_1,g_2,g_3,g_4,g_5,g_6)= (-\upkappa_1px^{p-1}-\upkappa_3sx^{s-1}-y, x-\upkappa_1px^{p-1}-\upkappa_3sx^{s-1}-y,x,x+y,y,-x+y-\upkappa_2qy^{q-1},-x-\upkappa_2qy^{q-1})$. Set $(h_0,h_1,h_2,h_3) \colonequals (-g_0,g_2,g_4,-g_6)$ and
consider 
\begin{equation*}
    \scrR \colonequals \frac{\mathbb{C} \lal u, v, x, y \ral}{uv-h_{0}h_{1}h_2h_3}=\frac{\mathbb{C} \lal u, v, x, y \ral}{uv-(\upkappa_1px^{p-1}+\upkappa_3sx^{s-1}+y)xy(x+\upkappa_2qy^{q-1})}
\end{equation*}
and $\scrR$-module $M = \scrR \oplus (u,h_{0}) \oplus (u,h_{0}h_{1})\oplus (u,h_{0}h_{1}h_2)$. Write $\uppi$ for the crepant resolution of $\Spec \scrR$, which corresponds to $M$ in \ref{36}. Then $\Lambda_{\mathrm{con}}(\uppi) \cong \Jac(Q_3,f')$ by \ref{gr}, and so $\Lambda_{\mathrm{con}}(\uppi) \cong \Jac(Q,f)$. 
Specialising $(\upkappa_1,\upkappa_2,\upkappa_3,p,q,s)$ to match the seven families in \ref{cor:A3norm}
yields the table, and hence the claim.
\end{proof}

We now classify Type~A potentials on $Q$ up to isomorphism; this is the main result of the section.
\begin{lemma}\label{616}
Let $\uplambda_1, \uplambda_2  \in \C$ with $\uplambda_1 \neq \uplambda_2$. Then we have $\Jac(\x^2+\x\y+\uplambda_1\y^2)\not\cong \Jac(\x^2+\x\y+\uplambda_2\y^2)$.
\end{lemma}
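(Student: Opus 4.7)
The plan is to reduce the claim to an intrinsic invariant of the central corner algebra $A_\lambda := e_2 \Jac(f_\lambda) e_2$. Suppose for contradiction that there is an algebra isomorphism $\phi \colon \Jac(f_{\lambda_1}) \xrightarrow{\sim} \Jac(f_{\lambda_2})$. The explicit structure of $A_\lambda$ computed below will show that $\Jac(f_{\lambda_i})$ is finite-dimensional, so by \ref{511} and \ref{352} it is the contraction algebra of an isolated $cA_3$ flopping contraction. Applying \ref{37} then forces $\phi$ to either fix every idempotent $e_i$, or swap $e_1 \leftrightarrow e_3$ while fixing $e_2$; in either case $\phi(e_2) = e_2$, so $\phi$ restricts to an isomorphism $A_{\lambda_1} \to A_{\lambda_2}$.

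The first substantive step is to describe $A_\lambda$ explicitly. A direct calculation of the cyclic derivatives, followed by post-multiplying $\partial_{a_1} f_\lambda = (2\x+\y)b_1$ by $a_1$, pre-multiplying $\partial_{b_1} f_\lambda = a_1(2\x+\y)$ by $b_1$, and doing the analogous manipulations for $\partial_{a_2}$ and $\partial_{b_2}$, produces the relations $2\x^2+\x\y = 2\x^2+\y\x = 0$ and $\x\y + 2\lambda\y^2 = 0$ inside $e_2 \Jac(f_\lambda) e_2$. Hence $A_\lambda$ is commutative with $\x^2 = \lambda\y^2$ and $\x\y = -2\lambda\y^2$; expanding $\x^2\y$ in two ways yields $\lambda(1-4\lambda)\y^3 = 0$, so $\y^3 = 0$ under the hypothesis $\lambda \neq 0, \tfrac14$. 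The result is a $4$-dimensional local commutative algebra with basis $\{1,\x,\y,\y^2\}$, one-dimensional socle $\mathfrak{m}^2 = \C\y^2$, and two distinguished two-dimensional ideals $(\x) = \C\x \oplus \C\y^2$ and $(\y) = \C\y \oplus \C\y^2$ corresponding to the subspaces $e_2 J e_1 J e_2$ and $e_2 J e_3 J e_2$ of paths factoring through vertices $1$ and $3$ respectively. Thus $\phi$ either preserves or swaps the unordered pair $\{(\x),(\y)\}$.

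Next I extract a scalar invariant. For any generators $\x_0 \in (\x) \setminus \mathfrak{m}^2$ and $\y_0 \in (\y) \setminus \mathfrak{m}^2$, the one-dimensionality of the socle defines unique $a,b \in \C$ via $\x_0^2 = a \y_0^2$ and $\x_0 \y_0 = b \y_0^2$. Because $\x\cdot \mathfrak{m}^2 = \y\cdot\mathfrak{m}^2 = 0$, these numbers are unchanged by $\mathfrak{m}^2$-shifts of $\x_0$ or $\y_0$; an elementary rescaling check shows the ratio $b^2/a$ is scale-invariant, and a short calculation confirms it is also equal to its value under the swap $\x_0 \leftrightarrow \y_0$. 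Evaluating at the standard generators gives $a = \lambda$ and $b = -2\lambda$, so $b^2/a = 4\lambda$ is an intrinsic invariant of the pair $(A_\lambda, \{(\x),(\y)\})$. Preservation of this invariant by $\phi$ forces $4\lambda_1 = 4\lambda_2$, contradicting $\lambda_1 \neq \lambda_2$.

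The main obstacle is ensuring the list of relations in the middle step is complete --- specifically, verifying that $A_\lambda$ really is four-dimensional rather than having some unanticipated higher relation, and confirming that $(\x)$ and $(\y)$ genuinely coincide with the quiver-theoretic ideals $e_2 J e_1 J e_2$ and $e_2 J e_3 J e_2$. Once these ingredients are in place the invariant calculation is routine linear algebra.
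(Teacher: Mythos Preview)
The paper does not prove this lemma at all: it is simply attributed to Erdmann via private communication \cite{E2}. Your argument therefore cannot be compared to a paper proof in the usual sense; rather, you are supplying a genuine proof where the paper offers none.

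Your strategy is sound and the invariant $b^2/a = 4\lambda$ does distinguish the algebras. A few remarks on the obstacles you flag. The dimension count for $A_\lambda$ can be made airtight as follows. Since every path from vertex $1$ to vertex $2$ begins with $a_1$, and every path from vertex $2$ to vertex $1$ begins with $b_1$ (similarly for vertex $3$), one checks that $e_2 J(f_\lambda) e_2$ is exactly the closure of the two-sided ideal of $\C\langle\!\langle \x,\y\rangle\!\rangle$ generated by $(2\x+\y)\x$, $\x(2\x+\y)$, $(\x+2\lambda\y)\y$, $\y(\x+2\lambda\y)$. The first pair forces $[\x,\y]=0$, and in the resulting commutative quotient the relations become $\x^2 = -\tfrac12\x\y$ and $\y^2 = -\tfrac1{2\lambda}\x\y$; computing $\x^2\y$ in two ways then gives $(1-\tfrac1{4\lambda})\x^2\y=0$, whence all degree-three monomials vanish. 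This pins down $\dim_\C A_\lambda = 4$ with basis $\{1,\x,\y,\x\y\}$. The identification $(\x) = e_2 J e_1 J e_2$ is immediate once you note that $e_1 J e_1$ is spanned by powers of $a_1b_1$, so $e_2 J e_1 J e_2$ is spanned by $b_1(a_1b_1)^k a_1 = \x^{k+1}$; likewise for $(\y)$.

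One small point worth making explicit in the write-up: the appeal to \ref{37} requires knowing that each $\Jac(f_{\lambda_i})$ arises as a contraction algebra, which you obtain from \ref{511}; you mention this, but \ref{352} (finite-dimensionality $\Leftrightarrow$ isolated) is not actually needed for \ref{37} itself, only if you wanted to invoke the stronger \ref{037} or \ref{thm:august}. Otherwise the argument goes through cleanly.
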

\begin{proof}
Write $f_i \colonequals \x^2+\x\y+\uplambda_i \y^2$ for $i=1,2$.
We argue by contradiction, assuming that there exists an algebra isomorphism 
$\phi\colon \Jac(f_1)\xrightarrow{\sim}\Jac(f_2)$.

By \ref{715}, each $f_i$ is realised by a crepant resolution $\uppi_i$ of a $cA_3$ singularity with
$\Lambda_{\mathrm{con}}(\uppi_i)\cong \Jac(f_i)$.
Hence \ref{37} implies that $\phi$ either fixes each idempotent $e_i$, or sends $e_i$ to $e_{4-i}$ for $1\leq i\leq 3$. We consider only the first case; the second is similar. 
Recall that the doubled $A_3$ quiver $Q$ on which $f_i$ is defined:
\[
\begin{array}{cl}
\begin{array}{c}
\begin{tikzpicture}[bend angle=30, looseness=0.9]
\node (a) at (-2,0) [vertex] {};
\node (b) at (0,0) [vertex] {};
\node (c) at (2,0) [vertex] {};
\node (a1) at (-2,-0.2) {$\scriptstyle 1$};
\node (a1) at (0,-0.2) {$\scriptstyle 2$};
\node (a1) at (2,-0.2) {$\scriptstyle 3$};
\draw[<-,bend right] (a) to node[below] {$\scriptstyle b_1$} (b);
\draw[->,bend left] (a) to node[above] {$\scriptstyle a_1$} (b);
\draw[<-,bend right] (b) to node[below] {$\scriptstyle b_2 $} (c);
\draw[->,bend left] (b) to node[above] {$\scriptstyle a_2$} (c);
%\node (z) at (0,-1.2) {Quiver $Q_{3,\{1,2,3\}}$};
\end{tikzpicture}
\end{array}
&
\begin{array}{l}
\x=b_1a_1\\
\y=a_2b_2
\end{array}
\end{array}
\]
Write $\m \colonequals \lcl a_1,b_1,a_2,b_2 \rcl$ for the arrow ideal.
Since $\phi(e_i)=e_i$ for $1\leq i\leq 3$, $\phi$ preserves sources and targets, so
\[
\phi(a_1)\in e_1\mathfrak{m}e_2,\quad \phi(b_1)\in e_2\mathfrak{m}e_1,\quad
\phi(a_2)\in e_2\mathfrak{m}e_3,\quad \phi(b_2)\in e_3\mathfrak{m}e_2.
\]
Moreover, the induced map on $\mathfrak{m}/\mathfrak{m}^2$ is invertible, hence
\[
\phi \colon a_1 \mapsto c_1a_1 + r_{1}, \quad b_1 \mapsto c_2b_1 + r_{2}, \quad a_2 \mapsto c_3a_2 + r_{3}, \quad b_2 \mapsto c_4b_2 + r_{4},
\]
where each $c_i\in\C^\times$ and $r_{i}\in \m^2$. 
To eliminate the higher-order terms $r_i$, we pass to the quotient modulo $\m^4$.
Since $\phi(\m)=\m$, it induces an isomorphism $\Bar{\phi}  \colon \Jac(f_1)/ \m^4\xrightarrow{\sim}\Jac(f_2)/ \m^4$. Since for each arrow $a$ the relation $\partial_a f_1$ is homogeneous of arrow-length $3$,
any term in $\phi(\partial_a f_1)$ involving at least one of the $r_i$
has arrow-length at least $4$, and hence lies in $\m^4$. It follows that $\Bar{\phi}$ depends only on the linear part of $\phi$, that is
\[
\Bar{\phi}  \colon a_1 \mapsto c_1a_1,\quad b_1 \mapsto c_2b_1,\quad a_2 \mapsto c_3a_2,\quad b_2 \mapsto c_4b_2.
\]
By a slight abuse of notation, we regard $\Bar{\phi}$ as a homomorphism on $\C \lbl Q \rbl \xrightarrow{\sim} \C \lbl Q \rbl$ and write
\[
\Bar{\phi}(J(f_1)) \colonequals \lcl \Bar{\phi}(\partial_{a_1}f_1),\, \Bar{\phi}(\partial_{b_1}f_1),\, \Bar{\phi}(\partial_{a_2}f_1),\, \Bar{\phi}(\partial_{b_2}f_1) \rcl
\]
for the ideal of $\C \lbl Q \rbl$ generated by these elements. 
Since $\Bar{\phi}$ induces an isomorphism
$\Jac(f_1)/ \m^4 \cong \Jac(f_2)/ \m^4$,
it follows that the ideals $\Bar{\phi}(J(f_1))$ and $J(f_2)$
generate the same ideal in $\C\lbl Q\rbl/\m^4$.
Therefore,
\[
\lcl \Bar{\phi}(J(f_1)),\, J(f_2),\, \m^4 \rcl
=
\lcl J(f_2),\, \m^4 \rcl .
\]

The Jacobi ideal $J(f_1)$ is generated by:
\begin{align*}
    \partial_{a_1}f_1 & = 2b_1a_1b_1+a_2b_2b_1 =2\x b_1+\y b_1,\\
   \partial_{b_1}f_1 & = 2a_1b_1a_1+a_1a_2b_2=2a_1\x+a_1\y, \\
   \partial_{a_2}f_1 & = b_2b_1a_1+2 \uplambda_1 b_2a_2b_2=b_2\x+2\uplambda_1 b_2\y,\\
  \partial_{b_2}f_1 & =  b_1a_1a_2+2\uplambda_1 a_2b_2a_2= \x a_2 +2\uplambda_1 \y a_2.
\end{align*}

Thus $\Bar{\phi}(\partial_{a_1}f_1)  =2c_1c_2^2b_1a_1b_1 +c_2c_3c_4 a_2b_2b_1 \propto 2\x b_1+ \uplambda' \y b_1$ where $\uplambda' \colonequals c_3c_4/(c_1c_2)\neq 0$, and $\propto$ denotes equality up to multiplication by a non-zero scalar.
Similarly,
\begin{align*}
  \Bar{\phi}(\partial_{b_1}f_1) \propto  2a_1\x +\uplambda' a_1\y, \quad
  \Bar{\phi}(\partial_{a_2}f_1)  \propto b_2\x +2  \uplambda'\uplambda_1 b_2\y, \quad
 \Bar{\phi}(\partial_{b_2}f_1)  \propto  \x a_2+2 \uplambda'\uplambda_1 \y a_2.
\end{align*}

Compare the generators of $\Bar{\phi}(J(f_1))$ (left column) and $J(f_2)$ (right column):

\begin{minipage}[t]{0.48\textwidth}
\begin{align}
&  2\x b_1+ \uplambda' \y b_1, \label{eq:L1}\\
&2a_1\x +\uplambda' a_1\y, \label{eq:L2}\\
& b_2\x +2  \uplambda'\uplambda_1 b_2\y, \label{eq:L3}\\
& \x a_2+2 \uplambda'\uplambda_1 \y a_2. \label{eq:L4}
\end{align}
\end{minipage}\hfill
\begin{minipage}[t]{0.48\textwidth}
\begin{align}
&2\x b_1+\y b_1, \label{eq:R1}\\
&2a_1\x+a_1\y, \label{eq:R2}\\
&b_2\x+2\uplambda_2 b_2\y, \label{eq:R3}\\
& \x a_2 +2\uplambda_2 \y a_2. \label{eq:R4}
\end{align}
\end{minipage}

Recall that $\uplambda' \neq 0$ and $\uplambda_1 \neq \uplambda_2$.
We split the proof into three cases.

(1) $\uplambda' \neq 1$.

Our strategy is to show that the four paths 
\[
\x b_1,\ \y b_1,\ a_1\x,\ a_1\y
\] 
lie in the ideal $\lcl \Bar{\phi}(J(f_1)), J(f_2), \m^4\rcl$, while none of them lies in $\lcl J(f_2), \m^4\rcl$. This yields a contradiction, since $\lcl \Bar{\phi}(J(f_1)),J(f_2) ,\m^4\rcl=\lcl J(f_2),\m^4\rcl$.

Since $\m^4$ has arrow-length at least $4$, any arrow-length $3$ element of $\lcl J(f_2),\m^4 \rcl$ must lie in the $\C$-span of \eqref{eq:R1}--\eqref{eq:R4}.
But none of $\x b_1,\ \y b_1,\ a_1\x,\ a_1\y$ lies in this span.
Hence none of them lies in $\lcl J(f_2),\m^4 \rcl$.

Since $\uplambda' \neq 1$, comparing \eqref{eq:L1} with \eqref{eq:R1} yields $\x b_1,\y b_1 \in  \lcl \Bar{\phi}(J(f_1)),J(f_2)\rcl$, and similarly, comparing \eqref{eq:L2} with \eqref{eq:R2} yields $a_1\x,  a_1\y \in  \lcl \Bar{\phi}(J(f_1)),J(f_2)\rcl$. Consequently,
\[
\x b_1,\y b_1,a_1\x,a_1\y \in  \lcl \Bar{\phi}(J(f_1)),J(f_2),\m^4\rcl,
\]
which contradicts the fact that none of these elements lies in $\lcl J(f_2),\m^4 \rcl$.

(2) $\uplambda' =1$ and $\uplambda_2 \neq 0$.

In this case the argument is analogous, but uses the relations involving $b_2$ and $a_2$.
Since $\m^4$ has arrow-length at least $4$, the arrow-length $3$ component of $\lcl J(f_2),\m^4 \rcl$ coincides with the $\C$-span of the generators
\eqref{eq:R1}--\eqref{eq:R4}. As $\uplambda_2 \neq 0$, none of the paths
\[
b_2\x,\ b_2\y,\ \x a_2,\ \y a_2
\]
lies in this span. Hence none of them lies in $\lcl J(f_2),\m^4 \rcl$.

Since $\uplambda' = 1$, the comparisons used in case~(1) are no longer available. However, since $\uplambda_1 \neq \uplambda_2$ by assumption, comparing \eqref{eq:L3} with \eqref{eq:R3}
and \eqref{eq:L4} with \eqref{eq:R4} yields 
\[
b_2\x,b_2\y,\x a_2,\y a_2 \in  \lcl \Bar{\phi}(J(f_1)),J(f_2)\rcl \subset \lcl \Bar{\phi}(J(f_1)),J(f_2),\m^4 \rcl,
\]
which contradicts the fact that none of these elements lies in $\lcl J(f_2),\m^4 \rcl$.

(3) $\uplambda_2 =0$.

This case differs from case~(2) only in that when $\uplambda_2 = 0$,
the relations \eqref{eq:R3} and \eqref{eq:R4} imply
$b_2\x,\ \x a_2 \in J(f_2)$.
However, since $\m^4$ has arrow-length at least $4$,
the arrow-length $3$ component of $\lcl J(f_2),\m^4 \rcl$
is again the $\C$-span of \eqref{eq:R1}--\eqref{eq:R4}.
But the paths $b_2\y, \y a_2$ do not lie in this span, and hence do not lie in $\lcl J(f_2),\m^4 \rcl$.

Since $\uplambda_2=0$ and $\uplambda_1\neq\uplambda_2$, we have $\uplambda_1\neq 0$.
Thus comparing \eqref{eq:L3} with \eqref{eq:R3} and \eqref{eq:L4} with \eqref{eq:R4} gives $ b_2\y,\y a_2 \in  \lcl \Bar{\phi}(J(f_1)),J(f_2)\rcl \subset \lcl \Bar{\phi}(J(f_1)),J(f_2),\m^4 \rcl$, again a contradiction.
\end{proof}

\begin{theorem}\label{610}
Any Type A potential on $Q$ must be isomorphic to one of the following isomorphism classes of potentials:
\begin{enumerate}
    \item $\x^2+\x\y+\uplambda\y^2$ for any $\uplambda \in \mathbb{C} \setminus \{0,\tfrac14\}$.\label{c1}
     \item $\x^2+\x\y+\frac{1}{4}\y^2+ \x^s$ for any $s \geq 3$.\label{c2}
    \item $\x^p+\x\y+\y^q \cong \x^q+\x\y+\y^p$ for any $(p,q) \neq (2,2)$.\label{c3}
        \item $\x^2+\x\y+\frac{1}{4}\y^2$.\label{c4}
        \item $\x^p+\x\y \cong \x\y+\y^p$ for any $p \geq 2$.\label{c5}
        \item $\x\y$.\label{c6}
\end{enumerate}
The Jacobi algebras of these potentials are all mutually non-isomorphic (except for those isomorphisms stated), and in particular, the Jacobi algebras with different parameters in the same item are non-isomorphic. 

The Jacobi algebras in \textnormal{(1)}, \textnormal{(2)}, \textnormal{(3)} are finite-dimensional and realized by crepant resolutions of isolated $cA_3$ singularities, and those in \textnormal{(4)}, \textnormal{(5)}, \textnormal{(6)} are infinite-dimensional and realized by crepant resolutions of non-isolated $cA_3$ singularities.
\end{theorem}

\begin{proof}
We first prove the isomorphisms in the statement. Applying $a_1 \mapsto b_2, b_1 \mapsto a_2, a_2 \mapsto b_1,  b_2 \mapsto a_1$ gives
\begin{align*}
    \x^p+\x\y+\y^q  \rightsquigarrow   \x^q+\x\y+\y^p, \
    \x^p+\x\y \rightsquigarrow  \x\y+\y^p.
\end{align*}

Then we prove the non-isomorphisms in the statement by using the following fact. If Type A potentials $f$ and $g$ on $Q$ are isomorphic, then $\dim_{\C}\Jac(f)=\dim_{\C}\Jac(g)$, and further by \ref{37} there is an equality of sets 
\begin{equation}\label{601}
\{\operatorname{dim}_{\C}\Jac(f)/e_1, \ \operatorname{dim}_{\C}\Jac(f)/e_3 \} = \{\operatorname{dim}_{\C}\Jac(g)/e_1, \ \operatorname{dim}_{\C}\Jac(g)/e_3 \}.
\end{equation}
The following table lists $\dim_{\C}\Jac(f)$, $\dim_{\C}\Jac(f)/e_1$ and $\dim_{\C}\Jac(f)/e_3$ for each $f$ in each item, using Toda's formula (see \cite[\S 4.4]{T2}).
\[
\begin{tabular}{ p{2cm}p{2cm}p{2.5cm}p{2.5cm}  }
\hline
 & $\dim_{\C}\Jac(f)$ & $\dim_{\C}\Jac(f)/e_1$ &  $\dim_{\C}\Jac(f)/e_3$\\
\hline
(1) & $20$ & $6$ & $6$\\
(2) & $9s+2$  & $6$ & $6$ \\
$\x^p+\x\y+\y^q$ & $4p+4q+4$ & $4q-2$ & $4p-2$\\
(4) & $\infty$ & 6 & 6\\
$\x^p+\x\y$ & $\infty$ & $\infty$ & $4p-2$\\
(6) & $\infty$ & $\infty$ & $\infty$\\
\hline
\end{tabular}
\]

Now, all Jacobi algebras in (1) have dimension $20$, but are mutually non-isomorphic by \ref{616}.
All Jacobi algebras in (2) are mutually non-isomorphic since they all have different dimensions. 

For (3), we only need to prove that $\x^p+\x\y+\y^q \not\cong \x^r+\x\y+\y^s$ for any $(p,q) \neq (r,s)$ and $(p,q) \neq (s,r)$. From the above table,
\begin{align*}
  &  \{\operatorname{dim}_{\C}\Jac(\x^p+\x\y+\y^q)/e_1, \ \operatorname{dim}_{\C}\Jac(\x^p+\x\y+\y^q)/e_3 \} = \{4q-2,4p-2\},\\
&  \{\operatorname{dim}_{\C}\Jac(\x^r+\x\y+\y^s)/e_1, \ \operatorname{dim}_{\C}\Jac(\x^r+\x\y+\y^s)/e_3 \} = \{4r-2,4s-2\}.
\end{align*}
Since $(p,q) \neq (r,s)$ and $(p,q) \neq (s,r)$, then the above two sets are not equal, and so $\x^p+\x\y+\y^q \not\cong \x^r+\x\y+\y^s$ by \eqref{601}.

For (5), since $\x^p+\x\y \cong \x\y+\y^p$, we only need to prove that $\x^p+\x\y \not\cong \x^q+\x\y$ for any $p \neq q$. From the above table,
\begin{align*}
  &  \{\operatorname{dim}_{\C}\Jac(\x^p+\x\y)/e_1, \ \operatorname{dim}_{\C}\Jac(\x^p+\x\y)/e_3 \} = \{\infty,4p-2\},\\
&  \{\operatorname{dim}_{\C}\Jac(\x^q+\x\y)/e_1, \ \operatorname{dim}_{\C}\Jac(\x^q+\x\y)/e_3 \} = \{\infty,4q-2\}.
\end{align*}
Since $p \neq q$, the above two sets are not equal, so $\x^p+\x\y \not\cong \x^q+\x\y$ by \eqref{601}.

The above shows that potentials in the same item are mutually non-isomorphic.
We finally prove that the potentials in different items are mutually non-isomorphic. 

Since Jacobi algebras in (1), (2) and (3) have finite dimension, while those in (4), (5) and (6) have infinite dimension, we only need to prove that the potentials in (1), (2) and (3) are mutually non-isomorphic, and the potentials in (4), (5) and (6) are mutually non-isomorphic, respectively.

From the above table, the Jacobi algebras in (1), (2), and (3) have dimensions $20$, $9s+2$ and $4p+4q+4$, respectively. Since $s \geq 3$ and $(p,q) \neq (2,2)$, then $9s+2 > 20$ and $4p+4q+4 > 20$, and so the potentials in (1) are not isomorphic to those in (2) and (3).
To compare the potentials in (2) and (3), since $(p,q) \neq (2,2)$, then $\{4q-2,4p-2\} \neq \{6,6\}$, then the potentials in (2) are not isomorphic to those in (3), by \eqref{601} and the table.

To compare the potentials in (4), (5) and (6), since $\{6,6\}$, $\{\infty,4p-2\}$ and $\{\infty,\infty\}$ are mutually not equal, then the potentials in (4), (5) and (6) are mutually non-isomorphic by \eqref{601} and the table.   

By the geometric realizations in \ref{715}, the Jacobi algebras in \textnormal{(1)}, \textnormal{(2)}, \textnormal{(3)} are realized by crepant resolutions of isolated $cA_3$ singularities, and those in \textnormal{(4)}, \textnormal{(5)}, \textnormal{(6)} are realized by crepant resolutions of non-isolated $cA_3$ singularities.
\end{proof}

\begin{remark}\label{6111}
In \ref{610}, the class \textnormal{(4)} can be viewed as a limit of \textnormal{(2)} as $s \to \infty$
or of \textnormal{(1)} as $\uplambda \to \frac{1}{4}$.
Similarly, the classes \textnormal{(5)} and \textnormal{(6)} arise as limits of \textnormal{(3)}
as $p \to \infty$ and $q \to \infty$, respectively.
This parallels the general phenomenon that divisor-to-curve contractions often occur as limits of flops; see also \cite{BW2}.
In the proof of \ref{610}, we establish separately the finiteness of the dimension of the Jacobi algebra and the realisation as a crepant resolution of an isolated $cA_3$ singularity.
However, by \ref{352}, either of these conditions implies the other.
\end{remark}

\begin{remark}
In this section, for a Type~A potential $f$ on the doubled $A_{3}$ quiver without loops, 
we normalise $f$ using the matrix $A_{12}(f)$ (see \ref{no6}(4)), 
which also appears in \cite[§4.2]{Z}. 
For Type~A potentials on the doubled $A_{3}$ quiver with loops, 
or more generally on the doubled $A_{n}$ quiver $Q_n$ with $n \geq 4$, 
one would instead need to employ matrices of the form $A_{ij}(f)$ with $j-i \geq 2$ in order to carry out an analogous normalisation procedure. 
At present, it is not clear how to extend the normalisation method developed here to Type~A potentials on $Q_n$ for arbitrary $n$.
\end{remark}

\subsection{Derived equivalence classes}
The purpose of this subsection is to prove \ref{611}, which describes the derived equivalence classes of Type~A potentials on $Q$ whose Jacobi algebras are finite-dimensional.
Throughout this subsection we restrict to this finite-dimensional case, since \ref{thm:august} applies only to isolated singularities.

Given a Type A potential $f$ on $Q$, by \ref{cor:A3norm} and \ref{715} we can realize $f$ by a $cA_3$ singularity
\begin{equation*}
    \scrR \cong \frac{\mathbb{C} \lal u, v, x, y \ral}{uv-h_{0}h_{1}h_2h_3}
\end{equation*}
together with the $\scrR$-module $M=  \scrR \oplus (u,h_0) \oplus (u,h_0h_1) \oplus  (u,h_0h_1h_2)$. Let $\uppi\colon \scrX\to \Spec\scrR$ be the corresponding crepant resolution. Then $\Lambda_{\mathrm{con}}(\uppi) \cong \underline{\End}_{\scrR}(M) \cong \Jac(f)$. 

\begin{notation}\label{notation:derived potential}
We adopt the following notation. We first recall $\uppi^i$, $\scrX^i$, $M^i$ and $\uppi^{\mathbf{r}}$, $\scrX^{\mathbf{r}}$, $M^{\mathbf{r}}$ in \ref{notation: cAn}. By \ref{514}, there is a Type A potential $g$ on $Q_{3,I}$ such that $\Lambda_{\mathrm{con}}(\uppi^{\mathbf{r}}) \cong \Jac(Q_{3,I},g)$ for some $I \subseteq \{1,2,3\}$, and we set $f^{\mathbf{r}}\colonequals g$, which is well defined up to the isomorphism of Jacobi algebras. For $1 \leq i \leq 3$, write $f^i$ for $f^{(i)}$.
\end{notation}
Since we are interested only in derived equivalence classes, working with $f^{\mathbf r}$ and $f^i$ up to isomorphism of Jacobi algebras causes no ambiguity.
By \ref{36}, we have
\[
\Lambda_{\mathrm{con}}(\uppi^{\mathbf{r}}) \cong \underline{\End}_{\scrR}(M^{\mathbf{r}}) \cong \Jac(f^{\mathbf{r}}).
\]
If, in addition, $\scrR$ is isolated, then \ref{thm:august} yields $f^{\mathbf{r}} \simeq f$
(in the sense of \ref{211}).
Moreover, by \ref{352}, $\scrR$ is isolated if and only if $\dim_{\C}\Lambda_{\mathrm{con}}(\uppi)<\infty$,
equivalently, if and only if $\Jac(f)$ is finite-dimensional.

Consequently, to determine derived equivalence classes of Type~A potentials on $Q$ with $\dim_\C\Jac(f)<\infty$, it suffices to study iterated flops of crepant resolutions of isolated $cA_3$ singularities.
The finiteness assumption is essential here, since \ref{thm:august} requires the base singularity to be isolated.

In order to present the NCCRs $\End_{\scrR}(M)$ and $\End_{\scrR}(M^{\mathbf{r}})$, we adopt the following.
\begin{definition}
Let $\mathcal{Q}$ be the quiver obtained from $Q$ by adding a new vertex $0$, arrows $a_0,b_0$ between $0$ and $1$, arrows $a_3,b_3$ between $0$ and $3$, and possibly a loop at $0$, as illustrated below.
\end{definition}
 \[
\begin{array}{c}
\begin{tikzpicture}[bend angle=12, looseness=1.2]
\node (a) at (-2,0) [vertex] {};
\node (b) at (0,0) [vertex] {};
\node (c) at (2,0) [vertex] {};
\node (d) at (0,-1.5) [vertex] {};

\node (a1) at (-2,-0.2) {$\scriptstyle 1$};
\node (a2) at (0,-0.2) {$\scriptstyle 2$};
\node (a3) at (2,-0.2) {$\scriptstyle 3$};
\node (a4) at (0,-1.7) {$\scriptstyle 0$};

\draw[->,bend left] (a) to node[gap] {$\scriptstyle a_{1}$} (b);
\draw[<-,bend right] (a) to node[gap] {$\scriptstyle b_{1}$} (b);
\draw[->,bend left] (b) to node[gap] {$\scriptstyle a_{2}$} (c);
\draw[<-,bend right] (b) to node[gap] {$\scriptstyle b_{2}$} (c);

\draw[->,bend left,looseness=0.9] (d) to node[gap] {$\scriptstyle a_{0}$} (a);
\draw[<-,bend right,looseness=0.9] (d) to node[gap] {$\scriptstyle b_{0}$} (a);
\draw[->,bend left,looseness=0.9] (d) to node[gap] {$\scriptstyle b_{3}$} (c);
\draw[<-,bend right,looseness=0.9] (d) to node[gap] {$\scriptstyle a_{3}$} (c);
\draw[<-,densely dotted]  (a4) edge [in=-120,out=-55,loop,looseness=8] node[below] {} (a4);
 
\node (z) at (0,-2.8) {Quiver $\mathcal{Q}$};
\end{tikzpicture}
\end{array}
\]

Since $f^i$ might be a potential in $Q_{3, I}$ for some $I \neq \emptyset$, and we aim to classify the derived equivalence classes of Type A potentials on $Q \colonequals Q_{3, \emptyset}$, we need the following lemma. 
\begin{lemma}\label{621}
Given a Type~A potential $f= \upkappa_1\x^p+\x\y+\upkappa_2\y^q$ in $Q$, the following statements hold.
\begin{enumerate}
\item $\upkappa_1 \neq 0$ and $p=2$ $\iff$ $f^1$ is a potential on $Q$.
\item $\upkappa_2 \neq 0$ and $q=2$ $\iff$ $f^3$ is a potential on $Q$. 
\item $\upkappa_1, \upkappa_2\neq 0$ and $p=q=2$ $\iff$ $f^2$ is a potential on $Q$.
\end{enumerate}
\end{lemma}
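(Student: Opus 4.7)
The plan is to pass through the explicit geometric description given by \ref{715} and then read off the vertex conditions using \ref{35}. First, by tracing the derivation in the proof of \ref{715} with the parameters $\upkappa_1, \upkappa_2$ kept general (allowing them to be zero in the degenerate cases (5)--(7) of \ref{cor:A3norm}), the $cA_3$ singularity and module corresponding to $f = \upkappa_1\x^p + \x\y + \upkappa_2\y^q$ satisfy
\[
    h_1 = x, \qquad h_2 = y, \qquad h_0 = \begin{cases} p\upkappa_1 x^{p-1} + y & \text{if } \upkappa_1 \neq 0, \\ y & \text{if } \upkappa_1 = 0, \end{cases} \qquad h_3 = \begin{cases} x + q\upkappa_2 y^{q-1} & \text{if } \upkappa_2 \neq 0, \\ x & \text{if } \upkappa_2 = 0. \end{cases}
\]

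Next, by \ref{notation:cAn}(3)--(4), the flop $\uppi^i$ corresponds to the permutation of $(h_0, h_1, h_2, h_3)$ that interchanges $h_{i-1}$ and $h_i$. Combining the definition in \ref{notation:cAn}(1) with \ref{35}, the NCCR $\Lambda(\uppi^i)$ has a loop at vertex $j$ precisely when $(h_{\sigma(j-1)}, h_{\sigma(j)}) \subsetneq (x, y)$ in $\C\lal x, y \ral$. Since passing to the contraction algebra only removes vertex $0$, and $Q = Q_{3, \emptyset}$ is precisely the loop-free quiver, $f^i$ is a potential on $Q$ if and only if all three relevant ideals equal $(x, y)$.

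For each $i \in \{1, 2, 3\}$, I will list the three ideals and check when each equals $(x, y)$. The pattern is uniform: whenever one of $h_1 = x$ or $h_2 = y$ is paired with $h_0$, the ideal $(x, h_0) = (x, y)$ automatically (since $h_0$ has linear term $y$ in either case), while $(h_0, y) = (x, y)$ if and only if $h_0$ has a nonzero degree-one $x$-coefficient, which occurs exactly when $\upkappa_1 \neq 0$ and $p = 2$. The symmetric statements hold for $h_3$. For $\uppi^1$ the three ideals are $(x, h_0)$, $(h_0, y)$, $(y, h_3)$; the first and third are always $(x, y)$, so the condition reduces to $(h_0, y) = (x, y)$, giving (1). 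For $\uppi^3$ the three ideals are $(h_0, x)$, $(x, h_3)$, $(h_3, y)$; the first and third are always $(x, y)$, so the condition reduces to $(x, h_3) = (x, y)$, giving (2). For $\uppi^2$ the three ideals are $(h_0, y)$, $(y, x)$, $(x, h_3)$; the middle is always $(x, y)$, so the two conditions must hold simultaneously, giving (3).

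The main obstacle is essentially bookkeeping: verifying that when $\upkappa_1 = 0$ or $p > 2$, the expression $h_0$ lies in $(x, y) \setminus ((x)+\C y)$ in a way that makes $(h_0, y)$ strictly contained in $(x, y)$; and symmetrically for $h_3$. Since all terms except the distinguished $y$ in $h_0$ lie in $(x^2)$ when $p > 2$ (and $h_0 = y$ has no $x$-contribution at all when $\upkappa_1 = 0$), the calculation reduces to an inspection at the level of $(x,y)/(x,y)^2$. No deep step beyond \ref{35}, \ref{36}, and \ref{715} is required.
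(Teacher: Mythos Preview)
Your approach is correct and essentially identical to the paper's: realize $f$ via \ref{715}, then for each flop $M^i$ apply the loop criterion of \ref{35} to the three consecutive pairs of $h$'s to determine whether the resulting quiver is loop-free. One small slip: you write ``$Q = Q_{3,\emptyset}$ is precisely the loop-free quiver'', but in the paper's convention $Q_{n,I}$ removes loops at the vertices in $I$, so the loop-free quiver is $Q = Q_{3,\{1,2,3\}}$; this does not affect the substance of your argument.
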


\begin{proof}
By \ref{715}, the potential $f$ can be realised by
\begin{equation*}
    \scrR \cong \frac{\mathbb{C} \lal u, v, x, y \ral}{uv-h_{0}h_{1}h_2h_3}
\end{equation*}
with associated $\scrR$-module $M=  \scrR \oplus (u,h_0) \oplus (u,h_0h_1) \oplus  (u,h_0h_1h_2)$ where 
\[
h_0=\upkappa_1p x^{p-1}+y,\quad h_1=x,\quad h_2=y,\quad h_3=x+\upkappa_2q y^{q-1}.
\]

Recall from notation \ref{notation:derived potential} that for each $1\leq i\leq 3$, the $f^i$ is realised by $\uppi^i  \colon \scrX^i \rightarrow \Spec \scrR$ such that $\Lambda_{\mathrm{con}}(\uppi^{i}) \cong \underline{\End}_{\scrR}(M^{i}) \cong \Jac(Q_{3,I_i}, f^{i})$ for some subset  $I_i \subseteq \{1,2,3\}$.

(1) By \ref{36}, $M^1=\scrR \oplus (u,h_1) \oplus (u,h_1h_0) \oplus  (u,h_1h_0h_2)$ and $\scrX^1$ is given pictorially by
\[
\begin{array}{ccc}
\begin{array}{c}
\scrX^1
\end{array} &
\begin{array}{c}
\begin{tikzpicture}[xscale=0.6,yscale=0.6]
\draw[black] (-0.1,-0.04,0) to [bend left=25] (2.1,-0.04,0);
\draw[black] (1.9,-0.04,0) to [bend left=25] (4.1,-0.04,0);

\draw[black] (3.9,-0.04,0) to [bend left=25] (6.1,-0.04,0);

\node at (1,0.6,0) {$\scriptstyle \Curve_{1}$};
\node at (3,0.6,0) {$\scriptstyle \Curve_{2}$};
\node at (5,0.6,0) {$\scriptstyle \Curve_{3}$};
\filldraw [black] (0,0,0) circle (1pt);
\filldraw [black] (2,0,0) circle (1pt);
\filldraw [black] (4,0,0) circle (1pt);
\filldraw [black] (6,0,0) circle (1pt);

\node at (0,-0.4,0) {$\scriptstyle h_1$};
\node at (2,-0.4,0) {$\scriptstyle h_0$};
\node at (4,-0.4,0) {$\scriptstyle h_2$};
\node at (6,-0.4,0) {$\scriptstyle h_{3}$};

\end{tikzpicture} 
\end{array}
\end{array}
\]
By \ref{35}, we have $I_1=\emptyset$ if and only if
$(h_1,h_0)=(x,y)$, $(h_0,h_2)=(x,y)$, and $(h_2,h_3)=(x,y)$,
which is equivalent to $\upkappa_1\neq 0$ and $p=2$.

(2) Similarly, by \ref{36} $M^3=\scrR \oplus (u,h_0) \oplus (u,h_0h_1) \oplus  (u,h_0h_1h_3)$ and $\scrX^3$ is depicted as
\[
\begin{array}{ccc}
\begin{array}{c}
\scrX^3
\end{array} &
\begin{array}{c}
\begin{tikzpicture}[xscale=0.6,yscale=0.6]
\draw[black] (-0.1,-0.04,0) to [bend left=25] (2.1,-0.04,0);
\draw[black] (1.9,-0.04,0) to [bend left=25] (4.1,-0.04,0);

\draw[black] (3.9,-0.04,0) to [bend left=25] (6.1,-0.04,0);

\node at (1,0.6,0) {$\scriptstyle \Curve_{1}$};
\node at (3,0.6,0) {$\scriptstyle \Curve_{2}$};
\node at (5,0.6,0) {$\scriptstyle \Curve_{3}$};
\filldraw [black] (0,0,0) circle (1pt);
\filldraw [black] (2,0,0) circle (1pt);
\filldraw [black] (4,0,0) circle (1pt);
\filldraw [black] (6,0,0) circle (1pt);

\node at (0,-0.4,0) {$\scriptstyle h_0$};
\node at (2,-0.4,0) {$\scriptstyle h_1$};
\node at (4,-0.4,0) {$\scriptstyle h_3$};
\node at (6,-0.4,0) {$\scriptstyle h_{2}$};

\end{tikzpicture} 
\end{array}
\end{array}
\]
Thus $I_3=\emptyset$ if and only if $\upkappa_2\neq 0$ and $q=2$.

(3) Similarly, by \ref{36} $M^2=\scrR \oplus (u,h_0) \oplus (u,h_0h_2) \oplus  (u,h_0h_2h_1)$ and $\scrX^2$ is depicted as
\[
\begin{array}{ccc}
\begin{array}{c}
\scrX^2
\end{array} &
\begin{array}{c}
\begin{tikzpicture}[xscale=0.6,yscale=0.6]
\draw[black] (-0.1,-0.04,0) to [bend left=25] (2.1,-0.04,0);
\draw[black] (1.9,-0.04,0) to [bend left=25] (4.1,-0.04,0);

\draw[black] (3.9,-0.04,0) to [bend left=25] (6.1,-0.04,0);

\node at (1,0.6,0) {$\scriptstyle \Curve_{1}$};
\node at (3,0.6,0) {$\scriptstyle \Curve_{2}$};
\node at (5,0.6,0) {$\scriptstyle \Curve_{3}$};
\filldraw [black] (0,0,0) circle (1pt);
\filldraw [black] (2,0,0) circle (1pt);
\filldraw [black] (4,0,0) circle (1pt);
\filldraw [black] (6,0,0) circle (1pt);

\node at (0,-0.4,0) {$\scriptstyle h_0$};
\node at (2,-0.4,0) {$\scriptstyle h_2$};
\node at (4,-0.4,0) {$\scriptstyle h_1$};
\node at (6,-0.4,0) {$\scriptstyle h_{3}$};

\end{tikzpicture} 
\end{array}
\end{array}
\]
Thus $I_2=\emptyset$ if and only if $\upkappa_1,\upkappa_2\neq 0$ and $p=q=2$.
\end{proof}

\begin{lemma}\label{622}
Suppose that $f$ is a Type A potential on $Q$. Then the following holds.
\begin{enumerate}
\item If $f=\x^2+\x\y+\uplambda\y^2$ with $\uplambda \neq 0$, then $f^1 \cong \x^2+\x\y+(\frac{1}{4}-\uplambda)\y^2 \cong f^3$ and $f^2\cong \x^2+\x\y+\frac{1}{16\uplambda}\y^2$.
\item If $f = \x^2+\x\y+\frac{1}{4}\y^2+\x^p$ with $p \geq 3$, then $f^1\cong \x^2+\x\y+\y^p$ and $f^3 \cong  \x^p+\x\y+\y^2$.
\end{enumerate}
\end{lemma}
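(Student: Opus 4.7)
The plan is to use the geometric realization from Proposition \ref{715} together with the flop-module dictionary of Notation \ref{notation: cAn}. For each potential $f$ in (1) and (2), the realization presents $\Jac(f) \cong \Lambda_{\mathrm{con}}(\uppi)$, where $\uppi$ is the crepant resolution of the $cA_3$ singularity $\scrR = \C\lal u,v,x,y\ral/(uv - h_0 h_1 h_2 h_3)$ corresponding to the ordered tuple $(h_0, h_1, h_2, h_3)$ tabulated in \ref{715}. The flop $\uppi^i$ then corresponds to the $\scrR$-module obtained by swapping positions $i-1$ and $i$ in this ordering. To identify $f^i$, I will (a) verify via Proposition \ref{35} that each consecutive pair in the new ordering generates the maximal ideal $(x, y)$, so by Corollary \ref{cor: TypeA_intrin} the flop is Type $A_{3,\emptyset}$ and $f^i$ is a reduced Type $A$ potential on $Q$; (b) choose a change of coordinates with $x' = h_1^{(i)}$, $y' = h_2^{(i)}$ up to units, re-express $h_0^{(i)}$ and $h_3^{(i)}$ in $(x', y')$, and apply Weierstrass preparation to bring them to the standard forms $p'\upkappa_1'x'^{p'-1}+y'$ and $x' + q'\upkappa_2' y'^{q'-1}$; (c) read off $(p', q', \upkappa_1', \upkappa_2')$ and normalize via Corollary \ref{cor:A3norm}.

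For part (1) the realization is $(h_0,h_1,h_2,h_3) = (2x+y, x, y, x+2\uplambda y)$. For $\uppi^1$ with new ordering $(x, 2x+y, y, x+2\uplambda y)$, setting $x' = 2x+y$, $y' = y$ yields $x = (x'-y')/2$, whence position $0$ becomes $(x'-y')/2$ and position $3$ becomes $\tfrac{1}{2}x' + (2\uplambda - \tfrac{1}{2})y'$; these match the Type $A$ form with $p'=q'=2$ and invariant product $\upkappa_1'\upkappa_2' = (1-4\uplambda)/4$, so $f^1 \cong \x^2+\x\y+(\tfrac{1}{4}-\uplambda)\y^2$. The case $\uppi^3$ is handled symmetrically via $x' = x$, $y' = x+2\uplambda y$ and yields the same invariant. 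For $\uppi^2$ with ordering $(2x+y, y, x, x+2\uplambda y)$, take $x' = y$, $y' = x$; positions $0$ and $3$ become $x' + 2y'$ and $2\uplambda x' + y'$, giving $\upkappa_1'=\tfrac{1}{4}$, $\upkappa_2'=\tfrac{1}{4\uplambda}$, and invariant $\tfrac{1}{16\uplambda}$.

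For part (2) the realization is $(h_0,h_1,h_2,h_3) = (2x+y+px^{p-1}, x, y, x+\tfrac{1}{2}y)$. For $\uppi^3$ with ordering $(2x+y+px^{p-1}, x, x+\tfrac{1}{2}y, y)$, setting $x' = x$, $y' = x+\tfrac{1}{2}y$ gives $y = 2(y'-x')$, so position $0$ becomes $px'^{p-1} + 2y'$ (matching $p'=p$, $\upkappa_1'=\tfrac{1}{2}$) and position $3$ becomes $-2x'+2y'$ (matching $q'=2$, $\upkappa_2'=-\tfrac{1}{2}$), hence $f^3 \cong \x^p+\x\y+\y^2$. The subtle case is $\uppi^1$ with ordering $(x, 2x+y+px^{p-1}, y, x+\tfrac{1}{2}y)$: take $x' = 2x+y+px^{p-1}$, $y' = y$, and solve $x$ as a formal power series in $(x', y')$ by the implicit function theorem. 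For position $3$, the Weierstrass root $\phi(y')$ is found by setting $x+y'/2 = 0$, i.e., $x = -y'/2$ in $x' = 2x+y+px^{p-1}$; this gives $\phi(y') = p(-y'/2)^{p-1}$, a pure $y'^{p-1}$-term, pinning $q'=p$ with $\upkappa_2' \neq 0$. The analogous root for position $0$ is $\psi(x') = x'$, giving $p'=2$ with $\upkappa_1' \neq 0$. After normalization via \ref{cor:A3norm}(3), $f^1 \cong \x^2+\x\y+\y^p$. The main obstacle is verifying that Weierstrass preparation of position $3$ produces precisely the exponent $p$ in $y'^{p-1}$ and not some other exponent; this reduces to the clean observation that the single nonlinear monomial $px^{p-1}$ in $h_1$ forces the vanishing locus $\{x + y'/2 = 0\}$ in $(x', y')$-coordinates to be defined by a pure power of $y'$.
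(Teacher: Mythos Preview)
Your approach is correct but genuinely different from the paper's. The paper computes the quiver with relations for $\underline\End_{\scrR}(M^i)$ directly via Proposition~\ref{35}, reads off the Jacobi relations explicitly, and then normalises. You instead perform an analytic coordinate change on $\C\lal x,y\ral$ so that the new middle pair becomes $(x',y')$, bring the outer entries to the standard shape appearing in the \emph{proof} of Proposition~\ref{715} (up to units), and then invoke that general realisation formula in reverse. Both routes are valid.

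Two points deserve tightening in your write-up. First, what you call ``Weierstrass preparation'' is really the observation that each $h_j^{(i)}$ defines a smooth irreducible curve germ, hence is associate to $x'-\phi(y')$ (respectively $y'-\psi(x')$) where $\phi$ parameterises the vanishing locus; general Weierstrass only gives a power series $a_0(x')$, whereas the form $p'\upkappa_1'x'^{p'-1}+y'$ in \ref{715} requires a single monomial. You should state explicitly that the vanishing loci in question are graphs of monomials (which is exactly your ``clean observation'' at the end, and holds here because the only nonlinearity is the single term $px^{p-1}$). Second, you are tacitly using that replacing any $h_j$ by a unit multiple leaves $\Lambda_{\mathrm{con}}$ unchanged: the ideals $(u,\prod_{k\le j}h_k)$ are the same, and the ambient ring is isomorphic via rescaling $v$. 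This is routine but should be said.

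The paper sidesteps both issues by computing relations directly, and it handles $f^1$ in part~(2) more cleanly still: rather than your implicit-function step for $(\x^2+\x\y+\tfrac14\y^2+\x^p)^1$, it first shows $(\x^2+\x\y+\y^p)^1 \cong \x^2+\x\y+\tfrac14\y^2+\x^p$ via the same direct quiver computation, and then uses that flopping is an involution to conclude. Your approach trades explicit quiver bookkeeping for analytic coordinate manipulations; the paper's trades analytic subtlety for algebra.
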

\begin{comment}
If $f=\x^2+\x\y+\uplambda\y^q$, then 
$f_1 \cong \begin{cases}
    \x^2+\x\y+(\frac{1}{4}-\uplambda)\y^2 \text{, if } q=2\\
    \x^2+\x\y+\frac{1}{4}\y^2+\x^q \text{, if } q> 2  \text{ and } \uplambda \neq 0
\end{cases}$ on the $Q$.

If $f=\uplambda \x^p+\x\y+\y^2$, then $f_3 \cong \begin{cases}
    \x^2+\x\y+(\frac{1}{4}-\uplambda)\y^2 \text{, if } p=2\\
    \x^2+\x\y+\frac{1}{4}\y^2+\x^p \text{, if } p> 2  \text{ and } \uplambda \neq 0
\end{cases}$ on the $Q_{3,\emptyset}$.    
\end{comment}

\begin{proof}
Suppose that $f=\x^2+\x\y+\uplambda\y^p$. By \ref{715}, we can realize $f$ by 
\begin{equation*}
    \scrR \cong \frac{\mathbb{C} \lal u, v, x, y \ral}{uv-h_{0}h_{1}h_2h_3}
\end{equation*}
and $\scrR$-module $M=  \scrR \oplus (u,h_0) \oplus (u,h_0h_1) \oplus  (u,h_0h_1h_2)$ where $h_0=2\x+y$, $h_1=x$, $h_2=y$ and $h_3=x+\uplambda p\y^{p-1}$. Since $M^1=\scrR \oplus (u,h_1) \oplus (u,h_1h_0) \oplus  (u,h_1h_0h_2)$, then by \ref{35} ${\End}_{\scrR}(M^1)$ can be presented by $\mathcal{Q}$ with relations
\begin{align*}
 & \x b_1-\y b_1=2b_1b_0a_0, \quad b_2\x-b_2\y=2(a_3b_3b_2-\uplambda pb_2\y^{p-1}), \\
  & a_1\x - a_1 \y=2b_0a_0a_1,\quad  \x a_2-\y a_2= 2(a_2a_3b_3-\uplambda p\y^{p-1}a_2),
\end{align*}
together with additional relations factoring through the vertex $0$, 
which will not play a role in the following.
Hence $\underline{\End}_{\scrR}(M^1)$ can be presented by $Q$ with relations
\begin{align*}
 & \x b_1-\y b_1=0,\quad b_2\x-b_2\y=-2\uplambda pb_2\y^{p-1}, \\
  & a_1\x - a_1 \y=0, \quad \x a_2-\y a_2= -2\uplambda p\y^{p-1}a_2.
\end{align*}
Thus $\underline{\End}_{\scrR}(M^1) \cong \Jac(Q,f^1)$ where $f^1 \cong \frac{1}{2}\x^2-\x\y+\frac{1}{2}\y^2-2\uplambda\y^p$. Normalizing by applying $a_1 \mapsto -\sqrt{2}a_1$ and $a_2 \mapsto \frac{1}{\sqrt{2}}a_2$ to $f^1$ gives
\begin{equation*}
    f^1 \mapsto \x^2+\x\y+\frac{1}{4}\y^2-2^{1-\frac{p}{2}}\uplambda\y^p.
\end{equation*}
Setting $p=2$ in the above potential proves the statement about $f^1$ in (1). The proof of the statement about $f^3$ in (1) is similar. 

For $p \geq 3$ and $\uplambda \neq 0$, applying $a_1 \mapsto \frac{1}{2}b_2,b_1 \mapsto a_2,a_2 \mapsto 2b_1, b_2 \mapsto a_1$ gives
\begin{equation*}
\x^2+\x\y+\frac{1}{4}\y^2-2^{1-\frac{p}{2}}\uplambda\y^p \rightsquigarrow
\x^2+\x\y+\frac{1}{4}\y^2-2^{1+\frac{p}{2}}\uplambda\x^p.
\end{equation*}
Then since $p \geq 3$ and $\uplambda \neq 0$, by \ref{cor:A3norm}(2), 
\begin{equation*}
    \x^2+\x\y+\frac{1}{4}\y^2-2^{1+\frac{p}{2}}\uplambda\x^p \cong \x^2+\x\y+\frac{1}{4}\y^2+\x^p.
\end{equation*}

Therefore, for $p\ge 3$ we obtain $(\x^2+\x\y+\y^p)^1=\x^2+\x\y+\frac{1}{4}\y^2+\x^p$.
Since flopping is an involution, this is equivalent to the statement in~(2). The proof of the statement about $f^3$ in (2) is similar.

Then we finally prove the statement about $f^2$ in (1). In this case,  $g_0=2\x+y$, $g_1=x$, $g_2=y$ and $g_3=x+2\uplambda \y$. Since $M^2=\scrR \oplus (u,h_0) \oplus (u,h_0h_2) \oplus  (u,h_0h_2h_1)$, then by \ref{35} ${\End}_{\scrR}(M^2)$ can be presented by $\mathcal{Q}$ with relations
\begin{align*}
 & \x b_1+2\y b_1=b_1b_0a_0, \quad 2\uplambda b_2\x+b_2\y=a_3b_3b_2, \\
  & a_1\x + 2a_1 \y=b_0a_0a_1,\quad  2\uplambda\x a_2+\y a_2= a_2a_3b_3,
\end{align*}
plus some other relations that factor through the vertex $0$ (and so will not be relevant below).
Hence $\underline{\End}_{\scrR}(M^2)$ can be presented by $Q$ with relations
\begin{align*}
  & \x b_1+2\y b_1=0, \quad 2\uplambda b_2\x+b_2\y=0, \\
  & a_1\x + 2a_1 \y=0,\quad  2\uplambda\x a_2+\y a_2= 0.
\end{align*}
Thus $\underline{\End}_{\scrR}(M^2) \cong \Jac(Q,f^2)$ where $f^2 \cong \x^2+\x\y+\frac{1}{16\uplambda}\y^2$.
\end{proof}

Now let $\uppi: \scrX \to\Spec\scrR$ be a crepant resolution where $\scrR$ is cDV, with exceptional curves $\bigcup_{i=1}^n \Curve_i$. For any curve class $\upbeta \in \bigoplus_i \Z \left\langle \Curve_i \right\rangle$, there exists a integer-valued number $\GV_{\upbeta}(\uppi)$ called the Gopakumar--Vafa (GV for short) invariant of class $\upbeta$ \cite{BKL, MT}.
\begin{definition}
Define the \emph{GV set} of $\uppi$ as $\GV(\uppi)=\{\GV_{\upbeta}(\uppi)| \text{ all }  \upbeta \text{ with } \GV_{\upbeta}(\uppi) \neq 0\}$.
\end{definition}

%The following should be viewed as a generalization of \eqref{601}.
\begin{lemma}\label{lemma:gvf}
Let $\uppi_k \colon \scrX_k \rightarrow \Spec \scrR_k$ be two crepant resolutions of isolated \textnormal{cDV} singularities $\scrR_k$ for $k=1,2$. If $\Lambda_{\mathrm{con}}(\uppi_1)$ is derived equivalent to $\Lambda_{\mathrm{con}}(\uppi_2)$, then $\GV(\uppi_1) = \GV(\uppi_2)$.
\end{lemma}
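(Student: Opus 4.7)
My plan is to argue in two steps: first to reduce the statement to the flop invariance of GV invariants, and then to invoke that invariance from the literature.

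First I would use the derived equivalence $\Lambda_{\mathrm{con}}(\uppi_1)\simeq\Lambda_{\mathrm{con}}(\uppi_2)$ together with \ref{037} to conclude that the base singularities are isomorphic, $\scrR_1\cong\scrR_2$. Using this isomorphism I can view both $\uppi_1$ and $\uppi_2$ as crepant resolutions of the same $\Spec\scrR$. Then by \ref{thm:august} any basic algebra derived equivalent to $\Lambda_{\mathrm{con}}(\uppi_1)$ is of the form $\Lambda_{\mathrm{con}}(\uppi')$ where $\uppi'$ is obtained from $\uppi_1$ by a finite sequence of iterated flops. Applying this to $\uppi_2$ (which is itself basic, as a contraction algebra), we deduce that $\uppi_2$ is obtained from $\uppi_1$ by an iterated flop.

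The key step is then to show that a single flop $\uppi\dashrightarrow\uppi^+$ over $\Spec\scrR$ preserves the GV set. Writing $\Curve_i$ for the curves of $\uppi$ and $\Curve_i^+$ for the curves of $\uppi^+$, the flop induces an isomorphism of free abelian groups $\bigoplus_i\Z\langle\Curve_i^+\rangle\xrightarrow{\sim}\bigoplus_i\Z\langle\Curve_i\rangle$ sending the class of the flopped curve to its negative and fixing all other curve classes. Under this identification, the GV invariants match up class-by-class: $\GV_{\upbeta^+}(\uppi^+)=\GV_{\upbeta}(\uppi)$ whenever $\upbeta^+\mapsto\upbeta$. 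This is the classical flop invariance of GV invariants for crepant resolutions of cDV singularities, and in the NCCR/contraction algebra setup can be cited from Toda (or from the cDV treatment in Brown--Wemyss). Taking the set of nonzero values on both sides yields $\GV(\uppi)=\GV(\uppi^+)$. Iterating over the sequence of flops connecting $\uppi_1$ and $\uppi_2$ gives $\GV(\uppi_1)=\GV(\uppi_2)$.

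The main obstacle is the flop invariance itself: it is not an immediate formal consequence of derived equivalence of contraction algebras alone, because derived equivalences a priori permute simples in a nontrivial way. The cleanest route is to cite the flop-invariance of GV numbers directly (e.g. from Toda's \cite[\S 4.4]{T2}, which the paper has already used to compute $\dim_\C\Jac(f)/e_i$ in \ref{610}) and package it with the flop identification of curve classes above. An alternative, if a purely algebraic argument is preferred, would be to express each nonzero $\GV_\upbeta(\uppi)$ as a dimension of a subquotient of $\Lambda_{\mathrm{con}}(\uppi)$ of the form $e_i\Lambda_{\mathrm{con}}(\uppi)/\mathrm{rad}^k\cdot e_i$, and then check that the multiset of such dimensions is a derived invariant, using that the simple modules are permuted by any derived equivalence between finite-dimensional basic algebras.
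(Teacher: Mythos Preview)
Your approach is essentially the same as the paper's: use \ref{037} to identify the bases, observe that $\uppi_1$ and $\uppi_2$ are then connected by iterated flops, and invoke flop invariance of GV invariants. Two minor points of comparison. First, once $\scrR_1\cong\scrR_2$ you do not need \ref{thm:august}; it is a standard MMP fact that any two crepant resolutions of the same cDV base are connected by flops, and the paper just uses this directly. Your detour through August only gives $\Lambda_{\mathrm{con}}(\uppi_2)\cong\Lambda_{\mathrm{con}}(\uppi')$ for some flop $\uppi'$, which is not literally the same as $\uppi_2$ being a flop of $\uppi_1$. Second, the flop invariance of GV sets is not the content of \cite[\S 4.4]{T2} (that is a dimension formula); the paper cites \cite[5.4]{NW1} and \cite[5.10]{VG} for this step. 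Your alternative ``algebraic'' paragraph is speculative and not needed.
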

\begin{proof}
Since $\Lambda_{\mathrm{con}}(\uppi_1)$ is derived equivalent to $\Lambda_{\mathrm{con}}(\uppi_2)$ and each $\scrR_k$ is isolated, then $\scrR_1 \cong \scrR_2$ by \ref{037}, so $\uppi_1$ and $\uppi_2$ are two crepant resolutions of a same cDV singularity and connected by a sequence of flops. Thus $\GV(\uppi_1) = \GV(\uppi_2)$ by \cite[5.4]{NW1}, \cite[5.10]{VG}.     
\end{proof}

For $\uplambda\in\C\setminus\{0,\tfrac14\}$, set
\[
\operatorname{Orbit}(\uplambda)\colonequals 
\left\{\uplambda,\frac{1-4\uplambda}{4},\frac{1}{4(1-4\uplambda)},\frac{\uplambda}{4\uplambda-1},\frac{4\uplambda-1}{16\uplambda},\frac{1}{16\uplambda}\right\}.
\]
This notation is purely definitional at this stage and records the parameters arising from the classification in \ref{611}.
\begin{theorem}\label{611}
The following groups the Type $A$ potentials on $Q$ with finite-dimensional Jacobi algebra into sets, where all the Jacobi algebras in a given set are derived equivalent.
\begin{enumerate}
\item $\{\x^2+\x\y+\uplambda'\y^2 \mid \uplambda'\in\operatorname{Orbit}(\uplambda)\}$ for $\uplambda \in \C \setminus \{0,\tfrac14\}$.
%\item $\{\x^2+\x\y+\uplambda' \y^2$ $\mid$ $\uplambda' = \uplambda,\frac{1-4\uplambda}{4},\frac{1}{4(1-4\uplambda)},\frac{\uplambda}{4\uplambda-1},\frac{4\uplambda-1}{16\uplambda},\frac{1}{16\uplambda}\}$ for $\uplambda \in \C \setminus \{0,\tfrac14\}$.
\item $\{\x^p+\x\y+\y^2$, $\x^2+\x\y+\y^p$, $\x^2+\x\y+\frac{1}{4}\y^2+ \x^p\}$ for $p \geq 3$.
\item $\{\x^p+\x\y+\y^q$, $\x^q+\x\y+\y^p \}$ for $p \geq 3$ and $q \geq 3$.    
\end{enumerate}
Moreover, the Jacobi algebras of the sets in \textnormal{(1)--(3)} are all mutually not derived equivalent, and in particular the Jacobi algebras of different sets in the same item are not derived equivalent. 
In \textnormal{(1)} there are no further basic algebras in the derived equivalence class, whereas in \textnormal{(2)--(3)} there are an additional finite number of basic algebras in the derived equivalence class.
\end{theorem}
\begin{proof}
By \ref{610} and \ref{352}, the potentials in the statement are precisely the Type $A$ potentials on $Q$ with finite-dimensional Jacobi algebra, thus they exhaust all possibilities.

Firstly, we prove that the Jacobi algebras in each given set are derived equivalent. By \ref{thm:august}, given a Type A potential $f$ with finite-dimensional Jacobi algebra and $\Jac(f) \cong \Lambda_{\mathrm{con}}(\uppi)$, if we want to obtain all the basic algebras that are derived equivalent to $\Jac(f)$, we only need to calculate all iterated flops from $\uppi$. So we consider $f^i$ for $1 \leq i \leq 3$ in the following.

(1) Suppose that $f=\x^2+\x\y+\uplambda\y^2$ where $\uplambda \neq 0, \frac{1}{4}$. 

By \ref{622}, $f^1 \cong \x^2+\x\y+(\frac{1}{4}-\uplambda)\y^2 \cong f^3$ and $f^2\cong \x^2+\x\y+\frac{1}{16\uplambda}\y^2$. Repeating the same argument, we have $f^{(12)} \cong \x^2+\x\y+\frac{1}{4(1-4\uplambda)}\y^2$, $f^{(21)} \cong \x^2+\x\y+\frac{4\uplambda-1}{16\uplambda}\y^2$ and $f^{(121)} \cong \x^2+\x\y+\frac{\uplambda}{4\uplambda-1}\y^2$. Repeating this process, only six numbers appear, so by \ref{thm:august} there are no further basic algebras in this derived equivalence class.

(2) Suppose that $f=\x^2+\x\y+\frac{1}{4}\y^2+ \x^p$ where $p \geq 3$. 

By \ref{622}, $f^1 \cong \x^2+\x\y+\y^p$ and $f^3 \cong \x^p+\x\y+\y^2$, and thus the three potentials in the statement are derived equivalent.
Since $p \geq 3$, then $f^{(12)}$, $f^{(13)}$, $f^{(31)}$ and $f^{(32)}$ are not potentials on $Q$ by \ref{621}, and so there are additional basic algebras in this derived equivalence class.

(3) By \ref{610}, $\x^p+\x\y+\y^q \cong \x^q+\x\y+\y^p$, and thus the two potentials in the statement are derived equivalent. Suppose that $f=\x^p+\x\y+\y^q $. Since $p \geq 3$ and $q \geq 3$, then $f^{1}$, $f^{2}$ and $f^3$ are not on $Q$ by \ref{621}, and so there are additional basic algebras in this derived equivalence class. 

The wall--chamber decomposition of the movable cone for a $cA_3$ crepant resolution is governed by the type~$A_3$ root system (see e.g. \cite[5.24, \S7]{W1}). These chambers are precisely the Weyl chambers, so their number equals the order of the Weyl group, namely $\# W(A_3) = |S_4| = 24$. 
Each chamber corresponds to a crepant resolution.

Moreover, the double $A_3$ quiver $Q$ admits a natural involution that sends 
$e_1 \mapsto e_3$, $e_2 \mapsto e_2$, and $e_3 \mapsto e_1$ 
(equivalently, exchanging $\x$ and $\y$ in the potentials). 
This symmetry identifies certain Jacobi algebras, so that there are at most 
$12$ distinct isomorphism classes. Consequently, the number of additional basic algebras appearing in the derived equivalence classes in cases~(2) and~(3) of \ref{611} is finite.

Secondly, we prove that the Jacobi algebras in different sets in \textnormal{(1)--(3)} are all mutually not derived equivalent. 

Given any potential $f$ in the statement, by \ref{715} we can find a Type 
$A_3$ crepant resolution $\uppi$ such that $\Lambda_{\mathrm{con}}(\uppi) \cong \Jac(f)$. By Toda's formula (see \cite[\S 4.4]{T2}), the GV set of each set in \textnormal{(1)--(3)} is:
\begin{dingautolist}{192}
    \item \{$1, 1, 1, 1, 1, 1$\},
    \item \{$1, 1, 1, p-1, 1, 1$\},
    \item \{$1, 1, 1, p-1, q-1, 1$\}.
\end{dingautolist}

Suppose that $f_1$ and $f_2$ are potentials in the statement with $f_1 \simeq f_2$ and each $\Jac(f_i)\cong \Lambda_{\mathrm{con}}(\uppi_i)$, where $\uppi_i \colon \scrX_i \rightarrow \Spec\scrR_i$ is a Type $A_3$ crepant resolution. Then $\Lambda_{\mathrm{con}}(\uppi_1)$ is derived equivalent to $\Lambda_{\mathrm{con}}(\uppi_2)$. Since each $\Jac(f_i)$ is finite-dimensional, then each $\scrR_i$ is isolated by \ref{352}, and so $\GV(\uppi_1) = \GV(\uppi_2)$ by \ref{lemma:gvf}. So if we want to prove that two potentials are not derived equivalent, we only need to prove that their corresponding GV sets are not equal.

Since $p \geq 3$, then any GV set in \ding{172} is different from that in \ding{173}, and so any set of potentials in (1) is not derived equivalent to that in (2). Since $q \geq 3$, then any GV set in \ding{173} is different from that in \ding{174}, and so any set of potentials in (2) is not derived equivalent to that in (3). Similar for (1) and (3).

Next, consider two sets of potentials in the same item. Given a potential $f$ in (1), we have already exhausted all $6$ potentials that are derived equivalent to $f$ in the above proof. Thus by \ref{thm:august}, different sets of potentials in (1) are not derived equivalent. Since different GV sets in \ding{173} are not equal, different sets of potentials in (2) are not derived equivalent. Similar for (3).
\end{proof}

\begin{remark}\label{625}
It is usually hard to give the derived equivalence class of an algebra $A$. 
But when $A$ is $\Jac(f)$ for a Type A potential $f$ on $Q_{n,I}$, there is a Type $A_n$ crepant resolution $\uppi\colon \scrX \rightarrow \Spec \scrR$ such that $A \cong  \Lambda_{\mathrm{con}}(\uppi)$ by \ref{511}. If further $A$ is finite-dimensional over $\C$, then $\scrR$ is isolated by \ref{352}. So we can apply \ref{thm:august} to get the \emph{full} derived equivalence class of $A$ by calculating all iterated flops from $\uppi$.
This is why we restrict this subsection to Type~A potentials on $Q$ with finite-dimensional Jacobi algebra.
\end{remark} 

\begin{remark}\label{626}
In cases \textnormal{(2)} and \textnormal{(3)} of \ref{611}, there are additional basic algebras in the derived equivalence class. These algebras are isomorphic to the Jacobi algebras of some potentials on $Q_{3, I}$ where $I \neq \emptyset$ (see the proof in \ref{611}).
\end{remark}

Recall the quaternion type quiver algebra $A_{p,q}(\upmu)$ from \cite{E1,H}. 
It is defined as the completion of the path algebra of the quiver $Q$
\[
\begin{tikzpicture}[bend angle=30, looseness=1]
\node (a) at (-2,0) [vertex] {};
\node (b) at (0,0) [vertex] {};
\node (c) at (2,0) [vertex] {};
\node (a1) at (-2,-0.2) {$\scriptstyle 1$};
\node (a1) at (0,-0.2) {$\scriptstyle 2$};
\node (a1) at (2,-0.2) {$\scriptstyle 3$};
\draw[<-,bend right] (a) to node[below] {$\scriptstyle b_1$} (b);
\draw[->,bend left] (a) to node[above] {$\scriptstyle a_1$} (b);
\draw[<-,bend right] (b) to node[below] {$\scriptstyle b_2 $} (c);
\draw[->,bend left] (b) to node[above] {$\scriptstyle a_2$} (c);
\end{tikzpicture}
\]
subject to the relations
\begin{equation*}
 a_1a_2b_2 -(a_1b_1)^{p-1}a_1, b_2b_1a_1-\upmu (b_2a_2)^{q-1}b_2, a_2b_2b_1-(b_1a_1)^{p-1}b_1, b_1a_1a_2-\upmu(a_2b_2)^{q-1}a_2,
\end{equation*}
where $\upmu \in \C$ and $p,q \geq 2$. 
Note that we impose fewer relations than in \cite{E1}, since we work in the completed path algebra.
Equivalently, $A_{p,q}(\upmu)\cong \Jac(Q,f)$, where
\begin{equation*}
f=\frac{1}{p}\x^p-\x\y+\frac{\upmu}{q}\y^q 
\;\cong\;
\x^p+\x\y+(-1)^q p^{-\frac{q}{p}} q^{-1}\upmu\,\y^q.
\end{equation*}
For convenience, set $B_{p,q}(\uplambda)\colonequals \Jac(Q,f)$ where $f=\x^p+\x\y+\uplambda\y^q$.
Then $A_{p,q}(\upmu) \cong B_{p,q}\!\big((-1)^q p^{-\frac{q}{p}} q^{-1}\upmu\big)$.
Denote $B_{p,q}(\uplambda) \colonequals  \Jac(Q,f)$ where $f=\x^p+\x\y+\uplambda\y^q$. Thus $A_{p,q}(\upmu) \cong B_{p,q}((-1)^qp^{-\frac{q}{p}}q^{-1}\upmu)$.

The following refines and extends the results of Erdmann and Holm \cite{E1,H}.

Similar to \ref{611}, for $\upmu \in\C\setminus\{0,1 \}$ we set a different orbit
\[
\operatorname{Orbit}(\upmu)\colonequals 
\left\{\upmu,1-\upmu, \frac{1}{1-\upmu}, \frac{\upmu}{\upmu-1}, \frac{\upmu-1}{\upmu}, \frac{1}{\upmu}\right\},
\]
which again serves as a convenient notation for the parameters occurring in the classification \ref{631} below.
\begin{cor}\label{631}
The following groups those algebras $A_{p,q}(\upmu)$ which are finite-dimensional into sets, where all the algebras in a given set are derived equivalent.
\begin{enumerate}
%\item $\{A_{2,2}(\upmu') \mid$ $\upmu' = \upmu,1-\upmu, \frac{1}{1-\upmu}, \frac{\upmu}{\upmu-1}, \frac{\upmu-1}{\upmu}, \frac{1}{\upmu}\}$ for $\upmu \in \C \setminus \{0,1\}$.
\item $\{A_{2,2}(\upmu') \mid$ $\upmu' \in \operatorname{Orbit}(\upmu) \}$ for $\upmu \in \C \setminus \{0,1\}$.
\item $\{A_{p,q}(1), A_{q,p}(1)\}$ for $(p,q)  \neq (2,2)$.
\end{enumerate}
Moreover, the algebras of the sets in \textnormal{(1)--(2)} are all mutually not derived equivalent.  
In \textnormal{(1)} there are no further basic algebras in the derived equivalence class, whereas in \textnormal{(2)} there are an additional finite number of basic algebras in the derived equivalence class.
\end{cor}

\begin{proof}
Using $A_{p,q}(\upmu) \cong B_{p,q}\!\big((-1)^q p^{-\frac{q}{p}} q^{-1}\upmu\big)$, in particular $A_{2,2}(\upmu)\cong B_{2,2}(\upmu/4)$, 
it follows from \ref{610} that the algebras listed in the statement are precisely the finite-dimensional ones, up to isomorphism.
We now show that the algebras within each set are derived equivalent.

(1) Since
\[
A_{2,2}(\upmu)\cong B_{2,2}(\frac{\upmu}{4})=\Jac\!\left(\x^2+\x\y+\frac{\upmu}{4}\y^2\right),
\]
the claim follows from \ref{611}(1). The same reference also shows that there are no further basic algebras in this derived equivalence class.

(2)
If $(p,q)\neq(2,2)$, then by the proof of \ref{cor:A3norm}(3) we have
$B_{p,q}\!\big((-1)^q p^{-\frac{q}{p}} q^{-1}\big)\cong B_{p,q}(1)$, hence $A_{p,q}(1)\cong B_{p,q}(1)$, and similarly $A_{q,p}(1)\cong B_{q,p}(1)$.
Therefore the stated pair is derived equivalent by \ref{611}(2)(3), which also implies that only finitely many further basic algebras occur in this derived equivalence class.

Finally, the algebras of the sets in \textnormal{(1)--(2)} are all mutually not derived equivalent by \ref{611}.
\end{proof}

\paragraph{Geometric interpretation of the $\mathfrak{S}_3$-symmetry.}\label{subsection:S3}

We now explain that the orbit structures appearing in \ref{611} and \ref{631},
which were introduced there in a purely ad hoc manner,
admit a uniform conceptual interpretation in terms of the $\mathfrak S_3$-action
arising from elliptic curves in Legendre form.
Conceptually, this reflects the fact that both the flopping geometry of $cA_3$ singularities and the Legendre form of elliptic curves are governed by configurations of four points on $\mathbb{P}^1$, up to projective transformation. We now make this connection precise.

We begin by recalling some standard facts about elliptic curves; see \cite[\S4.4]{H1}. Any elliptic curve $E$ over $\C$ can be written in Legendre form
\[
E \colon y^2 = x(x-1)(x-\uplambda),
\]
where $\uplambda \in \C \setminus \{0,1\}$. This realises $E$ as a double cover $E \to \mathbb{P}^1$ branched over the four points $0,\;1,\;\uplambda,\;\infty \in \mathbb{P}^1$. The $j$-invariant of $E$ is defined by
\begin{align}
j(\uplambda)= 2^8\frac{(\uplambda^2-\uplambda+1)^3}{\uplambda^2(\uplambda-1)^2}. \label{eq:j} 
\end{align}
Two elliptic curves over $\C$ are isomorphic if and only if they have the same $j$-invariant, and every element of $\C$ arises as the $j$-invariant of some elliptic curve.
%There is a one-to-one correspondence between the set of elliptic curves over $\C$, up to isomorphism, and the elements of $\C$, given by $E \rightarrow j(E)$. 

There is a natural action of the symmetric group $\mathfrak{S}_3$ on $\C \setminus \{0,1\}$, defined as follows. Given $\uplambda \in \C \setminus \{0,1\}$, we permute the three points $0,1,\uplambda$ according to an element $\sigma \in \mathfrak{S}_3$, and then apply a linear fractional transformation of $\mathbb{P}^1$ sending the first two back to $0$ and $1$.
The image of the third point defines $\sigma(\uplambda)$.
The resulting $\mathfrak{S}_3$-orbit of $\uplambda$ consists of
\[
\uplambda,\
1-\uplambda,\
\frac{1}{1-\uplambda},\
\frac{\uplambda}{\uplambda-1},\
\frac{\uplambda-1}{\uplambda},\
\frac{1}{\uplambda}.
\]
The group $\mathfrak{S}_3$ is generated by the transformations
\[
\uplambda \mapsto  1-\uplambda
\quad\text{and}\quad
\uplambda \mapsto \frac{1}{\uplambda}.
\]
Elements lying in the same orbit determine isomorphic elliptic curves, and each orbit coincides with a fibre of the $j$-invariant.

To relate the elliptic curve parameter $\uplambda$ to our setting, we consider the Type~$A$ potential
\[
f \colonequals \frac{1}{2}\x^2 - \x\y + \frac{1}{2}\uplambda \y^2
\;\cong\;
\x^2 + \x\y + \frac{1}{4}\uplambda \y^2, \quad \uplambda \in \C \setminus \{0,1\},
\]
which is precisely the potential $A_{2,2}(\uplambda)$ appearing in \ref{631}.
The restriction $\uplambda \in \C \setminus \{0,1\}$ agrees with the parameter range for elliptic curves in Legendre form.

By \ref{715} the potential $f$ is realised by the crepant resolution $\uppi \colon \scrX \rightarrow \Spec \scrR$ where 
\[
\scrR = \frac{\C \lal u, v, x, y \ral}{uv-(x-y)xy(x-\uplambda y)},
\]
with associated $\scrR$-module $M = \scrR \oplus(u,x-y)\oplus(u,(x-y)x)\oplus(u,(x-y)xy)$. Since $\uplambda \neq 0,1$, the singularity $\scrR$ is isolated by \ref{610}.
By \ref{36}, $\scrX$ is given pictorially by
\[
\begin{array}{ccc}
\begin{array}{c}
\scrX
\end{array} &
\begin{array}{c}
\begin{tikzpicture}[xscale=0.6,yscale=0.6]
\draw[black] (-0.1,-0.04,0) to [bend left=25] (2.1,-0.04,0);
\draw[black] (1.9,-0.04,0) to [bend left=25] (4.1,-0.04,0);

\draw[black] (3.9,-0.04,0) to [bend left=25] (6.1,-0.04,0);

\node at (1,0.6,0) {$\scriptstyle \Curve_{1}$};
\node at (3,0.6,0) {$\scriptstyle \Curve_{2}$};
\node at (5,0.6,0) {$\scriptstyle \Curve_{3}$};
\filldraw [black] (0,0,0) circle (1pt);
\filldraw [black] (2,0,0) circle (1pt);
\filldraw [black] (4,0,0) circle (1pt);
\filldraw [black] (6,0,0) circle (1pt);

\node at (0,-0.4,0) {$\scriptstyle x-y$};
\node at (2,-0.4,0) {$\scriptstyle x$};
\node at (4,-0.4,0) {$\scriptstyle y$};
\node at (6,-0.4,0) {$\scriptstyle x-\uplambda y$};

\end{tikzpicture} 
\end{array}
\end{array}
\]
If we evaluate the ratios $x/y$ at the defining linear factors $x-y,\ x,\ y,\ x-\uplambda y$, we obtain the values $1,\ 0,\ \infty,\ \uplambda$, which coincide exactly with the branch points of the elliptic curve in Legendre form.

We now follow the notation of \ref{notation:derived potential} and analyse how flops of $\scrX$ act on the parameter $\uplambda$.
For $1\leq i\leq 3$, we write $f^i$ for the Type~$A$ potential whose Jacobi algebra is isomorphic to the contraction algebra $\Lambda_{\mathrm{con}}(\uppi^i)$ of the crepant resolution $\uppi^i$ obtained by flopping the exceptional curve $\Curve_i$.
When $\scrR$ is isolated (equivalently, $\Jac(f)$ is finite-dimensional), by \ref{thm:august} we have the derived equivalence of potentials $f^i \simeq f$.

Consider the crepant resolution $\uppi^1 \colon \scrX^1 \rightarrow \Spec \scrR$ obtained by flopping the exceptional curve $\Curve_1$ of $\scrX$. The associated $\scrR$-module is $M^1= \scrR \oplus(u,x)\oplus(u,x(x-y))\oplus(u,x(x-y)y)$, and the corresponding contraction algebra satisfies $\Lambda_{\mathrm{con}}(\uppi^1) \cong \Jac(f^1)$ and $f \simeq f^1$.
By \ref{36}, $\scrX^1$ is given pictorially by
\[
\begin{array}{ccc}
\begin{array}{c}
\scrX^1
\end{array} &
\begin{array}{c}
\begin{tikzpicture}[xscale=0.6,yscale=0.6]
\draw[black] (-0.1,-0.04,0) to [bend left=25] (2.1,-0.04,0);
\draw[black] (1.9,-0.04,0) to [bend left=25] (4.1,-0.04,0);

\draw[black] (3.9,-0.04,0) to [bend left=25] (6.1,-0.04,0);

\node at (1,0.6,0) {$\scriptstyle \Curve_{1}$};
\node at (3,0.6,0) {$\scriptstyle \Curve_{2}$};
\node at (5,0.6,0) {$\scriptstyle \Curve_{3}$};
\filldraw [black] (0,0,0) circle (1pt);
\filldraw [black] (2,0,0) circle (1pt);
\filldraw [black] (4,0,0) circle (1pt);
\filldraw [black] (6,0,0) circle (1pt);

\node at (0,-0.4,0) {$\scriptstyle x$};
\node at (2,-0.4,0) {$\scriptstyle x-y$};
\node at (4,-0.4,0) {$\scriptstyle y$};
\node at (6,-0.4,0) {$\scriptstyle x-\uplambda y$};

\end{tikzpicture} 
\end{array}
\end{array}
\]
Applying the linear transformation of variables $\varphi_1 \colon x \mapsto x-y,\  y \mapsto -y,\  u \mapsto -u$ induces an isomorphism $\scrR \xrightarrow{\sim} \scrS_1$, where
\[
\scrS_1 \colonequals \frac{\C \lal u, v, x, y \ral}{uv-(x-y)xy(x-(1-\uplambda) y)}.
\]
Let $N_1 \colonequals \scrS_1 \otimes_{\scrR} M^1$. Then $N_1 \cong \scrS_1 \oplus(u,x-y)\oplus(u,(x-y)x)\oplus(u,(x-y)xy)$, and the corresponding crepant resolution $\uppi_1 \colon \scrY_1 \rightarrow \Spec \scrS_1$ is identified with $\uppi^1$ under the isomorphism $\scrR \cong \scrS_1$. In particular,
$\Lambda_{\mathrm{con}}(\uppi_1) \cong \Lambda_{\mathrm{con}}(\uppi^1)$. By \ref{36}, $\scrY_1$ is given pictorially by 
\[
\begin{array}{ccc}
\begin{array}{c}
\scrY_1
\end{array} &
\begin{array}{c}
\begin{tikzpicture}[xscale=0.6,yscale=0.6]
\draw[black] (-0.1,-0.04,0) to [bend left=25] (2.1,-0.04,0);
\draw[black] (1.9,-0.04,0) to [bend left=25] (4.1,-0.04,0);

\draw[black] (3.9,-0.04,0) to [bend left=25] (6.1,-0.04,0);

\node at (1,0.6,0) {$\scriptstyle \Curve_{1}$};
\node at (3,0.6,0) {$\scriptstyle \Curve_{2}$};
\node at (5,0.6,0) {$\scriptstyle \Curve_{3}$};
\filldraw [black] (0,0,0) circle (1pt);
\filldraw [black] (2,0,0) circle (1pt);
\filldraw [black] (4,0,0) circle (1pt);
\filldraw [black] (6,0,0) circle (1pt);

\node at (0,-0.4,0) {$\scriptstyle x-y$};
\node at (2,-0.4,0) {$\scriptstyle x$};
\node at (4,-0.4,0) {$\scriptstyle y$};
\node at (6,-0.4,0) {$\scriptstyle x-(1-\uplambda) y$};

\end{tikzpicture} 
\end{array}
\end{array}
\]
Thus, by \ref{715}, we have
$\Lambda_{\mathrm{con}}(\uppi_1)
\cong
\Jac\big(\x^2+\x\y+\frac{1}{4}(1-\uplambda)\y^2\big)$.
Since $\Lambda_{\mathrm{con}}(\uppi^1) \cong \Jac(f^1)$, it follows that
\[
f
=
\x^2+\x\y+\frac{1}{4}\uplambda\y^2
\;\simeq\;
\x^2+\x\y+\frac{1}{4}(1-\uplambda)\y^2
\;\cong\;
f^1.
\]
Thus, flopping the curve $\Curve_1$ induces the transformation $\uplambda \mapsto 1-\uplambda$ on the parameter.

%This parallels to the first generator of the group $\mathfrak{S}_{3}$ for the elliptic curves: $\uplambda \mapsto 1- \uplambda$.

By the symmetry of the exceptional curves $\Curve_1$ and $\Curve_3$, an entirely analogous argument applies to the flop of $\Curve_3$.
Consequently, we obtain
\[
f
=
\x^2+\x\y+\frac{1}{4}\uplambda \y^2
\;\simeq\;
\x^2+\x\y+\frac{1}{4}(1-\uplambda)\y^2
\;\cong\;
f^3.
\]
At the level of parameters, this corresponds to the transformation $\uplambda \mapsto 1-\uplambda$.

Consider the crepant resolution $\uppi^2 \colon \scrX^2 \rightarrow \Spec \scrR$ obtained by flopping the exceptional curve $\Curve_2$ of $\scrX$. The associated $\scrR$-module is $M^2 = \scrR \oplus(u,x-y)\oplus(u,(x-y)y)\oplus(u,(x-y)yx)$, and the corresponding contraction algebra satisfies $\Lambda_{\mathrm{con}}(\uppi^2) \cong \Jac(f^2)$ and $f \simeq f^2$.
By \ref{36}, $\scrX^2$ is given pictorially by
\[
\begin{array}{ccc}
\begin{array}{c}
\scrX^2
\end{array} &
\begin{array}{c}
\begin{tikzpicture}[xscale=0.6,yscale=0.6]
\draw[black] (-0.1,-0.04,0) to [bend left=25] (2.1,-0.04,0);
\draw[black] (1.9,-0.04,0) to [bend left=25] (4.1,-0.04,0);

\draw[black] (3.9,-0.04,0) to [bend left=25] (6.1,-0.04,0);

\node at (1,0.6,0) {$\scriptstyle \Curve_{1}$};
\node at (3,0.6,0) {$\scriptstyle \Curve_{2}$};
\node at (5,0.6,0) {$\scriptstyle \Curve_{3}$};
\filldraw [black] (0,0,0) circle (1pt);
\filldraw [black] (2,0,0) circle (1pt);
\filldraw [black] (4,0,0) circle (1pt);
\filldraw [black] (6,0,0) circle (1pt);

\node at (0,-0.4,0) {$\scriptstyle x-y$};
\node at (2,-0.4,0) {$\scriptstyle y$};
\node at (4,-0.4,0) {$\scriptstyle x$};
\node at (6,-0.4,0) {$\scriptstyle x-\uplambda y$};

\end{tikzpicture} 
\end{array}
\end{array}
\]
Since $\uplambda \neq 0$, applying the linear transformation of variables $\varphi_2 \colon x \mapsto y,\ y \mapsto x,\ u \mapsto \uplambda u$ induces an isomorphism $\scrR \xrightarrow{\sim} \scrS_2$, where
\[
\scrS_2 \colonequals \frac{\C \lal u, v, x, y \ral}{uv-(x-y)xy(x- \frac{1}{\uplambda} y)}.
\]
Let $N_2 \colonequals \scrS_2 \otimes_{\scrR} M^2$.
Then $N_2 \cong
\scrS_2 \oplus (u,x-y) \oplus (u,(x-y)x) \oplus (u,(x-y)xy)$,
and the corresponding crepant resolution
$\uppi_2 \colon \scrY_2 \rightarrow \Spec \scrS_2$
is identified with $\uppi^2$ under the induced isomorphism $\scrR \cong \scrS_2$.
In particular, $\Lambda_{\mathrm{con}}(\uppi_2) \cong \Lambda_{\mathrm{con}}(\uppi^2)$. By \ref{36}, $\scrY_2$ is given pictorially by 
\[
\begin{array}{ccc}
\begin{array}{c}
\scrY_2
\end{array} &
\begin{array}{c}
\begin{tikzpicture}[xscale=0.6,yscale=0.6]
\draw[black] (-0.1,-0.04,0) to [bend left=25] (2.1,-0.04,0);
\draw[black] (1.9,-0.04,0) to [bend left=25] (4.1,-0.04,0);

\draw[black] (3.9,-0.04,0) to [bend left=25] (6.1,-0.04,0);

\node at (1,0.6,0) {$\scriptstyle \Curve_{1}$};
\node at (3,0.6,0) {$\scriptstyle \Curve_{2}$};
\node at (5,0.6,0) {$\scriptstyle \Curve_{3}$};
\filldraw [black] (0,0,0) circle (1pt);
\filldraw [black] (2,0,0) circle (1pt);
\filldraw [black] (4,0,0) circle (1pt);
\filldraw [black] (6,0,0) circle (1pt);

\node at (0,-0.4,0) {$\scriptstyle x-y$};
\node at (2,-0.4,0) {$\scriptstyle x$};
\node at (4,-0.4,0) {$\scriptstyle y$};
\node at (6,-0.4,0) {$\scriptstyle x-\frac{1}{\uplambda}y$};

\end{tikzpicture} 
\end{array}
\end{array}
\]
Thus, by \ref{715}, we obtain
$\Lambda_{\mathrm{con}}(\uppi_2)
\cong
\Jac\big(\x^2+\x\y+\frac{1}{\uplambda}\y^2\big)$.
Since $\Lambda_{\mathrm{con}}(\uppi^2) \cong \Jac(f^2)$, it follows that
\[
f
=
\x^2+\x\y+\frac{1}{4}\uplambda\y^2
\;\simeq\;
\x^2+\x\y+\frac{1}{\uplambda}\y^2
\;\cong\;
f^2.
\]
At the level of parameters, this corresponds to the transformation
$\uplambda \mapsto \frac{1}{\uplambda}$.

Note that the above provides an alternative proof of \ref{622}(1).
Recall from the proof of \ref{611} that, in order to obtain all basic algebras derived equivalent to $\Jac(f)$, it suffices to consider all iterated flops of the crepant resolution $\uppi$.

As shown above, flopping the exceptional curves $\Curve_1$, $\Curve_2$, and $\Curve_3$ induces the transformations 
$\uplambda \mapsto 1-\uplambda$
and $\uplambda \mapsto \frac{1}{\uplambda}$, which generate the full $\mathfrak{S}_3$-action on $\C \setminus \{0,1\}$. Consequently, the derived equivalence class of the potential $f = \x^2+\x\y+\frac{1}{4}\uplambda \y^2$ consists precisely of the six-element orbit
\[
\left\{
\x^2+\x\y+\frac{1}{4}\uplambda' \y^2
\;\middle|\;
\uplambda' \in
\{
\uplambda,\;
1-\uplambda,\;
\frac{1}{1-\uplambda},\;
\frac{\uplambda}{\uplambda-1},\;
\frac{\uplambda-1}{\uplambda},\;
\frac{1}{\uplambda}
\}
\right\},
\]
where $\uplambda \in \C \setminus \{0,1\}$.

Therefore, in both the elliptic curve setting and the potential-theoretic setting, the parameter $\uplambda$ is acted on by a permutation of the distinguished points followed by a linear transformation of coordinates.
In each case this induces the same group action on $\uplambda$, generated by
\[
\uplambda \mapsto 1-\uplambda
\quad\text{and}\quad
\uplambda \mapsto \frac{1}{\uplambda}.
\]
Consequently, elliptic curves and Type~$A$ potentials have identical orbits in the parameter space.
Therefore we give a result parallel to the classical description for elliptic curves; see \cite[\S4.4]{H1}.
\begin{cor}
Let
\[
F \colonequals 
\left\{
\x^2+\x\y+\frac{1}{4}\uplambda \y^2
\;\middle|\;
\uplambda \in \C \setminus \{0,1\}
\right\},
\]
and for $f \in F$ define
$j(f) \colonequals j(\uplambda)$,
where $j(\uplambda)$ is given by \eqref{eq:j}.
Then:
\begin{enumerate}
\item the value $j(f)$ depends only on the derived equivalence class of $f$;
\item two potentials $f,f' \in F$ are derived equivalent if and only if $j(f)=j(f')$;
\item every element of $\C$ occurs as the $j$-invariant of some potential in $F$.
\end{enumerate}
Consequently, there is a one-to-one correspondence between derived equivalence classes of potentials in $F$ and elements of $\C$, given by $f \mapsto j(f)$.
Moreover, the fibre over $j(\uplambda)$ consists precisely of the $\mathfrak{S}_3$-orbit
\[
\{
\uplambda,\;
1-\uplambda,\;
\frac{1}{1-\uplambda},\;
\frac{\uplambda}{\uplambda-1},\;
\frac{\uplambda-1}{\uplambda},\;
\frac{1}{\uplambda}
\}.
\]
\end{cor}

It is natural to ask whether the function
\[
F \rightarrow \C,
\qquad
f \mapsto j(f),
\]
admits a natural extension to the boundary values $\uplambda \in \{0,1\}$.
Equivalently, one may ask whether this correspondence extends to a map
\[
\Bar{F}
\colonequals
\{
\x^2+\x\y+\frac{1}{4}\uplambda \y^2
\mid
\uplambda \in \C
\}
\longrightarrow
\mathbb{P}^1.
\]

From the geometric perspective developed above, this question reduces to understanding whether the potentials
\[
\x^2+\x\y \ \text{and } \x^2+\x\y+\frac{1}{4}\y^2,
\]
which arise from crepant resolutions of non-isolated $cA_3$ singularities (see \ref{610}), are derived equivalent.

\section{Appendix}\label{Appendix}
The purpose of this section is to prove \ref{thm31}, which gives a quiver presentation \eqref{Q2} of $\End_{\scrS}(N)$.
This is used to prove the geometric realization in $\S \ref{GR}$.  

We first introduce the reduction system and Diamond Lemma.
For a quiver $Q$, we denote the set of paths of degree $i$ by $Q_i$ where the degree is with respect to the path length, and write $Q_{\geq i}= \bigcup_{j \geq i} Q_j$ for the set of paths of degree $\geq i$. 
\begin{definition}\cite[\S 1]{B2}\label{def: RS}
Given a field $k$, a \emph{reduction system} $R$ for the path algebra $kQ$ is a set of pairs
\begin{equation*}
R=\left\{\left(s, \varphi_{s}\right) \mid s \in S \text { and } \varphi_{s} \in k Q\right\}
\end{equation*}
where
\begin{enumerate}
\item $S$ is a subset of $Q_{\geq 2}$ such that $s$ is not a sub-path of $s^{\prime}$ when $s \neq s^{\prime} \in S$.
\item For all $s \in S$, $s$ and $\varphi_{s}$ have the same head and tail.
\item For each pair $(s, \varphi_{s}) \in R$, $\varphi_{s}$ is \emph{irreducible}, meaning we can write $\varphi_{s}= \sum_{i} \uplambda_i p_i$ where each $\uplambda \in  k^{\times}$, and each $p_i$ does not contain elements in $S$ as a sub-path.
\end{enumerate}
\end{definition}
		
\begin{definition}
Let $(s, \varphi_{s}) \in R$ and let $q,\ r$ be two paths such that $qsr \neq 0$ in $kQ$. Following \cite[\S 2]{CS} a \emph{basic reduction} $\mathfrak{r}_{q,s,r}: kQ \rightarrow kQ$ is defined as the $k$-linear map uniquely determined by the following: for any path $p$ 
\begin{equation*}
\mathfrak{r}_{q, s, r}(p)= 
\begin{cases}q \varphi_{s} r & \text { if } p=qsr \\ 
p & \text { if } p \neq qsr\end{cases}
\end{equation*}
\end{definition}
Sometimes we write $p \rightarrow q \varphi_{s} r$ instead of $\mathfrak{r}_{q, s, r}(p)=q \varphi_{s} r$ for simplicity.

\begin{definition}
A \emph{reduction} $\mathfrak{r}$ is defined as a composition $\mathfrak{r}_{q_n, s_n, r_n} \circ \cdots \circ \mathfrak{r}_{q_2, s_2, r_2} \circ \mathfrak{r}_{q_1, s_1, r_1}$ of basic reductions for some $n \geq 1$. We say a path $p$ is \emph{reduction-finite} if for any infinite sequence of reductions $(\mathfrak{r}_i)_{i \in \mathbb{N}}$ there exists $n_0 \in \mathbb{N}$ such that for all $n \geq n_0$, we have $\mathfrak{r}_{n} \circ \cdots \circ \mathfrak{r}_{2} \circ \mathfrak{r}_{1}(p)=\mathfrak{r}_{n_{0}} \circ \cdots \circ \mathfrak{r}_{2} \circ \mathfrak{r}_{1}(p)$.    
\end{definition}

A path may contain many sub-paths in $S$, so one may obtain different elements in $kQ$ after performing different reductions.
\begin{definition}\cite[\S 1]{B2}
Let $R$ be a reduction system for $kQ$. A path $pqr \in Q_{\geq 3}$ for $p, q, r \in Q_{\geq 1}$ is an \emph{overlap ambiguity} of $R$ if $pq,\ qr \in S$.
%A path $pqt \in Q_{\geq 3}$ for $p, q, t \in Q_{\geq 1}$ is an inclusion ambiguity of $R$ if $q, pqt \in S$ (respectively, inclusion) (respectively, $r(p\varphi_{q}t) = r^{\prime}(\varphi_{pqt})$), \cite[3.10]{BW}
We say that an overlap ambiguity $pqr$ with $pq=s$ and $qr=s'$ is \emph{resolvable} if $\varphi_{s}r$ and $p\varphi_{s'}$ are reduction-finite and $\mathfrak{r}(\varphi_{s}t) = \mathfrak{r^{\prime}}(p\varphi_{s'})$ for some reductions $\mathfrak{r},\ \mathfrak{r^{\prime}}$.
\end{definition}

\begin{theorem} \textnormal{(Diamond Lemma)}\label{Diamond Lemma} \cite[1.2]{B2}
Let $R = \{(s, \varphi_s)\}_{s \in S}$ be a reduction system for $kQ$. Let $I=\left\langle s-\varphi_{s}\right\rangle_{s \in S} \subset kQ$ be the corresponding two-sided ideal and write $A = kQ/I$ for the quotient algebra. If $R$ is reduction-finite, then the following are equivalent:
\begin{enumerate}
\item All overlap ambiguities of $R$ are resolvable.
\item The image of the set of irreducible paths under the projection $kQ \rightarrow A$ forms a $k$-basis of $A$.
\end{enumerate}
\end{theorem}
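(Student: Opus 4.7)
The plan is to prove this via the classical framework of abstract rewriting systems applied to $kQ$. The reduction rules generate a ``one-step reduction'' relation $x \to y$ whenever $y = \mathfrak{r}_{q,s,r}(x)$ for some basic reduction, and reduction-finiteness furnishes the termination needed for well-founded induction. Every element thus admits at least one normal form (an irreducible reachable by iterated reductions), so the main task for $(1) \Rightarrow (2)$ is to upgrade the local confluence hypothesis (resolvability of overlap ambiguities) to global confluence, guaranteeing uniqueness.

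First I would prove that irreducible paths span $A$: given $x \in kQ$, any terminating reduction sequence yields an irreducible $\bar{x}$ with $x - \bar{x} \in I = \langle s - \varphi_s \rangle_{s \in S}$, so $x$ and $\bar{x}$ represent the same class in $A$. The substantive step is a Newman-type lemma: reduction-finiteness plus resolvability of overlap ambiguities implies global confluence. I would argue by well-founded induction on the reduction order. Given two basic reductions $x \to y_1$ via $\mathfrak{r}_{q_1, s_1, r_1}$ and $x \to y_2$ via $\mathfrak{r}_{q_2, s_2, r_2}$ acting on monomials of $x$, there are three cases. If they act on different monomials, the rewrites commute trivially. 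If they act on the same monomial $p$ at disjoint positions (so neither sub-path meets the other), the rewrites again commute after reindexing. If the two sub-paths overlap inside $p$, then condition (1) of Definition~\ref{def: RS} forbids one of $s_1, s_2$ from being contained in the other, so the overlap is genuinely an overlap ambiguity in the sense of the theorem; resolvability provides a common reduct, and the inductive hypothesis then extends this to a common normal form of $y_1$ and $y_2$.

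Global confluence together with termination yields a well-defined $k$-linear normal-form map $\nu \colon kQ \to \mathrm{Irr}$, where $\mathrm{Irr}$ is the $k$-span of irreducible paths. By construction $\nu$ fixes $\mathrm{Irr}$, annihilates each generator $s - \varphi_s$ of $I$, and hence descends to a $k$-linear splitting of the composition $\mathrm{Irr} \hookrightarrow kQ \twoheadrightarrow A$. This splitting forces the image of $\mathrm{Irr}$ in $A$ to be linearly independent, completing $(1) \Rightarrow (2)$. For the converse $(2) \Rightarrow (1)$, suppose the irreducibles project to a $k$-basis of $A$. For an overlap ambiguity $pqr$ with $pq = s$ and $qr = s'$, both $\varphi_s r$ and $p \varphi_{s'}$ equal the class of $pqr$ in $A$; reduction-finiteness lets us rewrite each to an $\mathrm{Irr}$-combination, and these two combinations must coincide because $\mathrm{Irr}$ injects into $A$, producing the required common reduct.

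The main obstacle is the combinatorial bookkeeping in the confluence step: one must verify that the only genuinely new obstruction to local confluence is an overlap ambiguity, and that inclusion-type ambiguities are ruled out by Definition~\ref{def: RS}(1), while ``parallel'' rewrites at disjoint positions of the same path always commute. Once this case analysis is clean, the inductive extension to global confluence is standard, and everything else is formal.
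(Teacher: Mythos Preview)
The paper does not give its own proof of this statement; it is quoted as Bergman's Diamond Lemma with the citation \cite[1.2]{B2} and used as a black box in the appendix. So there is no in-paper argument to compare against. Your outline is the standard Newman-style proof of Bergman's result and is essentially correct: termination from reduction-finiteness, local confluence from the case analysis (disjoint monomials, disjoint sub-paths, genuine overlap), inclusion ambiguities excluded by Definition~\ref{def: RS}(1), and then the normal-form splitting to establish linear independence.

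One small point to tighten: you say $\nu$ ``annihilates each generator $s - \varphi_s$ of $I$, and hence descends'' to a map on $A = kQ/I$. Since $\nu$ is only $k$-linear and not multiplicative, vanishing on the ideal generators $s - \varphi_s$ does not by itself force $\nu|_I = 0$; you need $\nu(q(s-\varphi_s)r) = 0$ for all paths $q,r$. This is immediate because $qsr \to q\varphi_s r$ is a single basic reduction, so both sides have the same normal form, but it should be said explicitly before invoking the splitting.
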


Consider the following quiver $Q$ with relations $I$.
\[
\begin{tikzpicture}[bend angle=8, looseness=1.2]
\node (a) at (0,0)  {$1$};
\node (b) at (2.5,0)  {$2$};
\node (c) at (5,0) {$3$};
\node (d) at (6.5,0) {$\hdots \hdots$};
\node (e) at (8,0) {$n-1$};
\node (f) at (11,0)  {$n$};
\node (g) at (5.5,-3) {$0$};

\draw[<-,bend right,looseness=0.7] (g) to node [gap] {$\scriptstyle b_{0}$} (a);
\draw[->,bend left,looseness=0.7] (g) to node [gap] {$\scriptstyle a_{0}$} (a);
\draw[<-,bend right] (a) to node [gap] {$\scriptstyle b_{1}$} (b);
\draw[->,bend left] (a) to node [gap] {$\scriptstyle a_{1}$} (b);
\draw[<-,bend right] (b) to node[gap] {$\scriptstyle b_{2}$} (c);
\draw[->,bend left] (b) to node[gap] {$\scriptstyle a_{2}$} (c);
\draw[<-,bend right] (e) to node[gap] {$\scriptstyle b_{n-1}$} (f);
\draw[->,bend left] (e) to node[gap] {$\scriptstyle a_{n-1}$} (f);
\draw[<-,bend right,looseness=0.7] (g) to node[gap] {$\scriptstyle a_{n}$} (f);
\draw[->,bend left,looseness=0.7] (g) to node[gap] {$\scriptstyle b_{n}$} (f);

\draw[<-]  (g) edge [in=-120,out=-55,loop,looseness=8] node[below] {$\scriptstyle l_{0,0}, l_{0,1}, \dots,  l_{0,n}$} (g);
\draw[<-]  (a) edge [in=120,out=55,loop,looseness=8] node[above] {$\scriptstyle l_{1,0}, l_{1,1}, \dots,  l_{1,n}$} (a);
\draw[<-]  (b) edge [in=120,out=55,loop,looseness=8] node[above] {$\scriptstyle l_{2,0}, l_{2,1}, \dots,  l_{2,n}$} (b);
\draw[<-]  (c) edge [in=120,out=55,loop,looseness=8] node[above] {$\scriptstyle l_{3,0}, l_{3,1}, \dots,  l_{3,n}$} (c);
\draw[<-]  (e) edge [in=120,out=55,loop,looseness=8] node[above] {$\scriptstyle l_{n-1,0}, l_{n-1,1}, \dots,  l_{n-1,n}$} (e);
\draw[<-]  (f) edge [in=120,out=55,loop,looseness=8] node[above] {$\scriptstyle l_{n,0}, l_{n,1}, \dots,  l_{n,n}$} (f);
\end{tikzpicture}
\]

\begin{equation}\label{Q2}
 I \colonequals
  \begin{cases}
      l_{t,i}a_{t}=a_{t}l_{t+1,i}, \ l_{t+1,i}b_{t}=b_{t}l_{t,i}, \ l_{t,i}l_{t,j}=l_{t,j}l_{t,i}, \\
      l_{t,t}=a_{t}b_{t}, \ l_{t+1,t}=b_{t}a_{t} \text{ for any }t \in \mathbb{Z}/(n+1) \text{ and } 0 \leq i, j \leq n.
  \end{cases}  
\end{equation}

%where $i,j,k = 0 \dots n$ and we consider the indexes of $a, e, l$ in $\mathbb{Z}/(n+1)$. Hence $l_{n+1,n} = l_{0,n}$.\\
Then define the reduction system $R$ for the path algebra $kQ$ to be 
\begin{align}\label{rs}
 &  R \colonequals \{(l_{t,i}a_t, a_tl_{t+1,i}),
 (l_{t+1,i}b_t, b_tl_{t,i}),
 (a_tb_t, l_{t,t}),
 (b_ta_t, l_{t+1,t}),
 (l_{tj}l_{ti}, l_{ti}l_{tj}) \mid  \notag \\
& \textnormal{ for any } 0  \leq i \leq n,\ t \in \mathbb{Z}/(n+1) \text{ and } j >i\}.  
\end{align}
%It is clear that $R$ is a reduction system for $kQ$. 
We next prove that $R$ is reduction-finite and all overlap ambiguities of $R$ are resolvable.

\begin{lemma}\label{lemma:reduction1}
The reduction system $R$ \eqref{rs} is reduction-finite.
\end{lemma}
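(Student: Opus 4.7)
The plan is to exhibit a well-founded monovariant on paths that strictly decreases under every basic reduction of $R$. Since in each pair $(s,\varphi_s)\in R$ the target $\varphi_s$ is itself a single path, every basic reduction $\mathfrak{r}_{q,s,r}$ sends a path either to itself or to another single path, so it suffices to define a function $\Phi$ from the set of paths of $Q$ to a well-ordered set and check that $\Phi(q\varphi_s r)<\Phi(qsr)$ for each reduction in $R$.

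Concretely, for a path $p=x_1 x_2\cdots x_k$ I will set
\[
\Phi(p)\colonequals\bigl(N(p),\ S(p),\ \mathrm{Inv}(p)\bigr)\in\N^3,
\]
ordered lexicographically, where:
\begin{itemize}
\item $N(p)$ is the number of indices $j$ with $x_j\in\{a_t,b_t\mid t\in\Z/(n+1)\}$, i.e.\ the count of non-loop letters;
\item $S(p)=\sum_{j:\,x_j\text{ is a loop}}(k-j)$ is the sum, over loop positions, of their distance to the right end;
\item $\mathrm{Inv}(p)$ is the number of pairs $j_1<j_2$ with $x_{j_1}=l_{t,a}$, $x_{j_2}=l_{t,b}$ at the same vertex $t$ and $a>b$.
\end{itemize}

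Now I would check the three types of reduction in turn. The pairs $a_tb_t\mapsto l_{t,t}$ and $b_ta_t\mapsto l_{t+1,t}$ replace two non-loop letters by one loop, so $N$ drops by $2$; the changes to $S$ and $\mathrm{Inv}$ are irrelevant because the first coordinate already decreases. The pairs $l_{t,i}a_t\mapsto a_t l_{t+1,i}$ and $l_{t+1,i}b_t\mapsto b_t l_{t,i}$ keep the path length and the value of $N$ fixed, leave the positions of every other letter unchanged, and move exactly one loop one step to the right, decreasing its contribution to $S$ by $1$; hence $S$ drops by $1$. Finally, $l_{t,j}l_{t,i}\mapsto l_{t,i}l_{t,j}$ for $j>i$ swaps the positions of two loops at the same vertex, so $N$ and $S$ are unchanged but one same-vertex inversion is removed, decreasing $\mathrm{Inv}$ by $1$.

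Consequently $\Phi$ strictly decreases under every basic reduction that actually modifies the path, and is unchanged otherwise. Since $(\N^3,\le_{\mathrm{lex}})$ is well-founded, no infinite sequence of basic reductions applied to any single path $p$ can produce infinitely many strict changes, proving $R$ is reduction-finite. The only step that requires a little care is the verification that $S$ is preserved by the sorting rule $l_{t,j}l_{t,i}\mapsto l_{t,i}l_{t,j}$: this follows at once because the two positions involved, and hence the multiset of distances-to-the-right contributed by loops, are the same before and after. I expect no other obstacle.
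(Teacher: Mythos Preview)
Your proof is correct and follows essentially the same approach as the paper: both identify that the three kinds of basic reduction respectively shorten the path, push non-loop arrows leftward (equivalently, push loops rightward), and sort the loop indices. The paper argues this informally, whereas you package it as an explicit lexicographic monovariant $\Phi=(N,S,\mathrm{Inv})$ on $\N^3$, which makes the well-foundedness transparent; this is a clean formalisation of the same idea.
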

\begin{proof}
For any path $p$ and any infinite sequence of reductions $(\mathfrak{r}_i)_{i \in \mathbb{N}}$, if there does not exist $n_0 \in \mathbb{N}$ such that for all $n \geq n_0$ we have $\mathfrak{r}_{n} \circ \cdots \circ \mathfrak{r}_{1}(p)=\mathfrak{r}_{n_{0}} \circ \cdots \circ \mathfrak{r}_{1}(p)$, then there must exist infinite basic reductions that can be applied to $p$ consecutively. We prove that this is impossible. There are three types of path pairs in $R$:
\begin{enumerate}
    \item $(a_tb_t, l_{t,t}), (b_ta_t, l_{t+1,t})$.
    \item $(l_{t,i}a_t, a_tl_{t+1,i}), (l_{t+1,i}b_t, b_tl_{t,i})$.
    \item $(l_{t,j}l_{t,i}, l_{t,i}l_{t,j})$ for $j>i$.
\end{enumerate}
The type $(1)$ basic reduction decreases the path degree by one. 
The type $(2)$ basic reduction moves $a_t$ or $b_t$ one step left, and $l_{t,i}$ or $l_{t+1, i}$ one step right in the path. 
Similarly, the type $(3)$ basic reduction moves $l_{t, i}$ one step left, and $l_{t, j}$ one step right in the path for $j>i$.

Thus, any composition of these three types either decreases the path degree or moves $a_t$, $b_t$ to the left, $l_{t,j}$ with the larger $j$ to the right. Since the path degree of $p$ is finite, we can only apply the basic reductions of these three types to $p$ finitely many times. 
\end{proof}

\begin{lemma}\label{lemma:reduction2}
All overlap ambiguities of the reduction system $R$ \eqref{rs} are resolvable.
\end{lemma}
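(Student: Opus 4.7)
The plan is to enumerate all overlap ambiguities of $R$ and verify each is resolvable by direct computation. Since the left-hand sides of the reduction pairs in \eqref{rs} are of only five shapes, namely
\begin{equation*}
l_{t,i}a_t, \quad l_{t+1,i}b_t, \quad a_tb_t, \quad b_ta_t, \quad l_{t,j}l_{t,i} \ (j>i),
\end{equation*}
and each arrow or loop has a prescribed head and tail, only finitely many ways two such words can overlap non-trivially. I will first list all overlap ambiguities up to obvious symmetry, then resolve each one.

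Concretely, I expect the overlaps to fall into the following seven families:
\begin{enumerate}
\item $l_{t,i}a_tb_t$ (overlap of $l_{t,i}a_t$ with $a_tb_t$),
\item $l_{t+1,i}b_ta_t$ (overlap of $l_{t+1,i}b_t$ with $b_ta_t$),
\item $a_tb_ta_t$ (overlap of $a_tb_t$ with $b_ta_t$),
\item $b_ta_tb_t$ (overlap of $b_ta_t$ with $a_tb_t$),
\item $l_{t,j}l_{t,i}a_t$ with $j>i$ (overlap of $l_{t,j}l_{t,i}$ with $l_{t,i}a_t$),
\item $l_{t+1,j}l_{t+1,i}b_t$ with $j>i$ (overlap of $l_{t+1,j}l_{t+1,i}$ with $l_{t+1,i}b_t$),
\item $l_{t,k}l_{t,j}l_{t,i}$ with $k>j>i$ (overlap of $l_{t,k}l_{t,j}$ with $l_{t,j}l_{t,i}$).
\end{enumerate}
I would first justify exhaustivity: any other pairing would force incompatible endpoints, so nothing else can overlap.

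For each family, I would exhibit two reductions starting from each end of the overlap and show they converge to a common irreducible element, calling the basic reductions by the pairs in \eqref{rs}. For instance, in (3), both $a_tb_ta_t \to l_{t,t}a_t \to a_tl_{t+1,t}$ and $a_tb_ta_t\to a_tl_{t+1,t}$ end at $a_tl_{t+1,t}$; in (5), reducing the left overlap gives $l_{t,j}l_{t,i}a_t \to l_{t,i}l_{t,j}a_t \to l_{t,i}a_tl_{t+1,j}\to a_tl_{t+1,i}l_{t+1,j}$, and reducing the right overlap gives $l_{t,j}l_{t,i}a_t\to l_{t,j}a_tl_{t+1,i}\to a_tl_{t+1,j}l_{t+1,i}\to a_tl_{t+1,i}l_{t+1,j}$, agreeing because $j>i$. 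Family (7) is the standard triple-swap used to check the braid-type relation between three commuting loop variables, and both reduction paths land on $l_{t,i}l_{t,j}l_{t,k}$. Family (1) requires a small case distinction depending on whether $i$ is less than, equal to, or greater than $t$, since the relation $l_{t,t}=a_tb_t$ reintroduces an $l$ that may or may not need to be reordered with $l_{t,i}$. Families (2), (4), (6) are entirely parallel to (1), (3), (5) by the symmetry interchanging the roles of $a_t$ and $b_t$.

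The proof is thus a case-by-case verification with no conceptual obstruction; the main bookkeeping difficulty will be the sub-cases in families (1), (2), (5), (6) where one must track whether an $l_{t,t}$ produced by reducing $a_tb_t$ needs an additional commutation step from rule (e), and carefully use the hypothesis $j>i$ (or $k>j>i$) to ensure both reduction sequences terminate at the unique word whose $l$-letters are arranged in weakly increasing index. Combined with \ref{lemma:reduction1} and the Diamond Lemma \ref{Diamond Lemma}, this will give the desired $k$-basis of $\mathbb{C}\lbl Q\rbl/I$ by irreducible paths, which is what is needed for the application in \S\ref{GR}.
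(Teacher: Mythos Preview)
Your proposal is correct and follows essentially the same case-by-case approach as the paper. In fact, your enumeration is more thorough: the paper's proof only explicitly lists and checks four families, namely your (1), (2), (5), (6), and omits the overlaps $a_tb_ta_t$, $b_ta_tb_t$, and $l_{t,k}l_{t,j}l_{t,i}$ with $k>j>i$. These are genuine overlap ambiguities (the pairs $(a_tb_t,l_{t,t})$, $(b_ta_t,l_{t+1,t})$, and $(l_{t,j}l_{t,i},l_{t,i}l_{t,j})$ do overlap with each other as you describe), and your outlined resolutions for them are correct. So your version actually closes a small gap in the paper's enumeration; otherwise the two arguments are identical in spirit and detail.
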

\begin{proof}
There are seven types of overlap ambiguities in the reduction system $R$ \eqref{rs}, namely
\[
l_{t,i}a_tb_t,\quad
l_{t+1,i}b_ta_t,\quad
l_{t,j}l_{t,i}a_t,\quad
l_{t+1,j}l_{t+1,i}b_t,\quad
a_tb_ta_t,\quad
b_ta_tb_t,\quad
l_{t,j}l_{t,i}l_{t,k},
\]
for $0 \leq i \leq n$, $t \in \mathbb{Z}/(n+1)$ and $j>i>k$.
We now verify that each of these ambiguities is resolvable.

\begin{enumerate}
\item When $t <i$,
$(l_{t,i}a_t)b_t \rightarrow a_t(l_{t+1,i}b_t) \rightarrow (a_tb_t)l_{t,i} \rightarrow l_{t,t}l_{t,i} $, and
$l_{t,i}(a_tb_t) \rightarrow l_{t,i}l_{t,t}  \rightarrow l_{t,t}l_{t,i} $.
The case of $t \geq i$ is similar.

\item When $t <i$,
$(l_{t+1,i}b_t)a_t \rightarrow b_t(l_{t,i}a_t) \rightarrow (b_ta_t)l_{t+1,i} \rightarrow l_{t+1, t}l_{t+1,i}$, and
$l_{t+1,i}(b_ta_t) \rightarrow l_{t+1,i}l_{t+1,t} \rightarrow l_{t+1,t}l_{t+1,i}.$
The case of $t \geq i$ is similar.

\item $(l_{t,j}l_{t,i})a_t \rightarrow l_{t,i}(l_{t,j}a_t) \rightarrow (l_{t,i}a_t)l_{t+1,j} \rightarrow a_tl_{t+1,i}l_{t+1,j}$,\\
$l_{tj}(l_{t,i}a_t) \rightarrow (l_{t,j}a_t)l_{t+1,i} \rightarrow a_t(l_{t+1,j}l_{t+1,i}) \rightarrow a_tl_{t+1,i}l_{t+1,j}$.

\item $(l_{t+1,j}l_{t+1,i})b_t \rightarrow l_{t+1,i}(l_{t+1,j}b_t) \rightarrow (l_{t+1,i}b_t)l_{t,j} \rightarrow b_tl_{t,i}l_{t,j}$,\\
$l_{t+1,j}(l_{t+1,i}b_t) \rightarrow (l_{t+1,j}b_t)l_{t,i} \rightarrow b_t(l_{t,j}l_{t,i}) \rightarrow b_tl_{t,i}l_{t,j}$. 

\item $(a_tb_t)a_t \rightarrow l_{t,t}a_t \rightarrow a_tl_{t+1,t}$, and $a_t(b_ta_t) \rightarrow a_tl_{t+1,t}$.
\item $(b_ta_t)b_t \rightarrow l_{t+1,t}b_t \rightarrow b_tl_{t,t}$, and $b_t(a_tb_t) \rightarrow b_tl_{t,t}$.
\item $(l_{t,j}l_{t,i})l_{t,k} \rightarrow l_{t,i}(l_{t,j}l_{t,k}) \rightarrow (l_{t,i}l_{t,k})l_{t,j} \rightarrow l_{t,k}l_{t,i}l_{t,j}$, \\
$l_{t,j}(l_{t,i}l_{t,k}) \rightarrow (l_{t,j}l_{t,k})l_{t,i} \rightarrow l_{t,k}(l_{t,j}l_{t,i}) \rightarrow l_{t,k}l_{t,i}l_{t,j}$. \qedhere
\end{enumerate}
\end{proof}

\begin{prop}\label{prop:diamond}
Consider the quiver $Q$ with relations $I$ \eqref{Q2} and its reduction system $R$ in \eqref{rs}. Then, the set of irreducible paths (with respect to $R$) of $kQ$ under the projection $kQ \rightarrow kQ/I$ forms a $k$-basis of $kQ/I$.
\end{prop}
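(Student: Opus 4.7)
The proposal is immediate from the machinery already assembled in this appendix. By Lemma \ref{lemma:reduction1}, the reduction system $R$ in \eqref{rs} is reduction-finite, and by Lemma \ref{lemma:reduction2}, all overlap ambiguities of $R$ are resolvable. Thus both hypotheses of the Diamond Lemma (\ref{Diamond Lemma}) are satisfied.

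To invoke the Diamond Lemma cleanly, the only verification still required is that the two-sided ideal generated by the differences $\{s-\varphi_s \mid (s,\varphi_s) \in R\}$ coincides with the ideal $I$ from \eqref{Q2}. This is a direct comparison: the five families of pairs listed in \eqref{rs}, namely $(l_{t,i}a_t, a_tl_{t+1,i})$, $(l_{t+1,i}b_t, b_tl_{t,i})$, $(a_tb_t,l_{t,t})$, $(b_ta_t,l_{t+1,t})$, and $(l_{t,j}l_{t,i}, l_{t,i}l_{t,j})$ for $j>i$, give rise precisely to the five families of relations in \eqref{Q2}. So the ideal generated by $\{s - \varphi_s\}$ equals $I$, and $kQ/I$ equals the algebra $A$ appearing in the Diamond Lemma.

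Applying the Diamond Lemma then yields that the image of the irreducible paths in $kQ/I$ forms a $k$-basis, which is exactly the statement of the proposition.

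There is essentially no obstacle here: both nontrivial ingredients (reduction-finiteness and resolvability of overlaps) have already been checked in the two preceding lemmas, so the proof is a one-line citation of \ref{Diamond Lemma} together with the identification of ideals noted above.
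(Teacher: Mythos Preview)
Your proposal is correct and follows essentially the same approach as the paper: both invoke Lemmas \ref{lemma:reduction1} and \ref{lemma:reduction2} to verify the hypotheses of the Diamond Lemma (\ref{Diamond Lemma}), after noting that the ideal generated by $\{s-\varphi_s\}$ coincides with $I$. Your write-up simply spells out the ideal identification in slightly more detail than the paper does.
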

\begin{proof}
It is clear that the two-sided ideal generated by $R$ (see \ref{Diamond Lemma}) coincides with $I$ \eqref{Q2}. Since $R$ is reduction-finite and all overlap ambiguities of $R$ are resolvable by \ref{lemma:reduction1} and \ref{lemma:reduction2}, the statement holds by \ref{Diamond Lemma}.   
\end{proof}

\begin{notation}\label{notation: ABL}
For any $t \in \Z/(n+1)$, consider the following subsets of the set of paths on $Q$ with head $t$.
\begin{enumerate}
\item $\scrA_t \colonequals \{  a_{t-i}\dots a_{t-2}a_{t-1} \mid  \text{ all } i\in \N \}$.
\item $\scrB_t \colonequals \{  b_{t+i-1}\dots b_{t+1}b_{t} \mid  \text{ all } i\in \N \}$.
\item $\scrL_t \colonequals \{l_{t,0}^{i_1}l_{t,1}^{i_2}\dots l_{t,n}^{i_n} \mid \text{ all } i_1,i_2,\dots , i_n \in \N \cup \{0\} \}$.
\item $\scrA_t \scrL_t \colonequals \{  pq \mid  \text{ all } p \in \scrA_t \text{ and } q \in \scrL_t  \}$.
\item $\scrB_t \scrL_t \colonequals \{  pq \mid  \text{ all } p \in \scrB_t \text{ and } q \in \scrL_t  \}$.
\item Then write $k \scrA_t$, $k \scrB_t$ and $k\scrL_t$ for the $k$-span of $\scrA_t$, $\scrB_t$ and $\scrL_t$ respectively.
\item For any $A \in k\scrA_t$, write $(A)_{t-1}$ for the unique element in $k\scrA_{t-1} \oplus ke_{t-1}$ such that $A= (A)_{t-1}a_{t-1}$. Here the summand $ke_{t-1}$ accounts for the case where $A=a_{t-1}$.
\item For any $B \in k\scrB_t$, write $(B)_{t+1}$ for the unique element in $k\scrB_{t+1} \oplus ke_{t+1}$ such that $B= (B)_{t+1}b_t$. Here the summand $ke_{t+1}$ accounts for the case where $B=b_t$.
\item For any $L \in k\scrL_t$ and $0 \leq  s \leq n $, write $(L)_s$ for the unique element in $k\scrL_{s}$, which is obtained by replacing $l_{t,0},l_{t,1}, \dots , l_{t,n}$ in $L$ by $l_{s,0},l_{s,1}, \dots , l_{s,n}$.
\end{enumerate}
\end{notation}

We next describe all irreducible paths in $Q$, with respect to the reduction system $R$ \eqref{rs}.

\begin{prop}\label{prop: irr}
For any path $p$ with head $t$ in $Q$, 
\begin{equation*}
    p \text{ is irreducible }  \iff p \in \scrA_t \cup \scrB_t \cup \scrL_t \cup \scrA_t \scrL_t \cup \scrB_t \scrL_t.
\end{equation*}
\end{prop}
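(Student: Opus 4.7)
The plan is to treat the two implications separately, with the forward direction being a routine check and the reverse direction requiring a structural analysis of how arrows of different types can follow one another in an irreducible path.

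For the implication ($\Leftarrow$), I would verify directly that no left-hand side of any pair in $R$ (i.e., $l_{s,i}a_s$, $l_{s+1,i}b_s$, $a_sb_s$, $b_sa_s$, or $l_{s,j}l_{s,i}$ with $j>i$) occurs as a sub-path of any element in $\scrA_t \cup \scrB_t \cup \scrL_t \cup \scrA_t\scrL_t \cup \scrB_t\scrL_t$. Inside $\scrA_t$ and $\scrB_t$ the only arrows are $a$'s (resp.\ $b$'s), so no mixed pattern appears. Inside $\scrL_t$ the loop indices are non-decreasing, which rules out $l_{t,j}l_{t,i}$ with $j>i$. For $\scrA_t\scrL_t$ and $\scrB_t\scrL_t$ the only new sub-path to check is at the junction, where the last arrow of the $\scrA_t$-part (resp.\ $\scrB_t$-part) is $a_{t-1}$ (resp.\ $b_t$) followed by a loop $l_{t,i}$; these are $a_{t-1}l_{t,i}$ and $b_t l_{t,i}$, which do not match any left-hand side in $R$.

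For the implication ($\Rightarrow$), I would let $p$ be an irreducible path with head $t$ and analyze where loops can appear. The key observation is that if some loop $l_{s,i}$ occurs in $p$ and is followed by another arrow $\alpha$ (with $t(\alpha)=s$), then $\alpha$ can only be another loop $l_{s,j}$ with $j\geq i$: the cases $\alpha=a_s$, $\alpha=b_{s-1}$, and $\alpha=l_{s,j}$ with $j<i$ are all forbidden, since they produce the reducible patterns $l_{s,i}a_s$, $l_{s,i}b_{s-1}=l_{(s-1)+1,i}b_{s-1}$, and $l_{s,j}l_{s,i}$ respectively. Therefore all loops in $p$ occur as a terminal block $l_{t,i_1}l_{t,i_2}\cdots l_{t,i_k}$ with $i_1\leq\cdots\leq i_k$ (the vertex must be $t=h(p)$), which is an element of $\scrL_t$.

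It then remains to show that the initial non-loop part consists either entirely of $a$'s or entirely of $b$'s. Since the reductions $a_sb_s\to l_{s,s}$ and $b_sa_s\to l_{s+1,s}$ are forbidden in $p$, consecutive arrows of the non-loop part cannot alternate between $a$ and $b$. Hence the non-loop part is a concatenation of only $a$'s or only $b$'s, and by tracking head/tail vertices one sees it must end at $t$, so it lies in $\scrA_t$ or $\scrB_t$ (with $\scrA_t$ and $\scrB_t$ including the lazy path $e_t$ as the empty product; alternatively the purely-loop case corresponds to $p\in\scrL_t$). Combining the two parts gives $p\in\scrA_t\cup\scrB_t\cup\scrL_t\cup\scrA_t\scrL_t\cup\scrB_t\scrL_t$, which completes the argument. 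The main subtlety will be a careful enumeration at the boundaries — particularly verifying that $b_t l_{t,i}$ and $a_{t-1}l_{t,i}$ are \emph{not} reducible while $l_{t+1,i}b_t$ and $l_{t-1,i}a_{t-1}$ are — so I would keep the head/tail conventions for $a_s,b_s,l_{s,i}$ explicit throughout.
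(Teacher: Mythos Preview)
Your proposal is correct and follows essentially the same approach as the paper. The paper organizes the ($\Rightarrow$) direction as a backwards induction on the last arrow of $p$ (splitting into the cases $a_{t-1}$, $b_t$, $l_{t,i}$ and then peeling off one arrow at a time), whereas you first argue globally that loops must form a terminal block and then treat the non-loop prefix; these are two packagings of the same structural observation.
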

\begin{proof}
By the reduction system $R$ \eqref{rs}, it is clear that each path in $\scrA_t, \scrB_t, \scrL_t, \scrA_t\scrL_t, \scrB_t\scrL_t$ is irreducible. We next prove the other direction.
Since the head of $p$ is $t$, $p$ either ends with $a_{t-1}$, $b_t$ or $l_{t,i}$ for some $i$. The proof splits into cases.

(1) $p$ ends with $a_{t-1}$.

Write $p = qa_{t-1}$ for some $q$ with head $t-1$. Then $q$ either ends with $a_{t-2}$, $b_{t-1}$ or $l_{t,i}$ for some $i$. However, if $q$ either ends with $b_{t-1}$ or $l_{t,i}$, then $qa_{t-1}$ is reducible by $R$ \eqref{rs}. Thus $q$ can only end with $a_{t-2}$. Repeating the same process gives $p \in \scrA_t$.

(2) $p$ ends with $b_{t}$.

Similar to (1), we can prove that $p \in \scrB_t$.

(3) $p$ ends with $l_{t,i}$.

Write $p =ql_{t,i}$ for some $q$ with head $t$. Then $q$ either ends with $a_{t-1}$, $b_t$ or $l_{t,j}$ for some $j$. If $q$ ends with $a_{t-1}$, then $q \in \scrA_t$ by (1), and so $p \in \scrA_t\scrL_t$. Similarly, if $q$ ends with $b_{t}$, then $p \in \scrB_t\scrL_t$. If $q$ ends  $l_{t,j}$, then $j \leq i$; otherwise, it will contradict the irreducibility of $ql_{t, i}$. Repeating the same process gives $p \in \scrL_t,\ \scrA_t\scrL_t$ or $\scrB_t\scrL_t$. 
\end{proof}

We next apply \ref{prop:diamond} and \ref{prop: irr} to prove the exactness of a particular complex in \ref{thm101}.
In the following, we write $P_t$ for the $k$-span of the paths with head $t$ in $kQ/I$ \eqref{Q2}.
\begin{lemma} \label{thm100}
The $k$-linear maps
\begin{align*}
& m_{l_{t,n}} \colon P_t \rightarrow P_t, \quad m_{a_t} \colon P_t \rightarrow P_{t+1} \\
& \quad \quad \quad f \mapsto fl_{t,n} \quad \qquad   f \mapsto fa_t
\end{align*}
are injective for any $ t \in \mathbb{Z}/(n+1)$.
\end{lemma}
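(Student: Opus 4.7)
The plan is to exploit the explicit basis of $kQ/I$ furnished by Propositions \ref{prop:diamond} and \ref{prop: irr}. By that classification, the disjoint union
\[
\mathcal{B}_t \colonequals \scrL_t\;\sqcup\;\scrA_t\scrL_t\;\sqcup\;\scrB_t\scrL_t
\]
(with $\scrL_t$ written in the canonical ordered form $l_{t,0}^{j_0}\cdots l_{t,n}^{j_n}$) is a $k$-basis of $P_t$, where I read $\scrA_t\subset\scrA_t\scrL_t$ via the trivial loop $L=e_t$, and similarly for $\scrB_t$. Since $P_t$ has a basis, to show that each of $m_{l_{t,n}}$ and $m_{a_t}$ is injective it suffices to verify that right multiplication sends elements of $\mathcal{B}_t$ to \emph{distinct} irreducible paths (and hence to linearly independent basis elements) in the target. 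No $k$-linear combination of distinct basis elements can vanish, so injectivity on $\mathcal{B}_t$ upgrades to injectivity of the $k$-linear map.

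For $m_{l_{t,n}}$, every element of $\mathcal{B}_t$ has a unique factorisation $X\cdot L$ with $X$ either empty, in $\scrA_t$, or in $\scrB_t$, and $L = l_{t,0}^{j_0}\cdots l_{t,n}^{j_n}$. Since $n$ is the maximal loop index at $t$, right multiplication by $l_{t,n}$ keeps $L$ in ordered form: $p\cdot l_{t,n} = X\cdot l_{t,0}^{j_0}\cdots l_{t,n}^{j_n+1}$, which is already irreducible and lies in $\mathcal{B}_t$. The map on $\mathcal{B}_t$ is plainly injective since one recovers $p$ by decrementing $j_n$.

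For $m_{a_t}$, the reduction rule $l_{t,i}a_t\to a_t l_{t+1,i}$ gives, by iteration, $L\cdot a_t = a_t\cdot(L)_{t+1}$ in $kQ/I$. Hence for $p=X\cdot L\in\mathcal{B}_t$ one computes:
\begin{itemize}
\item if $X$ is empty or $X\in\scrA_t$, then $p\cdot a_t = (X a_t)\cdot(L)_{t+1}$, which lies in $\scrA_{t+1}\scrL_{t+1}$ with $\scrA$-prefix $Xa_t$;
\item if $X = b_{t+i-1}\cdots b_{t+1}b_t\in\scrB_t$, then applying $b_ta_t\to l_{t+1,t}$ and then the loop-reordering rules gives $p\cdot a_t = (b_{t+i-1}\cdots b_{t+1})\cdot L''$, where $L''\in\scrL_{t+1}$ is the canonical reordering of $l_{t+1,t}\cdot(L)_{t+1}$ (it coincides with $(L)_{t+1}$ except that the $t$-th exponent is increased by one); this lands in $\scrB_{t+1}\scrL_{t+1}$ when $i\geq 2$ and in $\scrL_{t+1}$ when $i=1$.
\end{itemize}
These images sit in disjoint strata of $\mathcal{B}_{t+1}$, distinguished by the length and type ($\scrA$ versus $\scrB$) of the non-loop prefix: images of the $\scrL_t\sqcup\scrA_t\scrL_t$ strata all have $\scrA$-prefix of length $\geq 1$ ending in $a_t$, while images from $\scrB_t\scrL_t$ have either no $\scrA$-prefix or a $\scrB$-prefix. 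Within each stratum the map is visibly injective: $X$ is read off from the prefix of the image, and $L$ is recovered by reindexing the loop monomial from $t+1$ back to $t$ and decrementing the $t$-th exponent when $X$ originated in $\scrB_t$.

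The main obstacle is not conceptual but bookkeeping: one must keep track of the five pieces of $\mathcal{B}_t$, of how $b_ta_t$ interacts with the loop monomial, and of the fact that the $i=1$ case of the $\scrB$-stratum has empty $\scrB$-image and so lands in $\scrL_{t+1}$ rather than $\scrB_{t+1}\scrL_{t+1}$. Once these cases are tabulated, injectivity is immediate.
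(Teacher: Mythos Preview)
Your proof is correct and follows essentially the same approach as the paper: both arguments use the explicit basis of irreducible paths from Propositions \ref{prop:diamond} and \ref{prop: irr}, compute where each basis element is sent under right multiplication, and verify that the images are distinct irreducible paths. The only cosmetic differences are that the paper treats just $t=0$ and appeals to symmetry for the other cases (so the extra reordering of $l_{t+1,t}$ into $(L)_{t+1}$ does not arise, since $l_{1,0}$ already sits at the leftmost position), and that the paper writes $f$ as a general linear combination and tracks terms individually, whereas you phrase the same computation as an injective map on basis elements organised by strata.
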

\begin{proof}
We only prove $m_{l_{0,n}}$ and $m_{a_0}$ and are injective, the other cases are similar.
Since the reduction system $R$ \eqref{rs} is reduction-finite by \ref{lemma:reduction1}, we can assume $f \in P_0$ is irreducible. 

(1) $m_{l_{0,n}}$ is injective.

We first write $f= \sum_i \uplambda_ip_i$ as a linear combination of irreducible paths where each $\uplambda_i \in k$.
Since $p_i$ is irreducible and there are no paths in $S$ \eqref{rs} that end with $l_{0,n}$, $p_il_{0,n}$ is also irreducible. Thus if $fl_{0,n}=\sum_i \uplambda_ip_il_{0,n} = 0$, then each $\uplambda_i =0$ by \ref{prop:diamond}, and so $f=0$.

(2) $m_{a_0}$ is injective.

Since $f \in P_0$, by \ref{prop: irr} we can write $f$ as a linear combination of irreducible terms
\begin{equation*}
    f = \uplambda A+ \upmu B +\upbeta L+ \sum_{i} \uplambda_{i}A_{i}L_{i} +\sum_{j} \upmu_{j}B_{j}L_{j},
\end{equation*}
where each $\uplambda, \upmu, \upbeta , \uplambda_{i}, \upmu_{j} \in k$, and $A, A_{i}\in  k\scrA_0$, and $B, B_{j}\in  k\scrB_0$, and $L, L_{i}, L_{j}\in  k\scrL_0$. Thus
\begin{align}
fa_0&  = \uplambda Aa_0+ \upmu Ba_0 +\upbeta  La_0+ \sum_{i} \uplambda_{i}A_{i}L_{i}a_0 +\sum_{j}  \upmu_{j}B_{j}L_{j}a_0 \notag\\
&=  \uplambda Aa_0 +  \upmu(B)_1b_0a_0 +\upbeta La_0+ \sum_{i} \uplambda_{i}A_{i}L_{i}a_0 +\sum_{j}  \upmu_{j}(B_{j})_1b_0L_{j}a_0\tag{since $B=(B)_1b_0$ and $B_{j}=(B_{j})_1b_0$ } \\
& \rightarrow \uplambda Aa_0+ \upmu(B)_1l_{1,0}+\upbeta a_0(L)_1+\sum_{i}\uplambda_{i} A_{i}a_0(L_{i})_1+\sum_{j}\upmu_{j}(B_{j})_1l_{1,0}(L_{j})_1.  \label{fa} \\
& \tag{since $b_0L_ja_0 \rightarrow b_0a_0(L_j)_1 \rightarrow l_{1,0}(L_j)_1 $} 
\end{align}
By \ref{prop: irr}, each term in \eqref{fa} is irreducible. We next claim that each term in \eqref{fa} differs from the others.

Since $A_{i}L_{i}$ are different for different $i$, $A_{i}a_0(L_{i})_1$ are different for different $i$. Similarly, $(B_{j})_1l_{1,0}(L_{j})_1 $ are different for different $j$. 
Since $\deg(A_i) \geq 1$, $A_{i}a_0(L_{i})_1$ is different from $a_0(L)_1$ for each $i$. Similarly, $(B_{j})_1 l_{1,0}(L_{j})_1$ is different from $(B)_1l_{1,0}$ for each $j$. Thus we proved the claim.

So by \ref{prop:diamond} the terms in \eqref{fa} descend to give basis elements of $kQ/I$. Thus if $fa_0=0$, then each $\uplambda, \upmu, \upbeta ,\uplambda_{i}, \upmu_{j}$ is zero, and so $f=0$. Thus $m_{a_0}$  is injective.
\end{proof}

\begin{prop}\label{thm101}
\begin{equation*}
0 \rightarrow  P_0 \xrightarrow[d_4]{(a_0, b_n)}  P_1 \oplus P_n \xrightarrow[d_3]{\begin{psmallmatrix} l_{1,n} & -b_0b_n \\-a_na_0 & l_{n,0}  \end{psmallmatrix}}  P_1 \oplus P_n  \xrightarrow[d_2]{\begin{psmallmatrix}  b_0 \\a_n  \end{psmallmatrix}} P_0 \xrightarrow[d_1]{} k[l_{0,1}, l_{0,2},\dots ,l_{0,n-1}] \rightarrow 0
\end{equation*}
is an exact sequence of $k$-linear maps in $\mathrm{k}Q/I$ \eqref{Q2}.
\end{prop}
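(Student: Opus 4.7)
The approach is to work directly with the explicit $k$-basis of each $P_t$ provided by the Diamond Lemma (\ref{prop:diamond}) together with the classification of irreducible paths in \ref{prop: irr}, giving a vector-space decomposition
\[
P_t = k\scrA_t \oplus k\scrB_t \oplus k\scrL_t \oplus k(\scrA_t\scrL_t) \oplus k(\scrB_t\scrL_t)
\]
for each $t \in \mathbb{Z}/(n+1)$. Exactness must be verified at five positions, but two of them are immediate: injectivity of the leftmost map reduces to injectivity of $m_{a_0}$, which is \ref{thm100}, while surjectivity of $d_1$ is clear once one notes that $d_1$ factors through the projection $P_0 \twoheadrightarrow k\scrL_0$ onto the pure-loop summand, followed by killing $l_{0,0}$ and $l_{0,n}$.

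Next, I would verify the complex identities $d_1 d_2 = 0$, $d_2 d_3 = 0$, $d_3 d_4 = 0$ by direct computation from the relations \eqref{Q2}. For instance, $d_3 d_4 = 0$ reduces to $a_0 l_{1,n} = b_n a_n a_0$ and $a_0 b_0 b_n = b_n l_{n,0}$, each of which follows by combining $l_{t,i}a_t = a_t l_{t+1,i}$ (resp.\ $l_{t+1,i}b_t = b_t l_{t,i}$) with the substitutions $l_{0,n} = b_n a_n$ and $l_{0,0} = a_0 b_0$. The other two complex identities are analogous manipulations.

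The non-trivial inclusions $\ker d_i \subseteq \operatorname{im} d_{i+1}$ are proved by decomposing a kernel element in the irreducible basis and producing an explicit preimage. Concretely, for $\ker d_2 \subseteq \operatorname{im} d_3$ one tabulates how right-multiplication by $b_0$, respectively $a_n$, sends each of the five summands of $P_1$, respectively $P_n$, into the irreducible basis of $P_0$, then matches coefficients in $fb_0 + ga_n = 0$ to recover $(f,g)$ as $d_3(f',g')$. The analogous argument at $\ker d_3 \subseteq \operatorname{im} d_4$ uses the two simultaneous equations $fl_{1,n} = g a_n a_0$ and $fb_0 b_n = g l_{n,0}$ to pin down $h \in P_0$ with $f = ha_0$ and $g = hb_n$; injectivity from \ref{thm100} guarantees uniqueness of $h$, so the only remaining task is a finite case check for existence. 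Finally, $\ker d_1 \subseteq \operatorname{im} d_2$ says that every irreducible path in $P_0$ outside $k[l_{0,1}, \ldots, l_{0,n-1}]$ can be rewritten as $fb_0 + ga_n$, which is handled by inspecting the five classes of irreducible paths in $P_0$ and using $l_{0,0} = a_0 b_0$ and $l_{0,n} = b_n a_n$ to absorb the boundary loops, while elements of $\scrA_0$, $\scrB_0$, $\scrA_0\scrL_0$, $\scrB_0\scrL_0$ respectively end in $a_n$ or $b_0$ after reduction.

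The main obstacle is the bookkeeping at $\ker d_3$: there are ten pairs of irreducible types across $P_1 \oplus P_n$, and the two defining equations of $d_3$ must be solved in tandem. The argument should organize itself by observing that right-multiplication by $l_{1,n}$ and $b_0 b_n$ shift the $\scrL$-part of $f$ in parallel ways (via the commutation $l_{t,i}l_{t,j} = l_{t,j}l_{t,i}$ and the relations permuting loops through arrows), so the cases will collapse to a small number of archetypes; but getting this reduction clean, while simultaneously accounting for the boundary identifications $l_{t,t} = a_t b_t$ and $l_{t+1,t} = b_t a_t$, is the principal technical work.
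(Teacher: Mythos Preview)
Your outline is sound and would work, but the paper organizes the argument more economically in two places.

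First, exactness at the last three positions (surjectivity of $d_1$, and $\ker d_1 \subseteq \operatorname{im} d_2$, $\ker d_2 \subseteq \operatorname{im} d_3$) is not argued from scratch: the paper simply cites \cite[\S 6]{W2}. Your proposed direct verification via the irreducible basis is perfectly viable and self-contained, but it is extra work the paper avoids.

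Second, and more interestingly, for $\ker d_3 \subseteq \operatorname{im} d_4$ the paper does \emph{not} solve the two equations $fl_{1,n}=ga_na_0$ and $fb_0b_n=gl_{n,0}$ in tandem as you anticipate. It uses only the first one. The point is that for irreducible $f$, the product $fl_{1,n}$ is automatically irreducible, because no left-hand side in the reduction system $R$ ends with $l_{1,n}$ (the second index $n$ is maximal). So one decomposes $g$ along the five summands of $P_n$, reduces $ga_na_0$ to irreducible form, and simply reads off which of its terms can end in $l_{1,n}$. Matching against $fl_{1,n}$ forces all other coefficients of $g$ to vanish, leaving $g=hb_n$ for an explicit $h\in P_0$. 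Then $fl_{1,n}=hb_na_na_0=ha_0l_{1,n}$, and injectivity of $m_{l_{1,n}}$ from \ref{thm100} gives $f=ha_0$. The ``ten pairs of types'' bookkeeping you flag as the main obstacle collapses to a single irreducibility observation plus one pass through the decomposition of $g$; the second equation never enters.
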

\begin{proof}
This sequence is a chain complex from the relations $I$ \eqref{Q2}. The exactness at the last three indices is from \cite[\S 6]{W2}. By \ref{thm100}, we have $d_4$ is injective, and thus this complex is exact at the first index.
So we only need to prove that $\ker d_3 \subseteq \operatorname{im} d_4$. It suffices to prove that, for any $(f,g) \in P_1 \oplus P_n$,
\begin{equation*}
fl_{1,n} = ga_na_0   \Rightarrow (f,g) = (ha_0,hb_n) \text{ for some }h \in P_0.
\end{equation*}
Since the reduction system $R$ \eqref{rs} is reduction-finite by \ref{lemma:reduction1}, we can assume that $f$ and $g$ are irreducible. Since $f$ is irreducible and there are no paths in $S$ \eqref{rs} that end with $l_{1,n}$, then $fl_{1,n}$ is also irreducible. Since 
$g \in P_n$, by \ref{prop: irr} we can write $g$ as a linear combination of irreducible terms
\begin{equation*}
g=  A + B + \sum_{i=0}^n  L_{i}l_{n,i}  + \sum_{i=0}^n\sum_{j \geq 1}  A_{ij} K_{ij}l_{n,i} + \sum_{s \geq 1} B_sJ_s,
\end{equation*}
where each $A, A_{ij}\in  k\scrA_n$, and $B,B_{s}\in  k\scrB_n$, and $L_i, K_{ij},J_{s} \in  k\scrL_n$. Since $ L_{i}l_{n,i}$ is irreducible, $L_{i} \in k \langle l_{n,0},\dots, l_{n,i} \rangle$ for each $i$. Similarly, $K_{ij} \in k \langle l_{n,0},l_{n,1},\dots, l_{n,i} \rangle$ for each $i$ and $j$. 

Write
\[
B = \uplambda b_n + B_{0,1} b_0b_n
\quad\text{and}\quad
B_s = \upmu_s b_n + B_{s,1} b_0b_n
\]
for some $\uplambda, \upmu_s \in k$ and
$B_{0,1}, B_{s,1} \in k\scrB_1 \oplus ke_1$ (see \ref{notation: ABL}(8)), where $s \ge 1$. Thus
\begin{align*}
g &=  A + \uplambda b_n + B_{0,1} b_0b_n + \sum_{i=0}^n  L_{i}l_{n,i}  + \sum_{i=0}^n\sum_{j \geq 1}  A_{ij} K_{ij}l_{n,i} + \sum_{s \geq 1} (\upmu_s b_n + B_{s,1}b_0b_n)J_s\\
& =A + \uplambda b_n + B_{0,1} b_0b_n + \sum_{i=0}^n  L_{i}l_{n,i}  + \sum_{i=0}^n\sum_{j \geq 1}  A_{ij} K_{ij}l_{n,i} +\sum_{s \geq 1}\upmu_s b_nJ_s+\sum_{s \geq 1} B_{s,1}b_0b_nJ_s.
\end{align*}

Multiplying the $g$ above on the right by $a_na_0$, $ga_na_0$ equals 
\begin{align}
& Aa_na_0 + \uplambda b_na_na_0 + B_{0,1} b_0b_na_na_0 + \sum_{i}  L_{i}l_{n,i}a_na_0  + \sum_{i,j}  A_{ij} K_{ij}l_{n,i} a_na_0 \notag\\
&+\sum_{s}\upmu_s b_nJ_sa_na_0+\sum_{s } B_{s,1}b_0b_nJ_sa_na_0 \notag\\
& \rightarrow Aa_na_0 + \uplambda a_0l_{1,n}+B_{0,1}l_{1,0}l_{1,n}+\sum_i a_na_0(L_i)_1l_{1,i}+ \sum_{i,j}A_{ij}a_na_0(K_{ij})_1l_{1,i} \notag \\
& +\sum_s \upmu_s a_0(J_s)_1l_{1,n}+\sum_s B_{s,1}l_{1,0}(J_s)_1l_{1,n}  \label{gaa}\\
& \tag{since $b_0b_nJ_s a_na_0 \rightarrow b_0b_na_na_0(J_s)_1 \rightarrow b_0 l_{0,n}a_0(J_s)_1 \rightarrow b_0a_0l_{1,n}(J_s)_1\rightarrow l_{1,0}(J_s)_1l_{1,n}$}\\
&=Aa_na_0+\uplambda a_0l_{1,n}+B_{0,1}l_{1,0}l_{1,n}+\sum_{i=0}^{n-1} a_na_0(L_i)_1l_{1,i}+a_na_0(L_n)_1l_{1,n}+\sum_j A_{nj}a_na_0(K_{nj})_1l_{1,n}\notag\\
& + \sum_{i=0}^{n-1}\sum_jA_{ij}a_na_0(K_{ij})_1l_{1,i}  +\sum_s \upmu_s a_0(J_s)_1l_{1,n}+\sum_s B_{s,1}l_{1,0}(J_s)_1l_{1,n}  \notag \\
& =Aa_na_0+\sum_{i=0}^{n-1} a_na_0(L_i)_1l_{1,i}+ \sum_{i=0}^{n-1}\sum_jA_{ij}a_na_0(K_{ij})_1l_{1,i} +\mathsf{f}_1l_{1,n},\label{gaa2}
\end{align}
where we set $\mathsf{f}_1 \colonequals \uplambda a_0+B_{0,1}l_{1,0}+a_na_0(L_n)_1+\sum_j A_{nj}a_na_0(K_{nj})_1+\sum_s \upmu_s a_0(J_s)_1+\sum_s B_{s,1}l_{1,0}(J_s)_1$.

We claim that each term in \eqref{gaa} is irreducible. To see this, we consider the terms in \eqref{gaa} separately.
\begin{enumerate}
\item By the reduction system $R$ \eqref{rs} $Aa_na_0$ is irreducible.

\item Since $l_{1,n}, l_{1,0}l_{1,n} \in \scrL_{1}$, by \ref{prop: irr} $a_0l_{1,n}$ and $B_{0,1}l_{1,0}l_{1,n}$ are irreducible.

\item Since  $L_{i} \in k \langle l_{n,0},\dots, l_{n,i} \rangle$, $(L_{i})_1 \in k \langle l_{1,0},\dots, l_{1,i} \rangle$, so $(L_{i})_1 l_{1,i} \in k\scrL_1$. Thus $a_na_0 (L_{i})_1 l_{1,i}$ is irreducible by \ref{prop: irr}.
\item Since $K_{ij} \in k \langle l_{n,0},\dots, l_{n,i} \rangle$, $(K_{ij})_1 \in k \langle l_{1,0},\dots, l_{1,i} \rangle$, so $ (K_{ij})_1 l_{1,i} \in k\scrL_1$. Thus $A_{ij} a_n a_0  (K_{ij})_1 l_{1,i} $ is irreducible by \ref{prop: irr}.

\item Since $(J_s)_1 l_{1,n}, l_{1,0} (J_s)_1 l_{1,n} \in k\scrL_{1}$, by \ref{prop: irr} $a_0(J_s)_1 l_{1,n}$ and $B_{s,1}l_{1,0} (J_s)_1 l_{1,n}$ are irreducible.
\end{enumerate}

We next claim that all terms in \eqref{gaa} are pairwise distinct.

First, each term of the form $a_na_0 (L_i)_1 l_{1,i}$ ends with $l_{1,i}$, hence these terms are distinct for different values of $i$. 
Similarly, since the paths $A_{ij}K_{ij}l_{n,i}$ are distinct for different pairs $(i,j)$, the corresponding terms
$A_{ij}a_na_0 (K_{ij})_1 l_{1,i}$ are also distinct.
Likewise, the terms $a_0(J_s)_1l_{1,n}$ and $B_{s,1}l_{1,0}(J_s)_1l_{1,n}$ are distinct for different $s$.
Therefore, the terms appearing within each individual sum are pairwise different.

Next, we compare terms coming from different sums.
Since $\deg(A_{ij}) \ge 1$, the path $A_{ij}a_na_0$ differs from $a_na_0$, and hence
$A_{ij}a_na_0 (K_{ij})_1 l_{1,i}$ cannot coincide with $a_na_0 (L_i)_1 l_{1,i}$.
By the same reasoning, terms arising from different summands in \eqref{gaa} are mutually distinct.
This proves the claim.

Since \eqref{gaa2} is obtained by combining the terms in \eqref{gaa} that end with $l_{1,n}$, each term in \eqref{gaa2} is also irreducible and differs from the others. So the terms in \eqref{gaa2} descend to give different basis elements of $kQ/I$ by \ref{prop:diamond}. 

Recall that $fl_{1,n} = ga_na_0$ and $fl_{1,n}$ is irreducible. Since only $\mathsf{f}_1l_{1,n}$ ends with $l_{1,n}$ in $ga_na_0$ \eqref{gaa2}, then all terms in $ga_na_0$ except $\mathsf{f}_1l_{1,n}$ are zero. So
\begin{align*}
g & = \uplambda b_n + B_{0,1} b_0b_n+L_{n}l_{n,n} +\sum_{j }  A_{nj} K_{nj}l_{n,n}+\sum_{s } (\upmu_s b_n + B_{s,1}b_0b_n)J_s\\
& = \uplambda b_n+ B_{0,1} b_0b_n+L_{n}a_nb_n+\sum_{j }  A_{nj} K_{nj}a_nb_n+\sum_{s } (\upmu_s  + B_{s,1}b_0  )(J_s)_0b_n \tag{since $l_{n,n}=a_nb_n$ and $b_nJ_s= (J_s)_0b_n$} \\
& = hb_n. \tag{set $h \colonequals \uplambda +B_{0,1}b_0+  L_{n}a_n +\sum_j \uplambda_{nj} A_{nj} K_{nj}a_n + \sum_s (\upmu_{s}+ B_{s,1}b_0) (J_s)_0  $}
\end{align*}

Thus $ga_na_0=hb_n a_n a_0 = ha_0l_{1,n}$. Together with $fl_{1,n} = ga_na_0$ gives $fl_{1,n} = ha_0l_{1,n}$, and so $f=ha_0$ by \ref{thm100}. Thus $ (f,g) = (ha_0,hb_n)$, proving the claim.
\end{proof}

With the exact sequence in \ref{thm101}, we can calculate the vector space dimension of each graded degree piece of $P_t$ in \eqref{106}, which will be used to prove the isomorphism in \ref{thm31}.
\begin{notation}\label{no10}
In the following, we adopt a new definition of degree of $Q$ \eqref{Q2}, which differs from path length in \ref{QP}(4).
\begin{enumerate}
\item Define $\deg(a_i)=\deg(b_i)=1$ and $\deg(l_{t,i})=2$ for each $i$ and $t$.
\item With respect to this degree, write $P_{t,d}$ for the graded piece of degree $d$ of $P_t$.   
\item Write $D_d$ for the vector space dimension of $P_{0,d}$.
\end{enumerate}
\end{notation}
By the symmetry of the quiver $Q$ and relations $I$ \eqref{Q2}, $D_d$ is also the vector space dimension of $P_{t,d}$ for $1 \leq t \leq n$.
By \ref{thm101}, for any integer $d$, there is an exact sequence
\begin{equation*}
0 \rightarrow  P_{0,d} \rightarrow  P_{1,d+1} \oplus P_{n,d+1} \rightarrow  P_{1,d+3} \oplus P_{n,d+3}  \rightarrow P_{0,d+4} \rightarrow T_{d+4} \rightarrow 0,  
\end{equation*}
where $T_{d+4}$ denotes the degree $d+4$ piece of $k[l_{0,1},l_{0,2}, \dots ,l_{0,n-1}]$. Thus
\begin{equation}
    D_d - 2D_{d+1} + 2D_{d+3} - D_{d+4} + E_{d+4} = 0 \label{106}
\end{equation}
where $E_{d+4}= \dim_{k}T_{d+4}$. 

By definition of the grading, $P_{t,d}=0$ for all $t$ and all $d<0$, hence $D_d=0$ for $d<0$.
Moreover, the exact sequence above and \eqref{106} both hold for $d<0$.

\begin{notation}\label{scrS}
We next define 
\begin{equation}
    \scrS \colonequals \frac{k[ u,v,x_0,x_1, \dots ,x_n ] }{uv - x_0x_1 \dots x_{n}},
\end{equation}
and consider the $\scrS$-module $N \colonequals \bigoplus_{i=0}^{n}N_i$ where $N_0\colonequals \scrS$ and $N_i \colonequals (u,\prod_{j=0}^{i-1} x_{j})$ for $1 \leq i \leq n$. 
\end{notation}

We will show that $kQ/I$ \eqref{Q2} presents $\End_{\scrS}(N)$.
By \cite{IW1}, every morphism in $\End_{\scrS}(N)$ can be obtained as a linear combination of compositions of the following maps.

\begin{equation}\label{Q3}
\begin{array}{c}
\begin{tikzpicture}[bend angle=7, looseness=1.2]
\node (a) at (0,0)  {$N_{1}$};
\node (b) at (2.5,0)  {$N_{2}$};
\node (c) at (5,0) {$\circ$};
\node (d) at (6,0) {$\hdots$};
\node (e) at (7,0) {$\circ$};
\node (f) at (9.5,0)  {$N_{n}$};
\node (g) at (4.75,-2.5) {$N_0$};

\draw[<-,bend right,looseness=0.8] (g) to node [gap] {$\scriptstyle inc$} (a);
\draw[->,bend left,looseness=0.8] (g) to node [gap] {$\scriptstyle x_0$} (a);
\draw[<-,bend right] (a) to node [gap] {$\scriptstyle inc$} (b);
\draw[->,bend left] (a) to node [gap] {$\scriptstyle x_1$} (b);
\draw[<-,bend right] (b) to node[gap] {$\scriptstyle inc$} (c);
\draw[->,bend left] (b) to node[gap] {$\scriptstyle x_2$} (c);
\draw[<-,bend right] (e) to node[gap] {$\scriptstyle inc$} (f);
\draw[->,bend left] (e) to node[gap] {$\scriptstyle x_{n-1}$} (f);

\draw[<-,bend right,looseness=0.8] (g) to node[gap] {$\scriptstyle \frac{x_{n}}{u}$} (f);
\draw[->,bend left,looseness=0.8] (g) to node[gap] {$\scriptstyle u$} (f);
\draw[<-]  (g) edge [in=-120,out=-55,loop,looseness=8] node[below] {$\scriptstyle x_0,x_1,\dots , x_n$} (g);
\draw[<-]  (a) edge [in=120,out=55,loop,looseness=8] node[above] {$\scriptstyle x_0,x_1,\dots , x_n$} (a);
\draw[<-]  (b) edge [in=120,out=55,loop,looseness=8] node[above] {$\scriptstyle x_0,x_1,\dots , x_n$} (b);
\draw[<-]  (f) edge [in=120,out=55,loop,looseness=7] node[above] {$\scriptstyle x_0,x_1,\dots , x_n$} (f);
\end{tikzpicture}
\end{array}
\end{equation}

Thus there exists an obvious surjective homomorphism $kQ \twoheadrightarrow  \End_{\scrS}(N)$. Since $I$ gets sent to zero by inspection, this induces a surjective homomorphism $\psi \colon kQ/I \twoheadrightarrow  \End_{\scrS}(N)$. We will show that $\psi$ is an isomorphism, by counting graded pieces.

\begin{notation}\label{no11}
Grade $\scrS$ via $\deg(u)=\deg(v)=n+1$ and $\deg(x_0)= \dots = \deg(x_n)=2$. The particular choice of the graded shift of $N$ given by $N \colonequals \bigoplus_{i=0}^n N_i(-i)$ induces a grading in $\End_{\scrS}(N)$, which explicitly grades each arrow in \eqref{Q3} as follows.
\end{notation}

\begin{equation}\label{Q4}
\begin{array}{c}
\begin{tikzpicture}[bend angle=7, looseness=1.2]
\node (a) at (0,0)  {$N_{1}(-1)$};
\node (b) at (2.5,0)  {$N_{2}(-2)$};
\node (c) at (5,0) {$\circ$};
\node (d) at (6,0) {$\hdots$};
\node (e) at (7,0) {$\circ$};
\node (f) at (9.5,0)  {$N_{n}(-n)$};
\node (g) at (4.75,-2.5) {$N_0$};

\draw[<-,bend right,looseness=0.8] (g) to node [gap] {$\scriptstyle 1$} (a);
\draw[->,bend left,looseness=0.8] (g) to node [gap] {$\scriptstyle 1$} (a);
\draw[<-,bend right] (a) to node [gap] {$\scriptstyle 1$} (b);
\draw[->,bend left] (a) to node [gap] {$\scriptstyle 1$} (b);
\draw[<-,bend right] (b) to node[gap] {$\scriptstyle 1$} (c);
\draw[->,bend left] (b) to node[gap] {$\scriptstyle 1$} (c);
\draw[<-,bend right] (e) to node[gap] {$\scriptstyle 1$} (f);
\draw[->,bend left] (e) to node[gap] {$\scriptstyle 1$} (f);

\draw[<-,bend right,looseness=0.8] (g) to node[gap] {$\scriptstyle 1$} (f);
\draw[->,bend left,looseness=0.8] (g) to node[gap] {$\scriptstyle 1$} (f);
\draw[<-]  (g) edge [in=-120,out=-55,loop,looseness=8] node[below] {$\scriptstyle 2,2,\dots , 2$} (g);
\draw[<-]  (a) edge [in=120,out=55,loop,looseness=8] node[above] {$\scriptstyle 2,2,\dots , 2$} (a);
\draw[<-]  (b) edge [in=120,out=55,loop,looseness=8] node[above] {$\scriptstyle 2,2,\dots , 2$} (b);
\draw[<-]  (f) edge [in=120,out=55,loop,looseness=7] node[above] {$\scriptstyle 2,2,\dots , 2$} (f);
\end{tikzpicture}
\end{array}
\end{equation}

\begin{notation}\label{notation: grade}
Parallel to the notation \ref{no10}, we adopt the following notation. 
\begin{enumerate}
\item Set $Q_t \colonequals \Hom_{\scrS}(N,N_t(-t))$ for $0 \leq t \leq n$.
\item With respect to \eqref{Q4}, write $Q_{t,d}$ for the degree $d$ graded piece of $Q_t$.  
\item Write $D'_d$ for the vector space dimension of $Q_{0,d}$.
\end{enumerate}
\end{notation}
By the symmetry of \eqref{Q3}, $D'_d$ is also the vector space dimension of $Q_{t,d}$ for $1 \leq t \leq n$.

By \cite{W3}, we have the following exact sequence,
\begin{equation*}
    0 \rightarrow N_0 \xrightarrow[]{(x_0, u)} N_1 \oplus N_n \xrightarrow[]{\begin{psmallmatrix} x_n & -u \\-\frac{x_0x_n}{u} & x_0  \end{psmallmatrix}} N_1 \oplus N_n \xrightarrow[]{\begin{psmallmatrix} \scriptstyle inc \\ \frac{x_{n}}{u} \end{psmallmatrix}} N_0 \rightarrow 0
\end{equation*}
Using the grading in \ref{notation: grade}, the above exact sequence becomes
\begin{equation}\label{102}
    0 \rightarrow N_0 \xrightarrow[]{d_4}  N_1(-1) \oplus N_n(-n) \xrightarrow[]{d_3} N_1(-1) \oplus N_n(-n) \xrightarrow[]{d_2} N_0 \rightarrow 0.
\end{equation}
where each $d_i$ is homogeneous, and further $\deg(d_4)=1=\deg(d_2)$ and $\deg(d_3)=2$.
Applying $\Hom_{\scrS}(N,-)$ to \eqref{102} induces the following exact sequence,
\begin{equation*}
0 \rightarrow  Q_0 \xrightarrow[]{d_4}  Q_1 \oplus Q_n \xrightarrow[]{d_3}  Q_1 \oplus Q_n  \xrightarrow[]{d_2} Q_0 \xrightarrow[]{d_1} \Lambda_{\mathrm{con}} \cong k[x_1, x_2,\dots ,x_{n-1}]  \rightarrow 0,
\end{equation*}
which is parallel to the one in \ref{thm101}. Thus for any integer $d$, there is an exact sequence
\begin{equation*}
0 \rightarrow  Q_{0,d} \rightarrow  Q_{1,d+1} \oplus Q_{n,d+1} \rightarrow  Q_{1,d+3} \oplus Q_{n,d+3}  \rightarrow Q_{0,d+4} \rightarrow T_{d+4}' \rightarrow 0,  
\end{equation*}
where $T_{d+4}'$ denotes the degree $d+4$ piece of $k[x_1, x_2,\dots ,x_{n-1}]$. Thus
\begin{equation}
    D'_d - 2D'_{d+1} + 2D'_{d+3} - D'_{d+4} + E'_{d+4} = 0 \label{107}
\end{equation}
where $E_{d+4}'= \dim_{k}T_{d+4}'$. 

Similarly to \eqref{106}, by definition of the grading we have $Q_{t,d}=0$ for all $t$ and all $d<0$, hence $D'_d=0$ for $d<0$.
Moreover, the exact sequence above and \eqref{107} both hold for $d<0$.

\begin{prop}\label{thm31}
With notation as above, the homomorphism $\psi$ induces an isomorphism $kQ/I \xrightarrow{\sim}  \End_{\scrS}(N)$.
\end{prop}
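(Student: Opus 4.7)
\textbf{Proof proposal for \ref{thm31}.} The surjectivity of $\psi$ is already established from the construction (every generator of $\End_{\scrS}(N)$ is a composition of the arrows in \eqref{Q3}, and the relations $I$ of \eqref{Q2} map to zero). So the plan is to show injectivity by comparing the graded vector space dimensions on both sides, degree by degree. The gradings chosen in \ref{no10} on $kQ/I$ and in \ref{no11} on $\End_\scrS(N)$ are compatible: every arrow in \eqref{Q2} maps under $\psi$ to a homogeneous morphism of the same degree in \eqref{Q4}, so $\psi$ is a graded surjection. Hence $D_d \geq D'_d$ for all $d$, and it suffices to prove $D_d = D'_d$ for all $d \geq 0$.

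The key observation is that the two sequences $(D_d)$ and $(D'_d)$ satisfy the same four-step linear recurrence
\[
D_d - 2D_{d+1} + 2D_{d+3} - D_{d+4} + E_{d+4} = 0,
\]
as derived in \eqref{106} and \eqref{107}, with inhomogeneous term coming from the rightmost term of the exact sequences in \ref{thm101} and \eqref{102}. First I would verify that the inhomogeneous terms agree, i.e.\ $E_d = E'_d$ for all $d$: by inspection both $k[l_{0,1},\ldots,l_{0,n-1}]$ and $k[x_1,\ldots,x_{n-1}]$ are polynomial rings in $n-1$ variables with each generator of degree $2$ (using \ref{no10} and \ref{no11} respectively), so their graded Hilbert series coincide. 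Once $E_d = E'_d$, the recurrence determines $D_d$ (and $D'_d$) uniquely from the first four values $D_0,D_1,D_2,D_3$.

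The next step is to check these initial conditions by direct computation. Using \ref{prop: irr}, a basis for $P_{0,d}$ is given by the irreducible paths at vertex $0$ of degree $d$ (paths in $\scrA_0 \cup \scrB_0 \cup \scrL_0 \cup \scrA_0\scrL_0 \cup \scrB_0\scrL_0$ of the prescribed degree). For $d = 0$ there is only $e_0$, for $d=1$ there are no irreducible paths, and for $d = 2, 3$ one enumerates the loops $l_{0,0},\ldots,l_{0,n}$ together with the length-$2$ cycles $b_0 a_0$, $a_n b_n$ and their composites with an arrow. On the geometric side, $Q_{0,d} = \Hom_\scrS(N, N_0)_d$ decomposes as $\bigoplus_i \Hom_\scrS(N_i, N_0)(-i)_d$, and each $\Hom_\scrS(N_i, N_0)$ is an explicit fractional ideal of $\scrS$ whose graded dimensions are computable. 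Matching these low-degree counts is essentially a finite check.

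The main obstacle I anticipate is bookkeeping the grading conventions carefully enough that the base cases match --- in particular ensuring that the shift $(-i)$ on $N_i$ in \ref{no11} is exactly compensated by the degree contribution of the length-$i$ path $a_{i-1}\cdots a_1 a_0$ (or $b_0 b_n \cdots b_{n-i+1}$) that realises the inclusion $N_i \hookrightarrow N_0$. Once this is pinned down, the surjectivity $D_d \geq D'_d$ and the identical recurrence with identical inhomogeneous term force $D_d = D'_d$ for all $d$ by induction, so each graded piece of $\psi$ is an isomorphism of finite-dimensional $k$-vector spaces, and hence $\psi$ itself is an isomorphism.
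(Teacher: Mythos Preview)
Your proposal is correct and follows essentially the same route as the paper: graded surjectivity reduces the question to $D_d = D'_d$, and both sequences satisfy the same linear recurrence \eqref{106}/\eqref{107} with the same inhomogeneous term $E_d = E'_d$.

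One simplification worth noting: the paper avoids your direct verification of the four initial values $D_0,\dots,D_3$. The exact sequences producing \eqref{106} and \eqref{107} are stated for \emph{every} integer $d$, and for $d<0$ all the graded pieces $P_{i,d}$, $Q_{i,d}$, $T_d$, $T'_d$ vanish. So the recurrences hold with $D_d = D'_d = 0$ for $d \le -1$, and induction from these trivial negative-degree base cases already forces $D_d = D'_d$ for all $d \ge 0$. This bypasses the explicit low-degree bookkeeping you outline (enumerating irreducible paths and computing graded pieces of $\Hom_\scrS(N_i,N_0)$), and in particular sidesteps the grading-shift obstacle you anticipate.
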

\begin{proof}
With the notation above, $\psi$ is a graded surjective homomorphism, so it suffices to show that $D_d=D_d'$ for all $d \geq 0$.
Using \eqref{106}, \eqref{107}, and the equality $E_d = E'_d$ for all $d$, together with the fact that
$D_d = 0=D'_d$ for $d<0$, we deduce by induction that $D_d = D'_d$ for all $d \geq 0$.

For completeness, we also verify the cases $0 \leq d \leq 3$ explicitly.
By \ref{prop:diamond} and \ref{prop: irr}, the vector space $P_{0,d}$ admits a $\mathbb{C}$-basis given by irreducible paths as follows:
\begin{enumerate}
    \item For $d=0$, a basis of $P_{0,0}$ is $\{e_0\}$, hence $D_0=1$.
    \item For $d=1$, a basis of $P_{0,1}$ is $\{a_n, b_0\}$, hence $D_1=2$.
    \item For $d=2$, a basis of $P_{0,2}$ is
    \[
    \{a_{n-1}a_n,\; b_1b_0,\; l_{0,0}, l_{0,1},\dots ,l_{0,n}\},
    \]
    hence $D_2=n+3$.
    \item For $d=3$, a basis of $P_{0,3}$ is
    \[
    \{a_{n-2}a_{n-1}a_n,\; b_2b_1b_0,\;
    a_nl_{0,0},\dots ,a_nl_{0,n},\;
    b_0l_{0,0},\dots ,b_0l_{0,n}\},
    \]
    hence $D_3=2n+4$.
\end{enumerate}

For each $0 \leq d \leq 3$, the elements listed above form a $\mathbb{C}$-basis of $P_{0,d}$.
Their images under the correspondence in \eqref{Q3} lie in $Q_{0,d}$ and are linearly independent.
Therefore, $D'_d \geq D_d$ for $0 \leq d \leq 3$.
Since $\psi \colon kQ/I \twoheadrightarrow \End_{\scrS}(N)$ is surjective, it follows that
$D'_d = D_d$ for $0 \leq d \leq 3$.
Combining the base cases with the inductive step completes the proof.
\end{proof}

\end{document}